\DeclareSymbolFontAlphabet{\mathbbm}{bbold}
\DeclareSymbolFontAlphabet{\mathbb}{AMSb}
\NewDocumentCommand{\makeabbrev}{mmm}
 {
  \yoruk_makeabbrev:nnn { #1 } { #2 } { #3 }
 }
\makeabbrev{\textsf}{sf#1}{a,b,c,d,e,f,g,h,i,j,k,l,m,n,o,p,q,r,s,t,u,v,w,x,y,z,A,B,C,D,E,F,G,H,I,J,K,L,M,
N,O,P,Q,R,S,T,U,V,W,X,Y,Z}
\makeabbrev{\mathfrak}{fk#1}{a,b,c,d,e,f,g,h,i,j,k,l,m,n,o,p,q,r,s,t,u,v,w,x,y,
z,A,B,C,D,E,F,G,H,I,J,K,L,M,N,O,P,Q,R,S,T,U,V,W,X,Y,Z}
\makeabbrev{\mathcal}{cal#1}{A,B,C,D,E,F,G,H,I,J,K,L,M,N,O,P,Q,R,S,T,U,V,W,X,Y,
Z}
\makeabbrev{\mathbb}{bb#1}{A,B,C,D,E,F,G,H,I,J,K,L,M,N,O,P,Q,R,S,T,U,V,W,X,Y,Z}
\makeabbrev{\mathbf}{bf#1}{a,b,c,d,e,f,g,h,i,j,k,l,m,n,o,p,q,r,s,t,u,v,w,x,y,
z,A,B,C,D,E,F,G,H,I,J,K,L,M,N,O,P,Q,R,S,T,U,V,W,X,Y,Z}
\makeabbrev{\mathrm}{rr#1}{a,b,c,d,e,f,g,h,i,j,k,l,m,n,o,p,q,r,s,t,u,v,w,x,y,
z,A,B,C,D,E,F,G,H,I,J,K,L,M,N,O,P,Q,R,S,T,U,V,W,X,Y,Z}
\makeabbrev{\mathscr}{scr#1}{A,B,C,D,E,F,G,H,I,J,K,L,M,N,O,P,Q,R,S,T,U,V,W,X,Y,
Z}
\newcommand{\rr}[1]{\mathrm{#1}}
\newcommand\rlywdhat[1]{
  \savestack{\tmpbox}{\stretchto{
    \scaleto{\scalerel*[\widthof{\ensuremath{#1}} * \real{0.8}]
      {\kern.1pt\mathchar"0362\kern.1pt}{\rule{0ex}{\textheight}}}
    {\textheight}
    }{1.8ex}
  }
\stackon[-7pt]{#1}{\tmpbox}
}
\newcommand{\xto}[1]{\xrightarrow{#1}}
\renewcommand{\phi}{\varphi}
\renewcommand{\geq}{\geqslant}
\renewcommand{\leq}{\leqslant}
\newcommand{\eps}{\varepsilon}
\newcommand{\istar}{\scalebox{.6}{$\bigstar$}}
\newcommand{\Z}{\mathbb{Z}}
\newcommand{\bE}{\mathbb{E}}
\newcommand{\cD}{\mathrm{D}}
\newcommand{\cC}{\mathcal{C}}
\DeclareMathOperator{\Spec}{Spec}
\DeclareMathOperator{\Fun}{Fun}
\DeclareMathOperator{\Ext}{Ext}
\DeclareMathOperator{\Tor}{Tor}
\DeclareMathOperator{\Aut}{Aut}
\DeclareMathOperator{\map}{map}
\DeclareMathOperator{\innmap}{\underline{Map}}
\DeclareMathOperator{\Map}{Map}
\DeclareMathOperator{\Hom}{Hom}
\DeclareMathOperator{\Gr}{Gr}
\DeclareMathOperator{\coker}{coker}
\DeclareMathOperator{\im}{im}
\DeclareMathOperator{\fib}{fib}
\DeclareMathOperator{\cofib}{cof}
\DeclareMathOperator{\totcof}{tot-cof}
\DeclareMathOperator{\totfib}{tot-fib}
\DeclareMathOperator{\nerve}{N}
\DeclareMathOperator{\gr}{gr}
\DeclareMathOperator{\Un}{Un}
\DeclareMathOperator{\St}{St}
\DeclareMathOperator{\Lan}{Lan}
\DeclareMathOperator{\Ran}{Ran}
\DeclareMathOperator{\dec}{D\acute{e}c}
\DeclareMathOperator{\Free}{Free}
\DeclareMathOperator{\CoAlg}{CoAlg}
\DeclareMathOperator{\BBar}{Bar} %\Bar already defined
\DeclareMathOperator{\Tot}{Tot}
\DeclareMathOperator{\Cobar}{Cobar}
\DeclareMathOperator{\Cotor}{Cotor}
\def\tensor{\qopname\relax m{\otimes}}
\def\tplus{\qopname\relax m{\oplus}}
\def\cpr{\qopname\relax m{\amalg}}
\def\colim{\qopname\relax m{\operatorname{colim}}}
\newcommand{\simplex}{\reflectbox{
  \scalebox{1.5}[1.1]{\text{$\mathbbm{\Delta}$}}}}
\newcommand{\Yo}{\scalebox{1.25}[1.25]{\text{\usefont{U}{min}{m}{n}\symbol{'207}}}}
\DeclareFontFamily{U}{min}{}
\DeclareFontShape{U}{min}{m}{n}{<-> udmj30}{}
\newcommand{\stYo}{\Yo^{\raisebox{-.4em}{\scalebox{.7}{\kern-.3em st}}}}
\newcommand{\ptYo}{\Yo^{\raisebox{-.4em}{\scalebox{.7}{\kern-.3em pt}}}}
\newcommand{\Pre}[1]{\mathcal{P}(#1)}
\newcommand{\Prept}[1]{\mathcal{P}_{*}(#1)}
\newcommand{\Prefinpt}[1]{\mathcal{P}^{\mathrm{fin}}_{*}(#1)}
\newcommand{\Pst}[1]{\mathcal{P}_{\mathrm{st}}(#1)}
\newcommand{\wtrunc}[1]{\tau_{\scalebox{.7}{$\geq\kern-.3em #1$}}}
\newcommand{\ptrunc}[1]{\tau_{\scalebox{.7}{$\leq\kern-.3em #1$}}}
\newcommand{\Alg}{\mathrm{Alg}}
\newcommand{\op}{^{\mathrm{op}}}
\newcommand{\Sp}{\mathrm{Sp}}
\newcommand{\Ab}{\mathrm{Ab}}
\newcommand{\Mod}{\mathrm{Mod}}
\newcommand{\id}{\mathrm{id}}
\newcommand{\ominfty}{\Omega ^{\infty}}
\newcommand{\susinftyp}{\Sigma ^{\infty}_+}
\newcommand{\pt}{\mathrm{pt}}
\newcommand{\cat}{$\infty$-cat\-e\-go\-ry\xspace}
\newcommand{\cats}{$\infty$-cat\-e\-gories\xspace}
\newcommand{\wt}[1]{\widetilde{#1}}
\newcommand{\spaces}{\scrS}
\newcommand{\catCh}{\mathbf{Ch}}
\newcommand{\Ch}{\operatorname{Ch}}
\newcommand{\Cha}{\Ch_{*}}
\newcommand{\coCh}{\Ch^{*}}
\newcommand{\zero}{\mathbf{0}}
\newcommand{\de}{\partial}
\newcommand{\Fild}{\mathrm{Fil}^\text{\scalebox{0.85}{$\downarrow$}}}
\newcommand{\Fili}
  {\mathrm{Fil}_\text{\raisebox{1.3\depth}{\scalebox{0.85}{$\uparrow$}}}}
\newcommand{\cFild}{\rlywdhat{\Fild}}
\newcommand{\spch}[2]{\bbS^{#2}_{[#1]}}
\newcommand{\aush}{\widehat{\calA}}
\newcommand{\preaush}{\widetilde{\calA}}
\newcommand{\caush}{\calA}
\newcommand{\imp}{\calI}
\newcommand{\tothom}{\calH}
\newcommand{\kinv}{\calK}
\newcommand{\realiz}[1]{\lvert - \rvert_{#1}}
\newcommand{\pjct}{\pi}
\newcommand{\tw}{\scalebox{.9}{\textsc{tw}}}
\newcommand{\ev}{\mathrm{ev}}
\newcommand{\const}{\textsc{cns}}
\newcommand{\ptize}{\textsc{pt}}
\newcommand{\spineZ}{\mathrm{I}_{[-\infty,+\infty]}}
\newcommand{\DelPar}[1]{\Delta^{\{#1\}}}
\newcommand{\cprPar}[1]{\cpr_{\DelPar{#1}}}
\newcommand{\evalfun}{y}
\newcommand{\undch}{u}
\newcommand{\ifil}{\iota_{\mathrm{Fil}}}
\newcommand{\ich}{\iota_{\mathrm{Ch}}}
\newcommand{\oC}{C_{\scalebox{.6}{$\heartsuit$}}}
\newcommand{\ptMap}{\Map{}_*}
\newcommand{\cTP}{\bbT\bbP}
\newcommand{\cHP}{\bbH\bbP}
\newcommand{\cHCm}{\bbH\bbC^{-}}
\newtheoremstyle{reference}%
   {}
   {}
   {}                      % Font del testo
   {}                      % Rientro margini
   {\bfseries}             % Font del titolo dell'ambiente
   {}                     % Punteggiatura dopo "Teorema"\"Definizione"
\theoremstyle{reference}
  \newtheorem{thm}{Theorem}[section]
  \newtheorem{prop}[thm]{Proposition}
  \newtheorem{lemma}[thm]{Lemma}
  \newtheorem{cor}[thm]{Corollary}
  \newtheorem{defi}[thm]{Definition}
  \newtheorem{rem}[thm]{Remark}
  \newtheorem{ex}[thm]{Example}
  \newtheorem{notat}[thm]{Notation}
  \newtheorem{constr}[thm]{Construction}
  \newtheorem{variant}[thm]{Variant}
  \newtheorem*{thm*}{Theorem}
  \newtheorem*{cor*}{Corollary}
  \newtheorem*{counterex*}{Counterexample}
  \newtheorem*{defi*}{Definition}
  \newtheorem*{ex*}{Example}
  \newtheorem*{exercise*}{Exercise}
  \newtheorem*{lemma*}{Lemma}
  \newtheorem*{notat*}{Notation}
  \newtheorem*{prop*}{Proposition}
  \newtheorem*{question*}{Question}
  \newtheorem*{rem*}{Remark}
  \newtheorem*{them*}{Theorem}
  \newtheorem*{constr*}{Construction}
  \newtheorem*{guess*}{Guess}
\author{Stefano Ariotta}
\title[Coherent cochain complexes and Beilinson \MakeLowercase{t}-structures]
  {Coherent cochain complexes and Beilinson \MakeLowercase{t}-structures,
  with an appendix by Achim Krause}
\date{}
\begin{document}
\begin{abstract}
  We define and study coherent cochain complexes in arbitrary stable \cats,
  following Joyal.
  Our main result is that the \cat of coherent cochain complexes in
  a stable \cat $\scrC$ is equivalent to the \cat of complete filtered objects
  in $\scrC$.

  We then show how the Beilinson t-structure can be interpreted in light
  of such equivalence, and analyze its behavior in the presence of symmetric
  monoidal structures.
  We also examine the relationship between the notion of (higher) Toda brackets
  and coherent cochain complexes.
  Finally, we prove how every coherent cochain complex gives rise to a
  spectral sequence and illustrate some examples.
\end{abstract}
\maketitle
\setcounter{tocdepth}{1}
\tableofcontents
\section{Introduction}
Recall that a filtered chain complex (resp.\ a filtered spectrum) consists of a
$\bbZ$-indexed sequence
$$\cdots\to F^{n} \to F^{n-1} \to F^{n-2} \to\cdots$$
where each $F^i$ is a chain complex (resp.\ a spectrum) and the morphisms
between them are chain maps\footnote{in the case of chain
  complexes it is common to allow only monomorphisms, but as one can always
  replace a chain map by a monomorphic one up to quasi-isomorphism, this
  extra condition will not be relevant to us}
(resp.\ morphisms of spectra).

Two of the motivating reasons for the study of filtered derived categories
and filtered spectra are the construction of spectral sequences, and
the existence of the Beilinson t-structure (firstly
introduced in \cite{Bei87}; see Definition \ref{defi-fild-beilinson-t-str}).
The objective of this paper is to present a new perspective on such objects
(and more generally, filtered objects in stable \cats) that allows to
generalize at once both constructions in a homotopy coherent fashion,
and to gain some insight on other related constructions, like the obstruction
theory for the realizability of spectra with prescribed homotopy groups and
k-invariants (see Section \ref{section-decomp}).
This perspective will be realized by using coherent cochain complexes
(originally introduced in their bounded flavor in \cite[35.1]{JoyalNotes},
in relation to the $\infty$-categorical Dold-Kan correspondence).

A coherent cochain complex in a stable \cat $\scrC$ is a homotopy coherent
version of an ordinary cochain complex, and consists of a $\bbZ$-indexed
sequence of objects $C^i\in\scrC$ and differentials
$$\cdots\xto{\partial} C^{n} \xto{\partial} C^{n+1}
\xto{\partial} C^{n+2} \xto{\partial}\cdots$$
together with nullhomotopies $\partial^2 \simeq 0$, and further
coherence data making all the nullhomotopies mutually compatible.
Concretely, a coherent cochain complex will be defined as a pointed functor
from (the nerve of) a $1$-category having as objects the integers together with
an extra base point, and being generated by morphisms $\partial\colon n\to n+1$
such that $\partial^2 = 0$ (see Definition \ref{def-ch} for a rigorous
formulation).
For the sake of simplicity, in this introduction we will restrict to the case
of spectra, and the corresponding \cat of coherent cochain complexes
$\coCh(\Sp)$, although the results in the rest of the paper are discussed in the
wider generality of stable \cats equipped with some t-structure.

At their core, both the Beilinson t-structure and the spectral sequence
associated to a filtered spectrum are in a sense ``blind'' to the
information stored at the limit of the relevant filtered object.
As it is customary, we say that a filtered object $F$ is \emph{complete} if
$\varprojlim F \simeq 0$, and we call the cofiber of the canonical map
$\varprojlim F \to \varinjlim F$ the \emph{completion} of $\varinjlim F$.
If $F$ is a filtered spectrum,
the spectral sequence it generates only abuts to the completion of its colimit.
Something similar is true for the Beilinson t-structure, although saying
precisely what this means requires more work;
the failure of it being left complete is a good starting point:
all and only the constant objects are the ones that are $\infty$-connected
with respect to the Beilinson t-structure, and constant objects are in a precise
sense orthogonal to complete ones (see Proposition
\ref{prop-cfild-bousfield}). In Theorem \ref{thm-beil-recoll}, we prove that
the Beilinson t-structure can be glued out of its restriction to the full
subcategory of complete objects and the trivial t-structure on the full
subcategory of constant objects.

As it turns out, the \cat of filtered spectra is equivalent to that
of coherent cochain complexes of spectra:

\begin{thm*}(see Theorems \ref{thm-cohch-fild-equiv},
  \ref{thm-general-equivalence} and \S \ref{section-monoidal})
  There exists a symmetric monoidal adjunction
  $$
  \begin{tikzcd}[column sep=huge]
    \Fun(\bbZ\op,\Sp) \ar[r, shift left=1.1ex, "\aush"] &
    \coCh(\Sp) \ar[l, shift left=1.1ex, "\imp"]
      \ar[l,phantom,"\text{\rotatebox{-90}{$\dashv$}}"]
  \end{tikzcd}
  $$
  that restricts to an equivalence on complete filtered spectra.
  Moreover, this adjunction sends the Day convolution symmetric monoidal
  structure on filtered spectra, to a symmetric monoidal structure given
  componentwise by
  $$(C \tensor D)^n \simeq \tplus_{s+t=n} C^s \tensor D^t .$$
\end{thm*}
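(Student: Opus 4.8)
The plan is to realize the adjunction as a symmetric monoidal reflective localization of filtered spectra, and then to read off the transferred tensor product degreewise by means of the associated-graded functor. Recall the associated-graded functor $\gr\colon\Fun(\bbZ\op,\Sp)\to\Fun(\bbZ,\Sp)$ to graded spectra, $\gr^{n}F=\cofib(F^{n+1}\to F^{n})$; it is exact, it sends a filtered spectrum to $0$ precisely when all of its transition maps are equivalences (i.e.\ precisely on the constant filtered spectra), and it is symmetric monoidal, carrying the Day convolution on filtered spectra induced by the additive structure on $\bbZ$ to the Day convolution on graded spectra — the latter being exactly the operation $(A\tensor B)^{n}\simeq\tplus_{s+t=n}A^{s}\tensor B^{t}$ (the relevant comma categories being discrete, the convolution colimits reduce to coproducts). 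This last fact is the one external input I would use.

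First I would check that $\aush\dashv\imp$ is a symmetric monoidal adjunction. By Theorems \ref{thm-cohch-fild-equiv} and \ref{thm-general-equivalence} the counit $\aush\imp\To\id$ is an equivalence and $\imp$ is fully faithful with essential image the complete filtered spectra, so the adjunction exhibits a reflective localization of $\Fun(\bbZ\op,\Sp)$ onto the complete filtered spectra, with reflector $\aush$. Its class of local equivalences is exactly the class of maps $f$ with $\gr(f)$ an equivalence: indeed a filtered spectrum $F$ is complete, i.e.\ $\varprojlim F\simeq0$, if and only if $\Map(\const X,F)\simeq\Map(X,\varprojlim F)$ vanishes for every spectrum $X$, i.e.\ if and only if $F$ is right-orthogonal to the constant filtered spectra, and the latter are precisely the cofibers of the $\gr$-equivalences. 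Since $\gr$ is symmetric monoidal, this class of maps is stable under $(-)\tensor Y$ for every $Y$; by the general fact that a reflective localization inverting a $\tensor$-stable class of maps inherits a symmetric monoidal structure, the Day convolution then descends: $\aush$ becomes symmetric monoidal, $\imp$ lax symmetric monoidal, and $\coCh(\Sp)$ acquires the symmetric monoidal structure characterized by $C\tensor D\simeq\aush(\imp C\tensor\imp D)$, with unit $\aush$ applied to the monoidal unit of filtered spectra (namely $\bbS$ in filtration degree $0$, which is already complete).

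It then remains to identify this structure degreewise. Since $\gr$ inverts the local equivalences — these are by construction the $\gr$-equivalences — we have $\gr^{n}(\imp\aush F)\simeq\gr^{n}(F)$, so $\gr^{n}$ does not see completion. On the other hand, by Theorem \ref{thm-general-equivalence} the equivalence $\imp$ identifies the evaluation functor $\ev_{n}\colon\coCh(\Sp)\to\Sp$, $C\mapsto C^{n}$, with a shift of $\gr^{n}$ — concretely $C^{n}\simeq\gr^{n}(\imp C)[n]$, the $n$th term of a coherent cochain complex sitting in cohomological degree $n$, with its differentials and coherences recorded by the attaching maps of the associated filtration. Combining these, for $C,D\in\coCh(\Sp)$ one computes
$$
(C\tensor D)^{n}\ \simeq\ \gr^{n}\!\bigl(\imp C\tensor\imp D\bigr)[n]\ \simeq\ \Bigl(\tplus_{s+t=n}\gr^{s}(\imp C)\tensor\gr^{t}(\imp D)\Bigr)[n]\ \simeq\ \tplus_{s+t=n}C^{s}\tensor D^{t},
$$
where the first equivalence uses $\ev_{n}\simeq\gr^{n}(\imp(-))[n]$ together with $\gr^{n}(\imp\aush(-))\simeq\gr^{n}$, the second uses symmetric monoidality of $\gr$, and the third re-absorbs the shifts, since $\gr^{s}(\imp C)\tensor\gr^{t}(\imp D)\simeq C^{s}[-s]\tensor D^{t}[-t]\simeq(C^{s}\tensor D^{t})[-n]$ as $s+t=n$. (Any other normalization of the shift in the identification of $\ev_{n}$ cancels in the same way, so its precise form is immaterial; as a sanity check, $D$ the unit gives $(C\tensor D)^{n}\simeq C^{n}\tensor\bbS\simeq C^{n}$.)

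The step I expect to be the real obstacle is not any of these computations but the coherence bookkeeping: promoting the degreewise identifications to a genuine equivalence of symmetric monoidal \cats, and carrying out the localization and the transfer of structure at the level of symmetric monoidal \cats rather than homotopy categories. This leans on the earlier equivalence results being available with enough functoriality — so that $\imp$, the evaluation functors, the completion reflector and $\gr$ interact as used above — and on the (by now standard) symmetric monoidal promotion of a compatible Bousfield localization; by comparison, the symmetric monoidality of $\gr$ and the identification of the class of local equivalences are routine.
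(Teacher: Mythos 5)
Your argument is correct and follows essentially the same route as the paper: take the adjunction/equivalence from Theorems \ref{thm-cohch-fild-equiv} and \ref{thm-general-equivalence} as given, verify that the reflective localization onto complete filtered spectra is compatible with Day convolution, and then read off the transferred tensor product degreewise from the symmetric monoidality of $\gr$ (cf.\ Remark \ref{rem-square-aush-gr-shift}, Proposition \ref{prop-undch-aush-symm}, Corollary \ref{cor-coh-tensor-formula}). The one place you diverge from the paper is in how compatibility of the localization with the monoidal structure is verified: you check directly that the class of $\gr$-equivalences is stable under $(-)\tensor Y$ because $\gr$ is symmetric monoidal, whereas Proposition \ref{prop-loc-comp-with-day} verifies the equivalent internal-hom criterion (mapping into a complete object yields a complete object). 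Your route is marginally more direct and avoids the internal mapping object computation, but both rest on the same external input (symmetric monoidality of $\gr$, via \cite{Gwilliam-Pavlov}), so the difference is cosmetic rather than structural.
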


As already noted in \cite[1.2]{HA}, given a filtered spectrum $F$ one can
construct a sequence of morphisms on suitable shifts of the graded pieces
\begin{equation}\label{eq-intro-gr}
  \gr^nF[-n]\to\gr^{n+1}F[-n-1]
\end{equation}
such that all pairwise compositions are nullhomotopic, and the homotopies are
suitably compatible (this is in fact the first step for the construction of
the spectral sequence of $F$).
The left adjoint defined in the theorem sends $F$ to the coherent cochain
complex
$$\cdots \to \gr^{-2}F[-2] \to \gr^{-1}F[-1] \to \gr^0 F \to \gr^1 F [1] \to
\gr^2 F [2] \to \cdots$$
where the differentials are precisely the maps of (\ref{eq-intro-gr}).
Although above we only represented the components and the differentials of
the object $\aush F$, it also encodes much more data: $\coCh(\Sp)$ is
defined using a pointed diagram $1$-category and
pointed functors, hence its objects keep track also of the nullhomotopies
for the two-fold compositions of their differentials, and of all the
recursive coherences between them.
The compatibilities between all the nullhomotopies
can be made explicit by means of higher Toda brackets (see Section
\ref{section-toda}); in order to do so, we use an explicit $\bbE_1$ presentation
of exterior algebras over $\bbZ$ due to Achim Krause, presented in
Appendix \ref{appendix-achim}.
The left adjoint above is really a homotopy coherent version
of the construction of homotopy objects in Beilinson's t-structure: such
t-structure corresponds to the pointwise one along the left adjoint functor
discussed above; that is, for a filtered spectrum $F$,
$$\pi_n^B F \cong \pi_n^\text{lvl}\aush F,$$
where $\pi_n^\text{lvl} C$ denotes the functor applying
$\pi_n$ to all the components of $C$.

The right adjoint can be thought of as the functor
iteratively solving all the extension problems needed to reconstruct the
filtered object from its graded pieces and the differentials. Notice that,
in order for such a reconstruction to be possible in general, one really needs
to use all the information stored in the higher morphisms.
In fact, the differentials appearing in a coherent cochain complex encode
precisely the differentials of the $E_1$ page of a spectral sequence
abutting to the \emph{total homology} of the complex (that is, the object
underlying its associated filtered object). The higher pages together with
their differentials can be recovered
from the complex itself, by means of an incarnation of Deligne's
\emph{d\'ecalage} construction, as explained in Section \ref{section-spseq}.

\subsection{Outline of the paper}
In Section \ref{section-prelim}, we put the basis for the rest of the paper, by
recalling
the main definitions and some structural results about the \cats at hand. In
particular, we show that for a stable \cat $\scrC$,
both $\Fun(\bbZ\op,\scrC)$ and $\coCh\scrC$ sit in suitable recollements of
stable \cats.

Section \ref{section-filandch} is the technical core of the paper, in which we
construct the adjunction between filtered spectra
and cochain complexes of spectra, proving the equivalence of the latter
with the \cat of complete filtered spectra. Then, in Section
\ref{section-general-equiv}, we
generalize the result to all stable \cats with sequential (inverse) limits.

In Section \ref{section-t-structures}, we analyze the interplay between the
Beilinson t-structure and the levelwise t-structure on the relevant categories,
when the base \cat $\scrC$ comes equipped with a t-structure. We continue the
analysis of \cats with extra structure in Section \ref{section-monoidal}, where
we study the interplay of the equivalence of Theorem
\ref{thm-general-equivalence} with symmetric monoidal structures, and the
compatibility of the relevant t-structures in the case where the base $\scrC$
is also equipped with a t-structure.

In Section \ref{section-toda}, we use an explicit $\bE_1$ presentation
of exterior algebras over $\bbZ$, proven by Achim Krause in
Appendix \ref{appendix-achim}, to show that $\coCh\scrC$ is the universal \cat
of $\bbZ\op$-indexed sequences of morphisms in $\scrC$ such that any pairwise
composition is trivial, as are all the possible Toda brackets.

In Section \ref{section-decomp}, we show how coherent cochain complexes encode
a form of obstruction theory, and we use the formalism developed there to
recover a spectral sequence from a coherent cochain complex in
Section \ref{section-spseq}.

Finally, in Section \ref{section-examples}, we have a look at a few examples
from the recent literature that we believe benefit from the perspective of
coherent cochain complexes.

Some of the main tools for constructing the adjunction and proving
the equivalence of Theorem \ref{thm-general-equivalence} is
the stable nerve-realization paradigm. We
recollect and prove many results about the $\infty$-categorical incarnation
of this topic in Appendix \ref{appendix-first}.

\subsection{Notational conventions}

Throughout, we will use the following notations and terminology:
\begin{enumerate}
  \item We will denote by
    $$\Pre{\scrC}\coloneqq\Fun(\scrC\op,\spaces)$$
    the \cat of presheaves on $\scrC$.
  \item  We will denote by
    $$\Pst{\scrC}\coloneqq\Fun(\scrC\op,\Sp)$$
    the \cat of spectra-valued presheaves on $\scrC$, and sometimes refer to
    it as the \cat of \emph{stable presheaves} on $\scrC$.
  \item We will denote by
    $$\Yo\colon\scrC\to\Pre{\scrC}$$
    the Yoneda embedding, and by $\Yo_{\kern-0.2emC}$ the functor represented by an
    object $C\in\scrC$.
  \item Given any functor \cat $\Fun(\scrC,\scrD)$, we will denote by
    $$\const\colon\scrD\to\Fun(\scrC,\scrD)$$
    the (fully faithful) functor obtained by precomposition with the terminal
    functor $\scrC\to\Delta^0$, and by $\const_C$ its value at $C\in\scrC$.
  \item When considering $\bbZ$ or $\bbN$ as categories, we will always
    implicitly assume that they are given the structure of a poset category,
    with their usual ordering $\leq$.
    We will use the notations $\bbZ^\delta$ and
    $\bbN^\delta$ to denote
    their underlying discrete categories.
  \item Unless otherwise stated, whenever we refer to $\bbZ$, $\bbZ\op$ or
    $\bbZ^\delta$ as symmetric monoidal categories, we consider them endowed
    with the symmetric monoidal structure induced by addition.
  \item We refer to a presentable \cat endowed with a symmetric monoidal
    structure whose tensor product preserves colimits in each variable as a
    \emph{presentably symmetric monoidal \cat}.
  \item We refer to a stable \cat endowed with a symmetric monoidal
    structure whose tensor product is exact in each variable as a
    \emph{stably symmetric monoidal \cat}.
  \item We refer to a presentable stable \cat endowed with a symmetric monoidal
    structure whose tensor product preserves colimits in each variable as a
    \emph{stable presentably symmetric monoidal \cat}.
  \item We will most of the times omit the Eilenberg--MacLane functor from
    the notations, when considering an Abelian group as a spectrum.
\end{enumerate}

\begin{rem}
  Throughout, when dealing with t-structures, we will use the homological
  grading convention.
  When dealing with spectral sequences, we will use the cohomological
  Serre grading convention.
  For the sake of clarity, we decided to stick with
  the choice of working exclusively with decreasing filtered objects and
  coherent cochain complexes.
  The results of this paper translate immediately
  to the case of increasing filtrations and coherent chain complexes,
  with the caveat that the equivalence of Theorem \ref{thm-general-equivalence}
  always inverts
  the direction of the $\bbZ^\delta$-indexed arrows, hence complete
  \emph{increasing} filtered objects are equivalent to coherent \emph{chain}
  complexes.
\end{rem}

\begin{rem}\label{rem-size-conventions}
  Throughout, we (mostly implicitly)
  work in ZFC+U, where ``U'' is Tarski's axiom: ``For each set $x$, there
  exists a Grothendieck universe $U$ such that $x\in U$''.
  Equivalently we assume the existence of infinitely many strongly inaccessible
  cardinals, and we fix one as the cardinality of our universe of small sets.
  As we are agnostic about the size of the universe fixed, all the arguments
  that do not rely on dealing with a bigger universe hold regardless of
  the actual size of the objects referred to as ``small sets''.
  In those few cases where we need to deal with more than one universe at a
  time, we are going to be explicit about relative sizes.
\end{rem}

\subsection{Related works}
Some of the structural results about filtered objects in stable \cats
contained in Section \ref{section-filandch} have been presented for the
case of filtered spectra in \cite[Sections 2-3]{lurie2015rotation}.
The \cat of filtered objects of a stable \cat has also been studied in
\cite[Section 2]{Gwilliam-Pavlov}, and some results in Section
\ref{section-filandch} and Section \ref{section-monoidal} overlap with
\emph{loc.\ cit.}\footnote{the reader should note that in
\cite{Gwilliam-Pavlov} the authors refer to what we call
\emph{filtered} objects as ``sequences'', and reserve the term ``filtered
objects'' for what we call \emph{complete filtered objects}}.
In \cite{raksit}, Raksit considers an alternative formulation for coherent
chain complexes; although we don't dwelve into a detailed comparison,
the results of Section \ref{section-toda}, should give a clear idea about
the relation
between the formulation in \emph{op.\ cit.}\ and the one in this paper.
In \cite{waldeDK}, Walde proves the equivalence between $\Fun(\bbN,\scrC)$ and
$\Ch_{*\geq 0}(\scrC)$ through their equivalence with
$\Fun(\simplex\kern-.5em\op,\scrC)$.
A similar approach to the construction of the spectral sequence of a filtered
spectrum through the d\'{e}calage functor
that does not use the language of coherent cochain complexes appeared
recently in Hedenlund's Ph.D.\ thesis \cite{hedenlundPhD}.
The d\'{e}calage functor introduced in Section \ref{section-spseq} is
further analyzed and discussed in forthcoming work by
Hedenlund--Krause--Nikolaus.

\subsection{Prerequisites}
We assume the reader is familiar with the theory of \cats as developed
in \cite{HTT} and with the contents of \cite{HA}. In particular, we assume
the reader is thoroughly acquainted with the theory of t-structures as
developed in \cite[1.2.1]{HA} (see also\cite[Appendix A]{AnNik20}).
Some of the terminology and notations in this paper differ from the ones used
in \cite{HTT} and \cite{HA}, but we use the same notations and terminology
of \emph{op.\ cit.}\ for the concepts we do not explicitly recall or introduce
here.

\subsection{Aknowledgements}
This paper is part of my Ph.D. project at the University of M\"{u}nster.
I am very grateful to my advisor, Thomas Nikolaus, for suggesting
this topic and for his support and advice.
I would like to thank Benjamin Antieau, from whom I learned about the relation
between the Beilinson t-structure and Deligne's d\'{e}calage, and Fosco
Loregian, for having introduced me to co/end calculus way before I could
appreciate its value.
I am also grateful to Achim Krause, Edoardo Lanari, Jonas McCandless and
Tashi Walde for the insightful discussions I shared with them while writing this
article.

\section{Definitions and preliminaries}\label{section-prelim}
In this section we introduce and discuss a few key facts about coherent
cochain complexes and filtered objects in \cats.
In particular, in Proposition \ref{prop-coch-pst-recoll} and Proposition
\ref{prop-cfild-bousfield} we prove that both constructions figure in suitable
recollements (see Definition \ref{defi-recollement}) of stable \cats.

\begin{defi}\label{def-ch}\cite[35.1]{JoyalNotes}
  Let $\catCh$ be the pointed (ordinary) category given by
  $$\operatorname{ob}\catCh = \bbZ \cup \{\pt\}$$
  $$\catCh(n,m)=
    \begin{cases}
      \{\id,\zero\} \quad &\text{if } m=n; \\
      \{\de_n,\zero\} \quad &\text{if } m=n-1; \\
      \{\zero\} \quad &\text{otherwise }.
    \end{cases}$$
  where $\pt\in\catCh$ is a zero object, and $\zero$ is the zero map.

  Given any pointed \cat $\scrC$
  \begin{enumerate}
    \item the \cat of \emph{coherent chain complexes in $\scrC$} is the full
      subcategory
      $$\Cha(\scrC)\coloneqq\Fun^{0}(\catCh,\scrC)\subset\Fun(\catCh,\scrC)$$
      spanned by pointed functors.
    \item The \cat of \emph{coherent cochain complexes} is the full
      subcategory
      $$\coCh(\scrC)\coloneqq\Fun^{0}(\catCh\op,\scrC)\subset
        \Fun(\catCh\op,\scrC)$$
      spanned by pointed functors.
    \item An object $C\in\coCh(\scrC)$ is \emph{bounded above} if
      there exists an $n\in\bbZ$ such that
      $$C^k \simeq 0 \text{ for all } k>n.$$
    \item An object $C\in\coCh(\scrC)$ is \emph{bounded below} if
      there exists an $n\in\bbZ$ such that
      $$C^k \simeq 0 \text{ for all } k<n.$$
    \item An object $C\in\coCh(\scrC)$ is \emph{bounded} if it is
      bounded above and bounded below.
  \end{enumerate}
\end{defi}

\begin{rem*}
  Notice that $0\in\bbZ$ is an object of $\catCh$, but it is not its zero
  object.
\end{rem*}

\begin{ex}
  Let $\scrA$ be an Abelian category, then $\coCh(\scrA)$ is the usual category
  of cochain complexes of $\scrA$.
\end{ex}

In what follows, we will exclusively focus our attention on coherent cochain
complexes.

\begin{notat}
  Given a coherent cochain complex $C\in\coCh{\scrC}$, we will denote $C(\de_n)$
  by $\de^n_{C}$, or just by $\de^n$ if there is no risk of confusion.
\end{notat}

\begin{defi}
  Let $\scrC$ be any \cat.
  \begin{enumerate}
    \item The category $\Fild\scrC\coloneqq\Fun(\bbZ\op, \scrC)$ is called
      the \cat of \emph{(decreasing) filtered objects of $\scrC$}.
    \item The category $\Fili\scrC\coloneqq\Fun(\bbZ,\scrC)$ is called the
      \cat of \emph{(increasing) filtered objects of $\scrC$}
      \footnote{the position of the arrows in the notation is meant to remind
        the common convention of using upper indices for decreasing filtrations,
        and lower indices for increasing filtrations}.
    \item Given $F\in\Fild\scrC$, we call $F^{-\infty}\coloneqq\colim_i F^i$
      the \emph{underlying object of $F$}.
    \item Given $F\in\Fild\scrC$, we say that $F$ is \emph{complete} if
      $F^{+\infty}\coloneqq\lim_i F^i\simeq 0$.
      We will denote by $$\rlywdhat{\Fild}\scrC$$ the full subcategory
      of $\Fild\scrC$ spanned by complete objects.
  \end{enumerate}
\end{defi}

Throughout, we will concentrate on the case of descending filtrations.

\begin{defi}\label{defi-gr}
  Let $\scrC$ be an \cat with cofibers and countable coproducts.
  Precomposition with the inclusion map $\iota_n\colon\Delta^{\{n+1, n\}} \to
  \bbZ\op$
  induces functors
  \begin{displaymath}
  \begin{split}
    (\iota_n)^*\colon\Fild\scrC&\to\Fun(\Delta^1,\scrC)\\
    F&\mapsto \left(F^{n+1}\to F^n\right).
  \end{split}
  \end{displaymath}
  \begin{enumerate}
    \item We will define the \emph{$n$-th graded functor} $\gr^n$ as the
      composite functor
      $$\Fild\scrC\xto{(\iota_n)^*}\Fun(\Delta^1,\scrC)\xto{\cofib}\scrC.$$
    \item We define the \emph{associated graded functor} $\gr$ as the
      composite functor
      $$\Fild\scrC\xto{\left(\gr^n\right)_{n\in\bbZ}}
        \prod_{n\in\bbZ}\scrC\xto{\bigoplus}\scrC.$$
  \end{enumerate}
\end{defi}

\begin{defi}
  We say that a map $\alpha\colon F\to G$ in $\Fild\scrC$ is a \emph{graded
  equivalence} if $\alpha\colon F\to G$ is such that $\gr(\alpha)$ is an
  equivalence. Equivalently. if for all $n\in\bbZ$, the dotted map
  $$
  \begin{tikzcd}
    F^{n+1} \ar[d] \ar[r] & F^n \ar[d] \ar[r] & \gr^n F \ar[d, dashed]\\
    G^{n+1} \ar[r] & G^n \ar[r] & \gr^n G
  \end{tikzcd}
  $$
  induced by universality of cofibers is an equivalence.
\end{defi}

\begin{notat}
  Let $\scrC$ be an \cat. For $n,m\in\bbZ\cup\{+\infty,-\infty\}$, with
  $n\leq m$, we use the notation
  $$F^n/F^m\coloneqq\cofib\left(F^m\to F^n\right)$$
  to denote the cofiber of the evident map.
\end{notat}

One of the nice features of ordinary cochain complexes is the possibility to
write them as limits of bounded above (or below) ones. This feature is still
present in the coherent setting, as we now show.

\begin{constr}\label{constr-trunc-ch}
  Given any integer $n$, let $\catCh_{(-\infty,n]}$
  denote the full subcategory of $\catCh$ spanned by
  $\{\pt,n,n-1,n-2,\cdots\}$.
  Let now $\scrC$ denote a pointed complete\footnote{or, more generally, such
  that all the relevant Kan extensions exist and are pointwise}
  \cat. Then, each inclusion $\iota^n\colon\catCh_{(-\infty,n]}\to\catCh$
  induces, by right Kan extension along it, an adjunction
  $$
  \begin{tikzcd}[column sep=huge]
    \coCh(\scrC)
    \ar[r, shift left=1.1ex, "(\iota^n)^*"] &
    \Fun^0\left(\catCh\op_{(-\infty,n]},\scrC\right).
    \ar[l, shift left=1.1ex,"\iota^n_*"]
    \ar[l,phantom,"\text{\rotatebox{-90}{$\dashv$}}"]
    \end{tikzcd}
  $$
  For any given $C\in\coCh(\scrC)$, we can compute explicitly the values of
  $\iota^n_*C$.
  As $\iota^n$ is fully faithful, the only values we need to determine are
  the ones for $M>n$:
  \begin{displaymath}
  \begin{split}
    \iota^n_*C(M)
    \stackrel{\vphantom{(}^{(\ref{rem-end-ran-formula})}}{\simeq}
      &\int_{s\in\catCh\op_{(-\infty,n]}}\left[\Map_{\catCh\op}(M,s),C^s\right]\\
    \stackrel{\vphantom{(}^{\hphantom{(\ref{rem-end-ran-formula})}}}{\simeq}
      &\int_{s\in\catCh\op_{(-\infty,n]}}[\pt,C^s]\\
    \stackrel{\vphantom{(}^{(\ref{cor-dummy-coend})}}{\simeq}
      &\lim_{\catCh\op_{(-\infty,n]}}C\\
    \stackrel{\vphantom{(}^{\hphantom{(\ref{cor-dummy-coend})}}}{\simeq}
      &0
  \end{split}
  \end{displaymath}
  where the last limit is $0$ as the indexing category is pointed, and $C$
  preserves the zero object.
  If we now denote by $(-)^{\leq n} \coloneqq \iota^n_*(\iota^n)^*$, we have
  that the unit of the adjunction $(\iota^n)^*\dashv\iota^n_*$ induces an
  endofunctor of $\coCh(\scrC)$, denoted $(-)^{\leq n}$,
  sending a complex $C$ to the complex $C^{\leq n}$, which in degree $m$
  is given by
  $$ \begin{cases}
      C^m &\text{ if } m\leq n\\
      0   &\text{ else.}
     \end{cases} $$
  Similarly, the inclusions $\iota^{n,n+1}\colon\catCh_{(-\infty,n]}\to
  \catCh_{(-\infty,n+1]}$ induce adjunctions
  $$
  \begin{tikzcd}[column sep=huge]
    \Fun^0\left(\catCh\op_{(-\infty,n+1]},\scrC\right)
    \ar[r, shift left=1.1ex, "(\iota^{n,n+1})^*"] &
    \Fun^0\left(\catCh\op_{(-\infty,n]},\scrC\right).
    \ar[l, shift left=1.1ex,"\iota^{n,n+1}_*"]
    \ar[l,phantom,"\text{\rotatebox{-90}{$\dashv$}}"]
    \end{tikzcd}
  $$
  Notice that we have a natural equivalence
  $$
  (\iota^{n+1}\circ\iota^{n,n+1})^* \simeq
  (\iota^{n,n+1})^*(\iota^{n+1})^* \simeq (\iota^n)^*;
  $$
  by passing to adjoints, we get a natural transformation
  \begin{equation}\label{eq-compose}
    (\iota^{n+1})^* \Rightarrow \iota^{n,n+1}_*(\iota^n)^*;
  \end{equation}
  if we now precompose (\ref{eq-compose})
  with $\iota^{n+1}_*$ (using that adjunctions compose), we get a natural
  transformation
  $$\upsilon^{n+1}\colon(-)^{\leq n+1}\Rightarrow(-)^{\leq n}.$$
  By inspection, the above is given pointwise by
  $$
  (\upsilon^{n+1}_C)^m\simeq
  \begin{cases}
    \id\colon C^m\to C^m &\text{ if } m\leq n \\
    0\colon C^{n+1} \to 0 &\text{ if } m = n+1 \\
    0\colon 0 \to 0 &\text{ if } m \geq n+2.
  \end{cases}
  $$
\end{constr}

\begin{rem}
  Similarly to what we did in Construction \ref{constr-trunc-ch}, one can
  truncate \emph{below} a certan integer. It is also possible to consider
  \emph{left} Kan extensions along the $\iota^n$'s, and the induced counits
  to obtain different truncations with $\cofib \partial^n$ in degree $n+1$,
  for truncations above $n$, or $\fib \partial^n$ in degree $n-1$,
  for truncations below $n$. In what follows, we won't need any of such
  variants.
\end{rem}

\begin{lemma}\label{lemma-coch-limit-of-truncations}
  Let $\scrC$ be a pointed complete \cat. Then, for any $C\in\coCh\scrC$
  $$
  C\simeq\lim \left(\cdots \to C^{\leq n+1}\xto{\upsilon^{n+1}_C}
  C^{\leq n} \to \cdots\right)
  $$
  (where the $\upsilon^n$'s are the natural transformations defined in
  Construction \ref{constr-trunc-ch}).
\end{lemma}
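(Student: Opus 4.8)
The plan is to reduce the statement to a pointwise check in $\scrC$, using the explicit description of the truncations $(-)^{\leq n}$ and of the transition maps $\upsilon^{n+1}$ supplied by Construction~\ref{constr-trunc-ch}.

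First I would record that, $\scrC$ being complete, $\coCh\scrC=\Fun^0(\catCh\op,\scrC)$ is a full subcategory of $\Fun(\catCh\op,\scrC)$ closed under limits — a pointwise limit of functors carrying $\pt$ to a zero object again carries $\pt$ to $\lim 0\simeq 0$ — so $\coCh\scrC$ is complete and its limits are computed pointwise on $\catCh\op$. Next, the units $u^n\colon\id_{\coCh\scrC}\Rightarrow(-)^{\leq n}$ of the adjunctions $(\iota^n)^*\dashv\iota^n_*$ are compatible with the transformations $\upsilon^{n+1}\colon(-)^{\leq n+1}\Rightarrow(-)^{\leq n}$ (this compatibility being precisely what is encoded when $\upsilon^{n+1}$ is extracted from the $2$-cell~(\ref{eq-compose})), so the $u^n$ assemble into a cone over the tower $(-)^{\leq\bullet}$ and produce, for every $C$, a comparison map $\Phi_C\colon C\to\lim_n C^{\leq n}$. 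It remains to see that $\Phi_C$ is an equivalence, which by the above may be tested after evaluation at each object of $\catCh\op$.

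There is nothing to check at $\pt$, since all functors in sight are pointed, so fix $m\in\bbZ$. By Construction~\ref{constr-trunc-ch}, the tower $n\mapsto C^{\leq n}(m)$, indexed by $(\bbZ,\geq)$, has value $C^m$ with all transition maps equal to $\id_{C^m}$ once $n\geq m$, and value $0$ for $n<m$. Its limit may be computed by restricting along the inclusion of the full subposet $\{n\geq m\}$: this inclusion is cofinal in the sense relevant for limits, since for each $i\in\bbZ$ the corresponding comma category is the poset $\{n\geq\max(m,i)\}$, which has terminal object $\max(m,i)$ and is therefore weakly contractible. On $\{n\geq m\}$ the restricted tower is constant at $C^m$ with identity transition maps, so $\lim_n C^{\leq n}(m)\simeq C^m$ with all projections equivalences. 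Now $\Phi_C$ evaluated at $m$, followed by the projection $\lim_n C^{\leq n}(m)\to C^{\leq m}(m)=C^m$, equals $u^m_C$ evaluated at $m$; since $m$ is an object of $\catCh_{(-\infty,m]}$ and the unit of $(\iota^m)^*\dashv\iota^m_*$ becomes invertible after restriction along the fully faithful $\iota^m$, the latter map is an equivalence. By two-out-of-three, $\Phi_C$ is an equivalence at $m$, hence a pointwise equivalence, hence an equivalence.

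Most of the argument is bookkeeping; the one point I expect to require genuine care is the cofinality manipulation for the two-sided $(\bbZ,\geq)$-indexed limit — it is not an ordinary $\bbN\op$-tower — together with verifying that the units $u^n$ really do form a cone compatible with the $\upsilon^{n+1}$, both of which must be unwound from Construction~\ref{constr-trunc-ch}.
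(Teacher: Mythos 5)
Your proof is correct and follows the same basic route as the paper's: reduce to a pointwise check using the fact that limits in $\coCh\scrC$ are computed objectwise, then observe that each tower $n\mapsto C^{\leq n}(m)$ is eventually constant. You are in fact somewhat more careful than the paper on two points: the paper cites Proposition~\ref{prop-coch-pst-recoll} for the objectwise-limit claim, but that proposition assumes $\scrC$ is stable and cocomplete, hypotheses not present in the Lemma — your direct argument (pointwise limits of pointed functors remain pointed) is the correct justification in this generality — and you explicitly verify that the units $u^n$ assemble into a cone compatible with the $\upsilon^{n+1}$'s, giving the comparison map $\Phi_C$, whereas the paper leaves this implicit.
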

\begin{proof}
  It follows from Proposition \ref{prop-coch-pst-recoll} that limits in
  $\coCh\scrC$ can be computed objectwise. As for any $m\in\bbZ$ the
  sequence
  $$
  \cdots \to \left(C^{\leq n+1}\right)^m\xto{\upsilon^n_C}
      \left(C^{\leq n}\right)^m \to \cdots
  $$
  is eventually constant on the left, the result follows.
\end{proof}

\begin{lemma}\label{lemma-coch-to-prod-conservative}
  Given any pointed \cat $\scrC$,
  the functor $\undch\colon\coCh(\scrC) \to \prod_{\bbZ} \scrC$ induced by
  precomposition with $\bbZ^\delta\to\catCh\op$ is conservative.
\end{lemma}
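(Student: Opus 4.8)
The plan is to reduce the statement to the elementary fact that equivalences in a functor \cat are detected objectwise, together with the observation that the one object of $\catCh\op$ not in the image of $\bbZ^\delta\to\catCh\op$, namely the basepoint $\pt$, carries no information for pointed functors.

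First I would recall (see \cite{HTT}, e.g.\ the characterization of equivalences in a functor quasicategory) that for any \cat $\calK$ with set of objects $S$, a natural transformation $f\colon F\to G$ in $\Fun(\calK,\scrC)$ is an equivalence if and only if $f_x\colon F(x)\to G(x)$ is an equivalence in $\scrC$ for every $x\in S$; equivalently, restriction along the inclusion of the discrete \cat on $S$ into $\calK$ is conservative. Applying this to $\calK=\catCh\op$, whose set of objects is $\bbZ\cup\{\pt\}$, the restriction functor $\Fun(\catCh\op,\scrC)\to\prod_{\bbZ\cup\{\pt\}}\scrC$ is conservative; since $\coCh(\scrC)$ is by definition a \emph{full} subcategory of $\Fun(\catCh\op,\scrC)$, its restriction to $\coCh(\scrC)$ is conservative as well, and a morphism in $\coCh(\scrC)$ is an equivalence iff it is a pointwise equivalence on all objects of $\catCh\op$.

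Next, given $f\colon C\to D$ in $\coCh(\scrC)$ with $\undch(f)$ an equivalence, I would check the two kinds of objects of $\catCh\op$ separately. For $n\in\bbZ$, the component $f_n\colon C^n\to D^n$ is precisely the $n$-th component of $\undch(f)$, hence an equivalence by hypothesis. For the object $\pt$, both $C$ and $D$ are pointed functors, so $C(\pt)$ and $D(\pt)$ are zero objects of $\scrC$; the component $f_\pt$ is therefore a morphism between zero objects, which is automatically an equivalence (the zero object being both initial and terminal, the mapping space is contractible and $f_\pt$ is the essentially unique map). Combining the two cases, $f$ is a pointwise equivalence on $\catCh\op$, hence an equivalence in $\coCh(\scrC)$ by the previous paragraph.

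There is essentially no obstacle here; the only point worth stressing is the one that makes the statement nontrivial to formulate, namely that $\bbZ^\delta\to\catCh\op$ omits the basepoint, and this is exactly what the pointedness hypothesis on $\scrC$ (hence on the functors involved) takes care of. I also note that one needs no (co)limits in $\scrC$ for this argument, only a zero object, which is part of the standing hypothesis that $\scrC$ is pointed.
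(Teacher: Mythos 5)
Your proof is correct and follows essentially the same idea as the paper's one-line argument (equivalences are detected in the homotopy category, so the claim is ``clear''). You have simply spelled out the one nontrivial observation the paper leaves implicit: that the object $\pt\in\catCh\op$, the only one omitted by $\bbZ^\delta\to\catCh\op$, is sent to a zero object by pointedness, so restriction along $\bbZ^\delta$ still detects equivalences objectwise.
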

\begin{proof}
  As equivalences in \cats are detected at the level of homotopy categories,
  this is clear.
\end{proof}

\begin{prop}\label{prop-eval-adj}
  Let $\scrC$ be a complete and cocomplete semiadditive \cat, and let
  $n^*\colon\Fun(\catCh\op,\scrC)\to\Fun(\Delta^0,\scrC)$
  denote precomposition with $n\colon\Delta^0\to\catCh\op$.
  The ``evaluation at $n$'' functor
  $$\ev_n\colon\coCh(\scrC)\subset\Fun(\catCh\op,\scrC)
    \xto{n^*}\scrC,$$
  admits both a left and a right adjoint.
  Such adjoints are given objectwise by
  $$
  (\ev_n)_*X^m\simeq
  \begin{cases}
    X \quad &\text{if } m=n-1, n\\
    0 &\text{else.}
  \end{cases}
  $$
  with the identity as the only nontrivial differential, and
  $$
  (\ev_n)_!X^m\simeq
  \begin{cases}
    X \quad &\text{if } m=n, n+1\\
    0 &\text{else.}
  \end{cases}
  $$
  with the identity as the only nontrivial differential.
\end{prop}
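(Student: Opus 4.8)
The plan is to split $\ev_n$ as $\ev_n = n^*\circ j$, where $j\colon\coCh(\scrC)\hookrightarrow\Fun(\catCh\op,\scrC)$ is the inclusion of the pointed functors and $n^*$ is restriction along $n\colon\Delta^0\to\catCh\op$, and to build both adjoints on each factor separately, then compose. Since $\scrC$ is complete and cocomplete, $n^*$ admits a left adjoint $\Lan_n$ and a right adjoint $\Ran_n$, computed pointwise by the usual (co)limit formulas over the finite discrete Hom-sets of the $1$-category $\catCh$: $(\Ran_n X)(m)\simeq\prod_{\Map_{\catCh}(n,m)}X$ and $(\Lan_n X)(m)\simeq\coprod_{\Map_{\catCh}(m,n)}X$. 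Each of these index sets always contains the zero morphism, so these (co)products carry one ``useless'' factor of $X$ at every $m$ (including $m=\pt$), while everything else vanishes outside a couple of degrees around $n$. The remaining task is thus to equip $j$ with adjoints and to see that they delete exactly the zero-morphism factor.

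For the adjoints to $j$ I would exploit that $\pt$ is a zero object of $\catCh\op$. Restriction $\pt^*$ along $\{\pt\}\hookrightarrow\catCh\op$ then has a left and a right adjoint, and since $\pt$ is both initial and terminal all the comma categories involved are contractible, so both adjoints coincide with the constant functor $\const\colon\scrC\to\Fun(\catCh\op,\scrC)$. I claim the left adjoint to $j$ is $F\mapsto LF:=\cofib\big(\const_{F(\pt)}\to F\big)$, with the map the counit of $\Lan_\pt\dashv\pt^*$ (pointwise, $F$ applied to the unique arrow $\pt\to m$), and the right adjoint is $F\mapsto RF:=\fib\big(F\to\const_{F(\pt)}\big)$ with the unit of $\pt^*\dashv\Ran_\pt$. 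Since $(LF)(\pt)\simeq\cofib(\id_{F(\pt)})\simeq 0$, $LF$ is pointed, and for pointed $G$ one has $\Map(LF,G)\simeq\fib\big(\Map(F,G)\to\Map_{\scrC}(F(\pt),G(\pt))\big)\simeq\Map(F,G)$ because $G(\pt)\simeq 0$; the right-adjoint case is dual.

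Composing the two adjunctions gives $(\ev_n)_!\simeq L\circ\Lan_n$ and $(\ev_n)_*\simeq R\circ\Ran_n$. To reach the explicit description I would unwind these: $(\Ran_n X)(m)\simeq\prod_{\Map_{\catCh}(n,m)}X$, and applying $\fib(-\to\const_{(-)(\pt)})$ removes precisely the factor indexed by $\zero$ — this is exactly where semiadditivity enters, identifying the fiber of the projection off one factor of a finite product with the product over the remaining factors. As $\Map_{\catCh}(n,m)\smallsetminus\{\zero\}$ is a singleton for $m\in\{n-1,n\}$ and empty otherwise, the result is $X$ in degrees $n-1,n$ and $0$ elsewhere, and a short diagram chase (following how $\partial_n\in\catCh$ acts on the indexing sets) identifies the only nontrivial differential with $\id_X$. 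The computation of $(\ev_n)_!$ is the formal dual, with $\coprod$, $\Lan$, $\cofib$ in place of $\prod$, $\Ran$, $\fib$.

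The main obstacle is the combinatorial bookkeeping in $\catCh\op$: one must check carefully that for $F=\Ran_n X$ the structure map $F(m)\to F(\pt)$ is the projection onto the $\zero$-indexed factor — dually, for $F=\Lan_n X$, that $F(\pt)\to F(m)$ is the inclusion of the $\zero$-indexed summand — and then determine which surviving factor each differential $\partial_m$ selects. None of this is deep, but it is the step where the semiadditivity hypothesis is genuinely used and where a misidentified index would produce the wrong functor.
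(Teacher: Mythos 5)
Your proposal is correct and follows essentially the same route as the paper: factor $\ev_n = n^*\circ j$ with $j\colon\coCh(\scrC)\hookrightarrow\Fun(\catCh\op,\scrC)$ the inclusion, compute $n_!$ and $n_*$ by the end/coend Kan extension formulas, and postcompose with an adjoint of $j$. The one substantive difference is how the adjoints of $j$ are produced. You construct them separately, writing the left adjoint as $L F = \cofib(\const_{F(\pt)}\to F)$ and the right adjoint as $R F = \fib(F\to\const_{F(\pt)})$, whereas the paper invokes Proposition \ref{prop-coch-pst-recoll}, which exhibits a \emph{single} functor $\ptize F \simeq \cofib(\const_{F(\pt)}\to F)$ that is simultaneously left and right adjoint to $j$, and plugs it in on both sides. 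Your two formulas coincide with $\ptize$ because the unit $F\to\const_{F(\pt)}$ is split by the counit $\const_{F(\pt)}\to F$ (pointwise their composite is $F$ applied to $\zero_{m\pt}\circ\zero_{\pt m}=\id_\pt$), so $\fib$ and $\cofib$ produce the same complementary summand once $\scrC$ is semiadditive. Your route is in fact somewhat more self-contained here, since Proposition \ref{prop-coch-pst-recoll} is stated for stable $\scrC$ while the present proposition only assumes semiadditivity. One small inaccuracy in your account: identifying $\fib$ of a projection $\prod_S X\to X^{\{\zero\}}$ with the product over $S\setminus\{\zero\}$ does not actually require semiadditivity — it follows by pasting pullbacks in any pointed $\infty$-category with finite limits, and dually for the cofiber of a coproduct inclusion — so that particular step is not where the hypothesis is used.
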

\begin{proof}
  The functor $n^*$ admits both adjoints $n_!$ and $n_*$, given respectively
  by left and right Kan extension.
  As adjoint functors compose,
  it follows from Proposition \ref{prop-coch-pst-recoll} that the right
  Kan extension $(\ev_n)_*$ is given by the composite $\ptize\circ n_*$,
  and the left Kan extension $(\ev_n)_!$ is given by $\ptize\circ n_!$.
  By Remark \ref{rem-end-ran-formula}, $n_*$ is given by
  $$
  n_*X^m= [\Map_{\catCh\op}(m,n),X] =
  \begin{cases}
    X \oplus X \quad &\text{if } m=n-1, n\\
    X &\text{else.}
  \end{cases}
  $$
  with the differentials $n_*X^m\to n_*X^{m+1}$ determined by
  $$\Map_{\catCh\op}(m+1,n) \xto{(\partial_{m}\op)^*} \Map_{\catCh\op}(m,n)$$
  and thus given by
  \begin{equation}\label{eq-complex-ext}
  \begin{tikzcd}[ampersand replacement=\&]
    \cdots \ar[r] \&
    X \ar[r,"{\begin{pmatrix} \id \\ \id \end{pmatrix}}"] \&
    X\oplus X \ar[r,"{\begin{pmatrix} \id & 0 \\ 0 & \id \end{pmatrix}}"] \&
    X\oplus X \ar[r,"{\begin{pmatrix} \id & 0 \end{pmatrix}}"] \&
    X \ar[r] \&
    \cdots
  \end{tikzcd}
  \end{equation}
  in degrees $n-2$ to $n+1$, and by identities elsewhere.

  By Lemma \ref{lemma-coend-formula}, $n_!$ is given by
  $$
  n_*X^m= \Map_{\catCh\op}(n,m) \tensor X =
  \begin{cases}
    X \oplus X \quad &\text{if } m=n,n+1\\
    X &\text{else.}
  \end{cases}
  $$
  with the differentials $n_*X^m\to n_*X^{m+1}$ determined by
  $$\Map_{\catCh\op}(n,m) \xto{(\partial_{m}\op)_*} \Map_{\catCh\op}(n,m+1)$$
  and thus given by
  \begin{equation}
  \begin{tikzcd}[ampersand replacement=\&]
    \cdots \ar[r] \&
    X \ar[r,"{\begin{pmatrix} \id \\ 0 \end{pmatrix}}"] \&
    X\oplus X \ar[r,"{\begin{pmatrix} \id & 0 \\ 0 & \id \end{pmatrix}}"] \&
    X\oplus X \ar[r,"{\begin{pmatrix} \id & \id \end{pmatrix}}"] \&
    X \ar[r] \&
    \cdots
  \end{tikzcd}
  \end{equation}
  in degrees $n-1$ to $n+2$, and by identities elsewhere.

  By the explicit description of $\ptize$ given in Proposition
  \ref{prop-coch-pst-recoll}, we get the formulas for $(\ev_n)_*$ and
  $(\ev_n)_!$.
\end{proof}

We now recall some definitions and facts about recollements,
which we will use extensively in the following sections; we consider only
recollements in the case of stable \cats,
but the theory holds in greater generality; see also
\cite{FLRecoll}, \cite{BGRecoll} and \cite[A.8]{HA}.

\begin{defi}\label{defi-recollement}
  Let $\scrC$ be a stable \cat, and let $i\colon\scrC_0\hookrightarrow\scrC$ and
  $j\colon\scrC_1\hookrightarrow\scrC$ be full subcategories.
  We say that
  $\scrC$ is a \emph{recollement of the essential image of $i$ and
  the essential image of $j$} if:
  \begin{enumerate}
    \item Both $i$ and $j$ admit left adjoints:
      $$
      \begin{tikzcd}
        \scrC_0 \ar[r, hook, "i"'] &
        \scrC \ar[l, shift right=0.6ex, bend right, "i_L"']
          \ar[r, "j_L"]
          \ar[from=r, hook', shift left=0.6ex, bend left, "j"]
          \ar[l,phantom, shift right=1.2ex,
            "\text{\rotatebox{-90}{$\dashv$}}"] &
        \scrC_1
          \ar[l,phantom, shift left=1ex, "\text{\rotatebox{-90}{$\dashv$}}"]
      \end{tikzcd}
      $$
    \item The functor $j_L$, left adjoint to $j$, carries every object of
      $\scrC_0$ to zero;
    \item If $\alpha$ is a morphism of $\scrC$ such that $i_L(\alpha)$ and
      $j_L(\alpha)$ are equivalences, then $\alpha$ is an equivalence.
  \end{enumerate}
\end{defi}

\begin{rem}\label{rem-full-recol}
  It follows from \cite[A.8.5, A.8.19]{HA} that if $\scrC$ is a recollement of
  $\scrC_0$ and $\scrC_1$, then we actually have the following adjunctions
  $$
  \begin{tikzcd}[column sep=huge]
    \scrC_0 \ar[r, hook, "i"' description] &
    \scrC \ar[l, shift right=0.6ex, bend right, "i_L"']
      \ar[from=r, hook, shift right=0.6ex, bend right, "(j_L)_!"']
      \ar[r, "j_L" description]
      \ar[from=r, hook', shift left=0.6ex, bend left, "j"]
      \ar[l, shift left=0.6ex, bend left, "i_R"]
      \ar[l,phantom, shift left=2ex,
        "\text{\scalebox{1}{\rotatebox{-90}{$\dashv$}}}"]
      \ar[l,phantom, shift right=2ex,
        "\text{\scalebox{1}{\rotatebox{-90}{$\dashv$}}}"] &
    \scrC_1
      \ar[l,phantom, shift left=2ex,
        "\text{\scalebox{1}{\rotatebox{-90}{$\dashv$}}}"]
      \ar[l,phantom, shift right=2ex,
        "\text{\scalebox{1}{\rotatebox{-90}{$\dashv$}}}"]
  \end{tikzcd}
  $$
  where $(j_L)_!$ is fully faithful, and $i_R$ is such that
  $$ii_R \to \id_\scrC \to jj_L$$
  is a co/fiber sequence.
\end{rem}

\begin{prop}[]\cite[Proposition A.8.20]{HA}\label{prop-equi-recollement}
  Let $\scrC$ be a stable \cat, and let $i\colon\scrC_0\to\scrC$ be a fully
  faithful functor. The following are equivalent:
  \begin{enumerate}
    \item The functor $i$ admits a left adjoint and a right adjoint;
    \item There exists a full subcategory $j\colon\scrC_1\hookrightarrow\scrC$,
      closed under equivalences, such that $\scrC$ is the recollement of the
      essential images of $i$ and $j$.
  \end{enumerate}
  Moreover, if the conditions above hold, we can identify $\scrC_1$ with
  the full subcategory $\scrC_0^\perp\subseteq\scrC$ spanned by those
  objects $X\in\scrC$ such that for all $C\in \scrC_0$,
  $\Map_{\scrC}(C,X)\simeq\pt.$
\end{prop}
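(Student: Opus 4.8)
As flagged in the statement, this is \cite[Proposition~A.8.20]{HA}, and the plan is to reproduce that argument in our notation. The implication $(2)\Rightarrow(1)$ is immediate: if $\scrC$ is the recollement of the essential images of $i$ and $j$, then a left adjoint to $i$ is part of the data in axiom~(1) of Definition~\ref{defi-recollement}, while Remark~\ref{rem-full-recol} already manufactures a right adjoint $i_R$ to $i$, sitting in a cofiber sequence $ii_R\to\id_\scrC\to jj_L$. So the substance is in $(1)\Rightarrow(2)$, together with the identification of the ambient $\scrC_1$ with $\scrC_0^\perp$.

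So suppose $i$ admits both a left adjoint $i_L$ and a right adjoint $i_R$, and set $\scrC_1\coloneqq\scrC_0^\perp$. The first thing I would record is the standard package of facts for a fully faithful $i$: the unit $\id\to i_Ri$ and the counit $i_Li\to\id$ are equivalences, so $ii_L$ is an idempotent monad with unit $\eta$ (the reflection onto $\scrC_0$) and $ii_R$ is an idempotent comonad with counit $\eps$ (the coreflection onto $\scrC_0$), and $i_L$, being a left adjoint, preserves all colimits. Since $\scrC_0^\perp$ is carved out of $\scrC$ by the vanishing of the exact functors $\map_\Sp(iC,-)\colon\scrC\to\Sp$ for $C\in\scrC_0$, it is automatically a stable full subcategory closed under equivalences.

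Next I would produce the left adjoint to $j\colon\scrC_1\hookrightarrow\scrC$ by hand, as $j_LX\coloneqq\cofib(ii_RX\xto{\eps_X}X)$. To check that $j_LX\in\scrC_0^\perp$: apply the exact functor $\map_\Sp(iC,-)$ to this cofiber sequence in $\scrC$; the resulting first map $\map_\Sp(iC,ii_RX)\to\map_\Sp(iC,X)$ is induced by $\eps_X$, and applying $i_R$ turns $\eps_X$ into $i_R\eps_X\colon i_Rii_RX\to i_RX$, which is an equivalence since it is a one-sided inverse of the (invertible) unit $\eta_{i_RX}$; by the $(i\dashv i_R)$-adjunction the first map is therefore an equivalence, forcing $\map_\Sp(iC,j_LX)\simeq0$. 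For the adjunction $j_L\dashv j$: given $Y\in\scrC_0^\perp$, apply $\map_\Sp(-,Y)$ to the same cofiber sequence; the term $\map_\Sp(ii_RX,Y)$ vanishes because $i_RX\in\scrC_0$, so $\map_\Sp(j_LX,Y)\xrightarrow{\sim}\map_\Sp(X,Y)$, naturally in $X$. This establishes recollement axiom~(1).

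Finally I would check axioms~(2), (3) and the ``moreover'' clause. For~(2): $\eps_{iC}$ is an equivalence (triangle identity together with $i_Ri\simeq\id$), so $j_L(iC)\simeq0$; more precisely $\cofib(\eps_K)\simeq0$ exactly when $K\simeq ii_RK$, so $\ker j_L$ is precisely the essential image of $i$. For~(3): if $i_L\alpha$ and $j_L\alpha$ are both equivalences, then $j_L\cofib(\alpha)\simeq\cofib(j_L\alpha)\simeq0$ since $j_L$ preserves cofibers, so $\cofib(\alpha)$ lies in the essential image of $i$, say $\cofib(\alpha)\simeq iW$; then $W\simeq i_L(iW)\simeq i_L\cofib(\alpha)\simeq\cofib(i_L\alpha)\simeq0$ using $i_Li\simeq\id$ and that $i_L$ preserves cofibers, whence $\cofib(\alpha)\simeq0$ and $\alpha$ is an equivalence. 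For the identification of $\scrC_1$: in any recollement, $\Map_\scrC(iC,jY)\simeq\Map_{\scrC_1}(j_LiC,Y)\simeq\Map_{\scrC_1}(0,Y)\simeq\pt$ shows $\im(j)\subseteq\scrC_0^\perp$, while $Y\in\scrC_0^\perp$ forces $i_RY\simeq0$, so the cofiber sequence $ii_RY\to Y\to jj_LY$ of Remark~\ref{rem-full-recol} gives $Y\xrightarrow{\sim}jj_LY\in\im(j)$. The only real obstacle here is bookkeeping: keeping straight the two idempotent (co)monads $ii_L$ and $ii_R$ and repeatedly using that in the stable setting the cofiber sequences in sight stay cofiber sequences after applying mapping-spectrum functors. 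There is no hard construction or estimate — the statement is a formal consequence of the interplay between the orthogonality conditions and the reflection/coreflection, which is why it holds verbatim in the generality of \cite[A.8]{HA}.
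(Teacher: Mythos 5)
Your reconstruction is correct and follows the standard argument from \cite[Proposition A.8.20]{HA}, which the paper simply cites without reproving. The one cosmetic wrinkle is that you use $\eta$ both for the unit of $(i_L,i)$ and for the unit of $(i,i_R)$ in quick succession, but the intended meaning is clear from context and the argument itself — constructing $j_L X=\cofib(ii_RX\to X)$, checking it lands in $\scrC_0^\perp$ and is left adjoint to $j$, and verifying recollement axioms (2) and (3) together with the identification $\scrC_1\simeq\scrC_0^\perp$ — is exactly right.
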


Recollements are strictly related to semiorthogonal decompositions.

\begin{defi}\label{defi-semiorthogonal}
  Let $\scrC$ be a stable \cat. A \emph{semiorthogonal decomposition} of
  $\scrC$ is the datum of two full subcategories $\scrC_0$ and $\scrC_1$ of
  $\scrC$ such that
  \begin{enumerate}
    \item $\scrC_1 \simeq \scrC_0^\perp$;
    \item Every object $C\in\scrC$ sits in a cofiber sequence
      $$C_0\to C\to C_1$$
      where $C_0\in\scrC_0$ and $C_1\in\scrC_1$.
  \end{enumerate}
\end{defi}

\begin{rem}\label{rem-semiorthogonal}
  It follows from Remark \ref{rem-full-recol} that every recollement
  determines a semiorthogonal decomposition.
\end{rem}

We learned the following argument from \cite[Proof of Lemma 3]{BGRecoll}, and
we present it here almost verbatim for the reader's convenience.

\begin{prop}\label{prop-fracture-square}
  The commutative square
  $$
  \begin{tikzcd}
    \id_\scrC \ar[r, "\eta^i"] \ar[d, "\eta^j"'] & i i_L \ar[d, "i i_L \eta^j"] \\
    j j_L \ar[r, "\eta^j j j_L"] & i i_L j j_L
  \end{tikzcd}
  $$
  is Cartesian in $\Fun(\scrC,\scrC)$.
\end{prop}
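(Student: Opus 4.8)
The plan is to realise the square as the right-hand part of a morphism of cofiber sequences obtained from the cofiber sequence $i i_R \to \id_\scrC \to j j_L$ of Remark~\ref{rem-full-recol} by whiskering with the unit $\eta^i$, and then to reduce the Cartesianness to the fact that $\eta^i$ is invertible on the essential image of $i$. As a preliminary, recall that $\Fun(\scrC,\scrC)$ is stable with finite limits and colimits computed objectwise, so that a commutative square in it is Cartesian exactly when each of its evaluations at an object $X\in\scrC$ is Cartesian in $\scrC$, hence --- since $\scrC$ is stable --- exactly when the map induced on the fibers of its two horizontal legs is an equivalence. Recall also that $i i_L$ preserves finite colimits: $i$ admits adjoints on both sides (namely $i_L$ and, by Remark~\ref{rem-full-recol}, $i_R$), so $i$ preserves finite colimits, and $i_L$, being a left adjoint, preserves all colimits; consequently $i i_L$, as well as postcomposition with it, carries cofiber sequences to cofiber sequences.

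Next I would form the natural transformation of endofunctors of $\Fun(\scrC,\scrC)$ sending $F$ to $\bigl(\eta^i F\colon F\Rightarrow i i_L F\bigr)$ --- that is, the natural transformation from $\id_{\Fun(\scrC,\scrC)}$ to $i i_L\circ(-)$ --- and evaluate it on the two arrows of the cofiber sequence $i i_R\to\id_\scrC\xrightarrow{\eta^j}j j_L$. Splicing the two resulting naturality squares yields a morphism of cofiber sequences
\[
\begin{tikzcd}[column sep=large]
i i_R \ar[r] \ar[d, "\eta^i(i i_R)"'] & \id_\scrC \ar[r, "\eta^j"] \ar[d, "\eta^i"] & j j_L \ar[d, "\eta^i(j j_L)"] \\
i i_L i i_R \ar[r] & i i_L \ar[r, "i i_L \eta^j"'] & i i_L j j_L
\end{tikzcd}
\]
whose bottom row is again a cofiber sequence, being $i i_L$ applied to the top one. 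Up to transposing rows and columns --- which does not affect whether a square is Cartesian --- its right-hand square is precisely the square in the statement.

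Since both rows are cofiber sequences, the right-hand square is Cartesian if and only if the map it induces on the fibers of its horizontal legs is an equivalence; and by construction this induced map is the leftmost vertical arrow $\eta^i(i i_R)\colon i i_R\Rightarrow i i_L i i_R$. To finish I would check that $\eta^i(i i_R)$ is an equivalence, which I can do objectwise: at $X\in\scrC$ it is the unit map $\eta^i_{\,i(i_R X)}\colon i(i_R X)\to i i_L\,i(i_R X)$, and the unit of $i_L\dashv i$ is an equivalence on every object of the form $i(-)$ since $i$ is fully faithful (by the triangle identity, restricted along $i$ it is inverse to $i$ applied to the invertible counit). Therefore the square is Cartesian.

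The step I expect to cost the most care is the middle one: setting up the morphism of cofiber sequences so that its right-hand square really is the square in the statement, and correctly identifying the induced comparison map on horizontal fibers with $\eta^i(i i_R)$ rather than with something merely equivalent to it. Everything else is formal, given the cofiber sequence of Remark~\ref{rem-full-recol} and the colimit-preservation of $i i_L$; and if one prefers to avoid the functorial bookkeeping, the same argument runs verbatim objectwise, with the cofiber sequence $i i_R X\to X\to j j_L X$ used for each $X$ in place of the functor-level one.
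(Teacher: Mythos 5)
Your proof is correct and follows essentially the same route as the paper's: both reduce Cartesianness to the comparison map on the fibers of the two legs parallel to $\eta^j$, identify that map with $\eta^i$ whiskered with $i i_R$ (using the cofiber sequence $i i_R \to \id_\scrC \to j j_L$ of Remark~\ref{rem-full-recol} and exactness of $i i_L$), and conclude from fully faithfulness of $i$ that it is an equivalence. You simply unpack the paper's one-line argument in more detail --- and, as a bonus, you correctly read the paper's shorthand ``$i i_L\simeq\id$'' as the statement that $\eta^i$ is invertible on the essential image of $i$ (equivalently, $i_L i\simeq\id$).
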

\begin{proof}
  It follows from Remark \ref{rem-full-recol} that the fiber of the
  vertical maps is given by $\eta^i i i_R \colon i i_R \to i i_L i i_R$,
  which, as $i i_L \simeq \id$, is an equivalence.
\end{proof}

\begin{rem}
  In particular, every recollement determines a ``fracture square''
  $$
  \begin{tikzcd}
    C \ar[r] \ar[d] & i_L C \ar[d] \\
    j_L C \ar[r] & i_L j_L C
  \end{tikzcd}
  $$
  for any object $C\in\scrC$.
\end{rem}

We are now ready to prove the main results of this section.

\begin{prop}\label{prop-coch-pst-recoll}
  Let $\scrC$ be a stable \cat that is both complete and cocomplete.
  The \cat $\Fun(\catCh\op,\scrC)$ is the recollement of the essential images
  of $\const\colon\scrC\to\Fun(\catCh\op,\scrC)$ and $\coCh(\scrC)$:
  $$
  \begin{tikzcd}[column sep=huge]
    \scrC \ar[r, hook, "\const"' description, pos=.7] &
    \Fun(\catCh\op,\scrC)
      \ar[l, shift right=0.6ex, bend right, "\colim" description]
      \ar[from=r, hook, shift right=0.6ex, bend right, "i" description, pos=.4]
      \ar[r, "\ptize" description]
      \ar[from=r, hook', shift left=0.6ex, bend left, "i" description, pos=.4]
      \ar[l, shift left=0.6ex, bend left, "\lim" description]
      \ar[l,phantom, shift left=2ex,
        "\text{\scalebox{1}{\rotatebox{-90}{$\dashv$}}}", pos=.3]
      \ar[l,phantom, shift right=2ex,
        "\text{\scalebox{1}{\rotatebox{-90}{$\dashv$}}}", pos=.3] &
    \coCh(\scrC)
      \ar[l,phantom, shift left=2ex,
        "\text{\scalebox{1}{\rotatebox{-90}{$\dashv$}}}"]
      \ar[l,phantom, shift right=2ex,
        "\text{\scalebox{1}{\rotatebox{-90}{$\dashv$}}}"]
  \end{tikzcd}
  $$
  Moreover, the inclusion $i\colon\coCh(\scrC)\to\Fun(\catCh\op,\scrC)$ is both
  left and right adjoint to a functor $\ptize$, given by
  $$\ptize F(n)\simeq F(n)/F(\pt)$$
  on objects.
  Finally, the \cat $\coCh(\scrC)$ is stable, complete and cocomplete;
  if $\scrC$ is presentable, $\coCh(\scrC)$ is presentable as well.
\end{prop}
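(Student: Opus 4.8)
The plan is to deduce the recollement from Proposition~\ref{prop-equi-recollement} applied to $\const$, then to read off the remaining structure from Remark~\ref{rem-full-recol}, using crucially that $\pt$ is a \emph{zero} object of $\catCh$ and hence of $\catCh\op$. Since $\catCh\op$ has a zero object it is weakly contractible, so $\const\colon\scrC\to\Fun(\catCh\op,\scrC)$ is fully faithful; and since $\pt$ is both initial and terminal in $\catCh\op$, both the left adjoint $\colim$ and the right adjoint $\lim$ of $\const$ exist and are naturally equivalent to evaluation at $\pt$, so that $\colim\simeq\ev_\pt\simeq\lim$ as functors $\Fun(\catCh\op,\scrC)\to\scrC$. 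Proposition~\ref{prop-equi-recollement} then applies and exhibits $\Fun(\catCh\op,\scrC)$ as the recollement of $\im(\const)$ and $\im(\const)^\perp$, with adjunctions $\colim\dashv\const\dashv\lim$. It remains to identify $\im(\const)^\perp$: an object $F$ lies in it iff $\Map(\const_X,F)\simeq\Map_{\scrC}(X,\lim F)\simeq\Map_{\scrC}(X,F(\pt))$ is contractible for all $X\in\scrC$, i.e.\ iff $F(\pt)$ is terminal, i.e.\ (by stability of $\scrC$) iff $F(\pt)\simeq 0$, i.e.\ iff $F\in\coCh(\scrC)$; this proves the first assertion.

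Write $i$ for the inclusion $\coCh(\scrC)\hookrightarrow\Fun(\catCh\op,\scrC)$. By Remark~\ref{rem-full-recol} it has a left adjoint $\ptize$ fitting into a co/fiber sequence $\const\circ\lim\to\id\to i\circ\ptize$; evaluating at $F$ and then at $n\in\catCh\op$ this reads $F(\pt)\to F(n)\to(\ptize F)(n)$, with the first map induced by the unique morphism $\pt\to n$ (as $\pt$ is initial), whence $(\ptize F)(n)\simeq F(n)/F(\pt)$. To see that $i$ is \emph{also} a left adjoint to $\ptize$ I would use that $\pt$ is terminal in $\catCh\op$ as well: the unique morphisms $n\to\pt$ assemble into a natural transformation $F\to\const_{F(\pt)}$ whose composite with $\const_{F(\pt)}\to F$ is the identity, since on $n$ it is $F$ applied to the composite $\pt\to n\to\pt=\id_\pt$ (the zero object $\pt$ has a single endomorphism). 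Thus $\const_{F(\pt)}$ is a natural retract of $F$, so in the stable category $\Fun(\catCh\op,\scrC)$ one gets a natural splitting $F\simeq\const_{F(\pt)}\oplus\ptize F$ and a natural equivalence $\ptize F\simeq\fib\bigl(F\to\const_{F(\pt)}\bigr)$. For $G\in\coCh(\scrC)$ one then computes $\Map(G,\const_{F(\pt)})\simeq\Map_{\scrC}(\colim G,F(\pt))=\Map_{\scrC}(G(\pt),F(\pt))\simeq\ast$, so applying $\Map(G,-)$ to the fiber sequence $\ptize F\to F\to\const_{F(\pt)}$ gives $\Map(G,\ptize F)\simeq\Map(G,F)$ naturally; hence $i\dashv\ptize$, and together with $\ptize\dashv i$ this says that $i$ is both left and right adjoint to $\ptize$.

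For the last sentence of the proposition: being fully faithful with both a left and a right adjoint, $i$ preserves and reflects all limits and colimits; since $\scrC$ is complete and cocomplete so is $\Fun(\catCh\op,\scrC)$, with limits and colimits computed pointwise, and therefore $\coCh(\scrC)$ is closed under all of them and is itself complete and cocomplete. It contains the (pointed) zero functor and is closed under finite limits and colimits inside the stable category $\Fun(\catCh\op,\scrC)$, so a square in $\coCh(\scrC)$ is Cartesian iff it is Cartesian in $\Fun(\catCh\op,\scrC)$ iff it is coCartesian there iff it is coCartesian in $\coCh(\scrC)$; hence $\coCh(\scrC)$ is stable. If in addition $\scrC$ is presentable, then so is $\Fun(\catCh\op,\scrC)$, and $\coCh(\scrC)$ is a reflective subcategory whose localization endofunctor $i\circ\ptize$ preserves filtered colimits (both $\ptize$ and $i$ being left adjoints), so the localization is accessible and $\coCh(\scrC)$ is presentable.

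The only genuinely delicate point is the bookkeeping in the middle paragraph: one must pin down the two natural transformations relating $F$ to $\const_{F(\pt)}$ and check that their composite is the identity, which is exactly where the zero-object structure of $\catCh$ enters; everything else is a formal consequence of Proposition~\ref{prop-equi-recollement} and Remark~\ref{rem-full-recol}.
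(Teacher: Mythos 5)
Your proof is correct and follows the same overall structure as the paper's: apply Proposition~\ref{prop-equi-recollement} to $\const$, identify the orthogonal complement via the formula $\Map(\const_X,F)\simeq\Map_\scrC(X,F(\pt))$, then establish the two adjunctions $\ptize\dashv i$ and $i\dashv\ptize$. The difference is in how you establish the second adjunction $i\dashv\ptize$: the paper expresses $\ptize X\simeq\cofib(\const_{X(\pt)}\to X)$ and uses a shift, rewriting $\Map(C,\cofib(\const_{X(\pt)}\to X))\simeq\Map(C[-1],\fib(\const_{X(\pt)}\to X))$ to reduce to the vanishing of $\Map_\scrC(\colim C[-1],X(\pt))$, eventually producing the missing $\Omega$ to conclude. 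You instead exploit the zero-object structure of $\pt$ more directly: since $\pt$ is both initial and terminal in $\catCh\op$, both $\lim$ and $\colim$ are $\ev_\pt$, the unit of $\colim\dashv\const$ and counit of $\const\dashv\lim$ compose (pointwise, by $F(\pt\to n\to\pt)=F(\id_\pt)$) to the identity, and hence $\const_{F(\pt)}$ is a natural retract of $F$. This gives the natural splitting $F\simeq\const_{F(\pt)}\oplus\ptize F$, which makes $\Map(G,\ptize F)\simeq\Map(G,F)$ for $G\in\coCh(\scrC)$ immediate, without any shift. Your route buys a cleaner argument (no desuspension bookkeeping) and makes explicit the splitting that underlies the paper's fracture square (Proposition~\ref{prop-fracture-square}); the paper's route is more uniformly applicable to recollements where no such retraction exists. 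Both rely on the same essential input, namely $\colim G\simeq G(\pt)\simeq 0$ for $G\in\coCh(\scrC)$, which you derive cleanly from $\colim\simeq\ev_\pt$. The concluding remarks on stability and presentability agree with the paper's.
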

\begin{proof}
  The functor $\const$ admits a left and a right adjoint, given respectively
  by left and right Kan extension along the terminal morphism (i.e. the
  $\colim$ and $\lim$ functors). Hence, by Proposition
  \ref{prop-equi-recollement}, $\Fun(\catCh\op,\scrC)$
  is the recollement of (the essential image of) $\const$ and the full
  subcategory $\const(\scrC)^\perp$ orthogonal to it.
  We claim that
  \begin{equation}\label{eq-orth-pstch}
    \const(\scrC)^\perp\simeq\coCh(\scrC).
  \end{equation}
  To see this, recall that $\const(\scrC)^\perp$ is spanned by those objects
  $F\in\Fun(\catCh\op,\scrC)$ such that for all $X\in\scrC$ the mapping space
  $\Map_{\Fun(\catCh\op,\scrC)}(\const_X,F)$ is contractible. But, as
  $$\Map_{\Fun(\catCh\op,\scrC)}(\const_X,F)\simeq\Map_{\scrC}(X,F(\pt)),$$
  we see that (\ref{eq-orth-pstch}) holds.
  We denote by $\ptize$ the left adjoint to the fully faithful inclusion
  $$i\colon\coCh(\scrC)\to\Fun(\catCh\op,\scrC).$$
  Now, $\coCh(\scrC)$
  is stable by \cite[A.8.17]{HA} (or just because of the fact that (co)limits
  in functor categories are computed pointwise)
  and, if $\scrC$ is presentable (as $\Fun(\catCh\op,\scrC)$ is presentable
  by \cite[5.5.3.6]{HTT}) $\coCh(\scrC)$ is presentable by
  \cite[1.4.4.9]{HA}. By \cite[A.8.5]{HA}, we see that
  \begin{equation}\label{eq-pt-cof-cns}
    \ptize F\simeq \cofib\left(\const_{F(\pt)}\to F\right)
  \end{equation}
  and in particular $\ptize F(n)\simeq F(n)/F(\pt)$.
  The existence of a left adjoint for $\ptize$
  follows from Remark \ref{rem-full-recol}.
  To give a description, let's first note that, by (\ref{eq-pt-cof-cns})
  and the fully faithfulness of the inclusion $i$
  we can write
  \begin{displaymath}
  \begin{split}
    \Map_{\coCh(\scrC)}(C,\ptize X)
    &\simeq \Map_{\Fun(\catCh\op,\scrC)}\big(C,\cofib(\const_{F(\pt)}\to X)\big)\\
    &\simeq \Map_{\Fun(\catCh\op,\scrC)}\big(C[-1],\fib(\const_{F(\pt)}\to X)\big)
  \end{split}
  \end{displaymath}
  and, as corepresentable functors commute with limits, the latter is
  equivalent to
  $$\fib\left(\Map_{\Fun(\catCh\op,\scrC)}\left(C[-1],\const_{F(\pt)}\right)
    \to \Map_{\Fun(\catCh\op,\scrC)}\left(C[-1],X\right)\right)$$
  which in turn, as $\const$ is right adjoint to $\colim$, can be computed as
  \begin{equation}\label{eq-in-lemma-ptize}
  \fib\left(\Map_{\scrC}\big(\colim C[-1],F(\pt)\big)
      \to \Map_{\Fun(\catCh\op,\scrC)}\left(C[-1],X\right)\right).
  \end{equation}
  But, as the colimit of a coherent cochain complex is always $0$, we have
  that
  $$\Map_{\scrC}\big(\colim C[-1],F(\pt)\big)\simeq\pt$$
  and thus (\ref{eq-in-lemma-ptize}) is equivalent to
  \begin{displaymath}
  \begin{split}
    \fib\left(\pt\to \Map_{\Fun(\catCh\op,\scrC)}\left(C[-1],X\right)\right)
      \simeq & \ \Omega \Map_{\Fun(\catCh\op,\scrC)}\left(C[-1],X\right)\\
      \simeq & \ \Map_{\Fun(\catCh\op,\scrC)}\left(C,X\right).
  \end{split}
  \end{displaymath}
  In particular, as $\ptize$ is both left and right adjoint to the inclusion,
  we have that $\coCh(\scrC)$ is closed under all limits and colimits
  in $\Fun(\catCh\op,\scrC)$, which is by hypothesis complete and cocomplete.
\end{proof}

\begin{prop}\label{prop-cfild-bousfield}
  Let $\scrC$ be a stable \cat that is both complete and cocomplete.
  The \cat $\Fild\scrC$ is the recollement of the essential images
  of $\const\colon\scrC\to\Fild\scrC$ and $i\colon\cFild\scrC$:
  $$
  \begin{tikzcd}[column sep=huge]
    \scrC \ar[r, hook, "\const"' description] &
    \Fild\scrC \ar[l, shift right=0.6ex, bend right, "\colim"' description, pos=.45]
      \ar[from=r, hook, shift right=0.6ex, bend right, "L_!" description]
      \ar[r, "L" description]
      \ar[from=r, hook', shift left=0.6ex, bend left, "i" description]
      \ar[l, shift left=0.6ex, bend left, "\lim" description, pos=.55]
      \ar[l,phantom, shift left=2ex,
        "\text{\scalebox{1}{\rotatebox{-90}{$\dashv$}}}"]
      \ar[l,phantom, shift right=2ex,
        "\text{\scalebox{1}{\rotatebox{-90}{$\dashv$}}}"] &
    \cFild\scrC.
      \ar[l,phantom, shift left=2ex,
        "\text{\scalebox{1}{\rotatebox{-90}{$\dashv$}}}"]
      \ar[l,phantom, shift right=2ex,
        "\text{\scalebox{1}{\rotatebox{-90}{$\dashv$}}}"]
  \end{tikzcd}
  $$
  The left adjoint to the inclusion $i$
  is given by Bousfield localization at the class of graded equivalences,
  and computed as
  $$LF^n\simeq F^n/F^{+\infty}.$$
  Moreover, the \cat $\cFild\scrC$ is stable, complete and cocomplete; if
  $\scrC$ is presentable, $\cFild\scrC$ is presentable as well.
\end{prop}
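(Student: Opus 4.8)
The plan is to follow the proof of Proposition \ref{prop-coch-pst-recoll} almost verbatim, replacing $\catCh\op$ by $\bbZ\op$ and the object $F(\pt)$ by $F^{+\infty}=\lim_i F^i$. First I would observe that $\const\colon\scrC\to\Fild\scrC$ admits both a left and a right adjoint, $\colim\dashv\const\dashv\lim$, given by left and right Kan extension along the terminal functor $\bbZ\op\to\Delta^0$; hence, by Proposition \ref{prop-equi-recollement}, $\Fild\scrC$ is the recollement of the essential image of $\const$ and the orthogonal full subcategory $\const(\scrC)^\perp$. Using the adjunction $\const\dashv\lim$ one gets $\Map_{\Fild\scrC}(\const_X,F)\simeq\Map_{\scrC}(X,F^{+\infty})$, so $F$ lies in $\const(\scrC)^\perp$ exactly when $\Map_{\scrC}(X,F^{+\infty})\simeq\pt$ for every $X$, that is (taking $X=F^{+\infty}$) exactly when $F^{+\infty}$ is a zero object; thus $\const(\scrC)^\perp=\cFild\scrC$. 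The formula for the left adjoint $L$ of the inclusion $i$ then follows, as in the cited proposition, from the cofiber sequence $\const_{F^{+\infty}}\to F\to iLF$ of Remark \ref{rem-full-recol}: evaluating at $n$ gives $LF^n\simeq\cofib(F^{+\infty}\to F^n)=F^n/F^{+\infty}$.

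Next I would identify $L$ with the Bousfield localization at the graded equivalences, by checking that a morphism $\alpha$ of $\Fild\scrC$ is inverted by $L$ if and only if it is a graded equivalence. The key point is a natural equivalence $\gr^n\circ iL\simeq\gr^n$: since $LF^k=\cofib(F^{+\infty}\to F^k)$ and cofibers commute with cofibers, $\gr^n(LF)=\cofib(LF^{n+1}\to LF^n)\simeq\cofib\big(\cofib(\id_{F^{+\infty}})\to\cofib(F^{n+1}\to F^n)\big)\simeq\cofib(0\to\gr^nF)\simeq\gr^nF$, naturally in $F$. Granting this: if $\alpha$ is a graded equivalence then so is $L\alpha$; but $L\alpha$ is a morphism between complete objects, and a complete filtered object with vanishing associated graded must be zero (its transition maps are then equivalences, and as $\bbZ\op$ is connected its limit, which vanishes by completeness, is equivalent to each of its terms), so $\fib(L\alpha)$ is complete with vanishing graded, hence zero, hence $L\alpha$ is an equivalence. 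Conversely, if $L\alpha$ is an equivalence then $\gr\alpha$ is an equivalence (by the naturality above applied to $\alpha$), so $\alpha$ is a graded equivalence. Thus $L$ inverts exactly the graded equivalences, which is the assertion that $L$ is their Bousfield localization.

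Finally, for the structural statements I would argue as in Proposition \ref{prop-coch-pst-recoll}. Since $\lim\colon\Fild\scrC\to\scrC$ is exact (a right adjoint between stable \cats) and preserves all limits, the full subcategory $\cFild\scrC=\{F:F^{+\infty}\simeq 0\}$ is a stable subcategory of $\Fild\scrC$ closed under all limits; being moreover reflective via $L\dashv i$, it is cocomplete, with colimits computed by applying $iL$ to colimits formed in $\Fild\scrC$. Hence $\cFild\scrC$ is stable, complete and cocomplete. If $\scrC$ is presentable, then $\Fild\scrC=\Fun(\bbZ\op,\scrC)$ is presentable by \cite[5.5.3.6]{HTT}, and since the localization functor $iLF=\cofib(\const_{F^{+\infty}}\to F)$ commutes with $\omega_1$-filtered colimits (a $\bbZ\op$-indexed limit commutes with $\omega_1$-filtered colimits, while $\const$ and $\cofib$ commute with all colimits), $\cFild\scrC$ is an accessible reflective localization of $\Fild\scrC$, and therefore presentable.

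The genuinely new input beyond transcribing Proposition \ref{prop-coch-pst-recoll} is the identification of the class of $L$-equivalences with the graded equivalences; I expect the main (though short) obstacle to be the naturality of $\gr^n\circ iL\simeq\gr^n$, together with the vanishing of complete filtered objects having trivial associated graded.
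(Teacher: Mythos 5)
Your proof is correct and follows essentially the paper's approach. The paper's own argument declares the recollement ``completely analogous to the proof of Proposition~\ref{prop-coch-pst-recoll}'' and details only the identification of the $L$-local maps: it notes that $L\alpha$ is an equivalence iff $L(\cofib\alpha)\simeq 0$, that by \cite[A.8.5]{HA} this holds iff $\cofib\alpha$ is essentially constant, and then verifies by a short three-by-three diagram chase that this is equivalent to $\alpha$ being a graded equivalence. Your version --- compute $\gr^n\circ L\simeq\gr^n$, observe that a complete object with trivial associated graded vanishes, and apply this to $\fib(L\alpha)$ --- is an equivalent repackaging of the same idea. One spot where you are actually more careful than a literal transcription of the earlier proposition would be: there the inclusion is simultaneously a left and a right adjoint, so colimits are inherited from the ambient functor category, whereas $i\colon\cFild\scrC\hookrightarrow\Fild\scrC$ has only a left adjoint; you correctly derive cocompleteness from reflectivity rather than from closure under colimits. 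Your accessibility argument for presentability is also sound, though one could simply cite \cite[1.4.4.9]{HA} as the paper does for $\coCh\scrC$.
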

\begin{proof}
  The proof is completely analogous to the proof of
  Proposition \ref{prop-coch-pst-recoll}. The only new element is the
  identification of the local maps for the Bousfield localization determined
  by $L$. In order to prove it,
  let us notice that $L\alpha$ is an equivalence if and only
  if $\cofib L\alpha \simeq L (\cofib \alpha) \simeq 0$, which in turn,
  by \cite[A.8.5]{HA}, is the case if and only if $\cofib \alpha$ is
  essentially constant;
  by inspection of the following diagram
  $$
  \begin{tikzcd}
    F^{n+1} \ar[r] \ar[d] & F^n \ar[r] \ar[d] & \gr^n F \ar[d] \\
    G^{n+1} \ar[r] \ar[d] & G^n \ar[r] \ar[d] & \gr^n G \\
    (\cofib \alpha)^{n+1} \ar[r] & (\cofib \alpha)^n &
  \end{tikzcd}
  $$
  where all the rows and columns are co/fiber sequences,
  we see that this is the case if and only if $\alpha$ is a graded equivalence.
\end{proof}

We conclude this section with the following fact about fully faithful
adjoint functors, which we will use later.

\begin{prop}\label{prop-eqv-induces-unit-eqv}
  Let $\scrC$ and $\scrD$ be \cats, and let
  $$
  \begin{tikzcd}[column sep=huge]
    \scrC \ar[r, shift left=1.1ex, "F"] &
    \scrD \ar[l, shift left=1.1ex,"G"]
      \ar[l,phantom,"\text{\rotatebox{-90}{$\dashv$}}"]
  \end{tikzcd}
  $$
  be an adjunction. If there exists a natural isomorphism $\alpha\colon
  \id_{\scrC}\stackrel{\sim}{\Longrightarrow}GF$ then the unit of the
  adjunction is an equivalence (equivalently, $F$ is fully faithful).
\end{prop}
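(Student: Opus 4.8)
The plan is the following. The parenthetical equivalence --- ``the unit of $F\dashv G$ is an equivalence if and only if $F$ is fully faithful'' --- is standard (see e.g.\ \cite[5.2.7.4]{HTT}), so it suffices to prove that the unit $\eta\colon\id_\scrC\To GF$ is an equivalence. The crucial subtlety is that $\alpha$ is \emph{not} assumed to be the unit of the adjunction; so one cannot conclude directly, and one must leverage the adjunction itself in order to compare $\eta$ with $\alpha$.

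First I would set $T\coloneqq GF$ and $\mu\coloneqq G\epsilon F\colon T^2\To T$, where $\epsilon$ denotes the counit, and record the two ``monad unit identities''
$$\mu\circ T\eta\simeq\id_T\simeq\mu\circ\eta T,$$
obtained by whiskering the triangle identities $\epsilon F\circ F\eta\simeq\id_F$ and $G\epsilon\circ\eta G\simeq\id_G$ by $G$ on the left, respectively by $F$ on the right. Using the natural equivalence $\alpha\colon\id_\scrC\xto{\sim}T$, I would then consider the two natural transformations $\id_\scrC\To\id_\scrC$
$$u\coloneqq\alpha^{-1}\circ\eta,\qquad m\coloneqq\alpha^{-1}\circ\mu\circ(T\alpha\circ\alpha)$$
(here $T\alpha\circ\alpha\colon\id_\scrC\To T^2$ is the evident equivalence), and verify --- pointwise, using naturality of $\eta$ together with one of the monad unit identities above --- that $m\circ u\simeq\id_{\id_\scrC}$. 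Equivalently, and perhaps more conceptually: $u$ and $m$ are the unit and the multiplication of the monad structure on $\id_\scrC$ obtained by transporting the one on $T$ along the equivalence $\alpha$, and the identity $m\circ u\simeq\id$ is then simply the unit axiom of that monad.

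It remains to promote this to the statement that $u$ is an equivalence. By the previous step, $u$ admits a left inverse in the monoid $M\coloneqq\End_{\ho\Fun(\scrC,\scrC)}(\id_\scrC)$ of endomorphisms of $\id_\scrC$ in the homotopy category (under composition). The key observation is that $M$ is \emph{commutative}: composition of functors makes $\ho\Fun(\scrC,\scrC)$ into a monoidal category with unit object $\id_\scrC$, and the endomorphism monoid of the unit object of any monoidal category is commutative by the Eckmann--Hilton argument. Hence $m\circ u\simeq\id$ forces $u\circ m\simeq\id$, so $[u]$ is invertible in $M$; that is, $u$ is an equivalence in $\Fun(\scrC,\scrC)$, and therefore $\eta\simeq\alpha\circ u$ is an equivalence as well.

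The step I expect to be the main obstacle is precisely the last one: a priori the transported monad only tells us that $u$ has a \emph{one-sided} inverse, and the actual content of the proposition is that for natural endo-transformations of the identity functor a one-sided inverse is automatically two-sided. Everything preceding it is a formal manipulation of the triangle identities (equivalently, the standard facts that an associative algebra structure --- here, a monad --- transports along an equivalence of underlying objects, and that its unit obeys the unit axiom).
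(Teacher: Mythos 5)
Your proof is correct and follows essentially the same route as the paper's: transport the monad structure from $GF$ to $\id_{\scrC}$ along $\alpha$, use the unit identity to get a one-sided inverse, and then apply the Eckmann--Hilton argument (which the paper realizes implicitly as the naturality square for $\mu$ with respect to $\widetilde\eta$) to promote it to a two-sided inverse. The one stylistic simplification in your version is worth noting: by working in $\ho\Fun(\scrC,\scrC)$ you only need the $1$-categorical monad structure coming directly from the triangle identities, whereas the paper's appeal to the homotopy-coherent monad of [RV2] is, as they themselves observe by restricting to the $1$-skeleton, more machinery than the argument actually uses.
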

\begin{proof}
  By \cite{RV2}, it is possible to associate to any adjunction a homotopy
  coherent monad on $GF$, whose 1-skeletal part looks as follows
  \begin{displaymath}
  \begin{tikzcd}
  \id_{\scrC} \ar[r,"\eta" description] &[10pt]
  GF \ar[r, shift left=1.5ex, near end, "\eta GF" description]
    \ar[r, shift left=-1.5ex, near end, "GF \eta" description] &[40pt]
  GFGF \ar[l, near end, "G\eps F" description]
    \ar[r, near end, "GF \eta GF" description]
    \ar[r, shift left=3ex, near end, "\eta GFGF" description]
    \ar[r, shift left=-3ex, near end, "GFGF \eta" description] &[80pt]
  GFGFGF \ar[l, shift left=1.5ex, near end, "GFG\eps F" description]
    \ar[l, shift left=-1.5ex, near end, "G\eps FGF" description]
  \end{tikzcd}\cdots
  \end{displaymath}
  as a diagram in $\Fun(\scrC,\scrC)$.
  As, by hypothesis, $\id_{\scrC}\simeq GF$ via some $\alpha$, the homotopy
  coherent monad structure on $GF$ transfers to a homotopy coherent monad
  on $\id_{\scrC}$, whose 1-skeletal part looks as follows
  \begin{displaymath}
  \begin{tikzcd}
  \id_{\scrC} \ar[r,"\wt\eta" description] &[10pt]
  \id_{\scrC} \ar[r, shift left=1.5ex, near end, "\wt\eta" description]
    \ar[r, shift left=-1.5ex, near end, "\wt\eta" description] &[20pt]
  \id_{\scrC} \ar[l, near end, "\mu" description]
    \ar[r, near end, "\wt\eta" description]
    \ar[r, shift left=3ex, near end, "\wt\eta" description]
    \ar[r, shift left=-3ex, near end, "\wt\eta" description] &[20pt]
  \id_{\scrC} \ar[l, shift left=1.5ex, near end, "\mu" description]
    \ar[l, shift left=-1.5ex, near end, "\mu" description]
  \end{tikzcd}\cdots
  \end{displaymath}
  Now, by unitality, $\mu\wt\eta \simeq \id_{\id_{\scrC}}$,
  and as the following diagram commutes by naturality of $\mu$ (or of
  $\wt\eta$),
  $$
  \begin{tikzcd}
    \id_{\scrC} \ar[d, "\mu"'] \ar[r, "\wt\eta"] \ar[dr, "\id"]&
      \id_{\scrC} \ar[d, "\mu"] \\
    \id_{\scrC} \ar[r, "\wt\eta"'] & \id_{\scrC}
  \end{tikzcd}
  $$
  we see that $\wt\eta\mu\simeq\id_{\id_{\scrC}}$. Thus, $\wt\eta$ is an
  equivalence, and so is $\eta$.
\end{proof}

\section{Filtered spectra and cochain complexes of spectra}
\label{section-filandch}

Our goal in this section is to construct an equivalence between the \cats
$\rlywdhat{\Fild} \Sp$ and $\coCh(\Sp)$. In order to do so, we will first
construct a pair of adjoint functors between $\Fild\Sp$ and $\coCh(\Sp)$ using
the machinery of Appendix \ref{appendix-first}, and then prove that the right
adjoint is a fully faithful functor having $\cFild\Sp$ as its essential image.
Along the way, we compute explicitly the values of the pair of adjoints
constructed abstractly.

In order to construct the left adjoint, let us start with the following
observation.

\begin{lemma}\label{lemma-bbz-coch}
  Restriction along $\susinftyp\Yo\colon\bbZ\to\Pst\bbZ$ induces an equivalence
  of \cats
  $$\Fun^{L}(\Fild(\Sp),\coCh(\Sp))\simeq\Fun(\bbZ,\coCh(\Sp))$$
  whose inverse is given by
  associating to every functor $F\colon\bbZ\to\coCh(\Sp)$ its stable
  realization functor (see Definition \ref{defi-stab-nerve-real}).
\end{lemma}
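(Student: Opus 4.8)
The plan is to deduce this from the universal property of spectra-valued presheaves established in Appendix~\ref{appendix-first}. First I would unwind the notational conventions: by definition $\Fild(\Sp) = \Fun(\bbZ\op,\Sp) = \Pst{\bbZ}$ is the \cat of stable presheaves on the nerve of the poset $\bbZ$, and $\susinftyp\Yo\colon\bbZ\to\Pst{\bbZ}$ is the stable Yoneda embedding. Since $\bbZ$ is small, the stable nerve--realization paradigm says that $\Pst{\bbZ}$ is the free presentable stable \cat generated by $\bbZ$: for any presentable stable \cat $\scrD$, restriction along $\susinftyp\Yo$ is an equivalence $\Fun^{L}(\Pst{\bbZ},\scrD)\xto{\sim}\Fun(\bbZ,\scrD)$, whose inverse sends a functor $G\colon\bbZ\to\scrD$ to its stable realization functor (Definition~\ref{defi-stab-nerve-real}), i.e. to the colimit-preserving left Kan extension $\Lan_{\susinftyp\Yo}G$.

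Next I would check that the target $\scrD=\coCh(\Sp)$ satisfies the hypotheses. By Proposition~\ref{prop-coch-pst-recoll}, since $\Sp$ is presentable the \cat $\coCh(\Sp)$ is presentable, stable, complete and cocomplete; in particular it admits all small colimits, so for every $F\colon\bbZ\to\coCh(\Sp)$ the left Kan extension $\Lan_{\susinftyp\Yo}F$ exists, is computed pointwise, and preserves colimits, hence lands in $\Fun^{L}(\Fild(\Sp),\coCh(\Sp))$. Specializing the universal property recalled above to $\scrD=\coCh(\Sp)$ then yields the asserted equivalence $\Fun^{L}(\Fild(\Sp),\coCh(\Sp))\simeq\Fun(\bbZ,\coCh(\Sp))$, with the inverse given on objects by $F\mapsto\Lan_{\susinftyp\Yo}F$, which is precisely the stable realization functor of $F$.

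The hard part is really just careful bookkeeping with the appendix and the conventions. I would need to make sure the version of the nerve--realization theorem proved in Appendix~\ref{appendix-first} is stated for targets that are merely presentable and stable, so that $\coCh(\Sp)$ qualifies without any finiteness or compact-generation hypothesis being silently required; and I would need to track variance carefully, since passing through $\Pst{\bbZ}$ inverts the direction of the $\bbZ$-indexed arrows, so one must confirm that the restriction is taken out of $\Fun^{L}$ on $\Fild(\Sp)=\Fun(\bbZ\op,\Sp)$ along $\susinftyp\Yo$ pointing the way stated, rather than along the opposite embedding. Once the appendix's statement is lined up with the normalization used here, the lemma is a formal consequence of the universal property and of Proposition~\ref{prop-coch-pst-recoll}.
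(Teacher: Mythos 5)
Your argument is correct and takes essentially the same route as the paper: identify $\Fild(\Sp)$ with $\Pst{\bbZ}$, note that $\coCh(\Sp)$ is presentable and stable (Proposition \ref{prop-coch-pst-recoll}), and invoke the stable nerve-realization equivalence Lemma \ref{st-nerve-real} with target $\scrD=\coCh(\Sp)$. The variance worry you raise is moot: by definition $\Fild\Sp=\Fun(\bbZ\op,\Sp)=\Pst\bbZ$, so there is no further inversion to account for.

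There is, however, one small but genuine slip in how you describe the inverse. The stable realization functor $\lvert-\rvert_F^{\mathrm{st}}$ is \emph{not} the ordinary left Kan extension $\Lan_{\susinftyp\Yo}F$. Since $\susinftyp\dashv\ominfty$, one has $\Lan_{\susinftyp\Yo}F\simeq(\Lan_{\Yo}F)\circ\ominfty$, which fails to preserve colimits (already in the simplest case $\scrC=\Delta^0$, $F=\bbS$, this gives $\susinftyp_+\ominfty$ rather than $\mathrm{id}_{\Sp}$). The stable realization is instead characterized as the unique \emph{colimit-preserving} extension of $F$ along $\susinftyp\Yo$, obtained in the paper by composing the left Kan extension along $\Yo$ (which is colimit-preserving, by free cocompletion) with the equivalence $\Fun^L(\Pre\scrC,\scrD)\simeq\Fun^L(\Pst\scrC,\scrD)$ of \cite[1.4.4.5]{HA}. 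Put differently, the inverse is the left adjoint to restriction computed inside $\Fun^L$, not the absolute Kan extension in $\Fun$. This doesn't affect the validity of your proof — you never actually use the pointwise Kan extension formula — but the parenthetical ``i.e.\ $\Lan_{\susinftyp\Yo}G$'' should be dropped or replaced by the characterization above.
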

\begin{proof}
  As, by Proposition \ref{prop-coch-pst-recoll}, $\coCh(\Sp)$ is a stable
  \cat, this is just an application of Lemma \ref{st-nerve-real}, toghether
  with the observation that $\Fild\Sp \simeq \Pst{\bbZ}$.
\end{proof}

According to the previous lemma, we will have to provide a functor
$\bbZ\to\coCh(\Sp)$ in order to get the wanted left adjoint. A priori,
constructing such a functor would require to keep track of an infinite
amount of coherences, but it turns out that such coherences are essentially
trivial.
This idea is made precise by the following results.
The content of Proposition \ref{prop-spine-anodyne} is a corollary of
\cite[Tag 00J6]{kerodon}; we report a direct proof here for the reader's
convenience.

\begin{notat}
  For any $m,n\in\bbZ$, let $\mathrm{I}_{[m,n]}$ denote the simplicial set
  $$\DelPar{m,m-1} \cprPar{m-1} \cdots \cpr_{\Delta^{\{n-2\}}}
    \Delta^{\{n-2,n-1\}} \cpr_{\Delta^{\{n-1\}}} \Delta^{\{n-1,n\}}$$
  which can be informally represented as
  $$
  \begin{tikzcd}
    & \cdots \ar[dr] && n. \\
    m \ar[ur] && n-1 \ar[ur] &
  \end{tikzcd}
  $$
  Similarly, we denote by $\mathrm{I}_{[-\infty,n]}$ and
  $\mathrm{I}_{[n,+\infty]}$ respectively, the simplicial sets
  $$\cdots \cpr_{\Delta^{\{n-2\}}} \Delta^{\{n-2,n-1\}} \cpr_{\Delta^{\{n-1\}}}
    \Delta^{\{n-1,n\}}, \quad
  \Delta^{\{n,n+1\}} \cpr_{\Delta^{n+1}} \Delta^{\{n+1,n+2\}}
    \cpr_{\Delta{n+2}} \cdots$$
  and finally by
  $$\spineZ \coloneqq \cdots
    \cpr_{\Delta^{\{n-1\}}} \Delta^{\{n-1,n\}} \cpr_{\Delta^{\{n\}}}
    \Delta^{\{n,n+1\}} \cpr_{\Delta^{\{n+1\}}} \cdots$$
  which can be depicted as
  $$
  \begin{tikzcd}
    & n-1 \ar[dr] && n+1 \ar[dr] \\
    \cdots \ar[ur] && n \ar[ur] && \cdots &
  \end{tikzcd}
  $$
\end{notat}

\begin{prop}\label{prop-spine-anodyne}
  The inclusion $$\spineZ \to \bbZ$$ is inner anodyne.
\end{prop}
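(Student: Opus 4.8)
The plan is to exhibit $\spineZ \to \bbZ$ as a (transfinite) composition of pushouts of inner horn inclusions. Recall that $\bbZ$, viewed as the nerve of the poset category, has an $n$-simplex for every non-decreasing sequence $a_0 \leq a_1 \leq \cdots \leq a_n$ of integers, while $\spineZ$ is the union of the edges $\Delta^{\{k,k+1\}}$ glued along their common vertices. The idea is that $\bbZ$ is obtained from $\spineZ$ by filling in, one at a time, all the ``missing'' simplices: first the nondegenerate $2$-simplices on triples $k < \ell < m$, then $3$-simplices, and so on, and each such filling is a pushout of a single inner horn $\Lambda^n_i \hookrightarrow \Delta^n$.

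First I would reduce to the finite case $\mathrm I_{[m,n]} \to \Delta^{\{m,\dots,n\}}$ by writing $\spineZ$ as the filtered colimit $\varinjlim_{m \to -\infty,\, n \to +\infty} \mathrm I_{[m,n]}$ and $\bbZ$ as the corresponding colimit of the $\Delta^{\{m,\dots,n\}}$; since inner anodyne maps are closed under transfinite composition and filtered colimits, it suffices to treat each $\mathrm I_{[m,n]} \to \Delta^{N}$ (with $N = n-m$). Second, for the map $\mathrm I_{[0,N]} \to \Delta^N$ I would build an increasing filtration $\mathrm I_{[0,N]} = X_1 \subseteq X_2 \subseteq \cdots \subseteq X_N = \Delta^N$, where $X_k$ is the simplicial subset of $\Delta^N$ generated by all faces spanned by vertex-subsets $S \subseteq \{0,\dots,N\}$ that are contained in an interval of length $\leq k$ (equivalently, $\max S - \min S \leq k$), together with the spine. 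The passage from $X_k$ to $X_{k+1}$ adjoins the nondegenerate simplices on vertex-sets $S$ with $\max S - \min S = k+1$; ordering these adjunctions appropriately (say by $\min S$, or by an induction on how many interior vertices are omitted), each step is a pushout
$$
\begin{tikzcd}
\Lambda^{r}_{i} \ar[r] \ar[d] & X \ar[d] \\
\Delta^{r} \ar[r] & X'
\end{tikzcd}
$$
for a suitable $0 < i < r$: the horn $\Lambda^r_i$ is already present because all its faces either lie in the spine or have strictly smaller ``diameter'' (or omit a vertex already handled), while the interior face opposite $i$ — which involves the full interval — is exactly what is being added. The key combinatorial point is to check that when we attach the nondegenerate simplex on a consecutive-vertex set $\{j_0 < j_1 < \cdots < j_r\}$, exactly one face is not yet present, namely the one obtained by deleting an \emph{interior} vertex $j_i$ (so $0 < i < r$), which makes the horn inner; deleting an endpoint $j_0$ or $j_r$ gives a face of smaller diameter, already in $X_k$.

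The main obstacle is precisely this bookkeeping: one must specify the order in which simplices are attached so that at each stage exactly one inner face is missing, and verify there is no clash (no nondegenerate simplex of $\Delta^N$ is attached twice, and every one is eventually attached). I would handle this by a double induction — outer induction on the diameter $d = \max S - \min S$, inner induction on $\min S$ (or on $d - |S| + 1$, the number of omitted interior vertices) — and carefully identify, for each newly attached $\Delta^r$, the unique horn $\Lambda^r_i$ already built. Once the filtration and the ordering are pinned down, the rest is the standard verification that a map built from pushouts of $\Lambda^r_i \hookrightarrow \Delta^r$ with $0 < i < r$, closed under transfinite composition, is inner anodyne. Finally, reassembling the finite pieces via the colimit over $[m,n]$ gives that $\spineZ \to \bbZ$ is inner anodyne, as claimed.
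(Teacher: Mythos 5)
There are two genuine gaps. First, the reduction to the finite case is not valid as stated: weakly saturated classes (in particular, inner anodyne maps) are closed under pushout, transfinite composition, coproducts and retracts, but \emph{not} under filtered colimits of morphisms. Writing $\spineZ \to \bbZ$ as a colimit of the inclusions $\mathrm{I}_{[m,n]} \to \Delta^{\{m,\ldots,n\}}$ does not reduce the problem to those finite inclusions. To express $\spineZ \to \bbZ$ as a transfinite composition of pushouts of inner anodyne maps, what one actually needs is that the ``latching map'' $\mathrm{I}_{[m-1,n+1]} \cup_{\mathrm{I}_{[m,n]}} \Delta^{\{m,\ldots,n\}} \to \Delta^{\{m-1,\ldots,n+1\}}$ is inner anodyne — note the domain is a full simplex with two edges hanging off, \emph{not} a spine. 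This is precisely the map the paper handles in its first step (via the two Joyal lemmas $\Delta^{\{0,1\}} \amalg_{\Delta^{\{1\}}} \Delta^{\{1,\ldots,s\}} \to \Delta^s$ and $\Delta^{s-1} \amalg_{\Delta^{\{s-1\}}} \Delta^{\{s-1,s\}} \to \Delta^s$), and then builds the $P_m$'s as pushouts precisely so that $P_m \to P_{m+1}$ becomes a pushout of this latching map. Your reduction is missing this step; ``it suffices to treat each $\mathrm{I}_{[m,n]}\to\Delta^N$'' is the wrong conclusion.

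Second, for the finite case your proposed orderings on the diameter-filtration steps do not work. Within a fixed diameter $d$ and fixed $a = \min S$, ordering by the number $d - |S| + 1$ of omitted interior vertices fails in both directions: decreasing that count (increasing $|S|$) starts with the edge $\{a, a+d\}$, which has no inner horn at all; increasing that count (decreasing $|S|$) starts with the full interval $\{a,\ldots,a+d\}$, which has $d-1$ missing inner faces, so no single horn applies. Ordering by $\min S$ is orthogonal to this obstruction, since different values of $\min S$ give independent blocks. A correct ordering exists but is finer than either of these — for instance, for fixed $a$, attach $\Delta^{\{a,a+1\}\cup T}$ (for $T \subseteq \{a+2,\ldots,a+d\}$ with $a+d\in T$) in order of increasing $|T|$ via the horn $\Lambda^{|T|+1}_1$; each such attachment adds the missing inner face $\Delta^{\{a\}\cup T}$ as well, and together these exhaust the simplices of diameter $d$. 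This pairing is not captured by the orderings you describe. Of course, filling in a correct ordering here would essentially re-prove the Joyal lemma that the paper cites; the paper's approach avoids this bookkeeping entirely by quoting \cite[Proposition 2.13]{JoyThQcat} and combining the two cited maps into the one latching map it needs.
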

\begin{proof}
  Let us begin observing that, as shown in the proof of
  \cite[Proposition 2.13]{JoyThQcat}, for any $s>0$, both
  $$\Delta^{\{0,1\}} \cpr_{\Delta^{\{1\}}} \Delta^{\{1,\ldots,s\}}\to\Delta^s$$
  and
  $$\Delta^{s-1} \cpr_{\Delta^{\{s-1\}}} \Delta^{\{s-1,s\}} \to \Delta^s$$
  are inner anodyne.
  By virtue of the following commutative diagram of simplicial sets
  $$
  \begin{tikzcd}
    \DelPar{0,1} \cprPar{1} \DelPar{1,\ldots,s} \ar[d] \ar[r] &
      \DelPar{0,1} \cprPar{1} \DelPar{1,\ldots,s} \cprPar{s} \DelPar{s,s+1}
      \ar[d] \ar[dr, "\alpha"] & \\
    \Delta^s \ar[r] & \Delta^s \cprPar{s} \DelPar{s,s+1} \ar[r] &
      \Delta^{s+1}
  \end{tikzcd}
  $$
  where the square is a pushout, since the left vertical map is inner anodyne,
  the right vertical map is inner anodyne, and, as the right horizontal
  map is inner anodyne, the composite $\alpha$ is inner anodyne as well.

  Now, for any $m\in\bbZ$, let $P_m$ denote the following pushout of
  simplicial sets
  $$
  \begin{tikzcd}
    \mathrm{I}_{[-m,m]} \ar[d] \ar[r] & \Delta^{\{-m,\ldots,m\}} \ar[d]\\
    \mathrm{I}_{[-\infty,\infty]} \ar[r] & P_m.
  \end{tikzcd}
  $$
  As, by what we have seen above, the left vertical map in the following
  pushout square of simplicial sets
  $$
  \begin{tikzcd}
    \DelPar{0,1} \cprPar{1} \DelPar{1,\ldots,s} \cprPar{s} \DelPar{s,s+1}
      \ar[d] \ar[r] & P_m \ar[d,"\beta_m"] \\
    \Delta^{\{-m,\ldots,m\}} \ar[r] & P_{m+1}
  \end{tikzcd}
  $$
  is inner anodyne, we have that $\beta_m\colon P_m \to P_{m+1}$ is inner
  anodyne for any $m>0$.

  Finally, as the colimit projection $\gamma$ below
  $$
  \begin{tikzcd}
    \spineZ \simeq P_0 \ar[r,"\beta_0"] \ar[drrr,"\gamma"'] &
      P_1 \ar[r,"\beta_1"] \ar[drr]&
      P_2 \ar[r] \ar[dr] & \cdots \\
    &&& \colim_m P_m \simeq \bbZ
  \end{tikzcd}
  $$
  is a transfinite composition of inner anodyne maps, it is inner anodyne.
\end{proof}

\begin{cor}\label{cor-functors-from-Z}
  Let $\scrC$ be an \cat. To specify a functor $\bbZ\to\scrC$ it is sufficient
  to specify its value on objects and on morphisms of the form $n\to n+1$ for
  all $n\in\bbZ$.
\end{cor}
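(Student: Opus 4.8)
The plan is to read off the corollary directly from Proposition~\ref{prop-spine-anodyne}: the phrase ``it is sufficient to specify such data'' will be made precise as the statement that restriction along $\spineZ\hookrightarrow\bbZ$ is a trivial Kan fibration of functor \cats, and that its target is precisely the \cat of the data in the statement. Concretely, the argument splits into two steps.

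First, I would show that restriction is a trivial fibration. Since $\scrC$ is a quasi-category, the map $\scrC\to\Delta^0$ is an inner fibration, and the exponentiation property of inner anodyne maps (e.g.\ \cite[2.3.2.4]{HTT}) says that for any inner anodyne $A\to B$ the induced map $\Fun(B,\scrC)\to\Fun(A,\scrC)$ is a trivial Kan fibration, the relevant fibre product over $\Fun(A,\Delta^0)$ degenerating because $\Delta^0$ is terminal. Taking $A\to B$ to be the inclusion $\spineZ\hookrightarrow\bbZ$, which is inner anodyne by Proposition~\ref{prop-spine-anodyne}, this gives that
$$\Fun(\bbZ,\scrC)\longrightarrow\Fun(\spineZ,\scrC)$$
is a trivial Kan fibration. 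In particular it is surjective on vertices, so every map $\spineZ\to\scrC$ extends to a functor $\bbZ\to\scrC$, and it is a categorical equivalence, so the extension is unique up to a contractible space of choices.

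Second, I would unwind $\Fun(\spineZ,\scrC)$. By construction $\spineZ$ is the colimit in simplicial sets of the zig-zag
$$\cdots\;\DelPar{n-1}\to\DelPar{n-1,n}\leftarrow\DelPar{n}\to\DelPar{n,n+1}\leftarrow\DelPar{n+1}\;\cdots,$$
that is, one copy of $\Delta^1$ for each $n\in\bbZ$ glued along its endpoints to the neighbouring copies of $\Delta^0$. Since $\Fun(-,\scrC)$ carries colimits of simplicial sets to limits, $\Fun(\spineZ,\scrC)$ is the limit of
$$\cdots\;\scrC\xleftarrow{\ d_1\ }\Fun(\Delta^1,\scrC)\xrightarrow{\ d_0\ }\scrC\xleftarrow{\ d_1\ }\Fun(\Delta^1,\scrC)\xrightarrow{\ d_0\ }\scrC\;\cdots,$$
where the legs take an edge to its source and target. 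A vertex of this limit is a $\bbZ$-indexed family of edges $e_n\in\scrC_1$ together with vertices $c_n\in\scrC_0$ with $d_1e_n=c_n$ and $d_0e_n=c_{n+1}$; as an edge determines its endpoints, this is exactly the datum of objects $c_n\in\scrC$ and morphisms $e_n\colon c_n\to c_{n+1}$ for $n\in\bbZ$. Combined with the first step, this proves the corollary.

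I do not expect a genuine obstacle here, since the real content lies in Proposition~\ref{prop-spine-anodyne}, which is already available; the only points requiring a little care are citing the correct exponentiation property for inner anodyne maps against the inner fibration $\scrC\to\Delta^0$, and keeping track of the pushout presentation of $\spineZ$ used to identify $\Fun(\spineZ,\scrC)$.
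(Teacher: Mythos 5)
Your argument is correct and is precisely the intended reading of the word ``Corollary'' here: $\scrC\to\Delta^0$ is an inner fibration, so by the exponentiation property of inner anodyne maps the restriction $\Fun(\bbZ,\scrC)\to\Fun(\spineZ,\scrC)$ is a trivial Kan fibration, and a vertex of the target is exactly the data in the statement. (Minor: the HTT reference you want for the exponentiation property is Corollary 2.3.2.5 rather than 2.3.2.4, though both are in the immediate vicinity.)
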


\begin{constr}\label{constr-preaush}
  Let $\preaush\colon\bbZ\to\coCh(\Sp)$ be the functor defined on objects as
  $\preaush(n)=\spch{n}{n}$, where
  \begin{displaymath}
    \begin{split}
      \spch{n}{n}\colon\catCh\op&\to\Sp \\
      m&\mapsto
      \begin{cases}
        \bbS[n] \quad &\text{if } m=n, \\
        0 \quad &\text{if } m\ne n.
      \end{cases}
    \end{split}
  \end{displaymath}
  We define $\preaush$ on morphisms as follows.
  First of all, let us note that, as
  \begin{displaymath}
    \begin{split}
      \Fun(\Delta^{1},\coCh(\Sp))
        &\simeq\Fun(\Delta^{1},\Fun^{0}(\catCh\op,\Sp))\\
        &\subset\Fun(\Delta^{1},\Fun(\catCh\op,\Sp))\\
        &\simeq\Fun(\Delta^{1}\times\catCh\op,\Sp)
    \end{split}
  \end{displaymath}
  it is equivalent to determine a map in $\coCh(\Sp)$ or a map from $\Delta^1
  \times\catCh\op$ landing in spectra, such that both restrictions to
  $\{0\}\times\catCh\op$ and $\{1\}\times\catCh\op$ preserve zero objects
  (recall that $\Fun^0(\catCh\op,\Sp)$ is a full subcategory of $\Fun(\catCh
  \op,\Sp)$).
  Let $\iota_{m}\colon\catCh\op_{[m,m+1]}\to\catCh\op$ be the inclusion functor
  Then, we define $\preaush(\iota_{m})$ as the morphism
  corresponding to the following left Kan extension
  \begin{displaymath}
  \begin{tikzcd}
    \Delta^1\times\catCh\op_{[m,m+1]}
      \ar[d, "\id\times \iota_{m}"'] \ar[r, "\alpha"] & \Sp \\
    \Delta^1\times\catCh\op \ar[ur, dashed, "\beta"'] &
  \end{tikzcd}
  \end{displaymath}
  where $\alpha$ is obtained from the defining square for the suspension
  \begin{displaymath}
  \begin{tikzcd}
    \bbS[m] \ar[r] \ar[d] & 0 \ar[d] \\
    0 \ar[r] & \bbS[m+1]
  \end{tikzcd}
  \end{displaymath}
  and extending this map $\Delta^1\times\Delta^1 \to \Sp$ to
  $\Delta^1\times\catCh\op$ in the only possible way that sends $(0,\pt)$
  and $(1,\pt)$ to the zero spectrum.
  As $\id\times d_m$ is fully faithful, the values of $\beta(0,\pt)$ and
  $\beta(1,\pt)$ are determined by $\alpha$, and are zero by construction.
  By Corollary \ref{cor-functors-from-Z}, this completes the construction of
  $\preaush$.
\end{constr}

\begin{defi}\label{def-aush-imp}
  Let\footnote{where $\aush$ stands for ``Aush\"ulen'', German for \emph{hull
  shelling}} $\aush\coloneqq\lvert - \rvert_{\preaush}^{\rm{st}}
  \colon\Fild(\Sp)\to\coCh(\Sp)$ denote the stable
  $\preaush$-realization
  functor associated, by the equivalence of Lemma \ref{lemma-bbz-coch},
  to the functor $\preaush$ given in
  Construction \ref{constr-preaush}.
  Let us denote by\footnote{where $\imp$ stands for ``Impilare'', Italian
  for \emph{piling up}} $\imp\coloneqq\nerve_{\preaush}^{\rm{st}}$ its right
  adjoint
  $$
  \begin{tikzcd}[column sep=huge]
    \Fild\Sp \ar[r, shift left=1.1ex, "\aush"] &
    \coCh(\Sp) \ar[l, shift left=1.1ex,"\imp"]
      \ar[l,phantom,"\text{\rotatebox{-90}{$\dashv$}}"]
  \end{tikzcd}
  $$
  (the reason for the hat in the notation will be clearified in Proposition
  \ref{prop-imp-is-complete}).
  We will refer to $\aush$ as the \emph{shelling functor} functor
  and to the $\imp$ as the \emph{piling-up functor}; we will use the terms
  \emph{associated shelled complex} and \emph{piled-up filtered object} for
  objects of the form $\aush F$ and $\imp C$, respectively.
\end{defi}

\begin{rem}\label{rem-explicit-imp}
  It follows from Proposition \ref{prop-mapping-nerve} that, for any coherent
  cochain complex $C$, the filtered object $\imp C$ is given by
  $$\imp C^{\istar} \simeq\map_{\coCh(\Sp)}(\spch{\istar}{\istar},C).$$
\end{rem}

Our next goal is to show that $\aush$ factors through the localization
$\Fild\Sp\to\cFild\Sp$.
In order to do so, it will be useful to identify its graded pieces
$\aush(-)^n$.

\begin{lemma}\label{lemma-aush-graded-same}
  There is a natural equivalence
  $$\aush(-)^n\simeq\gr^n(-)[n]$$
  of functors $\Fild(\Sp)\to\Sp$.
\end{lemma}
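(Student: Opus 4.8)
The plan is to unwind the definition of the shelling functor $\aush$ as the stable $\preaush$-realization, and to compute its composite with the $n$-th graded functor $\gr^n\colon\coCh(\Sp)\to\Sp$ directly on the generators $\spch{n}{n}$ that $\preaush$ produces. Recall from Lemma \ref{lemma-bbz-coch} (and the nerve-realization paradigm of Appendix \ref{appendix-first}) that $\aush=\lvert-\rvert^{\rm st}_{\preaush}$ is the unique colimit-preserving functor $\Fild\Sp\simeq\Pst\bbZ\to\coCh(\Sp)$ whose restriction along $\susinftyp\Yo\colon\bbZ\to\Pst\bbZ$ is $\preaush$. Since $\gr^n\colon\coCh(\Sp)\to\Sp$ is exact and preserves colimits (it is a composite of cofiber and pullback functors on stable \cats, and colimits in $\coCh(\Sp)$ are computed objectwise by Proposition \ref{prop-coch-pst-recoll}), the composite $\gr^n\circ\aush\colon\Fild\Sp\to\Sp$ is also colimit-preserving. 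Likewise $\gr^n(-)[n]\colon\Fild\Sp\to\Sp$ is colimit-preserving: $\gr^n$ on filtered objects preserves colimits (cofibers commute with colimits) and the shift $[n]$ is an equivalence. So both functors in the claimed equivalence are colimit-preserving functors out of $\Pst\bbZ$, and by the universal property of the stable presheaf \cat it suffices to produce a natural equivalence of their restrictions along $\susinftyp\Yo$, i.e.\ of functors $\bbZ\to\Sp$.

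Next I would compute each side on the representables $\susinftyp\Yo(k)$ for $k\in\bbZ$. On the one hand, $\aush(\susinftyp\Yo(k))\simeq\preaush(k)=\spch{k}{k}$, the coherent cochain complex with $\bbS[k]$ in degree $k$ and zero elsewhere; applying $\gr^n$ to this, we get $\gr^n$ of a cochain complex that is $\bbS[k]$ concentrated in degree $k$. Using the explicit description of $\gr^n$ on a cochain complex $C$ — the cofiber of $C^{n+1}\to C^n$, or more precisely the appropriate combination coming from Definition \ref{defi-gr} applied to the filtered object underlying a cochain complex — one finds this is $\bbS[k]$ when $n=k$ and $0$ otherwise. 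On the other hand, $\susinftyp\Yo(k)\in\Fild\Sp\simeq\Pst\bbZ$ is the filtered spectrum $(\susinftyp\Yo(k))^i=\bbS$ for $i\leq k$ and $0$ for $i>k$; its $n$-th graded piece $\gr^n$ is $\cofib(\bbS\to\bbS)=0$ if $n\ne k$ and $\cofib(0\to\bbS)=\bbS$ if $n=k$, so $\gr^n(\susinftyp\Yo(k))[n]$ is $\bbS[n]=\bbS[k]$ when $n=k$ and $0$ otherwise. Thus the two restricted functors agree on objects; for the equivalence I would exhibit a natural transformation between them and check it is an equivalence pointwise, which reduces to these computations on representables.

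The main obstacle, I expect, is producing the natural equivalence \emph{coherently} rather than just objectwise: one needs a map of functors $\bbZ\to\Sp$, not merely an agreement on objects, and then to invoke the universal property to upgrade it to an equivalence of colimit-preserving functors on $\Pst\bbZ$. The cleanest way to get the natural transformation is to track where it comes from structurally. The functor $\preaush$ was built in Construction \ref{constr-preaush} precisely by left Kan extending the suspension squares $\bbS[m]\to 0\to 0\to\bbS[m+1]$; the differential $\spch{m}{m}\to\spch{m+1}{m+1}$ in the resulting cochain complex, or rather the way the graded pieces of the underlying filtered object of $\preaush$ assemble, is exactly what matches the tautological description of $\gr^n$ of a representable filtered spectrum. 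So I would phrase the natural transformation as coming from the commuting square defining $\preaush$ on morphisms, pass it through $\gr^n$, and compare with the analogous square for $\gr^n(-)[n]$ on $\Fild\Sp$; naturality in $\bbZ$ is then automatic, and colimit-preservation of both sides does the rest. A secondary technical point is making sure the shift conventions line up (the $[n]$ on the right is forced by the $\bbS[n]$ appearing in $\spch{n}{n}$), but this is a bookkeeping matter once the representable computation above is in hand.
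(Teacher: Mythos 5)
Your overall strategy — invoke the nerve--realization equivalence $\Fun(\bbZ,\Sp)\simeq\Fun^{L}(\Pst\bbZ,\Sp)$ of Lemma~\ref{st-nerve-real}, show both sides preserve colimits, and compute on the representables $\susinftyp\Yo_k$ — is exactly what the paper does, so the backbone of the argument is right. The observation that the paper leaves the coherence of the equivalence (a natural equivalence of functors $\bbZ\to\Sp$, not merely an objectwise agreement) implicit is a legitimate one, though your proposed fix remains at the level of a sketch.

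However, there is a genuine confusion about what the left-hand side of the lemma actually is. The notation $\aush(-)^n$ means the $n$-th \emph{component} of the coherent cochain complex $\aush F$, i.e.\ the composite $\ev_n\circ\aush$; it is \emph{not} $\gr^n\circ\aush$. The paper's $\gr^n$ is defined only on $\Fild(\scrC)$ (Definition~\ref{defi-gr}), and there is no $\gr^n$ functor on $\coCh(\Sp)$. Your stated formula ``the cofiber of $C^{n+1}\to C^{n}$'' for such a functor does not even make sense for a cochain complex (whose differentials go $C^n\to C^{n+1}$), and if you force it you get the wrong answer: applying $\cofib(C^{n+1}\to C^n)$ to $C=\spch{k}{k}$ yields $\bbS[k+1]$ at $n=k-1$, contradicting your stated conclusion ``$\bbS[k]$ when $n=k$ and $0$ otherwise.'' That conclusion is correct, but it is the value of $\ev_n\spch{k}{k}$, i.e.\ just reading off the $n$-th entry of $\spch{k}{k}$; no cofiber is involved on the $\coCh$ side. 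Relatedly, your colimit-preservation argument for the left-hand side should cite Proposition~\ref{prop-eval-adj} for $\ev_n$ having a right adjoint (as the paper does), rather than colimit-preservation of a nonexistent $\gr^n$ on $\coCh(\Sp)$. Once $\ev_n$ is consistently substituted for your ``$\gr^n$ on cochain complexes,'' your proof lines up with the paper's.
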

\begin{proof}
  By virtue of Lemma \ref{st-nerve-real}, it suffices to prove that both
  functors preserve colimits and agree on elements of the form
  $\susinftyp \Yo_{-}$.

  As both $\aush$ and $\ev_n$ admit right adjoints (see Proposition
  \ref{prop-eval-adj}), $\aush(-)^n\simeq\ev_n\circ \ \aush$ preserves
  colimits.
  By Definition \ref{defi-gr}, $\gr^n$ is given by $\cofib\circ(\iota_n)^*$; by
  \cite[1.1.1.8]{HA}, $\cofib$ admits a right adjoint, and since $\Sp$ is
  complete, $(\iota_n)^*$ admits a right adjoint, given by right Kan extension
  along $\iota_n$.

  It follows from Remark \ref{rem-stab-nerve-real} that for all $m\in\bbZ$
  $$\aush(\susinftyp \Yo_m)\simeq\lvert \Yo_m \rvert_{\preaush} \simeq
  \preaush(m)
  \simeq\spch m m$$
  (see Appendix \ref{appendix-first} for a detailed discussion)
  and thus that
  \begin{displaymath}
    \aush\left(\susinftyp \Yo_m\right)^n \simeq
    \begin{cases}
      \bbS[n] \quad &\text{if } m=n, \\
      0 \quad &\text{if } m\ne n,
    \end{cases}
  \end{displaymath}
  whereas a direct check shows that
  $$
  \gr^n\left(\susinftyp \Yo_m\right) \simeq
    \begin{cases}
      \bbS \quad &\text{if } m=n, \\
      0 \quad &\text{if } m\ne n,
    \end{cases}
  $$
  concluding the proof.
\end{proof}

\begin{prop}\label{prop-imp-is-complete}
  The adjunction $\aush\dashv\imp$ given in Definition \ref{def-aush-imp}
  factors as
  $$
  \begin{tikzcd}[column sep=huge]
    \Fild\Sp \ar[r, shift left=1.1ex, "L"] &
    \cFild\Sp \ar[r, shift left=1.1ex, "\caush"]
    \ar[l,phantom,"\text{\rotatebox{-90}{$\dashv$}}"]
    \ar[l, shift left=1.1ex] &
    \coCh(\Sp). \ar[l, shift left=1.1ex,"\imp"]
    \ar[l,phantom,"\text{\rotatebox{-90}{$\dashv$}}"]
  \end{tikzcd}
  $$
  through the localization of Proposition \ref{prop-cfild-bousfield}.

\end{prop}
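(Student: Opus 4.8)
The plan is to show first that the shelling functor $\aush$ inverts every graded equivalence, and then to read off the claimed factorization of the adjunction from the formal bookkeeping of the reflective localization $L\dashv i$ of Proposition~\ref{prop-cfild-bousfield}.

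The first and only substantive point is that $\aush$ inverts graded equivalences. For this I would invoke Lemma~\ref{lemma-aush-graded-same}, which provides a natural equivalence $\aush(-)^n\simeq\gr^n(-)[n]$: if $\alpha\colon F\to G$ is a graded equivalence, then $\aush(\alpha)^n\simeq\gr^n(\alpha)[n]$ is an equivalence for every $n\in\bbZ$, and since the functor $\undch\colon\coCh(\Sp)\to\prod_\bbZ\Sp$ of Lemma~\ref{lemma-coch-to-prod-conservative} is conservative, $\aush(\alpha)$ is itself an equivalence. (The same computation gives the converse, so $\aush$ inverts \emph{exactly} the graded equivalences, although only one direction is needed here.)

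Next I would factor $\aush$ through $L$. The components of the unit $\eta\colon\id_{\Fild\Sp}\Rightarrow iL$ are inverted by $L$, hence are graded equivalences by the identification of $L$-equivalences recorded in (the proof of) Proposition~\ref{prop-cfild-bousfield}; so by the previous step $\aush\eta$ is a natural equivalence, and setting $\caush\coloneqq\aush\circ i$ we get $\caush\circ L\simeq\aush$ (and $\caush$ is still colimit-preserving, since colimits in $\cFild\Sp$ are computed by applying $L$ to colimits in $\Fild\Sp$). Dually, I would check directly that $\imp$ lands in $\cFild\Sp$: for $C\in\coCh(\Sp)$ and any graded equivalence $\alpha$, the adjunction $\aush\dashv\imp$ identifies $\Map_{\Fild\Sp}(\alpha,\imp C)$ with $\Map_{\coCh(\Sp)}(\aush\alpha,C)$, which is an equivalence by the first step; applying this to the graded equivalences $0\to\const_A$ (note $\gr^n\const_A\simeq 0$) shows that $\imp C$ is right-orthogonal to all constant objects, i.e.\ complete, by Proposition~\ref{prop-equi-recollement}. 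Writing $i$ for the inclusion $\cFild\Sp\hookrightarrow\Fild\Sp$, this gives $\imp\simeq i\circ\imp'$ for a functor $\imp'\colon\coCh(\Sp)\to\cFild\Sp$; this is also the promised clarification of the hat in the notation.

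Finally, $\caush\dashv\imp'$ follows from a chain of natural equivalences: for $F'\in\cFild\Sp$ and $C\in\coCh(\Sp)$, using $Li\simeq\id$, the full faithfulness of $i$, the adjunction $L\dashv i$, and $\aush\simeq\caush\circ L$,
$$
\Map_{\coCh(\Sp)}(\caush F',C)\simeq\Map_{\coCh(\Sp)}(\aush\,iF',C)\simeq\Map_{\Fild\Sp}(iF',\imp C)\simeq\Map_{\cFild\Sp}(F',\imp' C).
$$
I do not expect a genuine obstacle: essentially all the content is packed into Lemma~\ref{lemma-aush-graded-same} (which itself rests on the stable nerve--realization machinery of Appendix~\ref{appendix-first}), and the remainder is the standard calculus of reflective localizations; the one spot that needs a moment's care is to keep straight that ``maps inverted by $L$'' and ``graded equivalences'' are the same class, which is already available from Proposition~\ref{prop-cfild-bousfield}.
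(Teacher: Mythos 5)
Your proposal is correct, and its first half (showing $\aush$ inverts graded equivalences via Lemma \ref{lemma-aush-graded-same} and the conservativity of $\undch$) matches the paper's argument exactly. The second half, however, takes a genuinely different route. The paper observes that $\aush$ preserves colimits, that $\cFild\Sp$ is closed under colimits in $\Fild\Sp$, and hence that $\caush$ preserves colimits and admits a right adjoint by the adjoint functor theorem; it then invokes ``adjoints compose'' to identify that right adjoint with $\imp$, which forces $\imp$ to take values in $\cFild\Sp$. You instead verify completeness of $\imp C$ directly: you notice that $0\to\const_A$ is a graded equivalence (both sides have vanishing associated graded), so $\aush$ inverts it, so the adjunction $\aush\dashv\imp$ gives $\Map_{\Fild\Sp}(\const_A,\imp C)\simeq\pt$ for all $A$, which is exactly the orthogonality characterization of completeness from Proposition \ref{prop-equi-recollement}. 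You then confirm $\caush\dashv\imp'$ by a three-step mapping-space computation. Both arguments are sound; yours is more hands-on and sidesteps the appeal to the adjoint functor theorem, while the paper's is slicker and stays closer to the abstract calculus of reflective localizations. The one small slip is the parenthetical ``colimits in $\cFild\Sp$ are computed by applying $L$ to colimits in $\Fild\Sp$'': while true of reflective localizations in general, here Proposition \ref{prop-cfild-bousfield} gives the stronger fact that $\cFild\Sp$ is closed under colimits in $\Fild\Sp$ (since $i$ is also a left adjoint), so the $L$ is not actually needed; but that aside is not used in your argument anyway.
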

\begin{proof}
  Let us first show that $\aush$ factors through the localization.
  By virtue of Proposition \ref{prop-cfild-bousfield},
  it is enough to show that it sends maps inducing equivalences on associated
  gradeds to equivalences. But, as per
  Lemma \ref{lemma-coch-to-prod-conservative},
  the functor
  $$\undch\colon\coCh(\Sp)\to\prod_{\bbZ}\Sp$$
  is conservative.
  It is thus enough to know that on each component, $\aush(-)^n$
  sends local maps to equivalences, which is true by virtue of Lemma
  \ref{lemma-aush-graded-same}.
  As $\aush$ preserves colimits, and $\cFild\Sp$ is a full
  subcategory of $\Fild\Sp$ closed under colimits (see Proposition
  \ref{prop-cfild-bousfield}), we get that $\caush$ preserves colimits, and
  hence admits a right adjoint.
  Since adjoints compose, this just means that $\imp$ takes values
  in the full subcategory of complete objects.
\end{proof}

Our next goal is to prove that the induced adjunction $\caush\dashv\imp$
is an equivalence of \cats. To this end, we need to prove a few key lemmata
before, and to get a better understanding of the functor $\imp$.

\begin{lemma}\label{lemma-caush-conservative}
  The functor $\caush$ is conservative.
\end{lemma}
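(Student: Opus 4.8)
The plan is to reduce the statement to the conservativity of the associated graded functor on complete filtered spectra. First I would observe that the composite $\undch\circ\caush\colon\cFild\Sp\to\prod_{\bbZ}\Sp$ is, by Lemma~\ref{lemma-aush-graded-same} (applied to $\aush$ and restricted along $L$ to complete objects, where $\aush$ and $\caush$ agree), naturally equivalent to $\bigl(\gr^n(-)[n]\bigr)_{n\in\bbZ}$. Since shifting is an autoequivalence of $\Sp$ in each factor, this composite is conservative if and only if $\gr\coloneqq(\gr^n)_{n\in\bbZ}\colon\cFild\Sp\to\prod_{\bbZ}\Sp$ is conservative, and once the composite is known to be conservative so is $\caush$, since any functor through which a conservative functor factors is itself conservative. (One could alternatively invoke the conservativity of $\undch$ from Lemma~\ref{lemma-coch-to-prod-conservative}, but that is not needed here.)

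So the real content is: a morphism $\alpha\colon F\to G$ of complete filtered spectra which is a graded equivalence is an equivalence. Here I would pass to the fiber. The inclusion $\cFild\Sp\hookrightarrow\Fild\Sp$ preserves limits — it is a right adjoint by Proposition~\ref{prop-cfild-bousfield} — so $\fib(\alpha)$ is again complete; and each $\gr^n$ is exact, being the composite of the exact functors $(\iota_n)^*$ and $\cofib$ from Definition~\ref{defi-gr}, whence $\gr^n\fib(\alpha)\simeq\fib\bigl(\gr^n(\alpha)\bigr)\simeq 0$ for all $n$. It therefore suffices to show that a complete filtered spectrum $H$ with $\gr^nH\simeq 0$ for every $n$ is zero.

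Finally, I would argue as follows: if $\gr^nH=\cofib(H^{n+1}\to H^n)\simeq 0$ for all $n$, then every transition map of $H$ is an equivalence, so $H$ lies in the essential image of $\const$ (the functor factors through the weakly contractible nerve of $\bbZ\op$). But Proposition~\ref{prop-cfild-bousfield} identifies $\cFild\Sp$ with $\const(\Sp)^{\perp}$, and any object $H$ lying in both $\const(\Sp)$ and $\const(\Sp)^{\perp}$ satisfies $\Map_{\Fild\Sp}(H,H)\simeq\pt$, forcing $\id_H\simeq 0$ and hence $H\simeq 0$. This completes the argument. The only mildly delicate point — and the one I would be most careful to state cleanly — is precisely this last ``vanishing graded plus complete implies zero'' step, in particular the passage from ``all transition maps invertible'' to ``constant''; routing it through the orthogonality $\cFild\Sp=\const(\Sp)^{\perp}$ avoids having to do any explicit localization bookkeeping.
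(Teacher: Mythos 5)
Your proof is correct. The core reduction — that $\caush(\alpha)$ being an equivalence is, via Lemma~\ref{lemma-aush-graded-same}, the same as $\alpha$ being a graded equivalence, so that the whole matter rests on ``a graded equivalence between complete filtered objects is an equivalence'' — is exactly the structure of the paper's proof. The difference is in how you discharge that last step: the paper cites the clause of Proposition~\ref{prop-cfild-bousfield} stating that $L$ is the Bousfield localization at graded equivalences (so that for local, i.e.\ complete, $F$ and $G$ one has $\alpha \simeq L\alpha$, and $L\alpha$ is an equivalence because $\alpha$ is local), whereas you pass to the fiber $H = \fib(\alpha)$ and argue directly. Your route uses only the orthogonality $\cFild\Sp = \const(\Sp)^\perp$ from the recollement: vanishing gradeds force all transition maps of $H$ to be equivalences, so $H$ is essentially constant (factoring through the contractible groupoid completion of $\bbZ\op$), and an object lying in both $\const(\Sp)$ and $\const(\Sp)^\perp$ has $\Map(H,H)\simeq\pt$, hence $\id_H \simeq 0$ and $H \simeq 0$. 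This is a legitimate alternative to citing the ``localization at graded equivalences'' clause, and it is in fact somewhat more self-contained, since the proof of that clause in Proposition~\ref{prop-cfild-bousfield} is only sketched by analogy. The only point where I would tighten your wording is the parenthetical about ``factoring through the weakly contractible nerve of $\bbZ\op$'': what you want is that a diagram $\bbZ\op \to \Sp$ inverting all arrows factors up to contractible choice through the Dwyer--Kan localization of $\bbZ\op$ at all morphisms, which is equivalent to a point because $|\bbZ\op| \simeq \bbR$ is contractible — the \emph{nerve} of $\bbZ\op$ itself is of course not contractible as a quasicategory.
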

\begin{proof}
  Let $\alpha\colon F\to G$ be a map in $\cFild\Sp$ such that
  $\caush(\alpha)\colon\caush F \to \caush G$
  is an equivalence. As $\coCh(\Sp)$ is a full subcategory of a functor
  category, equivalences are given pointwise, hence we have that, for all
  $n\in\bbZ$
  $$\caush F^n\simeq \caush G^n.$$
  But, by Lemma \ref{lemma-aush-graded-same}
  this is just means that $\alpha$ induces
  an equivalence on associated gradeds, hence, by
  Proposition \ref{prop-cfild-bousfield},
  it is an equivalence in $\cFild\Sp$.
\end{proof}

\begin{lemma}\label{lemma-left-is-right-shifted}
  There is a natural equivalence
  $(\ev_n)_!\simeq(\ev_{n-1})_*$
  (with notation as in Proposition \ref{prop-eval-adj}) of functors
  $\Sp\to\coCh(\Sp)$.
\end{lemma}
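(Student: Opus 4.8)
The plan is to obtain the equivalence by unwinding the explicit formulas of Proposition~\ref{prop-eval-adj}. Fix $X\in\Sp$. That proposition exhibits $(\ev_n)_!X$ as the coherent cochain complex carrying $X$ in two consecutive degrees, the identity of $X$ as the only non-trivial differential between those degrees, and the zero spectrum everywhere else, and it gives the completely analogous description of $(\ev_{n-1})_*X$. Comparing the supporting degrees of the two complexes --- keeping track of the convention of Definition~\ref{def-ch} for the direction of the generating morphisms of $\catCh$ --- one sees that $(\ev_n)_!X$ and $(\ev_{n-1})_*X$ are supported on the same pair of integers and carry the identity of $X$ between the same two slots; in particular, after applying the conservative functor $\undch$ of Lemma~\ref{lemma-coch-to-prod-conservative}, the two functors agree objectwise. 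The real content of the lemma is to promote this to a \emph{natural} equivalence, so that conservativity of $\undch$ can then be invoked.

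To that end I would construct the comparison through the adjunction $(\ev_n)_!\dashv\ev_n$ of Proposition~\ref{prop-eval-adj}: a natural transformation $(\ev_n)_!\Rightarrow(\ev_{n-1})_*$ is the same datum as a natural transformation $\id_{\Sp}\Rightarrow\ev_n\circ(\ev_{n-1})_*$ of endofunctors of $\Sp$, and by the explicit formula the composite $\ev_n\circ(\ev_{n-1})_*$ is canonically $\id_{\Sp}$ --- this is where the index bookkeeping is actually carried out. Taking the transformation $\phi\colon(\ev_n)_!\Rightarrow(\ev_{n-1})_*$ corresponding to $\id_{\id_{\Sp}}$, it remains to check that $\phi_X$ is an equivalence in $\coCh(\Sp)$ for all $X$; by Proposition~\ref{prop-coch-pst-recoll} equivalences in $\coCh(\Sp)$ are detected objectwise, and in each degree $\phi_X$ is, by the explicit descriptions, either $\id_X$ or $\id_0$, so this is immediate.

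The only genuine obstacle is the bookkeeping: tracking which pair of degrees carries each functor, in which direction the identity differential runs, and verifying that $\phi_X$ really restricts to the identity (and not to $0$, nor to a shift) on the two non-vanishing degrees. A variant that sidesteps the ``construct-then-verify'' step is to invoke essential uniqueness of adjoints: $(\ev_n)_!$ is \emph{the} left adjoint of $\ev_n$ by Proposition~\ref{prop-eval-adj}, so it suffices to show that $(\ev_{n-1})_*$ is \emph{also} left adjoint to $\ev_n$, i.e.\ that $\Map_{\coCh(\Sp)}\big((\ev_{n-1})_*X,C\big)\simeq\Map_{\Sp}(X,C^n)$ naturally in $X$ and $C$. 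This identification one reads off directly from the two-term description of $(\ev_{n-1})_*X$ in Proposition~\ref{prop-eval-adj}: a map out of it into $C$ amounts to a map $X\to C^n$ together with compatibility data against the differentials of $C$, and this data is contractible once one uses that in a coherent cochain complex every pairwise composite of differentials comes equipped with a coherent nullhomotopy. Then $(\ev_n)_!\simeq(\ev_{n-1})_*$ by uniqueness of left adjoints.
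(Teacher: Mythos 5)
Your argument breaks at its very first step: the claim that, by Proposition~\ref{prop-eval-adj}, $(\ev_n)_!X$ and $(\ev_{n-1})_*X$ are supported on the same pair of degrees. That proposition places $(\ev_n)_!X$ in degrees $\{n,n+1\}$ and $(\ev_k)_*X$ in degrees $\{k-1,k\}$, so $(\ev_{n-1})_*X$ sits in $\{n-2,n-1\}$. These two supports are disjoint; the functors do not agree even objectwise, and all three variants of your argument inherit the same off-by-two error. In particular $\ev_n\circ(\ev_{n-1})_*$ is the \emph{zero} functor rather than $\id_{\Sp}$, since $(\ev_{n-1})_*X$ vanishes in degree $n$, and $\Map_{\coCh(\Sp)}\bigl((\ev_{n-1})_*X,C\bigr)$ is governed by $C^{n-2}$, not $C^n$. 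What is actually true is $(\ev_n)_!\simeq(\ev_{n+1})_*$, equivalently $(\ev_{n-1})_!\simeq(\ev_n)_*$; the statement as printed carries an index typo, as corroborated by its invocation in Lemma~\ref{lemma-st-yo-is-y}, where the cited instance is $(\ev_n)_!\bbS\simeq(\ev_{n+1})_*\bbS$. You singled out ``the bookkeeping'' as the only real obstacle --- correctly --- but then asserted the match without carrying it out. Doing so carefully would have exposed both the flaw in your argument and the typo in the statement.

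Modulo the index correction, the overall plan is sound and close in spirit to the paper's. The paper's proof notes that $(\ev_n)_!$ (a left adjoint) and $(\ev_{n+1})_*$ (by its explicit pointwise description) both preserve colimits, so by Lemma~\ref{st-nerve-real} applied to $\Sp\simeq\Pst{\Delta^0}$ each is determined by its value on $\bbS$, which can then be read off Proposition~\ref{prop-eval-adj}. Your route of producing a comparison morphism via the adjunction $(\ev_n)_!\dashv\ev_n$ and verifying it objectwise would also work once the indices are fixed, and is somewhat more hands-on. The uniqueness-of-adjoints variant would further require making the ``the compatibility data is contractible'' step precise; as written it is a gesture rather than an argument, and I would not lean on it.
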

\begin{proof}
  It follows from the pointwise description of Proposition \ref{prop-eval-adj}
  that $(\ev_n)_*$ preserves all colimits.
  As $\Sp\simeq\Pst{\Delta^0}\simeq\St(\Pre{\Delta^0})$,
  Lemma \ref{st-nerve-real} implies that it is enough to check that they take
  the same value on $\susinftyp\Yo_{\pt}\simeq\susinftyp\pt\simeq\bbS$. But
  this follows immediately again from description given in Proposition
  \ref{prop-eval-adj}.
\end{proof}

\begin{lemma}\label{lemma-st-yo-is-y}
  We have equivalences
  $$\ptize \circ \susinftyp \Yo_n\simeq(\ev_n)_!\bbS\simeq(\ev_{n+1})_*\bbS$$
  of objects in $\coCh(\Sp)$
\end{lemma}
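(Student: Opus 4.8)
The plan is to identify $\ptize\circ\susinftyp\Yo_n$ with $(\ev_n)_!\bbS$ by showing that both corepresent the same functor $\coCh(\Sp)\to\spaces$, and to obtain $(\ev_n)_!\bbS\simeq(\ev_{n+1})_*\bbS$ from the explicit descriptions already at hand. Here $\Yo_n$ is to be read as the presheaf on $\catCh$ represented by the object $n\in\catCh$, so that $\susinftyp\Yo_n=\susinftyp\circ\Yo_n$ genuinely lands in $\Fun(\catCh\op,\Sp)$, the domain of $\ptize$ (note that this functor is \emph{not} pointed, since its value at $\pt$ is $\bbS$, which is precisely why $\ptize$ must be applied). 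The second equivalence requires nothing new: by Proposition~\ref{prop-eval-adj}, both $(\ev_n)_!\bbS$ and $(\ev_{n+1})_*\bbS$ are the coherent cochain complex with components $\bbS$ in degrees $n$ and $n+1$, $0$ elsewhere, and the identity as differential $\de^{n+1}$ — equivalently, this is Lemma~\ref{lemma-left-is-right-shifted} evaluated on $\bbS$, with the evident shift of index.

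For the first equivalence I would argue by corepresentability. Since $(\ev_n)_!$ is left adjoint to $\ev_n$ (Proposition~\ref{prop-eval-adj}), for every $C\in\coCh(\Sp)$ we have
$$\Map_{\coCh(\Sp)}\big((\ev_n)_!\bbS,\,C\big)\simeq\Map_{\Sp}(\bbS,C^n)\simeq\ominfty C^n,$$
so that $(\ev_n)_!\bbS$ corepresents $C\mapsto\ominfty C^n$. On the other side I would compose three adjunctions: the adjunction $\ptize\dashv i$ of Proposition~\ref{prop-coch-pst-recoll} (where $i$ is the fully faithful inclusion $\coCh(\Sp)\hookrightarrow\Fun(\catCh\op,\Sp)$) gives $\Map_{\coCh(\Sp)}(\ptize\susinftyp\Yo_n,C)\simeq\Map_{\Fun(\catCh\op,\Sp)}(\susinftyp\Yo_n,C)$; the adjunction $\susinftyp\dashv\ominfty$ between $\spaces$ and $\Sp$, applied pointwise, identifies this with $\Map_{\Fun(\catCh\op,\spaces)}(\Yo_n,\ominfty\circ C)$; and the Yoneda lemma evaluates the latter at $n$, giving $(\ominfty\circ C)(n)=\ominfty C^n$. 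Hence $\ptize\susinftyp\Yo_n$ and $(\ev_n)_!\bbS$ corepresent the same functor, and are therefore equivalent.

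As a cross-check, and an alternative hands-on route, one can compute $\ptize\susinftyp\Yo_n$ degreewise from the formula $\ptize F(m)\simeq F(m)/F(\pt)$ of Proposition~\ref{prop-coch-pst-recoll}: one has $\susinftyp\Yo_n(m)=\susinftyp(\Map_\catCh(m,n))$, and the structure map out of $\susinftyp\Yo_n(\pt)\simeq\bbS$ is the inclusion of the summand indexed by the zero morphism, so the cofiber leaves exactly one copy of $\bbS$ for each of the two values $m=n$ and $m=n+1$ with $\Map_\catCh(m,n)\neq\{\zero\}$, while the differential — induced by precomposition with $\de_{n+1}$, i.e.\ by $g\mapsto g\circ\de_{n+1}$ — becomes the identity $\bbS\to\bbS$. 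I do not expect a genuine obstacle; the only points that demand care are the overloading of $\Yo_{(-)}$ (here the Yoneda embedding of $\catCh$, not of $\bbZ$) and composing the adjunctions in the right variance — in particular, remembering that it is $\ptize$, and not $i$, that is left adjoint to the inclusion in the sense used above.
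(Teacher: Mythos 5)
Your proof is correct and follows essentially the same route as the paper's: identify both $(\ev_n)_!\bbS$ and $\ptize\susinftyp\Yo_n$ as corepresenting $C\mapsto\ominfty C^n$, using the adjunction $\ptize\dashv i$ from Proposition~\ref{prop-coch-pst-recoll}, and get the second equivalence from Lemma~\ref{lemma-left-is-right-shifted} (equivalently, the explicit formulas in Proposition~\ref{prop-eval-adj}). The only cosmetic difference is that where you unpack $\susinftyp\dashv\ominfty$ followed by the ordinary Yoneda lemma, the paper invokes the packaged stable Yoneda statement of Proposition~\ref{prop-st-yo-kinda}, which amounts to the same manipulation.
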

\begin{proof}
  The second equivalence is an instance of Lemma
  \ref{lemma-left-is-right-shifted}. Regarding the first one, we will show that
  they represent the same functor. In fact:
  \begin{displaymath}
  \begin{split}
    \Map_{\coCh(\Sp)}((\ev_n)_!\bbS,C)
    &\simeq\Map_{\Sp}(\bbS,C^n)\\
    &\simeq\ominfty C^n
  \end{split}
  \end{displaymath}
  and, using Proposition \ref{prop-st-yo-kinda}:
  \begin{displaymath}
  \begin{split}
    \Map_{\coCh(\Sp)}((\ptize\circ\susinftyp\Yo_n,C)
    &\simeq\Map_{\Pst\catCh}(\susinftyp\Yo_n,C)\\
    &\simeq\ominfty\map_{\Pst\catCh}(\susinftyp\Yo_n,C)\\
    &\simeq\ominfty C^n
  \end{split}
  \end{displaymath}
  concluding the proof.
\end{proof}

\begin{notat}\label{notat-evalfun}
  Motivated by Lemma \ref{lemma-st-yo-is-y}, we introduce the notation
  $$\evalfun_{-}\bbS\coloneqq\ptize\circ\susinftyp\Yo\colon\catCh\op\to\coCh(\Sp)$$
  to emphasize the connection with the pointed stabilization of the Yoneda
  embedding.
\end{notat}

We have the following ``density'' result for elements of the form
$\evalfun_n\bbS$ in $\coCh(\Sp)$.
\begin{lemma}\label{lemma-evalfun-dense}
  Given any $C\in\coCh(\Sp)$, there is a natural equivalence
  $$C\simeq\int^{n\in\catCh}\map_{\coCh(\Sp)}(\evalfun_n\bbS,C)
    \tensor\evalfun_n\bbS$$
  where $\tensor$ denotes the canonical tensoring of stable \cats over spectra
  (see \cite[4.8.2.20]{HA}).
\end{lemma}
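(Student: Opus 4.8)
The statement is a coend (``density'') formula expressing any coherent cochain complex $C$ as a colimit of the generating objects $\evalfun_n\bbS = \ptize\circ\susinftyp\Yo_n$, weighted by the spectra of maps into $C$. My plan is to reduce it to the standard co-Yoneda (density) formula in a stable functor category. First I would recall that $\susinftyp\Yo_n$, for $n$ ranging over $\catCh\op$, are (a version of) the corepresentable stable presheaves on $\catCh$, and that the usual co-Yoneda lemma in $\Pst\catCh=\Fun(\catCh\op,\Sp)$ gives, for any $G\in\Pst\catCh$,
$$
G\simeq\int^{n\in\catCh}\map_{\Pst\catCh}(\susinftyp\Yo_n,G)\tensor\susinftyp\Yo_n
\simeq\int^{n\in\catCh}\ominfty\susinfty G(n)_+\cdots
$$
— more precisely the stable enriched co-Yoneda formula $G\simeq\int^{n}\susinftyp\Yo_n\tensor G(n)$ after identifying mapping spectra. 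I would cite the stable nerve-realization material of Appendix \ref{appendix-first} (in the same spirit as Lemma \ref{st-nerve-real}) for this enriched coend presentation, since that appendix is exactly where the tensoring of stable presheaf categories over $\Sp$ and the relevant coend calculus are set up.

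The key step is then to transport this formula across the adjunction $i\colon\coCh(\Sp)\rightleftarrows\Fun(\catCh\op,\Sp)\colon\ptize$ of Proposition \ref{prop-coch-pst-recoll}, where $\ptize$ is \emph{both} left and right adjoint to the fully faithful inclusion $i$. Apply $\ptize$ to the co-Yoneda resolution of $G=i(C)$ (viewing $C$ as a pointed functor, so $i(C)=C$): since $\ptize$ is a left adjoint it preserves the coend (a colimit) and the tensoring over $\Sp$, giving
$$
C\simeq\ptize\,i(C)\simeq\int^{n\in\catCh}\ptize(\susinftyp\Yo_n)\tensor\map_{\Pst\catCh}(\susinftyp\Yo_n,i C).
$$
Now $\ptize(\susinftyp\Yo_n)=\evalfun_n\bbS$ by Notation \ref{notat-evalfun}, and by Proposition \ref{prop-st-yo-kinda} (as used in the proof of Lemma \ref{lemma-st-yo-is-y}) the mapping spectrum $\map_{\Pst\catCh}(\susinftyp\Yo_n,iC)$ agrees with $\map_{\coCh(\Sp)}(\evalfun_n\bbS,C)$ — both compute (a delooping of) $C^n$. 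Substituting yields exactly the claimed formula, and naturality in $C$ is inherited from naturality of the co-Yoneda resolution together with naturality of $\ptize$ and of the mapping-spectrum identification.

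The main obstacle is the bookkeeping around $\ptize$ being simultaneously the left \emph{and} right adjoint of the inclusion: one must be careful that the coend being manipulated really is a (weighted) colimit, so that applying the \emph{left} adjoint $\ptize$ commutes with it, rather than accidentally invoking the right-adjoint property; and one must make sure the enriched coend $\int^n \susinftyp\Yo_n\tensor C(n)$ is the correct stable (rather than merely unstable) density presentation, which is precisely what the nerve–realization appendix provides. A secondary subtlety is checking that the identification $\map_{\Pst\catCh}(\susinftyp\Yo_n,iC)\simeq\map_{\coCh(\Sp)}(\evalfun_n\bbS,C)$ is natural in $n\in\catCh$ (not just pointwise), so that it can be plugged into the coend; this follows from the fact that $\ptize\dashv i$ is an adjunction of $\Sp$-enriched functors, so the mapping-spectrum comparison is a natural transformation of functors $\catCh\op\to\Sp$. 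Once these points are pinned down, the proof is a formal manipulation with coends.
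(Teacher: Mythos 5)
Your proof is correct, but it takes a genuinely different route from the paper's. The paper works entirely inside $\coCh(\Sp)$ by a corepresentability argument: it maps both sides of the claimed equivalence into an arbitrary $D$, pulls the coend out through $\Map$ as an end, uses the tensor--cotensor adjunction and Proposition \ref{prop-st-yo-kinda} to rewrite the integrand as $\Map_{\Pst\catCh}(C^n, D^n)$, and then appeals to the end formula for natural transformations of spectra-valued functors to recognize the end as $\Map(C,D)$. Your proof instead starts from the stable, spectra-weighted co-Yoneda (density) formula for $iC$ in $\Pst\catCh$ and transports it across the adjunction $\ptize\dashv i$, using that $\ptize$ is exact --- being simultaneously left and right adjoint to $i$ --- so it commutes with the coend and with the tensoring over $\Sp$, and that $\ptize\circ\susinftyp\Yo=\evalfun_{-}\bbS$ holds by definition (Notation \ref{notat-evalfun}); the mapping-spectrum comparison $\map_{\Pst\catCh}(\susinftyp\Yo_n,iC)\simeq\map_{\coCh(\Sp)}(\evalfun_n\bbS,C)$ is then just the (spectral) adjunction unit, natural in $n$. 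This buys a more constructive, transport-based argument, and it also exhibits the lemma as the shadow under $\ptize$ of a density statement in the ambient presheaf category.

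One caveat worth flagging: the stable co-Yoneda formula $G\simeq\int^{n}\map_{\Pst\catCh}(\susinftyp\Yo_n,G)\tensor\susinftyp\Yo_n$ that you take as your starting point is not literally stated in the paper's appendix --- only its unstable cousin (Lemma \ref{lemma-tensor-functor}) is. It is of course a standard fact, but if one were to prove it one would most naturally run exactly the sort of ``both sides corepresent the same functor'' computation the paper uses for the lemma itself, so the two arguments are closer in spirit than they first appear. Your garbled intermediate line involving $\ominfty\susinfty G(n)_+$ is clearly an abandoned false start and does not affect the substance of the argument.
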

\begin{proof}
  Using Proposition \ref{prop-st-yo-kinda}
  \begin{displaymath}
  \begin{split}
    \Map_{\coCh(\Sp)}
    &\left(\int^{n\in\catCh}\map_{\coCh(\Sp)}(\evalfun_n\bbS,C)
      \tensor\evalfun_n\bbS, D\right)\\
    &\simeq \int_{n\in\catCh}\Map_{\coCh(\Sp)}
    \left(\map_{\coCh(\Sp)}(\evalfun_n\bbS,C) \tensor\evalfun_n\bbS, D\right)\\
    &\simeq \int_{n\in\catCh}\Map_{\coCh(\Sp)}
    \left(\map_{\coCh(\Sp)}(\evalfun_n\bbS,C),
    \map_{\coCh(\Sp)}(\evalfun_n\bbS,D)\right)\\
    &\simeq \int_{n\in\catCh}\Map_{\coCh(\Sp)}
    \left(\map_{\Pst\catCh}(\susinftyp\Yo_n,C),
    \map_{\Pst\catCh}(\susinftyp\Yo_n,D)\right)\\
    &\simeq \int_{n\in\catCh}\Map_{\coCh(\Sp)}
    \left(C^n,D^n\right)\\
    &\simeq \int_{n\in\catCh}\Map_{\Pst\catCh}
    \left(C^n,D^n\right).
  \end{split}
  \end{displaymath}
  Now, using the end formula for the space of natural transformations
  (see \cite[2.3]{glasman2016spectrum}, \cite[5.1]{GHN17}), we get that
  \begin{displaymath}
  \begin{split}
    \int_{n\in\catCh}
    &\Map_{\Pst\catCh}\left(C^n,D^n\right)\\
    &\simeq \Map_{\Pst\catCh}(C,D)\\
    &\simeq \Map_{\coCh(\Sp)}(C,D)
  \end{split}
  \end{displaymath}
  hence both objects corepresent the same functor.
\end{proof}

\begin{lemma}\label{lemma-cof-seq-repr-coch}
  There is a co/fiber sequence in $\coCh(\Sp)$ given by
  $$\spch{n-1}{n-1} \to \spch n n \to \evalfun_{n-1} \bbS[n]$$
  where the first map is the structure map defined in Construction
  \ref{constr-preaush}.
\end{lemma}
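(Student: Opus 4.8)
The plan is to take for the first map the structure map $s\coloneqq\preaush(\iota_{n-1})\colon\spch{n-1}{n-1}\to\spch n n$ of Construction \ref{constr-preaush}, and to identify its cofiber, formed in the stable \cat $\coCh(\Sp)$, with $\evalfun_{n-1}\bbS[n]$. By Proposition \ref{prop-coch-pst-recoll} the subcategory $\coCh(\Sp)$ is closed under colimits in $\Fun(\catCh\op,\Sp)$, and the latter are computed degreewise, so $\cofib(s)$ can be analysed one degree at a time.

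First I would record the degreewise picture. The complexes $\spch{n-1}{n-1}$ and $\spch n n$ are concentrated in degrees $n-1$ and $n$ with values $\bbS[n-1]$ and $\bbS[n]$, so by the description of Construction \ref{constr-preaush} the map $s$ vanishes outside degrees $n-1$ and $n$, where it is $\bbS[n-1]\to 0$ in degree $n-1$ and $0\to\bbS[n]$ in degree $n$. Therefore $\cofib(s)$ vanishes outside degrees $n-1,n$ and equals $\cofib(\bbS[n-1]\to 0)\simeq\bbS[n]$ in degree $n-1$ and $\cofib(0\to\bbS[n])\simeq\bbS[n]$ in degree $n$.

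Next I would pin down the differential of $\cofib(s)$ between degrees $n-1$ and $n$. The only nontrivial piece of coherence data carried by $s$ is its naturality square for that differential, which by Construction \ref{constr-preaush} is exactly the defining (pushout) square for the suspension of $\bbS[n-1]$, with corners $\bbS[n-1]$, $0$, $0$, $\bbS[n]$. Passing to cofibers along the $s$-direction of that square produces the differential of $\cofib(s)$, and since the square is a pushout the resulting map on cofibers is an equivalence. Hence $\cofib(s)$ is a coherent cochain complex concentrated in degrees $n-1,n$, equal to $\bbS[n]$ in each, with invertible differential.

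Finally I would compare with $\evalfun_{n-1}\bbS[n]$. By Lemma \ref{lemma-st-yo-is-y} and the explicit formulas of Proposition \ref{prop-eval-adj}, $\evalfun_{n-1}\bbS[n]\simeq(\ev_{n-1})_!(\bbS)[n]$, a complex with the same degreewise values and with identity differential; more robustly, for any $D\in\coCh(\Sp)$ concentrated in degrees $\{n-1,n\}$ with invertible differential, the counit $(\ev_{n-1})_!(D^{n-1})\to D$ adjoint to $\id_{D^{n-1}}$ is, by the pointwise description in Proposition \ref{prop-eval-adj}, the identity in degree $n-1$ and the differential of $D$ in degree $n$, hence a degreewise — and therefore an — equivalence. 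Applying this to $D=\cofib(s)$ and using exactness of $(\ev_{n-1})_!$ yields $\cofib(s)\simeq(\ev_{n-1})_!(\bbS[n])\simeq\evalfun_{n-1}\bbS[n]$, giving the asserted co/fiber sequence with first map the structure map of Construction \ref{constr-preaush}. The main obstacle is this last step — controlling the full homotopy-coherent structure of the cofiber, not merely its underlying objects — which is what the universal property of $(\ev_{n-1})_!$ is meant to handle; invertibility of the differential, coming from bicartesianness of the suspension square, is the other ingredient, and with both in hand the remainder is bookkeeping with formulas already established.
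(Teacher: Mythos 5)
Your proof is correct and follows the same basic route as the paper's: by Proposition~\ref{prop-coch-pst-recoll} the inclusion $\coCh(\Sp)\hookrightarrow\Pst\catCh$ preserves colimits, so the cofiber of the structure map is computed degreewise, and inspection of the resulting complex identifies it with $\evalfun_{n-1}\bbS[n]$. The paper leaves this last identification to ``inspection of the diagram''; you make explicit the two points that inspection is silently carrying — namely that the differential of the cofiber in degrees $n-1 \to n$ is an equivalence because Construction~\ref{constr-preaush}'s square is a pushout, and that a coherent complex concentrated in two adjacent degrees with invertible differential is, by the counit of $(\ev_{n-1})_! \dashv \ev_{n-1}$ and the pointwise formulas of Proposition~\ref{prop-eval-adj}, canonically equivalent to $(\ev_{n-1})_!$ applied to its bottom degree. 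That is a genuine tightening of the paper's argument rather than a different approach.
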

\begin{proof}
  By Proposition \ref{prop-coch-pst-recoll}, the inclusion of $\coCh(\Sp)$ into
  $\Pst\catCh$ preserves colimits, therefore we can understand the colimit
  in the latter \cat, where it can be computed pointwise (see
  \cite[5.1.2.3]{HTT}).
  The result then follows by inspection of the following diagram
  $$
  \begin{tikzcd}
    \spch{n-1}{n-1} \ar[d] &[-25pt] = &[-25pt] \cdots \ar[r] & 0 \ar[r] \ar[d]
     &\bbS[n-1] \ar[r] \ar[d] &  0 \ar[r] \ar[d] &  0 \ar[r] \ar[d] & \cdots \\
    \spch n n \ar[d] &[-25pt] = &[-25pt] \cdots \ar[r] & 0 \ar[r] \ar[d] &
      0 \ar[r] \ar[d] &  \bbS[n] \ar[r] \ar[d, equal] &  0 \ar[r] \ar[d]
      & \cdots \\
    \evalfun_{n-1} \bbS[n] &[-25pt] = &[-25pt] \cdots \ar[r] & 0 \ar[r] &
      \bbS[n] \ar[r, equal] & \bbS[n] \ar[r] &  0 \ar[r] & \cdots .\\
  \end{tikzcd}
  $$
\end{proof}

The previous result lets us get an explicit description of the graded pieces
of $\imp$.

\begin{prop}\label{prop-gr-of-imp}
  Let $C\in\coCh(\Sp)$; then
  $$\gr^n(\imp C) \simeq C^n [-n].$$
\end{prop}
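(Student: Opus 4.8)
The plan is to read off $\gr^n(\imp C)$ directly from the explicit description of $\imp$ in Remark \ref{rem-explicit-imp}, using the cofiber sequence of Lemma \ref{lemma-cof-seq-repr-coch} and the identification of the mapping spectra out of the $\evalfun_n\bbS$ coming from Lemma \ref{lemma-st-yo-is-y}. Concretely, recall that $\imp C$ is the filtered spectrum with $\imp C^n \simeq \map_{\coCh(\Sp)}(\spch n n, C)$, and that its transition map $\imp C^{n+1}\to\imp C^n$ is obtained by applying the exact contravariant functor $\map_{\coCh(\Sp)}(-,C)$ to the structure map $\spch n n\to\spch{n+1}{n+1}$ of Construction \ref{constr-preaush}; this last point is built into the definition of the stable nerve, but I would spell it out. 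By Definition \ref{defi-gr}, $\gr^n(\imp C)$ is the cofiber of this transition map.

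Next I would take the cofiber sequence of Lemma \ref{lemma-cof-seq-repr-coch}, reindexed via $n\mapsto n+1$, namely $\spch n n\to\spch{n+1}{n+1}\to\evalfun_n\bbS[n+1]$, whose first arrow is exactly the structure map above. Applying $\map_{\coCh(\Sp)}(-,C)$ turns it into a fiber sequence of spectra $\map_{\coCh(\Sp)}(\evalfun_n\bbS[n+1],C)\to\imp C^{n+1}\to\imp C^n$, so that $\fib(\imp C^{n+1}\to\imp C^n)\simeq\map_{\coCh(\Sp)}(\evalfun_n\bbS[n+1],C)$. Since $\coCh(\Sp)$ is stable, the cofiber of a map is the $1$-fold suspension of its fiber, and the mapping spectrum is exact in the first variable, so
$$\gr^n(\imp C)\simeq\map_{\coCh(\Sp)}(\evalfun_n\bbS[n+1],C)[1]\simeq\map_{\coCh(\Sp)}(\evalfun_n\bbS,C)[-n].$$

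Finally I would identify $\map_{\coCh(\Sp)}(\evalfun_n\bbS,C)\simeq C^n$: this is the mapping-spectrum refinement of the first equivalence in Lemma \ref{lemma-st-yo-is-y}, using $\evalfun_n\bbS\simeq(\ev_n)_!\bbS$ together with the adjunction $(\ev_n)_!\dashv\ev_n$ of Proposition \ref{prop-eval-adj}, which gives $\map_{\coCh(\Sp)}(\evalfun_n\bbS,C)\simeq\map_{\Sp}(\bbS,C^n)\simeq C^n$. Combining the two displays yields $\gr^n(\imp C)\simeq C^n[-n]$, as desired. I expect the only delicate point to be the bookkeeping: checking that $\map_{\coCh(\Sp)}(-,C)$ applied to the structure map of $\preaush$ really is the transition map of the filtered object $\imp C$ (a formal consequence of the nerve--realization formalism of Appendix \ref{appendix-first}, but worth stating), and keeping the suspension shifts straight across the fiber/cofiber comparison. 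No genuinely hard step is anticipated.
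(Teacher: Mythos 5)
Your argument is correct and follows essentially the same route as the paper: apply $\map_{\coCh(\Sp)}(-,C)$ to the cofiber sequence $\spch n n\to\spch{n+1}{n+1}\to\evalfun_n\bbS[n+1]$ of Lemma \ref{lemma-cof-seq-repr-coch}, read off $\gr^n(\imp C)$ as the cofiber (equivalently the shifted fiber) of the transition map, and identify the remaining mapping spectrum with $C^n[-n]$. The one cosmetic difference is in the last identification: you use $\evalfun_n\bbS\simeq(\ev_n)_!\bbS$ and the adjunction $(\ev_n)_!\dashv\ev_n$, whereas the paper writes $\evalfun_n\bbS\simeq\ptize\circ\susinftyp\Yo_n$ and invokes Proposition \ref{prop-st-yo-kinda}; these amount to the same thing and both factor through Lemma \ref{lemma-st-yo-is-y}. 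Your remark that one should, in principle, check that the transition maps of $\imp C$ really are $\map(-,C)$ applied to the structure maps $\spch n n\to\spch{n+1}{n+1}$ is a fair point of hygiene; the paper leaves this implicit, referring only to Remark \ref{rem-explicit-imp}.
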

\begin{proof}
  By Lemma \ref{lemma-cof-seq-repr-coch}, we have a co/fiber sequence
  $$\spch{n}{n} \to \spch{n+1}{n+1} \to \evalfun_{n} \bbS[n+1].$$
  By applying $\map_{\coCh(\Sp)}(-,C)$ to it, we get
  $$\map_{\coCh(\Sp)}(\evalfun_{n}\bbS[n+1],C)\to\imp C(n+1)\to\imp C(n),$$
  (see Remark \ref{rem-explicit-imp})
  and, by Lemma \ref{lemma-st-yo-is-y},
  \begin{displaymath}
  \begin{split}
    \map_{\coCh(\Sp)}(\evalfun_{n}\bbS[n+1],C)
    &\simeq\map_{\coCh(\Sp)}(\ptize \circ \susinftyp \Yo_{n},C[-n-1]) \\
    &\simeq\map_{\Pst\catCh}(\susinftyp \Yo_{n},C[-n-1]) \\
    &\simeq C^{n}[-n-1]
  \end{split}
  \end{displaymath}
  (where the last equivalence follows from Proposition \ref{prop-st-yo-kinda}).
\end{proof}

\begin{cor}\label{cor-almost-all-about-imp}
  Let $C\in\coCh(\Sp)$. Then
  \begin{enumerate}
    \item If $C^m \simeq 0$, then $\imp C^m \simeq \imp C^{m+1}$.
      \label{item-trivone}
    \item If there exists an $N$ such that $C^m \simeq 0$ for all $m < N$,
      then $\imp C^m \simeq \imp C^N$ for all $m \leq N$.
      \label{item-trivtwo}
    \item If there exists an $N$ such that $C^m \simeq 0$ for all $m\geq N$,
      then $\imp C^m \simeq \imp C^N$ for all $m \geq N$.
      \label{item-trivthree}
    \item If there exists an $N$ such that $C^m \simeq 0$ for all $m\geq N$,
      then $\imp C^m \simeq 0$ for all $m \geq N$ and
      $\imp C^{N-1} \simeq C^{N-1}[-N+1]$.\label{item-nontriv}
    \item For all $n\in\bbZ$, we have
      that $\imp \spch n n \simeq \susinftyp\Yo_n$.\label{item-repres}
  \end{enumerate}
\end{cor}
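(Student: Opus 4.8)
The plan is to deduce all five claims from the graded computation $\gr^m(\imp C)\simeq C^m[-m]$ of Proposition \ref{prop-gr-of-imp}, together with the completeness of $\imp C$ established in Proposition \ref{prop-imp-is-complete}; none of it requires new ideas beyond those two facts.

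First, for part \ref{item-trivone} I would recall that, by Definition \ref{defi-gr}, $\gr^m(\imp C)$ is the cofiber of the structure map $\imp C^{m+1}\to\imp C^m$. If $C^m\simeq 0$, Proposition \ref{prop-gr-of-imp} gives $\gr^m(\imp C)\simeq C^m[-m]\simeq 0$, and since $\Sp$ is stable a map with trivial cofiber is an equivalence, so $\imp C^{m+1}\xto{\sim}\imp C^m$. Parts \ref{item-trivtwo} and \ref{item-trivthree} then follow by iterating \ref{item-trivone} finitely many times: in each case the relevant finite chain of structure maps $\imp C^N\to\cdots\to\imp C^m$ (respectively $\imp C^m\to\cdots\to\imp C^N$) is a composite of equivalences. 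For part \ref{item-nontriv} I would combine \ref{item-trivthree} with the fact, read off from the proof of \ref{item-trivone}, that every structure map $\imp C^{m+1}\to\imp C^m$ with $m\geq N$ is an equivalence: the tail $\cdots\to\imp C^{N+1}\to\imp C^N$ is then cofinal in the diagram $\imp C\colon\bbZ\op\to\Sp$ and consists of equivalences, so $\lim_i\imp C^i\simeq\imp C^N$; since $\imp C$ is complete (Proposition \ref{prop-imp-is-complete}) this limit vanishes, hence $\imp C^m\simeq 0$ for all $m\geq N$, and then $\gr^{N-1}(\imp C)=\cofib(0\to\imp C^{N-1})\simeq\imp C^{N-1}$ is identified with $C^{N-1}[-N+1]$ by Proposition \ref{prop-gr-of-imp} once more.

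For part \ref{item-repres} I would first apply part \ref{item-trivtwo} (with $N=n$) and part \ref{item-nontriv} (with $N=n+1$) to $C=\spch n n$: this shows that $\imp\spch n n$ has component $\bbS$ in degrees $\leq n$ and $0$ in degrees $>n$, with the maps between the copies of $\bbS$ equivalences, matching the components of $\susinftyp\Yo_n$. To promote this to an equivalence of filtered spectra I would use the adjunction: recalling from the proof of Lemma \ref{lemma-aush-graded-same} that $\aush(\susinftyp\Yo_n)\simeq\spch n n$, the unit of $\aush\dashv\imp$ furnishes a map $\eta\colon\susinftyp\Yo_n\to\imp\aush(\susinftyp\Yo_n)\simeq\imp\spch n n$ between objects that are both complete (the target by Proposition \ref{prop-imp-is-complete}, the source because its components vanish above $n$). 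By Lemma \ref{lemma-caush-conservative} it then suffices to show $\caush(\eta)$ is an equivalence; since $\aush$ and $\caush$ agree on complete objects and $\aush(-)^m\simeq\gr^m(-)[m]$, Proposition \ref{prop-gr-of-imp} gives $\caush(\imp\spch n n)\simeq\aush(\imp\spch n n)\simeq\spch n n$, so $\caush(\eta)$ is an endomorphism of $\spch n n$ which, by the triangle identity for $\aush\dashv\imp$, admits a left inverse. Finally, as $\End_{\coCh(\Sp)}(\spch n n)\simeq\map_\Sp(\bbS[n],\bbS[n])\simeq\bbS$, a left-invertible endomorphism of $\spch n n$ is $\pm 1$ on $\pi_0=\bbZ$ and hence an equivalence; so $\caush(\eta)$, and therefore $\eta$, is an equivalence, giving $\imp\spch n n\simeq\susinftyp\Yo_n$.

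The hard part will be the last step \ref{item-repres}: passing from ``the components of $\imp\spch n n$ and $\susinftyp\Yo_n$ agree levelwise'' to an honest equivalence of filtered objects is not formal, and seems to require routing through the adjunction $\aush\dashv\imp$, the conservativity of $\caush$, and the computation of $\End(\spch n n)$. Parts \ref{item-trivone}--\ref{item-nontriv} are essentially immediate once Proposition \ref{prop-gr-of-imp} and the completeness of $\imp C$ are in hand.
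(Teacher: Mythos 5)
Your proofs of parts \ref{item-trivone}--\ref{item-nontriv} are essentially identical to the paper's: use Proposition \ref{prop-gr-of-imp} to identify $\gr^m(\imp C)$, read off which structure maps $\imp C^{m+1}\to\imp C^m$ have trivial cofiber and are therefore equivalences, iterate, and for \ref{item-nontriv} combine the eventually-stable tail with completeness of $\imp C$. This is exactly what the paper does.

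For part \ref{item-repres}, you go a different route. The paper's proof is terse (``follows from the definition of $\spch n n$ together with (1)--(4)''), with the implicit step being that a filtered spectrum whose components are $\bbS$ in degrees $\leq n$ and $0$ above, and whose structure maps below $n$ are equivalences, is uniquely determined: such an $F$ is the left Kan extension of $F^n\simeq\bbS$ along $\{n\}\hookrightarrow\bbZ\op$ (the counit $\Lan_{\{n\}}(F^n)\to F$ is a levelwise equivalence by inspection of the comma categories), which pins $F$ down as $\susinftyp\Yo_n$. Your assessment that this promotion ``is not formal'' is a bit pessimistic — the Kan extension argument is quite clean — but your alternative is sound. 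You instead take the unit $\eta\colon\susinftyp\Yo_n\to\imp\aush\susinftyp\Yo_n\simeq\imp\spch n n$, observe both sides are complete, reduce to $\caush(\eta)$ being an equivalence by conservativity (Lemma \ref{lemma-caush-conservative}), identify $\caush(\imp\spch n n)\simeq\spch n n$ from Proposition \ref{prop-gr-of-imp} together with the observation that a coherent cochain complex concentrated in a single degree is determined by that component (a point you should state explicitly, since matching components alone does not in general give an equivalence in $\coCh(\Sp)$), and then use the triangle identity plus $\End(\spch n n)\simeq(\imp\spch n n)^n\simeq\bbS$ (the first equivalence being Remark \ref{rem-explicit-imp}) to see $\caush(\eta)$ is left-invertible, hence $\pm1\in\pi_0\bbS$, hence invertible. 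This is correct and non-circular — you only invoke material that precedes the corollary — but it leans on more machinery than the paper's direct argument. The upshot of your route is that it avoids any hand-waving about promoting componentwise data to a filtered equivalence; the cost is the detour through $\End(\spch n n)$ and the left-invertibility trick, and the one small gap (the degree-$n$-concentrated complex being determined by its component) you glide over in one clause.
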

\begin{proof}
  The first three points are straightforward consequences of Proposition
  \ref{prop-gr-of-imp}, whereas (\ref{item-nontriv}) follows immediately
  from (\ref{item-trivthree}), together with the completeness of $\imp C$
  (see Proposition \ref{prop-imp-is-complete}).
  Point (\ref{item-repres}) follows from the definition of
  $\spch n n$ together with (\ref{item-trivone}-\ref{item-nontriv}).
\end{proof}

\begin{prop}\label{prop-counit-almost-equiv}
  The composite $\aush \imp$ is equivalent to $\id_{\coCh(\Sp)}$.
\end{prop}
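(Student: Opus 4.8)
The plan is to show that the counit $\epsilon\colon\aush\imp\Rightarrow\id_{\coCh(\Sp)}$ of the adjunction $\aush\dashv\imp$ (Definition \ref{def-aush-imp}) is an equivalence. Since the functor $\undch\colon\coCh(\Sp)\to\prod_{\bbZ}\Sp$ is conservative by Lemma \ref{lemma-coch-to-prod-conservative}, it is enough to prove that for each $n\in\bbZ$ the natural transformation $\ev_n(\epsilon)\colon\ev_n\aush\imp\Rightarrow\ev_n$ of functors $\coCh(\Sp)\to\Sp$ is an equivalence. The first observation is that the source is abstractly trivial: composing the natural equivalence $\ev_n\circ\aush\simeq\gr^n(-)[n]$ of Lemma \ref{lemma-aush-graded-same} with the identification $\gr^n(\imp C)\simeq C^n[-n]$ of Proposition \ref{prop-gr-of-imp} yields a natural equivalence $\ev_n\aush\imp\simeq\ev_n$. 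In particular $\ev_n\aush\imp$ is exact and preserves all colimits, just like $\ev_n$ itself (colimits in $\coCh(\Sp)$ being computed levelwise by Proposition \ref{prop-coch-pst-recoll}).

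The second step is a reduction to generators. As $\ev_n(\epsilon)$ is a natural transformation between exact, colimit-preserving functors, the full subcategory of $\coCh(\Sp)$ on which it is an equivalence is closed under colimits and shifts. By Lemma \ref{lemma-evalfun-dense} the objects $\evalfun_m\bbS$ generate $\coCh(\Sp)$ under colimits and shifts, and by Lemma \ref{lemma-cof-seq-repr-coch} each $\evalfun_m\bbS$ is a shift of the cofiber of a structure map $\spch m m\to\spch{m+1}{m+1}$; hence it suffices to check that $\ev_n(\epsilon_{\spch m m})$ is an equivalence for all $m,n\in\bbZ$.

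Finally one verifies this on the objects $\spch m m$. If $n\neq m$, then $(\spch m m)^n\simeq 0$ and, by the identification of the first step, $(\aush\imp\spch m m)^n\simeq\gr^n(\imp\spch m m)[n]\simeq(\spch m m)^n\simeq 0$, so there is nothing to prove. If $n=m$, recall that $\spch m m\simeq\aush(\susinftyp\Yo_m)$ by construction of $\aush$, and that $\imp\spch m m\simeq\susinftyp\Yo_m$ by Corollary \ref{cor-almost-all-about-imp}(\ref{item-repres}); applying $\ev_m$ to the triangle identity $\epsilon_{\aush F}\circ\aush(\eta_F)\simeq\id_{\aush F}$ with $F=\susinftyp\Yo_m$ exhibits $\ev_m(\epsilon_{\spch m m})\colon(\aush\imp\spch m m)^m\to(\spch m m)^m\simeq\bbS[m]$ as a split epimorphism. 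Since $\bbS[m]$ is nonzero and $\pi_0\End_{\Sp}(\bbS[m])\cong\bbZ$ has no idempotents besides $0$ and $1$, any split epimorphism $\bbS[m]\to\bbS[m]$ is an equivalence; this completes the argument.

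The main obstacle is precisely this last point. Lemma \ref{lemma-aush-graded-same} and Proposition \ref{prop-gr-of-imp} only produce an \emph{abstract} equivalence $\ev_n\aush\imp\simeq\ev_n$, and the real task is to see that this equivalence is the one induced by the counit; the reduction to the generators $\spch m m\simeq\aush(\susinftyp\Yo_m)$, where the triangle identity of the adjunction becomes available, together with the triviality of $\End_{\Sp}(\bbS[m])$, is what carries this out. Everything else is formal bookkeeping with the recollement and density results already established.
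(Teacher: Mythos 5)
Your proof is correct, but it takes a genuinely different route from the paper's. The paper's argument is shorter and proves exactly the stated claim: it uses cocontinuity of $\aush\imp$ and the density formula of Lemma \ref{lemma-evalfun-dense} to reduce to showing $\aush\imp\evalfun_n\bbS\simeq\evalfun_n\bbS$, and verifies this object-level equivalence directly by combining the cofiber sequence of Lemma \ref{lemma-cof-seq-repr-coch} with $\imp\spch nn\simeq\susinftyp\Yo_n$ and $\aush\susinftyp\Yo_n\simeq\spch nn$; the paper deliberately does not attempt to show that the counit \emph{itself} is an equivalence, instead deferring that upgrade to Proposition \ref{prop-eqv-induces-unit-eqv} (the homotopy-coherent monad argument) in the proof of Theorem \ref{thm-cohch-fild-equiv}. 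You instead target the counit directly: you reduce via conservativity of $\undch$ and the density/cofiber-sequence argument to the representables $\spch mm$, where the triangle identity $\eps_{\aush F}\circ\aush(\eta_F)\simeq\id$ with $F=\susinftyp\Yo_m$ exhibits $\ev_m(\eps_{\spch mm})$ as a split self-epimorphism of $\bbS[m]$, which must be an equivalence because $\pi_0\End_{\Sp}(\bbS[m])\cong\bbZ$. Your approach proves a stronger statement (that the specific natural transformation $\eps$ is an equivalence, not merely that some equivalence exists) and would allow Theorem \ref{thm-cohch-fild-equiv} to bypass Proposition \ref{prop-eqv-induces-unit-eqv} entirely, at the cost of invoking the rigidity of $\pi_0\End(\bbS[m])$, which the paper's argument does not need. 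Two small expository remarks: in the final step you should state explicitly that the source $(\aush\imp\spch mm)^m$ is $\bbS[m]$ (this is the abstract equivalence from your first paragraph), and the cleanest justification for ``split epi $\bbS[m]\to\bbS[m]$ is an equivalence'' is that $[f]$ is then a unit in $\pi_0\bbS\cong\bbZ$ and hence $f$ is invertible, rather than the idempotent phrasing, though that also works after some unwinding.
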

\begin{proof}
  It follows from Remark \ref{rem-explicit-imp} that $\imp$ commutes with
  arbitrary coproducts, and is thus cocontinuous. Hence, $\aush \imp$ is
  cocontinuous as well.
  By Lemma \ref{lemma-evalfun-dense}, we have that every coherent cochain
  complex is canonically the colimit of elements in the essential image of
  $\evalfun_{-}\bbS$.
  In particular, it is enough to check that the two functors agree on elements
  of the form $\evalfun_{n}\bbS$.
  Using Lemma \ref{lemma-cof-seq-repr-coch} and
  Corollary \ref{cor-almost-all-about-imp}.\ref{item-repres}, we have
  that
  \begin{displaymath}
  \begin{split}
    \aush\imp\evalfun_n\bbS
    &\simeq\cofib\left(\aush\imp\spch n n \to \aush\imp\spch{n+1}{n+1}\right)\\
    &\simeq\cofib\left(\aush\susinftyp\Yo_n \to \aush\susinftyp\Yo_{n+1}\right)\\
    &\simeq\cofib\left(\spch n n \to \spch{n+1}{n+1}\right)\\
    &\simeq\evalfun_n\bbS
  \end{split}
  \end{displaymath}
  concluding the proof.
\end{proof}

We can now prove the following theorem.

\begin{thm}\label{thm-cohch-fild-equiv}
  The adjunction
  $$
  \begin{tikzcd}[column sep=huge]
    \cFild\Sp \ar[r, shift left=1.1ex, "\caush"] &
    \coCh(\Sp). \ar[l, shift left=1.1ex,"\imp"]
      \ar[l,phantom,"\text{\rotatebox{-90}{$\dashv$}}"]
  \end{tikzcd}
  $$
  is an equivalence.
\end{thm}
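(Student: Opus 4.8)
The plan is to prove that \emph{both} the unit and the counit of the adjunction $\caush\dashv\imp$ are equivalences. The counit will be extracted with essentially no extra work from Proposition \ref{prop-counit-almost-equiv}, while for the unit I would feed the counit into the triangle identities and invoke the conservativity of $\caush$.

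First I would make the relation between $\aush$, $\caush$ and $\imp$ precise. By Proposition \ref{prop-imp-is-complete} the shelling functor factors as $\aush\simeq\caush\circ L$, where $L\colon\Fild\Sp\to\cFild\Sp$ is the Bousfield localization of Proposition \ref{prop-cfild-bousfield}, and the piling-up functor takes values in complete objects, so we may regard $\imp$ as a functor $\coCh(\Sp)\to\cFild\Sp$ right adjoint to $\caush$. Since the inclusion $\cFild\Sp\hookrightarrow\Fild\Sp$ is fully faithful, $L$ restricts to the identity on $\cFild\Sp$, and hence Proposition \ref{prop-counit-almost-equiv} furnishes a natural equivalence $\caush\,\imp\simeq\aush\,\imp\simeq\id_{\coCh(\Sp)}$ — but for the moment only as an \emph{abstract} natural equivalence, not yet identified with the counit.

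Next I would upgrade this to the statement that $\imp$ is fully faithful, i.e.\ that the counit $\eps\colon\caush\,\imp\To\id_{\coCh(\Sp)}$ is an equivalence. For this I would apply Proposition \ref{prop-eqv-induces-unit-eqv} to the opposite adjunction $\imp\op\dashv\caush\op$: the equivalence of the previous step provides a natural isomorphism $\id_{\coCh(\Sp)\op}\To\caush\op\circ\imp\op$, so the cited proposition shows that the unit of $\imp\op\dashv\caush\op$ is an equivalence; hence $\imp\op$, and therefore $\imp$, is fully faithful, which is exactly the assertion that $\eps$ is an equivalence. Then, writing $\eta\colon\id_{\cFild\Sp}\To\imp\,\caush$ for the unit, the triangle identity $(\eps\caush)\circ(\caush\eta)\simeq\id_{\caush}$ together with the fact that $\eps\caush$ is an equivalence forces $\caush\eta$ to be an equivalence; since $\caush$ is conservative by Lemma \ref{lemma-caush-conservative}, it follows that $\eta$ is an equivalence as well. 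With both unit and counit invertible, $\caush\dashv\imp$ is an equivalence of \cats.

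The one delicate point is the second step: Proposition \ref{prop-counit-almost-equiv} only delivers an abstract equivalence $\aush\,\imp\simeq\id$, and turning this into a statement about the counit is precisely what makes the detour through opposite categories (and Proposition \ref{prop-eqv-induces-unit-eqv}) necessary. An alternative, more hands-on route would be to compute the counit directly: using the conservative functor $\undch$ of Lemma \ref{lemma-coch-to-prod-conservative} it suffices to check that each component $(\caush\,\imp\,C)^n\to C^n$ is an equivalence, which can be read off from Lemma \ref{lemma-aush-graded-same} (giving $(\caush\,\imp\,C)^n\simeq\gr^n(\imp C)[n]$) together with Proposition \ref{prop-gr-of-imp} (giving $\gr^n(\imp C)\simeq C^n[-n]$), at the cost of having to track the counit map explicitly.
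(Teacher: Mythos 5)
Your proof is correct and follows essentially the same path as the paper's: use Propositions \ref{prop-counit-almost-equiv} and \ref{prop-imp-is-complete} to get $\caush\,\imp\simeq\id$, invoke (the dual of) Proposition \ref{prop-eqv-induces-unit-eqv} to upgrade this abstract equivalence to the statement that the counit is an equivalence, and then combine the triangle identity with the conservativity of $\caush$ (Lemma \ref{lemma-caush-conservative}) to get the unit. Your explicit detour through the opposite adjunction $\imp\op\dashv\caush\op$ is exactly what the paper means by ``the dual of Proposition \ref{prop-eqv-induces-unit-eqv}''; you have merely unfolded that step.
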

\begin{proof}
  Putting together Proposition \ref{prop-counit-almost-equiv}, and Proposition
  \ref{prop-imp-is-complete}, we get that $\caush\imp\simeq\id_{\coCh(\Sp)}$.
  This, together with (the dual of) Proposition \ref{prop-eqv-induces-unit-eqv}
  implies that the counit
  $\eps\colon\caush\imp\Longrightarrow \id_{\coCh{\Sp}}$ of the adjunction
  $\caush\dashv\imp$ is an equivalence.
  If we now consider the triangular identity
  $$
  \begin{tikzcd}
    \caush \ar[r, Rightarrow, "\caush \eta"]
      \ar[dr, Rightarrow, "\id_{\caush}"'] &
      \caush\imp\caush
      \ar[d, Rightarrow, "\eps_{\caush}"] \\
    & \caush
  \end{tikzcd}
  $$
  we see that, as both $\id_\caush$ and $\eps_\caush$
  are equivalences, $\caush\eta$ is an equivalence as well;
  but, as by Lemma \ref{lemma-caush-conservative} $\caush$ is
  conservative, then $\eta$ has to be an
  equivalence, concluding the proof.
\end{proof}

We conclude the section giving more information about the functor $\imp$;
we can leverage from Lemma \ref{lemma-cof-seq-repr-coch} our
understanding of it, by getting
a recursive description of its components, which in turn gives a complete
description of its values in the case of bounded above cochain complexes.

In the rest of the section, we will make free use of some results about
cubic diagrams in stable \cats, as presented in \cite{DJW}, and refer to
\emph{op.\ cit.}\ for all the related concepts, notations and terminology we
use and do not introduce here. The following fact is somewhat implicit in
\cite{DJW}, but as we will use it crucially, we report it
here for convenience.

\begin{prop}\label{prop-totcof-itercof}
  Let $\scrC$ be a stable \cat, let $a\in\bbN$ and let
  $C:(\Delta^a)\op\to\scrC$ be a functor.
  Moreover, if $(\Delta^1)^a$ denotes the $a$-fold product of $\Delta^1$ with
  itself, and we denote by
  $\overrightharp{v} =
  (\overrightharp{v}_{a}, \overrightharp{v}_{a-1}, \ldots, \overrightharp{v}_1)$
  the objects of $(\Delta^1)^a$ (where each $\overrightharp{v}_i$ can either
  be $0$ or $1$),
  let $F$ be the $a$-cube having for vertices:
  $$
  F (\overrightharp{v}) =
  \begin{cases}
    C(a) \quad &\text{if } \overrightharp{v} = (0,\ldots,0)
      \\[10pt]
      C(a-b)
      \quad &\text{if } 0 < b \leq a
      \text{, } \overrightharp{v}_i = 0 \text{ for } i > b
      \text{ and } \overrightharp{v}_i = 1 \text{ for } i \leq b
      \\[10pt]
      0 &\text{else}
  \end{cases}
  $$
  where all the nonzero maps are determined by $C$
  (that is, $F$ is an $a$-cube having the $C(i)$'s on its
  ``spine'' and zero objects elsewhere).
  Then,
  $$
  \totcof F\simeq\cofib^a (C) \simeq
  \underbrace{
  \cofib \kern-.2em\Big(\kern-.4em\cdots
  \kern-.2em\cofib \kern-.2em\Big(
  \kern-.2em\cofib
  }_{a \text{ times}}
  \kern-.2em\Big(C(a) \to C(a-1) \Big)
  \to C(1) \Big) \cdots \to C(0) \Big).
  $$
\end{prop}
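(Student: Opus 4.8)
The plan is to induct on $a\in\bbN$, leaning on the standard fact (implicit in \cite{DJW}) that the total cofiber of an $a$-cube can be computed one coordinate direction at a time, and that $\totcof$, being exact, commutes with the formation of levelwise cofibers. The case $a=0$ is vacuous: a $0$-cube is an object of $\scrC$, equal to its own total cofiber, and $\cofib^0(C)=C(0)$. (For $a=1$ one likewise reads off $\totcof F\simeq\cofib(C(1)\to C(0))=\cofib^1(C)$.)

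For the inductive step, assume the claim for $a-1$. I would regard $F$ as a morphism of $(a-1)$-cubes $F_0\to F_1$ by restricting the coordinate $\overrightharp{v}_1$ --- the direction that gets ``switched on first'' along the spine --- to $0$ and to $1$ respectively, so that $\totcof F\simeq\cofib(\totcof F_0\to\totcof F_1)\simeq\totcof G$ with $G\coloneqq\cofib(F_0\to F_1)$ the levelwise cofiber $(a-1)$-cube. Inspecting vertices: $F_0$ is the $(a-1)$-cube carrying $C(a)$ at its initial vertex and $0$ everywhere else, while $F_1$ is precisely the $(a-1)$-cube associated by the present construction to the restricted functor $C|_{\{0,\ldots,a-1\}}\colon(\Delta^{a-1})\op\to\scrC$, whose spine is $C(a-1)\to\cdots\to C(0)$. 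Passing to levelwise cofibers kills every off-spine zero vertex and turns the initial vertex $C(a)$ of $F_0$ into $\cofib(C(a)\to C(a-1))$; hence $G$ is the $(a-1)$-cube associated to the functor $\widetilde{C}\colon(\Delta^{a-1})\op\to\scrC$ that agrees with $C$ on $\{0,\ldots,a-2\}$ and has $\widetilde{C}(a-1)=\cofib(C(a)\to C(a-1))$, the map $\widetilde{C}(a-1)\to\widetilde{C}(a-2)$ being the canonical one out of the cofiber induced by the structure map $C(a-1)\to C(a-2)$. By the inductive hypothesis applied to $\widetilde{C}$ we get $\totcof G\simeq\cofib^{a-1}(\widetilde{C})$, and inspecting the nested-cofiber expression shows $\cofib^{a-1}(\widetilde{C})\simeq\cofib^{a}(C)$, since the innermost cofiber $\cofib(C(a)\to C(a-1))$ appearing in the latter is exactly the term $\widetilde{C}(a-1)$. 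Combining the two equivalences closes the induction.

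The only step I expect to need genuine care is the identification of $G$ with the cube associated to $\widetilde{C}$: one must check not merely that the vertices agree, but that the structure maps of $G$ are the ones prescribed by $\widetilde{C}$ --- in particular the map $\cofib(C(a)\to C(a-1))\to C(a-2)$ and all its higher-dimensional analogues, which encode the coherence of $\widetilde{C}$. These are supplied by the coherence data already built into $F$; for instance, the square of $F$ spanned by the vertices $C(a)$, a zero vertex, and $C(a-2)$ records precisely the nullhomotopy of $C(a)\to C(a-1)\to C(a-2)$ needed to define the structure map of $\widetilde{C}$. It is also worth noting that restricting the distinguished coordinate $\overrightharp{v}_1$, rather than an arbitrary one, is what prevents spurious shifts from appearing in $G$, so that $G$ is again literally a cube of the form appearing in the statement, now for the shorter diagram $\widetilde{C}$.
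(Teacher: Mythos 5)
Your proof is correct, and it takes a genuinely different route from the paper's. The paper invokes \cite[A.24]{DJW} to extend $F$ to a coCartesian $(a+1)$-cube $\widetilde F$ whose opposite face has $\totcof F$ at the terminal vertex and zeros elsewhere; it then applies \cite[A.11]{DJW} iteratively to deduce that $\cofib^a(\widetilde F)$ is an equivalence with source $\cofib^a(F)$ and target $\totcof F$, and leaves the identification $\cofib^a(F)\simeq\cofib^a(C)$ implicit. You instead argue by induction on $a$, splitting the cube as a morphism of $(a-1)$-cubes $F_0\to F_1$ in the $\overrightharp{v}_1$ direction, using that $\totcof$ is exact so that $\totcof F\simeq\totcof(\cofib(F_0\to F_1))$, identifying the levelwise cofiber cube $G$ with the cube of the statement for the shortened diagram $\widetilde C$, and then applying the inductive hypothesis. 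The two approaches appeal to closely related facts from \cite{DJW}, but yours does the iteration explicitly and, importantly, it makes visible the bookkeeping that the paper elides: the verification that $G$ is once again a cube of the required spine-plus-zeros form, with the structure map $\cofib(C(a)\to C(a-1))\to C(a-2)$ furnished by the coherence data of $F$ through the relevant zero vertex. Your observation that restricting along $\overrightharp{v}_1$ specifically (the coordinate that switches on first) is what keeps the cofiber cube in the correct shape, with no spurious suspensions, is exactly right and is the kind of care the paper's terser argument tacitly assumes.
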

\begin{proof}
  By \cite[A.24]{DJW}, one can extend $F$ to a coCartesian $(a+1)$-cube
  $\widetilde F$
  such that $\widetilde{F}|_{\{0\}\times\Delta^a} \simeq F$,
  and with $\widetilde{F}|_{\{1\}\times\Delta^a}$
  having $\totcof F$
  as its terminal vertex, and $0$ elsewhere
  (see \emph{op.\ cit.}\ for a precise statement).
  As $\widetilde F$ is coCartesian, by iteratively applying \cite[A.11]{DJW},
  $\cofib^a(\widetilde{F})$ is a coCartesian $1$-cube, i.e.\ an equivalence.
  We conclude by observing that its source is given by $\cofib^a(F)$, and its
  target is just $\totcof F$.
\end{proof}

\begin{rem}\label{rem-recursive-imp}
  By virtue of Lemma \ref{rem-explicit-imp} and
  Lemma \ref{lemma-cof-seq-repr-coch}, we see that
  \begin{displaymath}
  \begin{split}
    \imp C^n
      &\simeq \map(\spch n n , C) \\
      &\simeq \map  \Big(\fib\big(\evalfun_{n-1}\bbS[n]\to\spch{n}{n-1}\big), C\Big) \\
      &\simeq \cofib\Big(\map\big(\spch{n}{n-1},C\big)\to
        \map\big(\evalfun_{n-1}\bbS[n],C\big)\Big) \\
      &\simeq \cofib\Big(\map\big(\spch{n-1}{n-1},C\big)[-1]\to C^{n-1}[-n]\Big) \\
      &\simeq \cofib\Big(\map\big(\spch{n-1}{n-1},C\big)\to C^{n-1}[-n+1]\Big)[-1] \\
      &\simeq \fib  \Big(\imp C^{n-1}\to C^{n-1}[-n+1]\Big) \\
  \end{split}
  \end{displaymath}
  hence, by iteratively applying the above, we get that, for all $a\in\bbN$,
  $\imp C^n$ is naturally equivalent to
  \begin{displaymath}
  \begin{split}
  \overbrace{
  \fib \kern-.2em\Big(\kern-.4em\cdots
  \kern-.2em\fib \kern-.2em\Big(
  \kern-.2em\fib
  }^{a \text{ times}}
  \kern-.2em\Big(&\imp C^{n-a} \to C^{n-a}[-n+a] \Big)\\
  &\to C^{n-a+1}[-n+a-1] \Big) \cdots \to C^{n-1}[-n+1] \Big).
  \end{split}
  \end{displaymath}
  By stability, the above is equivalent to
  \begin{displaymath}
  \begin{split}
  \overbrace{
  \cofib \kern-.2em\Big(\kern-.4em\cdots
  \kern-.2em\cofib \kern-.2em\Big(
  \kern-.2em\cofib
  }^{a \text{ times}}
  \kern-.2em\Big(&\imp C^{n-a} \to C^{n-a}[-n+a] \Big)[-1]\\
  &\to C^{n-a+1}[-n+a-1] \Big)[-1] \cdots \to C^{n-1}[-n+1] \Big)[-1]
  \end{split}
  \end{displaymath}
  which, in turn, is equivalent to
  \begin{displaymath}
  \begin{split}
  \overbrace{
  \cofib \kern-.2em\Big(\kern-.4em\cdots
  \kern-.2em\cofib \kern-.2em\Big(
  \kern-.2em\cofib
  }^{a \text{ times}}
  \kern-.2em\Big(&\imp C^{n-a}[-a] \to C^{n-a}[-n] \Big)\\
  &\to C^{n-a+1}[-n] \Big) \cdots \to C^{n-1}[-n] \Big).
  \end{split}
  \end{displaymath}
  By \ref{prop-totcof-itercof},
  the latter is equivalent to
  the total cofiber of a suitable cube $F_a$, that is
  $$\imp C^n \simeq \totcof F_a$$
  and hence, by \cite[A.31]{DJW},
  $$\imp C^n \simeq \left(\totfib F_a\right)[a].$$
  We can describe $F_a$ explicitly as follows;
  if $(\Delta^1)^a$ denotes the $a$-fold product of $\Delta^1$ with itself,
  and we denote by
  $\overrightharp{v} =
  (\overrightharp{v}_{a-1}, \overrightharp{v}_{a-2}, \ldots, \overrightharp{v}_0)$
  the objects of $(\Delta^1)^a$ (where each $\overrightharp{v}_i$ can either
  be $0$ or $1$)
  $F_a$ is the $a$-cube having for vertices:
  $$
  F_a (\overrightharp{v}) =
  \begin{cases}
    \imp C^{n-a}[-a] \quad &\text{if } \overrightharp{v} = (0,\ldots,0)
      \\[10pt]
    C^{n-a+b}[-n]
      \quad &\text{if } 0 \leq b < a
      \text{, } \overrightharp{v}_i = 0 \text{ for } i > b
      \text{ and } \overrightharp{v}_i = 1 \text{ for } i \leq b
      \\[10pt]
      0 &\text{else.}
  \end{cases}
  $$
  As an example, in the case $a=3$, the formula above looks as follows
  $$
  \imp C^n \simeq
  \totcof \left(
  \begin{tikzcd}
    \imp C^{n-3}[-3] \ar[rr]\ar[dd]\ar[dr]&& C^{n-3}[-n] \ar[dr] \ar[dd] & \\
    & 0 \ar[rr, crossing over] && C^{n-2}[-n] \ar[dd] \\
    0 \ar[rr]\ar[dr]&& 0 \ar[dr]& \\
    & 0 \ar[rr] \ar[from=uu, crossing over]&& C^{n-1}[-n] \\
  \end{tikzcd}
  \right).
  $$
\end{rem}

The previous remark takes a particularly pleasant form when the complex is
bounded.

\begin{cor}\label{cor-imp-closed-formula-for-bounded}
  Let $C\in\coCh(\Sp)$, and assume there exists an $N$ such that
  $C^m \simeq 0$ for all $m>N$; then for all $a>0$
  \begin{displaymath}
  \begin{split}
  \imp C^{N-a} \simeq
  \overbrace{
  \cofib \kern-.2em\Big(\kern-.4em\cdots
  \kern-.2em\cofib \kern-.2em\Big(
  \kern-.2em\cofib
  }^{a \text{ times}}
  \kern-.2em\Big( &C^{N-a}[-N] \to C^{N-a+1}[-N] \Big)\\
  &\to C^{N-a+2}[-N] \Big) \cdots \to C^{N}[-N] \Big).
  \end{split}
  \end{displaymath}
\end{cor}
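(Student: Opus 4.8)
The plan is to bootstrap from the recursive description of $\imp C^n$ obtained in Remark~\ref{rem-recursive-imp}, using the boundedness hypothesis to supply a vanishing base case. Since $C^m\simeq 0$ for all $m>N$, i.e.\ for all $m\geq N+1$, Corollary~\ref{cor-almost-all-about-imp}.\ref{item-nontriv} (applied with $N+1$ in place of $N$) yields $\imp C^m\simeq 0$ for $m\geq N+1$ together with the identification $\imp C^N\simeq C^N[-N]$; this is the seed.

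Next I would feed this into Remark~\ref{rem-recursive-imp} with $n=N$ and $a$ iterations, which exhibits $\imp C^N$ as the total cofiber of the spine $a$-cube $F_a$ whose spine is
$$\imp C^{N-a}[-a]\to C^{N-a}[-N]\to C^{N-a+1}[-N]\to\cdots\to C^{N-1}[-N]$$
and which is zero off the spine. Combined with the base case this gives $\totcof F_a\simeq\imp C^N\simeq C^N[-N]$. Exactly as in the proof of Proposition~\ref{prop-totcof-itercof}, \cite[A.24]{DJW} lets us extend $F_a$ to a coCartesian $(a+1)$-cube $\widetilde F_a$ with $\widetilde F_a|_{\{0\}\times\Delta^a}\simeq F_a$ and with $\widetilde F_a|_{\{1\}\times\Delta^a}$ having $\totcof F_a\simeq C^N[-N]$ as its terminal vertex and $0$ elsewhere; inspecting the vertices, $\widetilde F_a$ is precisely the spine $(a+1)$-cube on $\imp C^{N-a}[-a]\to C^{N-a}[-N]\to\cdots\to C^{N-1}[-N]\to C^N[-N]$.

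It remains to read off $\imp C^{N-a}$. Being coCartesian, $\widetilde F_a$ is also Cartesian (the total fiber and total cofiber of a $k$-cube differ only by the shift by $k$, \cite[A.31]{DJW}), hence $\totfib\widetilde F_a\simeq 0$. Computing the total fiber of a spine cube as the corresponding right-nested iterated fiber (Proposition~\ref{prop-totcof-itercof} applied in $\scrC\op$, combined with \cite[A.31]{DJW}) and using that a fibre vanishes exactly when the corresponding map is an equivalence, this identifies $\imp C^{N-a}[-a]$ with the total fiber of the spine $a$-cube on $C^{N-a}[-N]\to\cdots\to C^N[-N]$. Applying $\totfib\simeq(\totcof)[-a]$ and Proposition~\ref{prop-totcof-itercof} once more, this total fiber is $\cofib^a\big(C^{N-a}[-N]\to\cdots\to C^N[-N]\big)[-a]$, i.e.\ the $a$-fold iterated cofiber from the statement shifted by $[-a]$; desuspending by $a$ yields the claimed formula.

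I expect the combinatorial bookkeeping to be the only real obstacle: pinning down the shapes of $F_a$ and $\widetilde F_a$ and, above all, tracking the shifts when passing between right-nested iterated fibers, left-nested iterated cofibers, and total fibers/cofibers. One can also avoid the cube language and argue by induction on $a$, using the single-step inverse recursion $\imp C^{N-a}\simeq\cofib\big(C^{N-a}[-N+a-1]\to\imp C^{N-a+1}\big)$ distilled from Remark~\ref{rem-recursive-imp} together with the octahedral identity $\cofib\big(X[1]\to\cofib(Y\to Z)\big)\simeq\cofib\big(\cofib(X\to Y)\to Z\big)$ applied $a-1$ times; but this only relocates the same bookkeeping.
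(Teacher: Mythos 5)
Your proof is correct and follows the same route as the paper's: both use Corollary~\ref{cor-almost-all-about-imp}.\ref{item-nontriv} to seed the base case $\imp C^N\simeq C^N[-N]$, feed it into the spine $a$-cube from Remark~\ref{rem-recursive-imp}, extend to a coCartesian $(a+1)$-cube via \cite[A.24]{DJW}, and then read off $\imp C^{N-a}[-a]$ as the total fiber of the complementary face before applying \cite[A.31]{DJW}. The only quibble is the phrase ``desuspending by $a$'' at the end --- what you actually do is cancel the common shift $[-a]$ from both sides of $\imp C^{N-a}[-a]\simeq \cofib^a(\cdots)[-a]$ --- but the mathematics is sound.
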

\begin{proof}
  It follows from Remark \ref{rem-recursive-imp}, Corollary
  \ref{cor-almost-all-about-imp}.\ref{item-nontriv}
  and \cite[Proposition A.24]{DJW} that the $(a+1)$-cube
  defined by
  $$
  F (\overrightharp{v}) =
  \begin{cases}
    \imp C^{N-a}[-a] \quad &\text{if } \overrightharp{v} = (0,\ldots,0)
      \\[10pt]
    C^{N-a+b}[-N]
      \quad &\text{if } 0 \leq b < a
      \text{, } \overrightharp{v}_i = 0 \text{ for } i > b
      \text{ and } \overrightharp{v}_i = 1 \text{ for } i \leq b
      \\[10pt]
    \imp C^N \simeq C^N[-N]
      \quad &\text{if } \overrightharp{v} = (1,\ldots,1)
      \\[10pt]
    0 &\text{else}
  \end{cases}
  $$
  (where $\overrightharp{v} =
  (\overrightharp{v}_{a}, \overrightharp{v}_{a-1}, \ldots,
  \overrightharp{v}_0)$)
  is coCartesian.
  In particular, again by \cite[Proposition A.24]{DJW},
  $\imp C^{N-a}[-a]$ is the total fiber of $F|_{\Delta^a \times \{1\}}$;
  the thesis follows from \cite[Proposition A.31]{DJW}.
\end{proof}

\begin{ex}
  In the cases $a=2$ and $a=3$, the previous Corollary specializes to
  $$
  \imp C^{N-2} \simeq \totcof \left(
  \begin{tikzcd}
    C^{N-2} \ar[r] \ar[d] & C^{N-1} \ar[d] \\
    0 \ar[r] & C^{N} \\
  \end{tikzcd}
\right)[-N]
  $$
  and
  $$
  \imp C^{N-3} \simeq \totcof \left(
  \begin{tikzcd}
    C^{N-3} \ar[rr]\ar[dd]\ar[dr]&& C^{N-2} \ar[dr] \ar[dd] & \\
    & 0 \ar[rr, crossing over] && C^{N-1} \ar[dd] \\
    0 \ar[rr]\ar[dr]&& 0 \ar[dr]& \\
    & 0 \ar[rr] \ar[from=uu, crossing over]&& C^{N}\\
  \end{tikzcd}
\right)[-N]
  $$
  respectively.
\end{ex}

The following remark, inspired by the ``Gap objects'' considered
in \cite[1.2.2]{HA}, will be useful later, and sheds some light on the
relation between coherent cochain complexes and the objects called
$\bbZ$-complexes in \emph{op.\ cit.}

\begin{rem}\label{rem-intermediate-subquotients}
  Proposition \ref{prop-gr-of-imp} and the calculus of total cofibers of cubic
  diagrams allow us to understand also all the intermediate subquotients of
  $\imp C$.
  We can consider the diagram consisting of co/Cartesian squares
  $$
  \begin{tikzcd}[column sep = 0.7em]
    \imp C^n \ar[r] \ar[d]
            & \imp C^{n-1} \ar[r] \ar[d]
            & \imp C^{n-2} \ar[r] \ar[d]
            & \imp C^{n-3} \ar[r] \ar[d]
            & \imp C^{n-4} \ar[d] \ar[r] & \cdots\\
    0 \ar[r]
            & C^{n-1}[-n+1] \ar[r] \ar[d]
            & \imp C^{n-2}/\imp C^{n} \ar[r] \ar[d]
            & \imp C^{n-3}/\imp C^{n} \ar[r] \ar[d]
            & \imp C^{n-4}/\imp C^{n} \ar[d] \ar[r] & \cdots\\
    & 0 \ar[r]
            & C^{n-2}[-n+2] \ar[r] \ar[d]
            & \imp C^{n-3}/\imp C^{n-1} \ar[r] \ar[d]
            & \imp C^{n-4}/\imp C^{n-1} \ar[d] \ar[r] & \cdots\\
    && 0 \ar[r]
            & C^{n-3}[-n+3] \ar[r]\ar[d]
            & \imp C^{n-4}/\imp C^{n-2} \ar[r]\ar[d] & \cdots\\
    &&&\vdots & \vdots
  \end{tikzcd}
  $$
  from which we can deduce that, for any $n\in\bbZ$
  $$\imp C^{n-2}/\imp C^{n}\simeq \cofib(\partial^{n-2})[-n+1];$$
  moreover, we can proceed inductively and apply
  \cite[A.26]{DJW} to the cofiber sequences
  $$\imp C^{n-k+1}/\imp C^n \to \imp C^{n-k}/\imp C^n \to C^{n-k}[-n+k]$$
  to identify
  $\imp C^{n-k}/\imp C^n$ with the total cofiber of a cube having
  the truncation
  $$\left(C^{n-k}\to C^{n-k+1}\to \cdots \to C^{n-1}\right)[-n+1]$$
  of $C[-n+1]$ on its ``spine'' and zeroes elsewhere; thus, by Proposition
  \ref{prop-totcof-itercof} we have
  \begin{displaymath}
  \begin{split}
  \imp C^{n-k}/\imp C^n \simeq
  \overbrace{
  \cofib \kern-.2em\Big(\kern-.4em\cdots
  \kern-.2em\cofib \kern-.2em\Big(
  \kern-.2em\cofib
  }^{k-1 \text{ times}}
  \kern-.2em\Big( &C^{n-k} \to C^{n-k+1} \Big)\\
  &\to C^{n-k+2} \Big) \cdots \to C^{n-1} \Big)[-n+1]
  \end{split}
  \end{displaymath}
  for any $k\geq 2$.
\end{rem}

\section{The general equivalence}\label{section-general-equiv}
In this section, we extend the result of Theorem \ref{thm-cohch-fild-equiv}
to all stable \cats with sequential limits. Along the way, we prove that
the explicit formulas given for $\caush$ and $\imp$ in the previous section
hold also in the general case.

\begin{lemma}\label{lemma-huge-presheaves}
  Let $\scrC$ be a small stable \cat. Then, the following
  equivalences hold:
  $$\Fun\left(\scrC,\coCh\left(\Sp\right)\right)\simeq
  \coCh\left(\Fun\left(\scrC,\Sp\right)\right)$$
  and
  $$\Fun\left(\scrC,\cFild\left(\Sp\right)\right)
  \simeq\cFild\left(\Fun\left(\scrC,\Sp\right)\right).$$
\end{lemma}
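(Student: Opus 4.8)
The plan is to deduce both equivalences from the exponential law for \cats, $\Fun(\scrC,\Fun(\scrD,\Sp))\simeq\Fun(\scrD,\Fun(\scrC,\Sp))$ (both sides being $\Fun(\scrC\times\scrD,\Sp)$), together with the observation that in each case the relevant full subcategory of $\Fun(\catCh\op,\Sp)$ (resp.\ of $\Fild\Sp$) is cut out by a \emph{pointwise} condition, and is therefore unaffected by transposing the two functor variables.

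First I would apply the exponential law (see \cite{HTT}) with $\scrD=\catCh\op$ and with $\scrD=\bbZ\op$ to obtain
$$\Fun(\scrC,\Fun(\catCh\op,\Sp))\simeq\Fun(\catCh\op,\Fun(\scrC,\Sp)),\qquad
\Fun(\scrC,\Fild\Sp)\simeq\Fild(\Fun(\scrC,\Sp)).$$
Note that $\Fun(\scrC,\Sp)$ is stable, hence pointed, so that $\coCh(\Fun(\scrC,\Sp))$ and $\cFild(\Fun(\scrC,\Sp))$ make sense. It then remains only to check that each of these two equivalences restricts to an equivalence between the full subcategory of pointed functors (resp.\ complete objects) on the left-hand side and the corresponding full subcategory on the right-hand side.

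For the first statement: a functor $G\colon\scrC\to\Fun(\catCh\op,\Sp)$ factors through $\coCh(\Sp)=\Fun^{0}(\catCh\op,\Sp)$ if and only if $G(c)$ carries the zero object $\pt\in\catCh\op$ to a zero object of $\Sp$ for every $c\in\scrC$. Since equivalences — hence the property of being a zero object — are detected pointwise in a functor \cat, the transpose $\wt G\colon\catCh\op\to\Fun(\scrC,\Sp)$ of $G$ satisfies $\wt G(\pt)\simeq 0$ if and only if $\wt G(\pt)(c)\simeq G(c)(\pt)\simeq 0$ for all $c$; that is, $G$ factors through $\coCh(\Sp)$ exactly when $\wt G$ factors through $\coCh(\Fun(\scrC,\Sp))$. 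Restricting the exponential-law equivalence thus yields $\Fun(\scrC,\coCh(\Sp))\simeq\coCh(\Fun(\scrC,\Sp))$.

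For the second statement: a functor $G\colon\scrC\to\Fild\Sp$ factors through $\cFild\Sp$ if and only if $\lim_i G(c)^i\simeq 0$ for every $c\in\scrC$. As limits in $\Fun(\scrC,\Sp)$ are computed pointwise, the transpose $\wt G\colon\bbZ\op\to\Fun(\scrC,\Sp)$ satisfies $(\lim_i\wt G(i))(c)\simeq\lim_i G(c)^i$, so that $\wt G$ is a complete filtered object of $\Fun(\scrC,\Sp)$ if and only if each $G(c)$ is complete; restricting the exponential-law equivalence then gives $\Fun(\scrC,\cFild\Sp)\simeq\cFild(\Fun(\scrC,\Sp))$. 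There is no genuine obstacle here: the only thing to be careful about is that both defining conditions are literally pointwise — that is, that zero objects and sequential limits in $\Fun(\scrC,\Sp)$ are formed and detected objectwise — which is precisely what lets them be transported across the transposition.
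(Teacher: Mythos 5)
Your proof is correct and follows essentially the same route as the paper: pass through $\Fun(\scrC\times\scrD,\Sp)$ via the exponential law and observe that the defining condition (pointedness, or completeness) is a pointwise condition and therefore commutes with transposing the two functor variables. Your treatment is slightly more explicit than the paper's, which only spells out the coherent cochain case and declares the complete filtered case "entirely analogous."
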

\begin{proof}
  We prove this for $\coCh\left(\Fun\left(\scrC,\Sp\right)\right)$, the
  case of
  $\cFild\left(\Fun\left(\scrC,\Sp\right)\right)$ being entirely
  analogous.
  We have that, by definition of $\coCh\left(\Sp\right)$,
  \begin{equation}\label{eq-funcats}
    \Fun\left(\scrC,\coCh\left(\Sp\right)\right)
    \simeq\Fun\left(\scrC,\Fun^0\left(\catCh\op,\Sp\right)\right).
  \end{equation}
  Now, as
  $$\Fun\left(\scrC,\Fun\left(\catCh\op,\Sp\right)\right)
    \simeq\Fun\left(\scrC\times\catCh\op,\Sp\right)$$
  we have that the \cats of (\ref{eq-funcats})
  are equivalent to the full subcategory
  of $\Fun(\scrC\times\catCh\op,\Sp)$ spanned by functors that are pointed in
  the second variable (that is, sending any pair
  $(C,0)\in\scrC\times\catCh\op$ to $0$). Thus, as
  $$\Fun\left(\scrC\times\catCh\op,\Sp\right)
    \simeq\Fun\left(\catCh\op,\Fun\left(\scrC,\Sp\right)\right)$$
  the \cats of (\ref{eq-funcats}) are in turn equivalent to
  the full subcategory
  $$\Fun^0\left(\catCh\op,\Fun\left(\scrC,\Sp\right)\right)
    \subset\Fun\left(\catCh\op,\Fun\left(\scrC,\Sp\right)\right)$$
  spanned by
  pointed functors, which, by definition, is $\coCh\left(\Fun\left(\scrC,
  \Sp\right)\right)$.
\end{proof}

\begin{cor}\label{cor-eq-for-psh}
  Let $\scrC$ be a small stable \cat. Then, the equivalence of
  Theorem \ref{thm-cohch-fild-equiv} extends to an equivalence between
  $\coCh(\Fun(\scrC,\Sp))$ and $\cFild(\Fun(\scrC,\Sp))$
  which we will again denote by
  $$
  \begin{tikzcd}[column sep=huge]
    \cFild(\Fun(\scrC,\Sp)) \ar[r, shift left=1.1ex, "\caush"] &
    \coCh(\Fun(\scrC,\Sp)). \ar[l, shift left=1.1ex,"\imp"]
      \ar[l,phantom,"\sim" description]
  \end{tikzcd}
  $$
\end{cor}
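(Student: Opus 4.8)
The plan is to deduce the statement formally from Lemma~\ref{lemma-huge-presheaves} together with Theorem~\ref{thm-cohch-fild-equiv}, and then to verify that the resulting equivalence is still described by the explicit formulas of Section~\ref{section-filandch}. First I would post-compose the equivalence $\caush\colon\cFild\Sp\xto{\sim}\coCh(\Sp)$ of Theorem~\ref{thm-cohch-fild-equiv} with the functor $\Fun(\scrC,-)$; since $\Fun(\scrC,-)$ sends equivalences of \cats to equivalences of \cats, this produces an equivalence $\Fun(\scrC,\cFild\Sp)\xto{\sim}\Fun(\scrC,\coCh(\Sp))$, with inverse induced by $\imp$. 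Composing on both sides with the two equivalences furnished by Lemma~\ref{lemma-huge-presheaves}, namely $\cFild(\Fun(\scrC,\Sp))\simeq\Fun(\scrC,\cFild\Sp)$ and $\Fun(\scrC,\coCh(\Sp))\simeq\coCh(\Fun(\scrC,\Sp))$, yields the desired equivalence between $\cFild(\Fun(\scrC,\Sp))$ and $\coCh(\Fun(\scrC,\Sp))$.

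It then remains to justify that this composite may legitimately be denoted $\caush$ and $\imp$, i.e.\ that it is given by the same formulas (Lemma~\ref{lemma-aush-graded-same}, Proposition~\ref{prop-gr-of-imp}, Remark~\ref{rem-recursive-imp}) as in the case $\scrC=\Delta^0$. The key observation is that the equivalences of Lemma~\ref{lemma-huge-presheaves} are compatible with evaluation: under $\coCh(\Fun(\scrC,\Sp))\simeq\Fun(\scrC,\coCh(\Sp))$, evaluating a coherent cochain complex of presheaves at an object $c\in\scrC$ corresponds to applying $\ev_c\colon\Fun(\scrC,\Sp)\to\Sp$ componentwise, and similarly for filtered objects. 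Since limits and colimits in $\Fun(\scrC,\Sp)$ — hence also the functors $\gr^n$, the shifts $[k]$, the fiber and cofiber functors, and the functors $\ev_n$, $(\ev_n)_!$, $(\ev_n)_*$ entering those formulas — are computed pointwise in $\scrC$, each identity of Section~\ref{section-filandch} holds in $\Fun(\scrC,\Sp)$ as soon as it holds after evaluating at every $c\in\scrC$, where it reduces to the already-established statement over $\Sp$. In particular $\caush(F)^n\simeq\gr^n(F)[n]$, $\gr^n(\imp C)\simeq C^n[-n]$, and the recursive total-(co)fiber description of $\imp C^n$ all remain valid, justifying the notation.

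The step requiring the most care — though ultimately routine — is the bookkeeping of the previous paragraph: one must unwind the chain of equivalences in the proof of Lemma~\ref{lemma-huge-presheaves} (which passes through $\Fun(\scrC\times\catCh\op,\Sp)$) to confirm that it intertwines ``evaluate at $c$'' on one side with ``apply $\ev_c$ levelwise'' on the other, and likewise that it is compatible with the recollements of Propositions~\ref{prop-coch-pst-recoll} and~\ref{prop-cfild-bousfield}, so that the functors $\const$, $\ptize$ and $L$ are transported correctly. Once this compatibility is recorded, no further computation is needed: the general equivalence, together with all of its explicit descriptions, follows by evaluating pointwise and invoking the results of Section~\ref{section-filandch}.
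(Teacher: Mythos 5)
Your argument is exactly the paper's (implicit) one: the corollary has no written proof in the paper because it is a direct consequence of Lemma~\ref{lemma-huge-presheaves} together with Theorem~\ref{thm-cohch-fild-equiv}, obtained precisely as you say by applying $\Fun(\scrC,-)$ to the equivalence over $\Sp$ and conjugating by the equivalences of Lemma~\ref{lemma-huge-presheaves}. The ``bookkeeping'' you flag at the end --- that the chain of equivalences intertwines evaluation at $c\in\scrC$ with levelwise application of $\ev_c$, so the explicit formulas for $\caush$ and $\imp$ persist --- is exactly what the paper records as Remark~\ref{rem-rephrase}, so your proposal matches both the corollary and the remark that follows it.
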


\begin{rem}\label{rem-rephrase}
  One way to re-phrase Lemma \ref{lemma-huge-presheaves} and Corollary
  \ref{cor-eq-for-psh} is to say that objects in
  $\coCh(\Fun(\scrC,\Sp))$ can be described as bifunctors
  $(C,n)\mapsto G_C(n)$ such that $G_C(\pt)\simeq 0$ for any $C\in\scrC$,
  and similarly
  objects in $\cFild(\Fun(\scrC,\Sp))$ can be described as bifunctors
  $(C,n)\mapsto F_C(n)$ such that $\lim_n F_C(n) \simeq 0$ for any $C\in\scrC$.
  The equivalence of Corollary
  \ref{cor-eq-for-psh}
  is then given on objects by pointwise (in $\scrC$) postcomposition with
  $\caush$ and $\imp$; that is, we have
  $$\caush\Big((C,n)\mapsto F_C^n\Big) \simeq \Big((C,n) \mapsto
  \caush\big(F_C^\bullet\big)^n\Big)$$
  and
  $$\imp\Big((C,n)\mapsto G_C^n\Big) \simeq \Big((C,n) \mapsto
  \imp\big(G_C^\bullet\big)^n\Big).$$
\end{rem}

\begin{rem}\label{rem-ess-images}
  Let $\scrC$ be a small stable \cat, and let $\scrA\subseteq\Fun(\scrC,\Sp)$
  be a full stable subcategory closed under sequential limits. Postcomposition
  with the inclusion $\iota\colon\scrA\to\Fun(\scrC,\Sp)$ induces fully
  faithful functors
  $$\iota_{\mathrm{Fil}}\colon\cFild(\scrA)\to\cFild(\Fun(\scrC,\Sp))$$
  and
  $$\iota_{\mathrm{Ch}}\colon\coCh(\scrA)\to\coCh(\Fun(\scrC,\Sp)).$$
  By inspection, an object $F$ lies in the essential image of
  $\iota_{\mathrm{Fil}}$ if and only if $F^n$ lies in $\scrA$ for all
  integers $n$, and an object $C$ lies in the essential image
  of $\iota_{\mathrm{Ch}}$ if and only if $C^n$ lies in $\scrA$ for all
  integers $n$.
\end{rem}

\begin{lemma}\label{lemma-factorization-full-subcats}
  Let $\scrA\subseteq\Fun(\scrC,\Sp)$ be a full, stable subcategory closed
  under sequential limits.
  \begin{enumerate}
    \item If $F$ is a coherent cochain complex in $\cFild(\Fun(\scrC,\Sp))$ that
      lies in the essential image of $\cFild(\scrA)$, then $\caush F$
      lies in the essential image of $\coCh(\scrA)$.
      \label{l-e-nptwisea}
    \item If $C$ is a coherent cochain complex in $\coCh(\Fun(\scrC,\Sp))$ that
      lies in the essential image of $\coCh(\scrA)$, then $\imp C$
      lies in the essential image of $\cFild(\scrA)$.
      \label{l-e-nptwisei}
  \end{enumerate}
\end{lemma}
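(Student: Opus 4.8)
The plan is to reduce both statements, via Remark \ref{rem-ess-images}, to the pointwise assertions that $(\caush F)^n$ lies in $\scrA$ for every $n$ (given $F^k\in\scrA$ for all $k$), and that $(\imp C)^n$ lies in $\scrA$ for every $n$ (given $C^k\in\scrA$ for all $k$). Part (\ref{l-e-nptwisea}) should then be immediate: by Remark \ref{rem-rephrase} the functor $\caush$ of Corollary \ref{cor-eq-for-psh} is pointwise (in $\scrC$) postcomposition with the $\caush$ of Theorem \ref{thm-cohch-fild-equiv}, so Lemma \ref{lemma-aush-graded-same} gives $(\caush F)^n\simeq\gr^n(F)[n]\simeq\cofib(F^{n+1}\to F^n)[n]$ in $\Fun(\scrC,\Sp)$, and this lies in $\scrA$ because $\scrA$ is a stable subcategory containing $F^n$ and $F^{n+1}$.

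For part (\ref{l-e-nptwisei}) I would exploit the description of $\imp$ on bounded-above complexes together with the presentation of an arbitrary complex as a limit of such. Concretely: for every $n$ the truncation $C^{\leq n}$ of Construction \ref{constr-trunc-ch} again has all components in $\scrA$ (they are the $C^k$, or $0$), and being bounded above, Corollary \ref{cor-almost-all-about-imp}.\ref{item-nontriv} and Corollary \ref{cor-imp-closed-formula-for-bounded} --- read in $\Fun(\scrC,\Sp)$ via Remark \ref{rem-rephrase}, all operations involved being pointwise in $\scrC$ --- show that each $\imp(C^{\leq n})^m$ is either $0$ or a finite iterated cofiber of shifts of $C^m,\dots,C^n$, hence lies in $\scrA$. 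Then, using $C\simeq\lim_n C^{\leq n}$ from Lemma \ref{lemma-coch-limit-of-truncations} and the fact that $\imp$, being half of the equivalence of Corollary \ref{cor-eq-for-psh}, preserves limits (computed pointwise in $\bbZ$ in $\cFild$), I obtain $(\imp C)^m\simeq\lim_n\imp(C^{\leq n})^m$, a sequential limit of objects of $\scrA$, which lies in $\scrA$ by hypothesis; Remark \ref{rem-ess-images} then gives the conclusion.

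The only genuinely delicate points are bookkeeping. First, one must be sure the explicit formulas for $\imp$ proved over $\Sp$ in Section \ref{section-filandch} transfer to $\coCh(\Fun(\scrC,\Sp))$; this is exactly what Remark \ref{rem-rephrase} provides, and it suffices because shifts and cofibers in $\Fun(\scrC,\Sp)$ are computed pointwise in $\scrC$, so ``finite iterated cofiber of objects of $\scrA$'' is an honest statement about the ambient category $\Fun(\scrC,\Sp)$ in which $\scrA$ sits as a stable subcategory. Second, one must note that $\lim_n\imp(C^{\leq n})^m$ is a tower limit, so that closure of $\scrA$ under sequential limits applies. An alternative to the truncation argument, avoiding Lemma \ref{lemma-coch-limit-of-truncations}, is to use instead that $\imp C$ is complete (Corollary \ref{cor-eq-for-psh}), whence $(\imp C)^n\simeq\lim_m\big(\imp C^n/\imp C^m\big)$, together with Remark \ref{rem-intermediate-subquotients}, which identifies each subquotient $\imp C^n/\imp C^m$ with an iterated cofiber of shifts of $C^n,\dots,C^{m-1}$, hence an object of $\scrA$. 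Either way the crux, and the expected main obstacle, is the same: to see that the components of $\imp C$ are assembled as sequential limits of finite colimits of the $C^k$.
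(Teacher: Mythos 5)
Your proof is correct and takes essentially the same approach as the paper's: part (1) is the same one-line argument via Lemma \ref{lemma-aush-graded-same} and stability of $\scrA$, and part (2) likewise reduces to $C\simeq\lim_j C^{\leq j}$ (Lemma \ref{lemma-coch-limit-of-truncations}), finite cofiber formulas for $\imp$ on bounded-above complexes, and closure of $\scrA$ under finite colimits and sequential limits --- the paper cites Remark \ref{rem-recursive-imp} where you cite Corollary \ref{cor-imp-closed-formula-for-bounded}, but those describe the same total (co)fiber. Your alternative at the end, using completeness of $\imp C$ to write $(\imp C)^n\simeq\lim_m(\imp C^n/\imp C^m)$ together with the subquotient formulas of Remark \ref{rem-intermediate-subquotients}, is a valid and slightly leaner variant that the paper does not take.
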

\begin{proof}
  (\ref{l-e-nptwisea}):
  Let $F\colon (C,n)\mapsto F_C^n$ lie in (the essential image of)
  $\cFild(\scrA)$.
  By Lemma \ref{lemma-aush-graded-same} together with Remark
  \ref{rem-rephrase}, we have that, for any integer $m$,
  $(\caush F)^m$ is given by $\cofib\left(F_\bullet^{m+1}\to F_\bullet^m\right)$.
  By Remark \ref{rem-ess-images}, each $F_\bullet^n$ lies in $\scrA$,
  hence, as $\scrA$ is a stable subcategory each $(\caush A)^m$ lies in
  $\scrA$, and thus $\caush F$ lies in the essential image of $\coCh(\scrA)$.

  (\ref{l-e-nptwisei}):
  Let $G\colon (C,n)\mapsto G_C^n$ lie in (the essential image of)
  $\coCh(\scrA)$.
  By Lemma \ref{lemma-coch-limit-of-truncations}, $G$ can be expressed as the
  limit of bounded above complexes:
  $$
  G\simeq\lim_{j\in\bbZ\op}G^{\leq j};
  $$
  notice that, by Remark \ref{rem-ess-images} and by the definition of
  the $G^{\leq j}$'s given in Construction \ref{constr-trunc-ch}, all
  the objects of the form $(G^{\leq j})^k_\bullet$ belong to $\scrA$.
  By Remark \ref{rem-recursive-imp} together with Remark \ref{rem-rephrase},
  we have that, for any choice of integers $j$ and $m$,
  $(\imp G^{\leq j})^m_\bullet$ is either zero or can be expressed as a
  suitable total fiber for a finite diagram $K^{j,m}$ whose entries are either
  zeroes or of the form $(G^{\leq j})^k_\bullet[j]$ for different values of $k$;
  in particular, $G^{\leq j}$ is a finite limit of a diagram with values in
  $\scrA$.
  Putting everything together (and keeping in mind that by Proposition
  \ref{prop-eval-adj} evaluation commutes with limits),
  we have that
  \begin{displaymath}
  \begin{split}
    (\imp G)^n &\simeq \left(\imp(\lim_j G^{\leq j})\right)^n\\
               &\simeq \left(\lim_j (\imp G^{\leq j})\right)^n \\
               &\simeq \lim_j (\imp G^{\leq j})^n \\
               &\simeq \lim_j (\totcof K^{j,n})
  \end{split}
  \end{displaymath}
  is given by a sequential limit of finite colimits of elements of $\scrA$,
  and thus lies in $\scrA$ for any given $n$. This in turn proves that
  $\imp G$ lies in the essential image of $\cFild(\scrA)$.
\end{proof}

\begin{lemma}\label{lemma-factored-equivalence}
  Let $\scrA\subseteq\Fun(\scrC,\Sp)$ be a full, stable subcategory closed
  under sequential limits. Then the equivalence between
  $\cFild(\Fun(\scrC,\Sp))$ and $\coCh(\Fun(\scrC,\Sp))$ restricts to an
  equivalence
  $$
    \cFild(\scrA) \simeq \coCh(\scrA).
  $$
\end{lemma}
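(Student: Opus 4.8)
The plan is to deduce the statement formally from the equivalence $\caush\dashv\imp$ between $\cFild(\Fun(\scrC,\Sp))$ and $\coCh(\Fun(\scrC,\Sp))$ of Corollary~\ref{cor-eq-for-psh}, together with the two fully faithful inclusions $\ifil\colon\cFild(\scrA)\hookrightarrow\cFild(\Fun(\scrC,\Sp))$ and $\ich\colon\coCh(\scrA)\hookrightarrow\coCh(\Fun(\scrC,\Sp))$ of Remark~\ref{rem-ess-images}. The essential input has already been isolated: Lemma~\ref{lemma-factorization-full-subcats} says precisely that $\caush$ carries the essential image of $\ifil$ into the essential image of $\ich$, and that $\imp$ carries the essential image of $\ich$ into the essential image of $\ifil$. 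Since a functor landing, up to equivalence, in a full subcategory factors through it essentially uniquely, this produces functors $\caush'\colon\cFild(\scrA)\to\coCh(\scrA)$ and $\imp'\colon\coCh(\scrA)\to\cFild(\scrA)$ together with natural equivalences $\ich\circ\caush'\simeq\caush\circ\ifil$ and $\ifil\circ\imp'\simeq\imp\circ\ich$.

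Next I would verify that $\caush'$ and $\imp'$ are mutually inverse. Chaining the two natural equivalences above yields
$$\ifil\circ\imp'\circ\caush'\;\simeq\;\imp\circ\ich\circ\caush'\;\simeq\;\imp\circ\caush\circ\ifil\;\simeq\;\ifil,$$
where the last equivalence comes from the unit of the adjoint equivalence of Corollary~\ref{cor-eq-for-psh}. Since $\ifil$ is fully faithful, the induced functor $\ifil\circ(-)$ on functor \cats is fully faithful and $\ifil$ is in particular conservative; hence the displayed equivalence descends to a natural equivalence $\imp'\circ\caush'\simeq\id_{\cFild(\scrA)}$. The symmetric computation, using the counit $\caush\circ\imp\simeq\id$ and the full faithfulness of $\ich$, gives $\caush'\circ\imp'\simeq\id_{\coCh(\scrA)}$. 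Hence $\caush'$ is an equivalence with inverse $\imp'$. (If one wants the restriction to be an adjoint equivalence, it suffices to transport $\caush\dashv\imp$ along the inclusions, using $\Map_{\coCh(\scrA)}(\caush'F,D)\simeq\Map(\caush\ifil F,\ich D)\simeq\Map(\ifil F,\imp\ich D)\simeq\Map_{\cFild(\scrA)}(F,\imp'D)$.)

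The only genuinely nontrivial ingredient is Lemma~\ref{lemma-factorization-full-subcats}, which is already proven; the remaining steps are formal bookkeeping with full subcategories. The one point that deserves a moment's care is the passage from ``$\caush$ and $\imp$ preserve the relevant essential images'' to ``$\caush$ and $\imp$ restrict to functors between the subcategories, with the unit and counit equivalences restricting as well''; this rests on the universal property of a full subcategory $\scrB\subseteq\scrD$ of an \cat --- namely that $\scrB$ is the full subcategory on a chosen collection of equivalence classes of objects, so any functor valued in $\scrB$, and any natural transformation between such functors, factors through $\scrB$ essentially uniquely. Beyond this I anticipate no real obstacle.
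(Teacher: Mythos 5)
Your proof is correct and follows essentially the same route as the paper: factor $\caush\circ\ifil$ and $\imp\circ\ich$ through the full subcategories via Lemma~\ref{lemma-factorization-full-subcats}, chain the resulting natural equivalences using $\caush\imp\simeq\id$ and $\imp\caush\simeq\id$, and cancel the fully faithful inclusions to conclude that the restricted functors are mutually inverse. The paper calls the restricted functors $A$ and $I$ where you write $\caush'$ and $\imp'$, and it presents the cancellation step more tersely, but the content is identical; your additional remark about transporting the adjoint equivalence is a harmless supplement.
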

\begin{proof}
  It follows from Lemma \ref{lemma-factorization-full-subcats} that (in the
  notation of Remark \ref{rem-ess-images}) $\caush\circ\iota_{\mathrm{Fil}}$
  factors through $\iota_{\mathrm{Ch}}$ and $\imp\circ\iota_{\mathrm{Ch}}$
  factors thorugh $\iota_{\mathrm{Fil}}$, i.e.\ there exist functors (which
  we'll temporarily denote $A$ and $I$) such that
  $\caush\circ\ifil\simeq\ich\circ A$ and $\imp\circ\ich\simeq\ifil\circ I$.
  In particular, as
  \begin{displaymath}
  \begin{split}
    \ich &\simeq \caush\circ\imp\circ\ich\\
         &\simeq \caush\circ\ifil \circ I\\
         &\simeq \ich \circ A \circ I
  \end{split}
  \quad
  \text{ and }
  \quad
  \begin{split}
    \ifil &\simeq \imp\circ\caush\circ\ifil\\
          &\simeq \imp\circ\ich\circ A\\
          &\simeq \ifil\circ I\circ A
  \end{split}
  \end{displaymath}
  $A$ and $I$ are mutually inverse.
\end{proof}

\begin{thm}\label{thm-general-equivalence}
  Let $\scrC$ be a stable \cat having sequential limits. Then there exists
  an equivalence of stable \cats
  $$
  \begin{tikzcd}[column sep=huge]
    \cFild(\scrC) \ar[r, shift left=1.1ex, "\caush"] &
    \coCh(\scrC). \ar[l, shift left=1.1ex,"\imp"]
      \ar[l,phantom,"\sim" description]
  \end{tikzcd}
  $$
\end{thm}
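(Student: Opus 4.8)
The plan is to bootstrap from Theorem~\ref{thm-cohch-fild-equiv} by means of the spectral Yoneda embedding together with Lemma~\ref{lemma-factored-equivalence}. First I would dispose of the size issue: enlarging the universe if necessary (which is harmless for our purposes, cf.\ Remark~\ref{rem-size-conventions}, since Theorem~\ref{thm-cohch-fild-equiv} and its consequences in Sections~\ref{section-filandch}--\ref{section-general-equiv} hold over the \cat of spectra of any fixed universe), we may assume $\scrC$ is small. Then $\scrC\op$ is a small stable \cat, so Lemma~\ref{lemma-factored-equivalence} applies to full stable subcategories of $\Fun(\scrC\op,\Sp)$ that are closed under sequential limits.

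Next I would consider the spectral Yoneda embedding
$$
y\colon\scrC\hookrightarrow\Fun(\scrC\op,\Sp),\qquad X\longmapsto\map_{\scrC}(-,X).
$$
This functor is fully faithful, and it preserves every limit that exists in $\scrC$, since limits in $\Fun(\scrC\op,\Sp)$ are computed objectwise and each $\map_{\scrC}(W,-)$ preserves limits. It is moreover exact, because a functor between stable \cats that preserves finite limits automatically preserves finite colimits (it carries the zero object to the zero object and pullback squares to pullback squares, which in stable \cats coincide with pushout squares). Hence the essential image $\scrA\subseteq\Fun(\scrC\op,\Sp)$ of $y$ is a full, stable subcategory, and since $\scrC$ has sequential limits and $y$ preserves them, a sequential limit in $\Fun(\scrC\op,\Sp)$ of objects of $\scrA$ is again in $\scrA$; that is, $\scrA$ is closed under sequential limits. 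Applying Lemma~\ref{lemma-factored-equivalence} (with $\scrC\op$ in the role of ``$\scrC$'') yields an equivalence $\cFild(\scrA)\simeq\coCh(\scrA)$ implemented by the restrictions of $\caush$ and $\imp$; transporting along the equivalence $\scrC\simeq\scrA$ induced by $y$ (postcomposition with an equivalence induces equivalences $\cFild(\scrC)\simeq\cFild(\scrA)$ and $\coCh(\scrC)\simeq\coCh(\scrA)$) produces the asserted equivalence $\cFild(\scrC)\simeq\coCh(\scrC)$.

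Finally, to justify denoting the transported functors again by $\caush$ and $\imp$, and to see that the explicit descriptions of Section~\ref{section-filandch} persist, I would unwind Remark~\ref{rem-rephrase}: under the identification $\scrC\simeq\scrA$ the equivalence is computed by postcomposing pointwise (in $\scrC\op$) with $\caush$, respectively $\imp$, on spectra, and the formula $\caush(-)^n\simeq\gr^n(-)[n]$ of Lemma~\ref{lemma-aush-graded-same} together with the iterated-(co)fiber and total-fiber description of $\imp$ from Remark~\ref{rem-recursive-imp} only involve shifts, finite (co)fibers and sequential limits, all of which are preserved and detected by the fully faithful, exact, limit-preserving embedding $y$; so the formulas descend verbatim to $\scrC$. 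The step requiring the most care — rather than any new idea — is precisely this verification that $\scrA$ is a stable subcategory closed under sequential limits \emph{as computed in the ambient functor category}, so that the hypotheses of Lemma~\ref{lemma-factored-equivalence} are genuinely met; granting that, the theorem is formal.
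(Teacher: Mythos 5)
Your proof follows the same route as the paper's: enlarge the universe so $\scrC$ is small, embed $\scrC$ via the spectral Yoneda functor $\stYo$ into $\Fun(\scrC\op,\Sp_{U_1})$, and apply Lemma~\ref{lemma-factored-equivalence} to the essential image. The only difference is that you spell out the verification that this essential image is a full stable subcategory closed under sequential limits (which the paper leaves implicit in the phrase ``apply Lemma~\ref{lemma-factored-equivalence} to $\stYo$'') and fold the content of Remark~\ref{rem-explicit-description-general-case} into the proof rather than stating it afterward; these checks are correct.
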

\begin{proof}
  Let $U_0$ be our universe of small sets, and let $U_1$ denote a
  universe containing $U_0$ as an element. Let us denote by $\Sp_{U_1}$ the
  stabilization of the \cat of $U_1$-small spaces. The stable Yoneda embedding
  (see Definition \ref{defi-st-yo}) provides a fully faithful
  (see \cite[Section 6]{NikStableOperads}) exact functor
  $$\stYo\colon\scrC\to\Fun\left(\scrC\op,\Sp_{U_1}\right).$$
  The results now follows from Lemma \ref{lemma-factored-equivalence} applied
  to $\stYo$.
\end{proof}

\begin{rem}\label{rem-general-adjunction}
  In particular, by composing the equivalence of Theorem
  \ref{thm-general-equivalence} with the adjunction $L\dashv i$ of Proposition
  \ref{prop-cfild-bousfield}, we get an induced adjunction
  $$
  \begin{tikzcd}[column sep=huge]
    \Fild\scrC \ar[r, shift left=1.1ex, "\aush"] &
    \coCh\scrC \ar[l, shift left=1.1ex,"\imp"]
      \ar[l,phantom,"\text{\rotatebox{-90}{$\dashv$}}"]
  \end{tikzcd}
  $$
  for any stable \cat $\scrC$ with sequential limits.
\end{rem}

\begin{rem}\label{rem-explicit-description-general-case}
  In the proof of Lemma \ref{lemma-factorization-full-subcats} we also showed
  that the pointwise descriptions already given for $\caush$ and $\imp$ in
  the case $\scrC=\Sp$ hold also in the general case of
  Theorem \ref{thm-general-equivalence};
  that is, for $F\in\cFild\scrC$
  $$\caush F^n\simeq\gr^n F[n]$$
  and, for $C\in\coCh\scrC$,
  $$\imp C^n \simeq \totfib F_a [a]$$
  for a suitable cube $F_a$, given explicitly by
  $$
  F_a (\overrightharp{v}) =
  \begin{cases}
    \imp C(n-a)[-a] \quad &\text{if } \overrightharp{v} = (0,\ldots,0)
      \\[10pt]
    C^{n-a+b}[-n]
      \quad &\text{if } 0 \leq b < a
      \text{, } \overrightharp{v}_i = 0 \text{ for } i > b
      \text{ and } \overrightharp{v}_i = 1 \text{ for } i \leq b
      \\[10pt]
      0 &\text{else.}
  \end{cases}
  $$
  (see \ref{rem-explicit-imp} for details about the notation).
  In particular, Remark \ref{rem-intermediate-subquotients} generalizes as well,
  giving
  \begin{displaymath}
  \begin{split}
  \imp C^{n-k}/\imp C^n \simeq
  \overbrace{
  \cofib \kern-.2em\Big(\kern-.4em\cdots
  \kern-.2em\cofib \kern-.2em\Big(
  \kern-.2em\cofib
  }^{k-1 \text{ times}}
  \kern-.2em\Big( &C^{n-k} \to C^{n-k+1} \Big)\\
  &\to C^{n-k+2} \Big) \cdots \to C^{n-1} \Big)[-n+1]
  \end{split}
  \end{displaymath}
  for any $n\in\bbZ$ and $k\geq 2$.
\end{rem}

\section{Coherent cochain complexes and Beilinson $\mathrm{t}$-structures}
\label{section-t-structures}

In this section, we study the connection between the
pointwise t-structure on coherent cochain complexes
and the Beilinson t-structure on filtered objects and show how the
former can in some sense be interpreted as an easier to understand version of
the latter.

In fact, if $\Fild\scrC$ is equipped with the Beilinson t-structure,
then the full subcategory of complete filtered objects inherits a t-structure
from it\footnote{this fact can easily be proved directly, but will be an immediate
consequence of Theorem \ref{thm-general-equivalence}}.
It turns out that such inherited t-structure
is equivalent to the one obtained by carrying the pointwise t-structure
on $\coCh\scrC$ along the equivalence of Theorem \ref{thm-general-equivalence};
moreover, the Beilinson t-structure on $\Fild\scrC$
is in some sense characterized by this property and by carrying ``trivial
information'' on essentially constant objects
(see Theorem \ref{thm-beil-recoll} for a precise statement).
In particular, $\Fild\scrC$ and $\cFild\scrC$ have the same heart (Remark
\ref{rem-recollement-of-hearts}).

\begin{defi}\label{defi-coch-belinison-t-str}
  Let $\scrC$ be a stable \cat equipped with a t-structure. We define
  the \emph{pointwise t-structure} on $\coCh\scrC$ to be the one defined
  by
  \begin{displaymath}
  \begin{split}
    (\coCh\scrC)_{\geq 0} &= \{C\in\coCh\scrC \ | \ \forall n \ C^n \in
      \scrC_{\geq 0}\}\\
    (\coCh\scrC)_{\leq 0} &= \{C\in\coCh\scrC \ | \ \forall n \ C^n \in
      \scrC_{\leq 0}\}.
  \end{split}
  \end{displaymath}
  We will denote the truncation functors for this t-structure by
  $\tau^\text{lvl}_{\geq n}$ and $\tau^\text{lvl}_{\leq n}$, and the
  homotopy objects by $\pi^\text{lvl}_n$ for all $n\in\bbZ$.
\end{defi}

\begin{rem}\label{rem-coch-same-completeness}
  It follows immediately from the definitions that $\coCh\scrC$ has precisely
  the same separatedness and completeness properties that $\scrC$ has.
\end{rem}

\begin{defi}\label{defi-transferred}
  Let $\scrC$ be a stable \cat with sequential limits.
  The \emph{transferred t-structure} on $\cFild\scrC$ is the t-structure
  $(\imp(\coCh\scrC)_{\geq 0},\imp(\coCh\scrC)_{\leq 0})$ transferred
  along the equivalence of Theorem \ref{thm-general-equivalence}.
\end{defi}

\begin{rem}\label{rem-heart-complete}
  As a direct consequence of the definitions, we have that
  $$\left(\cFild\scrC\right)^\heartsuit\simeq(\coCh\scrC)^\heartsuit
  \simeq\coCh\left(\scrC^\heartsuit\right).$$
\end{rem}

The following definition is a straightforward generalization of the
t-structure first introduced by Beilinson in \cite{Bei87}.

\begin{defi}\label{defi-fild-beilinson-t-str}
  Let $\scrC$ be a stable \cat having all sequential limits\footnote{this
  assumption is likely superfluous; see Remark \ref{rem-seq-lims}},
  equipped with a right separated\footnote{unlike the previous one, this
  assumption is crucial; see Remark \ref{rem-right-assumption}}
  t-structure $(\scrC_{\geq 0}, \scrC_{\leq 0})$.

  The \emph{Beilinson t-structure} $(\Fild_{\geq 0}\scrC, \Fild_{\leq 0}\scrC)$
  on $\Fild\scrC$ is defined as follows:
  \begin{enumerate}
    \item[$\bullet$] $\Fild_{\geq 0}\scrC$ is the full subcategory spanned by
      the objects $F\in\Fild\scrC$ such that
      $$\gr^i(F)\in \scrC_{\geq -i} \text{ for all } i.$$
    \item[$\bullet$] $\Fild_{\leq 0}\scrC$ is the full subcategory spanned by
      the objects $F\in\Fild\scrC$ such that
      $$F^i\in \scrC_{\leq -i} \text{ for all } i.$$
  \end{enumerate}
  (Note the asymmetry in the definition). This t-structure appeared first,
  in a slightly different setting, in \cite{Bei87}.
  Its existence in this generality will be a consequence
  of \cite[Theorem 2.19]{FLRecoll} together with Theorem \ref{thm-beil-recoll}.
  We will denote the truncation functors for this t-structure by
  $\tau^B_{\geq n}$ and $\tau^B_{\leq n}$, and the
  homotopy objects by $\pi^B_n$ for all $n\in\bbZ$.
\end{defi}

\begin{rem}\label{rem-seq-lims}
  The hypothesis of $\scrC$ having
  all sequential limits in Definition \ref{defi-fild-beilinson-t-str} is
  there just because we will use Theorem \ref{thm-general-equivalence} to prove
  its existence. We believe that the t-structure can exist even
  without this assumption on $\scrC$, but as all the examples that arise in
  practice satisfy this extra hypothesis, we didn't bother finding a proof that
  does not use it.
\end{rem}

\begin{rem}
  We can infer a few properties of the Beilinson t-structure from its
  definition:
  \begin{enumerate}
    \item Even if we are assuming $\scrC$ to be right separated, $\Fild\scrC$
          need not be so; in fact, the full subcategory of
          $\infty$-coconnective objects $\cap_n (\Fild\scrC)_{\leq n}$ consists
          of all filtered objects whose associated graded is trivial, hence of
          all the essentially constant objects.
    \item Since the full subcategory of $\infty$-connective objects consists of
          the levelwise $\infty$-connective ones, $\Fild\scrC$ is left separated
          if and only if $\scrC$ is so.
  \end{enumerate}
\end{rem}

We learned the following fact from \cite[5.4]{BMS2}; although the result in
\emph{loc.\ cit.}\ is stated in less generality, the proof carries verbatim
to the general case. We report the argument here for the reader's convenience.

\begin{prop}\label{prop-gr-and-trunc}
  Let $\scrC$ be a stable \cat having all sequential limits,
  equipped with a right separated t-structure
  $(\scrC_{\geq 0}, \scrC_{\leq 0})$, and let $\wtrunc{n}$ denote its
  Whitehead truncation functors.
  Let $\Fild(\scrC)$ be equipped with the Beilinson t-structure, and
  let $\wtrunc{n}^B$ denote its Whitehead truncation functors.
  Then, there is a natural equivalence of functors $\Fild\scrC\to\scrC$
  $$
  \gr^i\circ\wtrunc{n}^B \simeq \wtrunc{n-i}\circ\gr^i
  $$
  for all $i\in\bbZ$.
\end{prop}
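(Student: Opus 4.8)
The plan is to apply the exact functor $\gr^i$ to the defining cofiber sequence of the Beilinson truncations and then to recognize the resulting triangle in $\scrC$ as the truncation triangle of $\gr^iF$. First I would fix $F\in\Fild\scrC$ and write down the canonical cofiber sequence
$$\wtrunc{n}^B F \longrightarrow F \longrightarrow \ptrunc{n-1}^B F$$
associated to the Beilinson t-structure. Since $\gr^i=\cofib\circ(\iota_i)^*$ is a composite of exact functors between stable \cats, it preserves cofiber sequences and commutes with shifts, so applying it produces a cofiber sequence in $\scrC$
$$\gr^i\wtrunc{n}^B F \longrightarrow \gr^i F \longrightarrow \gr^i\ptrunc{n-1}^B F.$$
By the uniqueness (and naturality) of truncation triangles for the t-structure on $\scrC$, the proof will be complete once I show that the left-hand term lies in $\scrC_{\geq n-i}$ and the right-hand term in $\scrC_{\leq n-i-1}$: this will simultaneously exhibit $\gr^i\wtrunc{n}^B F\simeq\wtrunc{n-i}\gr^i F$ and $\gr^i\ptrunc{n-1}^B F\simeq\ptrunc{n-i-1}\gr^i F$.

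The connectivity of the left term should be immediate from the definition: by construction $\wtrunc{n}^B F\in\Fild_{\geq n}\scrC$, and unwinding Definition \ref{defi-fild-beilinson-t-str} (using that the exact functors $\gr^j$ commute with shifts) an object $G$ lies in $\Fild_{\geq n}\scrC$ precisely when $\gr^j G\in\scrC_{\geq n-j}$ for every $j$; the case $j=i$ is exactly what is needed. For the coconnectivity of the right term I would use that $\ptrunc{n-1}^B F\in\Fild_{\leq n-1}\scrC$ translates into $(\ptrunc{n-1}^B F)^j\in\scrC_{\leq n-1-j}$ for all $j$; then $\gr^i\ptrunc{n-1}^B F$, being $\cofib\big((\ptrunc{n-1}^B F)^{i+1}\to(\ptrunc{n-1}^B F)^i\big)$, sits (after rotating the triangle) as an extension of $(\ptrunc{n-1}^B F)^{i+1}[1]\in\scrC_{\leq n-i-1}$ by $(\ptrunc{n-1}^B F)^i\in\scrC_{\leq n-i-1}$, so it lies in $\scrC_{\leq n-i-1}$ by closure of that subcategory under extensions.

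I expect the only genuinely delicate point to be the bookkeeping of the grading shifts, and specifically keeping straight the asymmetry built into Definition \ref{defi-fild-beilinson-t-str}: for $\Fild_{\geq 0}\scrC$ the connectivity bound is placed on the graded pieces $\gr^j$, whereas for $\Fild_{\leq 0}\scrC$ it is placed on the levels $F^j$ --- and it is precisely the one extra shift picked up by the cofiber computing $\gr^i$ that makes the right-hand term land in $\scrC_{\leq n-i-1}$ rather than in something too weak to conclude. It is worth noting that this argument uses neither right separatedness of the t-structure on $\scrC$ nor the equivalence of Theorem \ref{thm-general-equivalence}; those are needed only to guarantee that the Beilinson t-structure exists in the first place.
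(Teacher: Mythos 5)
Your proposal is correct and follows essentially the same route as the paper: the paper shows directly that $\gr^i$ is t-exact between the Beilinson t-structure and the shifted t-structure $(\scrC_{\geq -i},\scrC_{\leq -i})$ on $\scrC$ --- using the defining condition on graded pieces for the connective side, and the fiber sequence $F^i\to\gr^i F\to F^{i+1}[1]$ together with closure of $\scrC_{\leq -i}$ under extensions for the coconnective side --- and then invokes the general fact that an exact, t-exact functor commutes with truncations. Your argument unfolds that last step explicitly by applying $\gr^i$ to the truncation triangle and identifying the result via uniqueness of truncation triangles; the two inputs you use (the defining condition for $\Fild_{\geq n}$ and the cofiber/fiber sequence computing $\gr^i$) are exactly the ones the paper uses to verify t-exactness, so the mathematical content is the same.
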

\begin{proof}
  Notice that for any $i$, the exact functor $\gr^i\colon\Fild\scrC\to\scrC$
  carries $(\Fild\scrC)_{\geq 0}$ to $\scrC_{\geq -i}$. Moreover, as by
  \cite[1.2.1.16]{HA} each $\scrC_{\geq -i}$ is closed under extensions,
  the fiber sequence
  $$F^i\to\gr^i F\to F^{i+1}[1]$$
  proves that $\gr^i F\in\scrC_{\leq -i}$, and thus $\gr^i$ carries
  also $(\Fild\scrC)_{\leq 0}$ to $\scrC_{\leq -i}$. That is, each $\gr^i$
  is \emph{t-exact} with respect to the Beilinson t-structure on $\Fild\scrC$
  and the shifted t-structure $(\scrC_{\geq -i},\scrC_{\leq -i})$ on $\scrC$.
  As any exact and t-exact functor between stable \cats equipped with
  t-structures commutes with the truncation functors associated to the
  t-structures, the result follows.
\end{proof}

\begin{cor}\label{cor-gr-and-pi}
  In particular, in the hypotheses of Proposition \ref{prop-gr-and-trunc},
  if we denote by $\pi_n$ the t-structure homotopy object functors of $\scrC$
  and by $\pi_n^B$ the t-structure homotopy object functors of $\Fild\scrC$,
  we have the equivalence (natural in $F$)
  $$
  \gr^i\pi_n^B F \simeq (\pi_{n-i}(\gr^i F))[-i]
  $$
  for all $i,n\in\bbZ$.
\end{cor}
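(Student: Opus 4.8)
The plan is to obtain this as a formal consequence of Proposition \ref{prop-gr-and-trunc} and its coconnective counterpart, so I would not expect any serious obstacle. First I would recall two routine inputs. On the one hand, $\gr^i$ is an exact functor of stable \cats, being by Definition \ref{defi-gr} the composite of $(\iota_i)^*$ with $\cofib$; in particular it commutes with shifts and preserves (co)fibre sequences. On the other hand, in the normalized convention the homotopy object functors are given by $\pi_n(-) \simeq \big(\ptrunc{n}\wtrunc{n}(-)\big)[-n]$, with values in the heart, and similarly for $\pi_n^B$ on $\Fild\scrC$.

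Next I would record the companion of Proposition \ref{prop-gr-and-trunc} for the coconnective truncation, namely the natural equivalence
\[
  \gr^i\circ\ptrunc{n}^B \;\simeq\; \ptrunc{n-i}\circ\gr^i
\]
of functors $\Fild\scrC\to\scrC$. This follows in two ways: either by applying the exact functor $\gr^i$ to the fibre sequence $\wtrunc{n+1}^B F\to F\to\ptrunc{n}^B F$ and using Proposition \ref{prop-gr-and-trunc} to identify the fibre term, or, more transparently, by invoking the observation made inside the proof of Proposition \ref{prop-gr-and-trunc} that each $\gr^i$ is t-exact from the Beilinson t-structure on $\Fild\scrC$ to the shifted t-structure $(\scrC_{\geq -i},\scrC_{\leq -i})$ on $\scrC$, together with the fact that an exact, t-exact functor between stable \cats equipped with t-structures commutes with \emph{both} of the associated truncation functors.

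Granting these, the corollary is a short computation, natural in $F$:
\[
  \gr^i\pi_n^B F \;\simeq\; \big(\gr^i\,\ptrunc{n}^B\,\wtrunc{n}^B F\big)[-n] \;\simeq\; \big(\ptrunc{n-i}\,\wtrunc{n-i}\,\gr^i F\big)[-n] \;\simeq\; \big(\pi_{n-i}(\gr^i F)\big)[-i],
\]
where the first step is the normalized description of $\pi_n^B$ together with exactness of $\gr^i$, the second is Proposition \ref{prop-gr-and-trunc} composed with its coconnective companion, and the last is again the normalized description of $\pi_{n-i}$ after reorganizing the shift (since $\ptrunc{n-i}\wtrunc{n-i}X \simeq \pi_{n-i}(X)[n-i]$, the outer $[-n]$ becomes $[-i]$). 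The only point demanding a little care, precisely because of the built-in asymmetry of the Beilinson t-structure, is to pin down the coconnective version of Proposition \ref{prop-gr-and-trunc} correctly and to keep the shift conventions for $\pi_n$ consistent throughout; everything else is formal.
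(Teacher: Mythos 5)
Your proof is correct and is essentially the argument the paper intends: since the proof of Proposition~\ref{prop-gr-and-trunc} already observes that $\gr^i$ is t-exact (hence commutes with both truncation functors, not only $\wtrunc{n}$), the corollary follows exactly by the shift bookkeeping you carry out. The explicit verification that the shift $[-n]$ becomes $[-i]$ after passing to $\pi_{n-i}$ is the only non-formal point, and you have it right.
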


We recall the following theorem from \cite{FLRecoll} (which is an
$\infty$-categorical generalization of \cite[1.4.10]{BBD83}).
\begin{thm}\cite[Theorem 2.19]{FLRecoll}\label{thm-glued-t-str}
  Given a recollement $\scrC_0\hookrightarrow\scrC\hookleftarrow\scrC_1$,
  suppose both $\scrC_0$ and $\scrC_1$ are equipped with t-structures, then
  there exists a t-structure on $\scrC$, called the \emph{glued t-structure},
  given by
  \begin{displaymath}
  \begin{split}
    \scrC_{\geq 0} &= \{X\in\scrC \ | \ j_L X \in (\scrC_1)_{\geq 0}
    \text{ and } i_L X \in (\scrC_0)_{\geq 0}\}\\
    \scrC_{\leq 0} &= \{X\in\scrC \ | \ j_L X \in (\scrC_1)_{\leq 0}
    \text{ and } i_R X \in (\scrC_0)_{\leq 0}\}
  \end{split}
  \end{displaymath}
  (with notations as in Remark \ref{rem-full-recol}).
\end{thm}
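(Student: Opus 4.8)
The plan is to verify directly that the prescribed pair $(\scrC_{\geq 0},\scrC_{\leq 0})$ satisfies the three defining conditions of a t-structure on $\scrC$ in the sense of \cite[1.2.1.4]{HA}: closure under the relevant shifts, the orthogonality $\Map_\scrC(X,Y)\simeq\ast$ for $X\in\scrC_{\geq 0}$ and $Y\in\scrC_{\leq -1}$, and the existence, for every $X\in\scrC$, of a (co)fiber sequence $X'\to X\to X''$ with $X'\in\scrC_{\geq 0}$, $X''\in\scrC_{\leq -1}$. Throughout I would work with the recollement data from Definition \ref{defi-recollement} and Remark \ref{rem-full-recol}: the functors $i_L,i_R,j_L$ are exact; $i_Li\simeq i_Ri\simeq\id_{\scrC_0}$ and $j_L(j_L)_!\simeq j_Lj\simeq\id_{\scrC_1}$; $j_Li\simeq 0$, $i_L(j_L)_!\simeq 0$ and $i_Rj\simeq 0$ (the last three deduced by adjunction from axiom (2) of a recollement); and the cofiber sequence of endofunctors $ii_R\to\id_\scrC\to jj_L$.

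Closure under shifts and orthogonality are the quick steps. Since $i_L,i_R,j_L$ commute with shifts and each of $(\scrC_0)_{\geq 0},(\scrC_1)_{\geq 0},(\scrC_0)_{\leq 0},(\scrC_1)_{\leq 0}$ is closed under the appropriate shift, so are $\scrC_{\geq 0}$ and $\scrC_{\leq 0}$; the same exactness argument, using that each $(\scrC_k)_{\geq 0}$ is closed under extensions, shows $\scrC_{\geq 0}$ is closed under extensions, which I will use below. For orthogonality I would take $X\in\scrC_{\geq 1}$, $Y\in\scrC_{\leq 0}$ and apply $\Map_\scrC(X,-)$ to $ii_RY\to Y\to jj_LY$: the adjunctions $i_L\dashv i$ and $j_L\dashv j$ identify the two outer mapping spaces with $\Map_{\scrC_0}(i_LX,i_RY)$ and $\Map_{\scrC_1}(j_LX,j_LY)$, both contractible because $i_LX\in(\scrC_0)_{\geq 1}$, $j_LX\in(\scrC_1)_{\geq 1}$, $i_RY\in(\scrC_0)_{\leq 0}$, $j_LY\in(\scrC_1)_{\leq 0}$ and $\scrC_0,\scrC_1$ carry t-structures; hence $\Map_\scrC(X,Y)\simeq\ast$, which is the orthogonality axiom up to a shift.

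The substance is the truncation axiom, which I would establish by glueing the truncation functors of $\scrC_0$ and $\scrC_1$ in the manner of \cite[1.4.10]{BBD83}. Given $X$, first set $X_1:=(j_L)_!\,\tau^{\scrC_1}_{\geq 0}(j_LX)$ with its canonical map $X_1\to X$ (the $\scrC_1$-truncation map followed by the counit $(j_L)_!j_L\to\id_\scrC$), and let $Q:=\cofib(X_1\to X)$; applying $j_L$ and using $j_L(j_L)_!\simeq\id$ yields $j_LQ\simeq\tau^{\scrC_1}_{\leq -1}(j_LX)\in(\scrC_1)_{\leq -1}$, while $i_LX_1\simeq i_L(j_L)_!(-)\simeq 0$ together with $j_LX_1\simeq\tau^{\scrC_1}_{\geq 0}(j_LX)$ shows $X_1\in\scrC_{\geq 0}$. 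Next, using only that $j_LQ\in(\scrC_1)_{\leq -1}$, set $\tilde Q:=\cofib\bigl(i\,\tau^{\scrC_0}_{\geq 0}(i_RQ)\to Q\bigr)$, the map factoring through the counit $ii_RQ\to Q$; feeding the cofiber sequence $ii_RQ\to Q\to jj_LQ$ and the map $i\,\tau^{\scrC_0}_{\geq 0}(i_RQ)\to ii_RQ$ into an octahedron produces a cofiber sequence $i\,\tau^{\scrC_0}_{\leq -1}(i_RQ)\to\tilde Q\to jj_LQ$. Applying $j_L$ and $i_R$ and using $j_Li\simeq 0$, $i_Rj\simeq 0$, $i_Ri\simeq\id$ gives $j_L\tilde Q\simeq j_LQ\in(\scrC_1)_{\leq -1}$ and $i_R\tilde Q\simeq\tau^{\scrC_0}_{\leq -1}(i_RQ)\in(\scrC_0)_{\leq -1}$, so $\tilde Q\in\scrC_{\leq -1}$; moreover $\fib(Q\to\tilde Q)\simeq i\,\tau^{\scrC_0}_{\geq 0}(i_RQ)$ lies in $\scrC_{\geq 0}$ since $j_L$ sends it to $0$ and $i_L$ sends it to $\tau^{\scrC_0}_{\geq 0}(i_RQ)$. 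Finally, set $X':=\fib(X\to\tilde Q)$ along $X\to Q\to\tilde Q$: an octahedron gives a cofiber sequence $X_1\to X'\to\fib(Q\to\tilde Q)$, exhibiting $X'$ as an extension of two objects of $\scrC_{\geq 0}$, hence $X'\in\scrC_{\geq 0}$; and $\cofib(X'\to X)\simeq\tilde Q=:X''\in\scrC_{\leq -1}$, which is the required triangle.

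With the three conditions in hand, $(\scrC_{\geq 0},\scrC_{\leq 0})$ is a t-structure whose truncation functors are $X\mapsto X'$ and $X\mapsto\tilde Q$ as constructed, so the two displayed formulas in the statement hold by construction. I expect the third paragraph to be the genuine obstacle: a one-step truncation does not work, because $(j_L)_!$ and $j$ are only one-sided t-exact and the two adjoints $i_L\dashv i\dashv i_R$ genuinely differ; this forces the two-stage recipe (truncate on $\scrC_1$ through the fully faithful $(j_L)_!$, then correct on $\scrC_0$ through $i$) and requires invoking the vanishing identities $j_Li\simeq 0$ and $i_Rj\simeq 0$ at exactly the right points to keep the images of $j_L$, $i_L$, $i_R$ in the correct pieces.
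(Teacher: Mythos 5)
Your proof is correct. The paper itself gives no argument for this statement; it simply cites \cite[Theorem~2.19]{FLRecoll}, which is the $\infty$-categorical version of the classical glueing of t-structures from \cite[1.4.10]{BBD83}. Your argument is exactly that BBD glueing, filled in self-containedly from the recollement data the paper collects in Remark~\ref{rem-full-recol}: the vanishings $j_Li\simeq 0$ (axiom~(2)), $i_L(j_L)_!\simeq 0$ and $i_Rj\simeq 0$ (deduced by adjunction from axiom~(2) and $\scrC_1\simeq\scrC_0^\perp$), the cofiber sequence $ii_R\to\id_\scrC\to jj_L$, and exactness of $i_L$, $i_R$, $j_L$. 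The two-stage truncation you use --- first truncate on the $\scrC_1$ side through the fully faithful $(j_L)_!$ to produce $Q$, then correct on the $\scrC_0$ side through $i$ to produce $\tilde Q$, and stitch the pieces together by two octahedra --- is precisely the device of BBD, and the identifications $j_L\tilde Q\simeq j_LQ$ and $i_R\tilde Q\simeq\tau^{\scrC_0}_{\leq -1}(i_RQ)$, which exploit $j_Li\simeq 0$ and $i_Rj\simeq 0$ at exactly the points you single out, are indeed where the construction earns its keep. One small stylistic remark on the orthogonality step: it is slightly cleaner to apply the mapping \emph{spectrum} functor $\map_\scrC(X,-)$ to $ii_RY\to Y\to jj_LY$, so that the outer terms vanishing in nonnegative degrees forces the middle one to as well by a single long exact sequence; with mapping \emph{spaces} one must run the $\pi_0$ part of the sequence of pointed sets separately to conclude contractibility rather than mere weak contractibility of higher homotopy.
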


\begin{thm}\label{thm-beil-recoll}
  Let $\scrC$ be a stable \cat with all sequential limits equipped with a
  right separated t-structure $(\scrC_{\geq 0},\scrC_{\leq 0})$.
  Then the glued t-structure on $\Fild\scrC$
  (via the recollement of Remark \ref{rem-full-recol})
  obtained by considering
  \begin{enumerate}
    \item the trivial t-structure\footnote{that
      is, the one given by $(\scrC,\{0\})$, where all objects are connective,
      and only the zero object is coconnective}
      on $\scrC$,
    \item the transferred t-structure on $\cFild\scrC$
  \end{enumerate}
  is the Beilinson t-structure (with respect to
  $(\scrC_{\geq 0},\scrC_{\leq 0})$) on $\Fild\scrC$, as per Definition
  \ref{defi-fild-beilinson-t-str}.
\end{thm}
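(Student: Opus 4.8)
The plan is to compute both t-structures explicitly and then match their connective and coconnective halves.

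First I will unwind the glued t-structure. In the recollement of Remark \ref{rem-full-recol} attached to $\Fild\scrC$ by Proposition \ref{prop-cfild-bousfield}, the ``closed'' subcategory $\scrC_{0}$ is the constant objects, with $i = \const$, $i_{L} = \colim$ and $i_{R} = \lim$, while the ``open'' subcategory $\scrC_{1}$ is $\cFild\scrC$, with $j_{L} = L$; note that $j_{L}$ indeed annihilates the constant objects, since $L(\const_{X})^{n} \simeq X/X \simeq 0$. Feeding this, the trivial t-structure on $\scrC_{0}$ (every object connective, only $0$ coconnective) and the transferred t-structure on $\scrC_{1}$ into Theorem \ref{thm-glued-t-str} yields
\[
  (\Fild\scrC)_{\geq 0} = \{\, F : LF \in \imp(\coCh\scrC)_{\geq 0} \,\},
  \qquad
  (\Fild\scrC)_{\leq 0} = \{\, F : F \text{ complete and } F \in \imp(\coCh\scrC)_{\leq 0} \,\},
\]
where in the coconnective description I have used that the localization map $F \to LF$ is an equivalence as soon as $F$ is complete.

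Next I will rewrite the transferred conditions in terms of graded pieces. Since $\caush$ is inverse to $\imp$ and $\caush F^{n} \simeq \gr^{n}F[n]$ by Remark \ref{rem-explicit-description-general-case}, an object $F \in \cFild\scrC$ lies in $\imp(\coCh\scrC)_{\geq 0}$ (resp.\ in $\imp(\coCh\scrC)_{\leq 0}$) if and only if $\gr^{n}F \in \scrC_{\geq -n}$ (resp.\ $\gr^{n}F \in \scrC_{\leq -n}$) for all $n$. For the connective part this already closes the comparison: by Proposition \ref{prop-cfild-bousfield}, $L$ is Bousfield localization at the graded equivalences, so the unit $F \to LF$ induces equivalences $\gr^{n}F \simeq \gr^{n}(LF)$ for all $n$; hence $F \in (\Fild\scrC)_{\geq 0}$ if and only if $\gr^{n}F \in \scrC_{\geq -n}$ for all $n$, which is precisely $\Fild_{\geq 0}\scrC$.

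It remains to identify the coconnective parts, which is the only real step. I must show that, for $F \in \Fild\scrC$, the conditions \emph{(i)} $F^{n} \in \scrC_{\leq -n}$ for all $n$ and \emph{(ii)} $F$ is complete and $\gr^{n}F \in \scrC_{\leq -n}$ for all $n$ are equivalent. For \emph{(i)}$\Rightarrow$\emph{(ii)}: rotating the defining cofibre sequence of $\gr^{n}$ produces a cofibre sequence $F^{n} \to \gr^{n}F \to F^{n+1}[1]$ whose outer terms lie in $\scrC_{\leq -n}$ (using $\scrC_{\leq -n-1}[1] = \scrC_{\leq -n}$), and $\scrC_{\leq -n}$ is closed under extensions, so $\gr^{n}F \in \scrC_{\leq -n}$; moreover, for every $k$ one has $F^{+\infty} \simeq \lim_{m \geq k} F^{m}$ with each $F^{m}$, $m \geq k$, in $\scrC_{\leq -k}$, whence $F^{+\infty} \in \scrC_{\leq -k}$ because $\scrC_{\leq -k}$ is closed under limits, and therefore $F^{+\infty} \in \bigcap_{k} \scrC_{\leq -k} = 0$ by right separatedness --- this is the single place where that hypothesis enters. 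For \emph{(ii)}$\Rightarrow$\emph{(i)}: completeness gives $F^{n} \simeq \lim_{m} F^{n}/F^{m}$, obtained by applying $\lim_{m}$ to the cofibre sequences $F^{m} \to F^{n} \to F^{n}/F^{m}$ and using $\lim_{m} F^{m} = F^{+\infty} \simeq 0$; each $F^{n}/F^{m}$ is a finite iterated extension of $\gr^{n}F, \dots, \gr^{m-1}F$, all of which lie in $\scrC_{\leq -n}$, so $F^{n}/F^{m} \in \scrC_{\leq -n}$, and hence $F^{n} \in \scrC_{\leq -n}$ by closure under limits. Combining \emph{(i)}$\Leftrightarrow$\emph{(ii)} with the two preceding paragraphs shows that the glued and the Beilinson t-structures have the same connective and the same coconnective subcategory, so they coincide; in particular the pair of Definition \ref{defi-fild-beilinson-t-str} is a genuine t-structure, as was promised there. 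The main obstacle is exactly this coconnective comparison: one must control how sequential limits interact with a t-structure that is assumed only right separated, and this is precisely what makes the right separatedness hypothesis of Definition \ref{defi-fild-beilinson-t-str} indispensable.
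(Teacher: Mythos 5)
Your proof is correct and follows essentially the same route as the paper's: you unwind the glued t-structure from Theorem~\ref{thm-glued-t-str}, identify the connective half immediately via $\gr^n L F \simeq \gr^n F$, and reduce the coconnective half to the equivalence of $\forall n\ F^n\in\scrC_{\leq -n}$ with completeness plus $\forall n\ \gr^n F\in\scrC_{\leq -n}$, invoking right separatedness exactly where the paper does. The only (cosmetic) divergence is in the implication $(\mathit{ii})\Rightarrow(\mathit{i})$: you build $F^n/F^m$ from the graded pieces by closure of $\scrC_{\leq -n}$ under extensions, whereas the paper uses closure under limits (taking fibers along the tower $F^n/F^{m+1}\to F^n/F^m\to\gr^m F[1]$); both are valid and the two arguments are otherwise identical.
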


Before proving Theorem \ref{thm-beil-recoll}, we state one immediate
consequence of it (and the definition of glued t-structure given in Theorem
\ref{thm-glued-t-str}).

\begin{cor}\label{cor-transferred}
  The Beilinson t-structure on $\Fild\scrC$ induces a t-structure on the
  full subcategory of complete filtered objects $\cFild\scrC$ that is
  equivalent to the transferred t-structure of Definition
  \ref{defi-transferred}.
\end{cor}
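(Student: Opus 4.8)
The plan is to read the statement off from Theorem~\ref{thm-beil-recoll} together with the explicit description of a glued t-structure in Theorem~\ref{thm-glued-t-str}. By Theorem~\ref{thm-beil-recoll}, the Beilinson t-structure on $\Fild\scrC$ is the glued t-structure for the recollement of Proposition~\ref{prop-cfild-bousfield}, built from the trivial t-structure on $\scrC$ (sitting on the full subcategory of constant objects) and the transferred t-structure on $\cFild\scrC$. The first step is to match the abstract notation of Theorem~\ref{thm-glued-t-str} with the recollement at hand: the functor called $i$ there is $\const\colon\scrC\to\Fild\scrC$, with $i_L=\colim$ and $i_R=\lim$; the functor called $j$ there is the inclusion $\cFild\scrC\hookrightarrow\Fild\scrC$, with $j_L=L$ the completion functor of Proposition~\ref{prop-cfild-bousfield}.

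With this dictionary, and using that for the trivial t-structure on $\scrC$ every object is connective while only $0$ is coconnective, the formulas of Theorem~\ref{thm-glued-t-str} collapse to
$$\Fild_{\geq 0}\scrC=\{X:LX\in(\cFild\scrC)_{\geq 0}\},\qquad \Fild_{\leq 0}\scrC=\{X:LX\in(\cFild\scrC)_{\leq 0}\text{ and }\lim X\simeq 0\},$$
where $(\cFild\scrC)_{\geq 0}=\imp(\coCh\scrC)_{\geq 0}$ and $(\cFild\scrC)_{\leq 0}=\imp(\coCh\scrC)_{\leq 0}$ denote the transferred classes of Definition~\ref{defi-transferred}. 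From the second formula, $\lim(\tau^B_{\leq n}X)\simeq 0$ for every $X$ and every $n$; that is, every Beilinson-coconnective truncation of every filtered object is complete. Since $\cFild\scrC$ is a stable subcategory of $\Fild\scrC$ closed under shifts (Proposition~\ref{prop-cfild-bousfield}), it is therefore closed under all Beilinson truncations, and hence inherits a t-structure whose connective part is $\cFild\scrC\cap\Fild_{\geq 0}\scrC$ and whose coconnective part is $\cFild\scrC\cap\Fild_{\leq 0}\scrC$.

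It remains to identify these two classes with the transferred ones. For $F\in\cFild\scrC$ one has $LF\simeq F$, because $L$ is left adjoint to the fully faithful inclusion $i$, so the counit $L\circ i\Rightarrow\id$ is an equivalence; moreover $\lim F\simeq 0$ by completeness. Substituting into the formulas above, $F\in\Fild_{\geq 0}\scrC$ if and only if $F\in(\cFild\scrC)_{\geq 0}$, and $F\in\Fild_{\leq 0}\scrC$ if and only if $F\in(\cFild\scrC)_{\leq 0}$ (the completeness clause being automatically satisfied on $\cFild\scrC$). This is precisely the assertion that the induced t-structure on $\cFild\scrC$ coincides with the transferred one. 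I do not expect a genuine obstacle here: the only points requiring some care are keeping the recollement bookkeeping straight and noticing that the closure of $\cFild\scrC$ under Beilinson truncations is forced exactly by the choice of the \emph{trivial} t-structure on the constant-object part of the recollement.
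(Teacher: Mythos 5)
The proposal is correct and takes essentially the same approach as the paper: the paper states Corollary~\ref{cor-transferred} as ``one immediate consequence'' of Theorem~\ref{thm-beil-recoll} together with the explicit description of the glued t-structure in Theorem~\ref{thm-glued-t-str}, offering no further argument, and you have simply unpacked the bookkeeping that makes it immediate (matching $i_L=\colim$, $i_R=\lim$, $j_L=L$, specializing to the trivial t-structure, using $L F\simeq F$ and $\lim F\simeq 0$ for complete $F$, and noting that completeness of every $\tau^B_{\leq n}X$ plus stability of $\cFild\scrC$ gives closure under the Beilinson truncations).
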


\begin{proof}[Proof of Theorem \ref{thm-beil-recoll}]
  Let us start by identifying the subcategory of connective objects for
  the transferred t-structure on $\cFild\scrC$.
  We have that $F\in(\cFild\scrC)_{\geq 0}$ if and only if
  $\caush F \in \left(\coCh\scrC\right)_{\geq 0}$.
  By Definition \ref{defi-coch-belinison-t-str},
  this is the case if and only if
  $$
  (\caush F)^n \simeq \gr^n F[n] \in \scrC_{\geq 0}
  \text{ for all } n
  $$
  hence
  $$
  (\cFild\scrC)_{\geq 0} = \left\{F\in\cFild\scrC \ | \ \forall n \ \gr^n F \in
  \scrC_{\geq -n}\right\}.
  $$
  Now, according to Theorem \ref{thm-glued-t-str}, the connective
  objects in the glued t-structure on $\Fild\scrC$ are given by all the
  $G\in\Fild\scrC$ such that
  $$LG\in\left(\cFild\scrC\right)_{\geq 0}$$
  (the condition on $G^{-\infty}$ being empty, as we are considering the
  trivial t-structure on $\scrC$);
  but, as for all $n$ we have $\gr^n LG \simeq \gr^n G$, the above is
  equivalent to the condition
  $$
  \gr^n G \in \scrC_{\geq -n} \text{ for all } n
  $$
  which in turn determines exactly the class of connective objects for the
  Beilinson t-structure of Definition \ref{defi-fild-beilinson-t-str};
  as, by \cite[1.2.1.3]{HA}, the class of connective objects completely
  determines the t-structure, provided its existence, the only thing left
  is to chech is that the description of coconnective objects given in
  Theorem \ref{thm-glued-t-str} coincides with the one given in Definition
  \ref{defi-fild-beilinson-t-str}; that is, we have to prove that
  $$
  \left(\forall n \ \gr^n F \in \scrC_{\leq -n} \text{ and } F^{+\infty} \simeq 0\right)
  \Longleftrightarrow
  \left(\forall n \ F^n \in \scrC_{\leq -n}\right).
  $$

  For the ``only if'' direction: \cite[1.2.1.16]{HA}
  implies that $\scrC_{\leq -n}$ is closed under
  extensions, hence if $F^{n+1} \in \scrC_{\leq -n-1}$ and
  $F^n \in \scrC_{\leq -n}$ the fiber sequence
  $$F^n\to\gr^{n}F\to F^{n+1}[1]$$
  proves that $\gr^n F\in\scrC_{\leq -n}$.
  To check that $F^{+\infty}\simeq 0$, observe that any subset of the form
  $\bbZ\op_{\geq n}$ is an initial subcategory of $\bbZ\op$, hence for any
  $n\in\bbZ$ we have
  $$
  F^{+\infty}\coloneqq\lim\left(\cdots\to F^{n+1}\to F^n \to \cdots\right)
  \simeq\lim\left(\cdots\to F^{n+1}\to F^n\right);
  $$
  as (by \cite[1.2.1.6]{HA}), each $\scrC_{\leq n}$ is closed under all limits
  existing in $\scrC$,
  $$F^{+\infty}\in\scrC_{\leq n} \ \forall n\in\bbZ$$
  (recall that $\scrC_{\leq m}\subseteq\scrC_{\leq n}$ for all pairs of integers
  $m\leq n$) and thus $F^{+\infty}\simeq 0$ by the right separatedness
  hypothesis.

  For the ``if'' direction, let us start by noticing that as
  $\gr^{n+1}F[1]\in\scrC_{\leq -n}$, the fiber
  sequence
  $$F^n/F^{n+2}\to\gr^n F\to\gr^{n+1}F$$
  proves $F^n/F^{n+2}\in\scrC_{\leq -n}$ (again, as the latter \cat is closed
  under limits in $\scrC$). We can now proceed inductively for $m\geq 2$ to
  show that (as $\gr^{n+m}F[1]\in\scrC_{\leq -n-m+1}\subseteq\scrC_{\leq -n})$
  the fiber sequence
  $$F^n/F^{n+m+1}\to F^n/F^{n+m}\to \gr^{n+m}F[1]$$
  implies all objects $F^n/F^k$ for $k>n$ lie in $\scrC_{\leq -n}$.
  Since (again, by \cite[1.2.1.6]{HA}) we know we can compute colimits in
  $\scrC_{\leq -n}$ just by computing them in $\scrC$ and then reflecting
  along the left adjoint to the inclusion (in particular, the colimit
  is the same in both categories if the object already happened to land
  in $\scrC_{\leq -n}$ when computed in $\scrC$), we have that
  \begin{displaymath}
  \begin{split}
    \lim_k F^n/F^k
    &\simeq \lim_k \cofib\left(F^k\to F^n\right)\\
    &\simeq \cofib\left(\lim_k F^k\to F^n\right)\\
    &\simeq F^n/F^{+\infty}
  \end{split}
  \end{displaymath}
  lies in $\scrC_{\leq -n}$. But, as by hypothesis $F^{+\infty}\simeq 0$,
  we have that $F^n\in\scrC_{\leq -n}$ as desired.
\end{proof}

\begin{notat}
  Motivated by the previous results, we will refer to the transferred
  t-structure on $\cFild\scrC$ also as the \emph{Beilinson t-structure}.
\end{notat}

\begin{rem}\label{rem-recollement-of-hearts}
  In the situation of Theorem \ref{thm-glued-t-str}, passing to hearts one
  gets a ``recollement'' of Abelian categories:
  $$
  \begin{tikzcd}[column sep=huge]
    \scrC_0^\heartsuit \ar[r, hook, "i"' description] &
    \scrC^\heartsuit \ar[l, shift right=0.6ex, bend right, "i_L"']
      \ar[from=r, hook, shift right=0.6ex, bend right, "(j_L)_!"']
      \ar[r, "j_L" description]
      \ar[from=r, hook', shift left=0.6ex, bend left, "j"]
      \ar[l, shift left=0.6ex, bend left, "i_R"]
      \ar[l,phantom, shift left=2ex,
        "\text{\scalebox{1}{\rotatebox{-90}{$\dashv$}}}"]
      \ar[l,phantom, shift right=2ex,
        "\text{\scalebox{1}{\rotatebox{-90}{$\dashv$}}}"] &
    \scrC_1^\heartsuit
      \ar[l,phantom, shift left=2ex,
        "\text{\scalebox{1}{\rotatebox{-90}{$\dashv$}}}"]
      \ar[l,phantom, shift right=2ex,
        "\text{\scalebox{1}{\rotatebox{-90}{$\dashv$}}}"]
  \end{tikzcd}
  $$
  where $i$, $j$ and $(j_L)_!$ are fully faithful. As shown already in
  \cite[1.4.18]{BBD83}, in this situation $j_L$ characterizes
  $\scrC_1^\heartsuit$ as the quotient category
  $\scrC^\heartsuit/\scrC_0^\heartsuit$.

  In particular, in the hypothesis of Theorem \ref{thm-beil-recoll}, we have
  that (as $\scrC$ is endowed with the trivial t-structure)
  $\scrC^\heartsuit\simeq 0$ and thus (using Remark \ref{rem-heart-complete})
  $$(\Fild\scrC)^\heartsuit\simeq\left(\cFild\scrC\right)^\heartsuit
  \simeq\coCh\left(\scrC^\heartsuit\right).$$
\end{rem}

\begin{rem}\label{rem-comparison-formulae}
  It follows from Definition \ref{defi-transferred} together with Corollary
  \ref{cor-transferred} that for any $F\in\cFild\scrC$ and $C\in
  \coCh\scrC$ we have
  $$
  \caush \left(\tau^B_{\leq n} F\right) \simeq
  \tau^\text{lvl}_{\leq n} (\caush F) \quad
  \text{and} \quad
  \imp \left(\tau^\text{lvl}_{\leq n} C \right)\simeq
  \tau^B_{\leq n} (\imp C)
  $$
  for all $n\in\bbZ$, and similar formulae for the truncations above $n$.
  Moreover, by Remark \ref{rem-heart-complete}, we also have
  $$\pi^B_n F \simeq \pi^\text{lvl}_n \caush F \quad
  \text{and} \quad
  \pi^\text{lvl}_n C \simeq \pi^B_n \imp C
  $$
  for all $n\in\bbZ$.
\end{rem}

\begin{rem}\label{rem-right-assumption}
  One can construct on $\Fild\scrC$ the glued t-structure as in
  Theorem \ref{thm-glued-t-str} regardless of the right separatedness
  hypothesis;
  if $\scrC$ is not right separated, the class of coconnective objects will be
  the full subcategory
  $$
  \{F\in\Fild_{\leq 0} \ | \ \aush F \in (\coCh\scrC)_{\leq 0}
  \text{ and } F^{-\infty} \simeq 0\}
  $$
  but this class will in general not coincide with the one described in
  Definition \ref{defi-fild-beilinson-t-str}
  (and, as by \cite[1.2.1.3]{HA} the class of connective objects completely
  determines the t-structure, there cannot exist
  a t-structure exactly as in Definition \ref{defi-fild-beilinson-t-str} if
  the two classes of ``candidate coconnective objects'' do not coincide).
  We are not aware of any application for this t-structure.
\end{rem}

\section{Symmetric monoidal structures}\label{section-monoidal}
The $\infty$-categorical Day convolution (introduced in \cite{GlasmanDay} and
further developed in \cite[2.2.6]{HA}) provides a way
to equip with a symmetric monoidal structure any functor category
$\Fun(\scrC,\scrD)$, provided that both $\scrC$ and $\scrD$ are symmetric
monoidal, and $\scrD$ is presentably so.
In particular (see Remark \ref{rem-day-structures}),
we can endow $\Fild\scrC$ with a symmetric monoidal structure whenever
$\scrC$ is presentably symmetric monoidal.
As it turns out, such monoidal structure induces one on $\cFild\scrC$, and
thus on $\coCh\scrC$, whenever Theorem \ref{thm-general-equivalence} applies.
In this section, we analyze these induced symmetric monoidal structures,
and their interaction with the t-structures introduced
in the previous section. In particular, we prove that the Day convolution
structure on both $\Fild\scrC$ and $\cFild\scrC$ is compatible with
Beilinson t-structures, and that the symmetric monoidal structure on
$\coCh\scrC$ provides a homotopy coherent refinement of the usual tensor
product of cochain complexes.

\begin{rem}\label{rem-day-structures}
  Let $\scrC$ be a presentably symmetric monoidal \cat.
  By \cite[2.11]{GlasmanDay} (see also \cite[2.2.6.17]{HA}, with $\kappa$
  chosen to be the strongly inaccessible cardinal determining the size of our
  universe of small sets) we can endow $\Fild(\scrC)$ with the structure of a
  symmetric monoidal \cat, given by Day convolution (where the symmetric
  monoidal structure on $\bbZ\op$ is given by addition).
  Again, by \cite[2.2.6.17]{HA}, if $F$ and $G$ are filtered objects in
  $\scrC$, their Day convolution product is given by Kan extension of
  $\tensor\circ(F,G)$ along $+$, hence by \cite[4.3.3.2]{HTT} and
  \cite[4.3.2.2]{HTT} it is pointwise given by
  \begin{equation}\label{eq-day-formula}
    (F\tensor_{\text{Day}}G)^n \simeq \colim_{
    \substack{(s,t)\in\bbZ\op\times\bbZ\op \\
    s+t\geq n}}
  F^s \tensor G^t
  \end{equation}
  where the shape of the colimit follows from inspection
  of the comma category $(+\downarrow n)$ (see Definition \ref{defi-comma-cat}).
  In particular, $\Fild(\scrC)$ is presentably symmetric monoidal.
  Similarly, we can endow $\prod_\bbZ\scrC\simeq\Fun(\bbZ^\delta,\scrC)$ with
  a presentably symmetric monoidal structure given by Day convolution, whose
  product is pointwise given by
  $$
  \left((X_u)_{u\in\bbZ}\tensor_{\text{Day}}(Y_v)_{v\in\bbZ}\right)_n \simeq
  \bigoplus_{s+t=n} X_s \tensor Y_t
  $$
  where once again the shape of the colimit follows from inspection
  of the comma category $(+\downarrow n)$.
\end{rem}

\begin{prop}\label{prop-unit-day}
  Let $\scrC$ be a presentably symmetric monoidal \cat, whose unit we'll denote
  by $\mathbbm{1}$. Then, the unit object for $(\Fild\scrC,
  \tensor_{\text{Day}})$ is given by the filtered object
  $$\mathbbm{1}_{\langle\leq 0\rangle}\coloneqq
  \cdots \to 0 \to 0 \to \mathbbm{1}\xto{\id}\mathbbm{1}\xto{\id}\cdots$$
  consisting of copies of $\mathbbm{1}$ and identity morphisms for $n\leq 0$,
  and of copies of $0$ for $n>0$.
\end{prop}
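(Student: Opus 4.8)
The plan is to identify the Day convolution unit of $\Fild\scrC = \Fun(\bbZ\op,\scrC)$ with a left Kan extension and then to evaluate that Kan extension degreewise. Recall from the construction of the Day convolution structure (see \cite{GlasmanDay} and \cite[2.2.6]{HA}, with the same choice of $\kappa$ as in Remark \ref{rem-day-structures}) that, since the $0$-ary operation of the symmetric monoidal category $(\bbZ\op,+)$ is the functor $e\colon\Delta^0\to\bbZ\op$ selecting the object $0\in\bbZ$, the unit of $(\Fild\scrC,\tensor_{\text{Day}})$ is the functor $\Lan_e\mathbbm{1}\colon\bbZ\op\to\scrC$, where $\mathbbm{1}\colon\Delta^0\to\scrC$ names the unit of $\scrC$. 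So it suffices to compute $\Lan_e\mathbbm{1}$.

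The next step is to use the pointwise formula for left Kan extensions, exactly as invoked in Remark \ref{rem-day-structures} (\cite[4.3.3.2]{HTT} and \cite[4.3.2.2]{HTT}): for every $n\in\bbZ$,
$$(\Lan_e\mathbbm{1})(n)\simeq\colim_{(e\downarrow n)}\mathbbm{1}\simeq\colim_{\Map_{\bbZ\op}(0,n)}\mathbbm{1},$$
the colimit of the constant diagram at $\mathbbm{1}$ over the comma category $(e\downarrow n)$, whose underlying space is discrete on the set $\Map_{\bbZ\op}(0,n)$ since $\Delta^0$ has a single object. As $\bbZ\op$ is a poset, $\Map_{\bbZ\op}(0,n)$ is a point when $0\geq n$ in $\bbZ$, i.e.\ when $n\leq 0$, and is empty otherwise. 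Hence $(\Lan_e\mathbbm{1})(n)\simeq\mathbbm{1}$ for $n\leq 0$ and $(\Lan_e\mathbbm{1})(n)\simeq 0$ for $n>0$.

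It then remains to pin down the structure maps. For a morphism $n\to n'$ of $\bbZ\op$ (so $n\geq n'$ in $\bbZ$), the induced map $(\Lan_e\mathbbm{1})(n)\to(\Lan_e\mathbbm{1})(n')$ is the one induced on colimits by postcomposition $\Map_{\bbZ\op}(0,n)\to\Map_{\bbZ\op}(0,n')$. When $n\leq 0$ (hence also $n'\leq 0$) this is the identity of the one-point set, so the induced map is $\id\colon\mathbbm{1}\to\mathbbm{1}$; when $n>0$ the source is $0$ and the map is the unique one (in particular the map in degree $1\to 0$ is the zero map $0\to\mathbbm{1}$). Therefore $\Lan_e\mathbbm{1}$ is precisely the filtered object $\mathbbm{1}_{\langle\leq 0\rangle}$ described in the statement, which finishes the argument.

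There is no genuinely hard step here; the one point requiring care is the variance bookkeeping — remembering that a morphism $0\to n$ exists in $\bbZ\op$ exactly when $n\leq 0$ in $\bbZ$, and that under $\Fild\scrC=\Fun(\bbZ\op,\scrC)$ the transition maps $F^{n+1}\to F^{n}$ of a decreasing filtered object come from the morphisms $n+1\to n$ of $\bbZ\op$. Alternatively, one can verify the claim directly from the colimit formula (\ref{eq-day-formula}) applied to $\mathbbm{1}_{\langle\leq 0\rangle}\tensor_{\text{Day}}(-)$, by decomposing the colimit over $\{(s,t):s+t\geq n\}$ along the projection to the second coordinate: each fibre $\{s:s+t\geq n\}$ has a terminal object $s=n-t$, the resulting contributions with $t<n$ vanish and form a sieve in $\bbZ\op$, and the remaining colimit over $\bbZ\op_{\geq n}$ is computed by its terminal object $n$. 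This route is longer, so I would relegate it to a remark and present the Kan-extension computation as the main proof.
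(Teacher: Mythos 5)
Your proof is correct, and it takes a genuinely different route from the paper's. The paper does \emph{not} compute $\mathbbm{1}_{\text{Day}}$ directly: instead it first establishes the claim $(\spadesuit)$ that $(F\tensor_{\text{Day}}\mathbbm{1}_{\langle\leq 0\rangle})^n\simeq F^n$ for all $F$ and $n$, by unwinding the colimit formula~(\ref{eq-day-formula}) for $F\tensor_{\text{Day}}\mathbbm{1}_{\langle\leq 0\rangle}$ and decomposing the indexing diagram into a subdiagram of zeroes and a ``staircase'' of equivalences whose colimit is $F^n$. It then invokes the abstract existence of a unit object and the unitor equivalence $\mathbbm{1}_{\text{Day}}\tensor_{\text{Day}}\mathbbm{1}_{\langle\leq 0\rangle}\simeq\mathbbm{1}_{\langle\leq 0\rangle}$, together with $(\spadesuit)$ applied to $F=\mathbbm{1}_{\text{Day}}$, to deduce the levelwise identification. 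Your argument instead identifies $\mathbbm{1}_{\text{Day}}$ outright as $\Lan_e\mathbbm{1}$, and reads off its degrees and transition maps from the comma categories $(e\downarrow n)$, which are discrete and in fact either empty or singletons because $\bbZ\op$ is a poset. The payoff of your route is a shorter, more conceptual proof; what it needs that the paper's proof doesn't is the explicit identification of the Day unit as the left Kan extension of $\mathbbm{1}_\scrC$ along the nullary tensor of $\bbZ\op$. This identification is standard (it falls out of the multimap-space description of Day convolution, cf.\ \cite{GlasmanDay}), but since the paper never cites a reference for the unit formula itself, you should name one explicitly rather than leaving it implicit in the bundle \cite{GlasmanDay}, \cite[2.2.6]{HA}. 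The alternative you sketch in your final paragraph, namely decomposing the colimit~(\ref{eq-day-formula}) along the second coordinate, is essentially the paper's verification of $(\spadesuit)$; you are right that this is the longer route and is better relegated to a remark.
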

\begin{proof}
  First, let us note that it is enough to prove the following claim:
  \begin{enumerate}
    \item[($\spadesuit$)] For any $F\in\Fild\scrC$ and any $n\in\bbZ$
    we have
  $$(F\tensor_{\text{Day}}\mathbbm{1}_{\langle\leq 0\rangle})^n\simeq F^n.$$
  \end{enumerate}
  In fact, the existence of the unitor equivalence
  $\mathbbm{1}_{\text{Day}}\tensor\mathbbm{1}_{\langle\leq 0\rangle}\xto{\sim}
  \mathbbm{1}_{\langle\leq 0\rangle}$
  together with the claim implies the existence of equivalences
  $$\mathbbm{1}_{\text{Day}}^n\simeq
  \left(\mathbbm{1}_{\text{Day}}\tensor
  \mathbbm{1}_{\langle\leq 0\rangle}\right)^n\simeq
  \mathbbm{1}_{\langle\leq 0\rangle}^n$$
  for all $n\in\bbZ$.
  As equivalences in $\Fild\scrC$ can be cheched pointwise, this is enough to
  conclude
  $\mathbbm{1}_{\text{Day}}\simeq \mathbbm{1}_{\langle\leq 0\rangle}$.

  We now turn to the proof of claim ($\spadesuit$).
  By (\ref{eq-day-formula}), this boils down to proving that the colimit
  of the following diagram
  $$
  \begin{tikzcd}
    &\cdots \ar[d] & \cdots \ar[d] & \cdots \ar[d] & \cdots \ar[d] &
      \cdots \ar[d] & \cdots \ar[d]\\
    \cdots \ar[r] & 0 \ar[r]\ar[d] & 0 \ar[r]\ar[d]&
      0 \ar[r]\ar[d]& 0 \ar[r]\ar[d]&
      0 \ar[r]\ar[d]& 0 \\
    \cdots \ar[r] & 0 \ar[r]\ar[d] & 0 \ar[r]\ar[d]&
      0 \ar[r]\ar[d]& 0 \ar[r]\ar[d]& 0 & \\
    \cdots \ar[r] & F^{n+3} \ar[r]\ar[d, equal] & F^{n+2} \ar[r]\ar[d, equal]&
      F^{n+1} \ar[r]\ar[d, equal]& F^n && \\
    \cdots \ar[r] & F^{n+3} \ar[r]\ar[d, equal] & F^{n+2} \ar[r]\ar[d, equal]&
      F^{n+1} &&& \\
    \cdots \ar[r] & F^{n+3} \ar[r]\ar[d, equal] & F^{n+2} &&&& \\
    &\cdots &&&&& \\
  \end{tikzcd}
  $$
  is equivalent to $F^n$. By finality, it is enough to check that the colimit
  of the following diagram
  \begin{equation}\label{eq-zigzag}
  \begin{tikzcd}
    &&&&& \cdots \ar[d]\\
    &&&& 0 \ar[r]\ar[d]& 0 & \\
    &&& 0 \ar[r]\ar[d]& 0 && \\
    && F^{n+1} \ar[r]\ar[d, equal]& F^{n} &&& \\
    & F^{n+2} \ar[r]\ar[d, equal]& F^{n+1} &&&& \\
    F^{n+3} \ar[r]\ar[d, equal] & F^{n+2} &&&&& \\
    \cdots &&&&& \\
  \end{tikzcd}
  \end{equation}
  is equivalent to $F^n$.
  If we denote by $A$ the colimit of the diagram
  \begin{equation}\label{eq-zero-zig}
  \begin{tikzcd}
    &&&&& \cdots \ar[d]\\
    &&&& 0 \ar[r]\ar[d]& 0 & \\
    &&& 0 \ar[r]& 0 && \\
  \end{tikzcd}
  \end{equation}
  and by $B$ the colimit of the diagram
  \begin{equation}\label{eq-zag}
  \begin{tikzcd}
    &&& 0 \ar[d]&&& \\
    && F^{n+1} \ar[r]\ar[d, equal]& F^{n} &&& \\
    & F^{n+2} \ar[r]\ar[d, equal]& F^{n+1} &&&& \\
    F^{n+3} \ar[r]\ar[d, equal] & F^{n+2} &&&&& \\
    \cdots &&&&& \\
  \end{tikzcd}
  \end{equation}
  we have that, by \cite[4.2.3.10]{HTT}, we can decompose the colimit of
  (\ref{eq-zigzag}) as the coproduct $A\oplus B$. As (\ref{eq-zero-zig})
  consists only of zero objects, its colimit is zero. Thus, the colimit of
  (\ref{eq-zigzag}) is equivalent to the colimit of (\ref{eq-zag}).
  By finality, we can omit the top right arrow $0\to F^n$ from the diagram in
  order to compute its colimit.
  By applying inductively \cite[4.2.3.10]{HTT}, we see that $B$ can be computed
  as the iterated pushout
  $$F^n\cpr_{F^{n+1}}F^{n+1}\cpr_{F^{n+2}}F^{n+2}\cdots.$$
  As each of the pushouts is taken along an equivalence, we have $B\simeq F^n$,
  as desired.
\end{proof}

Recall the following definition.
\begin{defi} \label{defi-loc-compatible-with-tensor}
  \cite[2.2.1.7]{HA}
  Let $\scrC$ be a symmmetric monoidal \cat, and let $L\colon\scrC\to\scrC$
  be a localization functor. The functor $L$ is \emph{compatible with the
  symmetric monoidal structure} if for every $L$-equivalence
  $X\to Y$, and every $Z\in\scrC$, the induced $X\tensor Z\to Y\tensor Z$
  is an $L$-equivalence.
\end{defi}

The following proposition already appeared as \cite[2.25]{Gwilliam-Pavlov},
we present an alternative proof.

\begin{prop}\label{prop-loc-comp-with-day}
  Let $\scrC$ be a presentably symmetric monoidal \cat. Then,
  the localization functor $L\colon\Fild\scrC\to\cFild\scrC$ is compatible with
  Day convolution.
\end{prop}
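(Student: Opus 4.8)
The plan is to reduce the statement to a computation with essentially constant filtered objects. Recall from Proposition~\ref{prop-cfild-bousfield} (and from its proof) that $L$ is the Bousfield localization of $\Fild\scrC$ at the graded equivalences, and that a morphism $\alpha$ of $\Fild\scrC$ is a graded equivalence if and only if $\cofib(\alpha)$ is essentially constant. Since $\scrC$ is presentably symmetric monoidal, Day convolution on $\Fild\scrC$ preserves colimits separately in each variable (Remark~\ref{rem-day-structures}); in particular, for a fixed $Z\in\Fild\scrC$ the functor $-\tensor_{\text{Day}}Z$ preserves cofiber sequences, so that $\cofib(\alpha\tensor_{\text{Day}}\id_Z)\simeq\cofib(\alpha)\tensor_{\text{Day}}Z$. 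By Definition~\ref{defi-loc-compatible-with-tensor} it therefore suffices to prove that $K\tensor_{\text{Day}}Z$ is essentially constant whenever $K\in\Fild\scrC$ is essentially constant and $Z\in\Fild\scrC$ is arbitrary.

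To prove this I would work with the pointwise formula~(\ref{eq-day-formula}), writing $(K\tensor_{\text{Day}}Z)^n\simeq\colim_{(s,t)\in(+\downarrow n)}K^s\tensor Z^t$ for the colimit over the comma category $(+\downarrow n)\subseteq\bbZ\op\times\bbZ\op$ of pairs with $s+t\geq n$. As $K$ is essentially constant, the cocone maps $K^s\to K^{-\infty}\coloneqq\colim_s K^s$ are all equivalences, so the diagram $(s,t)\mapsto K^s\tensor Z^t$ on $(+\downarrow n)$ is objectwise equivalent to $(s,t)\mapsto K^{-\infty}\tensor Z^t$, which is the restriction along the projection $q\colon(+\downarrow n)\to\bbZ\op$, $(s,t)\mapsto t$, of the filtered object $t\mapsto K^{-\infty}\tensor Z^t$. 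The key point is that $q$ is cofinal: for every $t_0\in\bbZ\op$ the relevant comma category is the poset $\{(s,t)\in\bbZ^2 : s+t\geq n,\ t\leq t_0\}$, which is cofiltered (given two objects, the object $(s,t)$ with $t=t_0$ and $s$ sufficiently large receives a morphism to each of them) and hence has weakly contractible nerve. Consequently $(K\tensor_{\text{Day}}Z)^n\simeq\colim_{t\in\bbZ\op}K^{-\infty}\tensor Z^t\simeq K^{-\infty}\tensor Z^{-\infty}$, with no dependence on $n$; moreover the $n$-th structure map of $K\tensor_{\text{Day}}Z$ is induced by the inclusion $(+\downarrow(n+1))\hookrightarrow(+\downarrow n)$, which commutes with the two copies of $q$, so under these identifications it becomes the identity of $\colim_{t\in\bbZ\op}K^{-\infty}\tensor Z^t$. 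Hence $K\tensor_{\text{Day}}Z$ is essentially constant, which is exactly what is needed. (Equivalently, this shows that $\const\colon\scrC\to\Fild\scrC$ is symmetric monoidal and that $\const_c\tensor_{\text{Day}}Z\simeq\const_{c\tensor Z^{-\infty}}$ for all $Z$.)

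The only genuinely non-formal ingredient is the cofinality of $q\colon(+\downarrow n)\to\bbZ\op$; everything else is bookkeeping with the localization of Proposition~\ref{prop-cfild-bousfield} and with the Day convolution formula~(\ref{eq-day-formula}). I expect this cofinality to be the main obstacle, though a mild one: it is the single place where the combinatorics of the index category $(+\downarrow n)$ enters, and it must be set up with care, since the wrong choice of variance would produce a diagram whose colimit is not $K^{-\infty}\tensor Z^{-\infty}$.
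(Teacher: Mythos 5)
Your proof is correct, but it takes a genuinely different route from the paper's. The paper works on the limit side: by \cite[2.12 (3)]{NikStableOperads}, compatibility of a Bousfield localization with a closed monoidal structure reduces to the internal mapping object $\innmap(F,G)$ being local whenever $G$ is, and the paper verifies $\lim_n\innmap(F,G)(n)\simeq 0$ directly by pushing the sequential limit through the end formula of \cite[3.11]{NikStableOperads} and the internal mapping object of $\scrC$. You stay on the colimit side: since by Proposition~\ref{prop-cfild-bousfield} the $L$-equivalences are exactly the maps whose cofiber is essentially constant, and $-\tensor_{\text{Day}}Z$ is exact, you reduce the claim to the assertion that essentially constant objects form a tensor ideal, and check this via the pointwise formula~(\ref{eq-day-formula}) and a finality argument for the projection $(+\downarrow n)\to\bbZ\op$, $(s,t)\mapsto t$. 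That finality check is exactly right and correctly identified as the nontrivial step: the relevant fiber, the poset of $(s,t)$ with $s+t\geq n$ and $t\leq t_0$, is nonempty and cofiltered, hence weakly contractible. Your approach is more self-contained (invoking only general finality criteria beyond the paper itself) and yields as a dividend the explicit formula $\const_c\tensor_{\text{Day}}Z\simeq\const_{c\tensor Z^{-\infty}}$, which the internal-hom argument does not make visible; the paper's proof is shorter once the external criterion is granted and avoids the combinatorics of the index poset entirely. One small correction to your closing parenthetical: $\const$ is \emph{not} strong symmetric monoidal, because the Day unit $\mathbbm{1}_{\langle\leq 0\rangle}$ of Proposition~\ref{prop-unit-day} is not essentially constant; what your computation shows is that the binary constraint $\const_c\tensor_{\text{Day}}\const_{c'}\to\const_{c\tensor c'}$ is an equivalence, so $\const$ is lax symmetric monoidal (as the right adjoint of the strong monoidal $\colim$) with invertible binary part, but the unit comparison is not invertible.
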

\begin{proof}
  Note that, by \cite[3.11]{NikStableOperads}, $\Fild\scrC$ admits an internal
  mapping object given by
  $$n\mapsto\innmap(F,G)(n)\simeq\int_{m\in\bbZ\op}\map_{\scrC}(F(m),G(m+n))$$
  (where $\map_{\scrC}$ denotes the internal mapping object of $\scrC$)
  By \cite[2.12 (3)]{NikStableOperads}, it suffices to prove that, given any
  $F\in\Fild\scrC$ and any $G\in\cFild\scrC$, the internal mapping object is
  complete; by the following computation
  \begin{displaymath}
  \begin{split}
    \lim_{n\in\bbZ\op} \ \innmap(F,G)(n)
    &\simeq\lim_{n\in\bbZ\op}\int_{m\in\bbZ\op}\map_{\scrC}\bigg(F(m),G(m+n)\bigg)\\
    &\simeq\int_{m\in\bbZ\op}\lim_{n\in\bbZ\op}\map_{\scrC}\bigg(F(m),G(m+n)\bigg)\\
    &\simeq\int_{m\in\bbZ\op}\map_{\scrC}\bigg(F(m),\lim_{n\in\bbZ\op}G(m+n)\bigg)\\
    &\simeq\int_{m\in\bbZ\op}\map_{\scrC}\bigg(F(m),0\bigg)\\
    &\simeq \vphantom{\int}0.
  \end{split}
  \end{displaymath}
  this is in fact the case.
\end{proof}

\begin{rem}\label{rem-day-localized}
  \cite[2.25]{Gwilliam-Pavlov}
  It follows from Proposition \ref{prop-loc-comp-with-day} and
  \cite[2.2.1.9]{HA} that if $\scrC$ is presentably symmetric monoidal,
  we have an induced presentably symmetric monoidal structure on
  $\cFild(\scrC)$, which we'll refer to as the \emph{completed Day convolution}
  monoidal structure and
  will denote by $\widehat{\tensor}$. In particular, we have that
  $$F \ \widehat{\tensor} \ G \simeq L \Big(F\tensor_{\text{Day}}G\Big).$$
\end{rem}

\begin{rem}\label{rem-localized-unit}
  From Proposition \ref{prop-loc-comp-with-day} and
  Remark \ref{rem-day-localized}, we have
  that the unit for $\widehat\tensor$ is given by
  $$L\mathbbm{1}_{\langle\leq 0\rangle}\simeq\mathbbm{1}_{\langle\leq 0\rangle}.$$
\end{rem}

\begin{defi}\label{defi-coh-tensor-product-coch}
  Let $\scrC$ be a stable presentably symmetric monoidal \cat.
  We refer to the symmetric monoidal structure induced on $\coCh(\scrC)$ by the
  equivalence of Theorem \ref{thm-general-equivalence} as the \emph{coherent
  cochains tensor product}, and we will denote it just by $\tensor$.
\end{defi}

\begin{rem}\label{rem-square-aush-gr-shift}
  It follows from Theorem \ref{thm-general-equivalence} together with
  Remark \ref{rem-explicit-description-general-case}
  that the functor $\undch$ defined in Lemma
  \ref{lemma-coch-to-prod-conservative}
  fits into the following commutative diagram
  $$
  \begin{tikzcd}
    \Fild\scrC\ar[r,"\aush"] \ar[d, "\gr"'] &[15pt] \coCh(\scrC) \ar[d,"\undch"] \\
    \prod_\bbZ\scrC\ar[r,"(\Sigma^n)_{n\in\bbZ}"] &[15pt] \prod_\bbZ\scrC.
  \end{tikzcd}
  $$
  that is, $\undch \circ \aush F \simeq \left(\gr^n F [n]\right)_{n\in\bbZ}$
  naturally in $F\in\Fild\scrC$.
\end{rem}

The symmetric monoidal stucture given in Definition
\ref{defi-coh-tensor-product-coch} is really a homotopy coherent generalization
of the usual tensor product of cochain complexes, in a sense made precise
by the following results.

\begin{prop}\label{prop-undch-aush-symm}
  The functor $\undch \circ \aush$ is symmetric monoidal.
\end{prop}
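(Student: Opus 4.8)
The plan is to reduce the claim to the symmetric monoidality of the associated graded functor for Day convolution, using the explicit description of $\undch\circ\aush$ that is already available.

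First I record why the categories at hand are symmetric monoidal. By Remark~\ref{rem-day-localized} (i.e.\ \cite[2.2.1.9]{HA} applied to Proposition~\ref{prop-loc-comp-with-day}) the localization $L\colon\Fild\scrC\to\cFild\scrC$ is symmetric monoidal from $\tensor_{\text{Day}}$ to $\widehat{\tensor}$, and by Definition~\ref{defi-coh-tensor-product-coch} the equivalence $\caush\colon\cFild\scrC\xrightarrow{\sim}\coCh\scrC$ transports $\widehat{\tensor}$ to $\tensor$; hence $\aush=\caush\circ L$ (Proposition~\ref{prop-imp-is-complete}) is symmetric monoidal. (This is also the fact one will use to deduce, from the present proposition, that $\undch$ itself is symmetric monoidal, and hence that $(C\tensor D)^n\simeq\bigoplus_{s+t=n}C^s\tensor D^t$.) The actual content, though, is extracted from Remark~\ref{rem-square-aush-gr-shift}: there is a natural equivalence $\undch\circ\aush\simeq\tau\circ\gr$, where $\gr\colon\Fild\scrC\to\prod_\bbZ\scrC$ is the associated graded and $\tau\colon\prod_\bbZ\scrC\to\prod_\bbZ\scrC$ is the degree-twist functor $(X_n)_{n\in\bbZ}\mapsto(\Sigma^nX_n)_{n\in\bbZ}$. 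So it suffices to equip $\gr$ and $\tau$ with symmetric monoidal structures, with $\prod_\bbZ\scrC$ carrying the Day convolution of Remark~\ref{rem-day-structures}.

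For $\tau$ this is formal: exactness of $\tensor$ in $\scrC$ gives coherent equivalences $\Sigma^sA\tensor\Sigma^tB\simeq\Sigma^{s+t}(A\tensor B)$, which, compared with the formula $(X\tensor_{\text{Day}}Y)_n\simeq\bigoplus_{s+t=n}X_s\tensor Y_t$, make $\tau$ a symmetric monoidal autoequivalence — equivalently, $\tau$ is the colimit-preserving symmetric monoidal endofunctor of $\prod_\bbZ\scrC$ (presentably symmetric monoidally freely generated by $\scrC$ and the discrete symmetric monoidal category $(\bbZ^\delta,+)$) determined by $\id_\scrC$ and by the symmetric monoidal functor $(\bbZ^\delta,+)\to\prod_\bbZ\scrC$ sending $m$ to the degree-$m$-concentrated object $\mathbbm{1}[m]$. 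For $\gr$ one uses the universal property of Day convolution: $\gr$ preserves colimits (each $\gr^n=\cofib\circ(\iota_n)^*$ is a composite of colimit-preserving functors and $\bigoplus$ is colimit-preserving), and since $\Fild\scrC$ is presentably symmetric monoidally freely generated by $\scrC$ and the symmetric monoidal poset $(\bbZ\op,+)$, a colimit-preserving symmetric monoidal lift of $\gr$ amounts to a colimit-preserving symmetric monoidal functor $\scrC\to\prod_\bbZ\scrC$ (take $X\mapsto$ the degree-$0$-concentrated object $X$) together with a symmetric monoidal functor $(\bbZ\op,+)\to\prod_\bbZ\scrC$ (take $m\mapsto$ the degree-$m$-concentrated object $\mathbbm{1}$); these are symmetric monoidal because $\mathbbm{1}_{\langle\leq m\rangle}\tensor_{\text{Day}}\mathbbm{1}_{\langle\leq m'\rangle}\simeq\mathbbm{1}_{\langle\leq m+m'\rangle}$ (cf.\ Proposition~\ref{prop-unit-day}) and $\gr^n\mathbbm{1}_{\langle\leq m\rangle}$ is $\mathbbm{1}$ for $n=m$ and $0$ otherwise. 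The resulting colimit-preserving symmetric monoidal functor agrees with $\gr$ on the generators $\mathbbm{1}_{\langle\leq m\rangle}\otimes X$, hence coincides with $\gr$, endowing the latter with a symmetric monoidal structure; this reproves the symmetric monoidality of $\gr$, which also appears in \cite{Gwilliam-Pavlov}. Composing the two structures yields the symmetric monoidal structure on $\undch\circ\aush\simeq\tau\circ\gr$.

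I expect the only genuine obstacle to be the symmetric monoidality of $\gr$: one must check that the comparison maps are honest equivalences — so that one has a strong, not merely lax, symmetric monoidal structure — and that this may be tested on a family of objects that generates $\Fild\scrC$ under colimits and is closed under $\tensor_{\text{Day}}$, so that the universal property of Day convolution applies cleanly. Everything downstream is bookkeeping with the identification of Remark~\ref{rem-square-aush-gr-shift}.
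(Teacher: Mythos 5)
Your decomposition $\undch\circ\aush\simeq(\Sigma^n)_{n\in\bbZ}\circ\gr$ is the same as the paper's, and the treatment of $\gr$ is fine (this is \cite[2.26]{Gwilliam-Pavlov}, which the paper cites directly). But the step you flag as ``formal'' is wrong: $\tau=(\Sigma^n)_{n\in\bbZ}$ is \emph{not} a symmetric monoidal autoequivalence of $(\prod_\bbZ\scrC,\tensor_{\text{Day}})$, and the symmetric monoidal functor $(\bbZ^\delta,+)\to\prod_\bbZ\scrC$ sending $m\mapsto\mathbbm{1}[m]$ that would generate it does not exist. The obstruction is the Koszul sign: a symmetric monoidal functor out of $\bbZ^\delta$ must carry the identity self-braiding of $1$ to the self-braiding of $\mathbbm{1}[1]$, but the swap on $S^1\wedge S^1$ has degree $-1$, so that braiding is $-1\in\pi_0\End(\mathbbm{1}[2])$, not the identity. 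No choice of signs in the coherence maps $\Sigma^sA\tensor\Sigma^tB\simeq\Sigma^{s+t}(A\tensor B)$ fixes this: the symmetry constraint would force a sign $f$ with $f(s,t)-f(t,s)\equiv st\ (\mathrm{mod}\ 2)$, which fails already at $s=t=1$. The honest coherence introduces $(-1)^{st}$, i.e.\ precisely the Koszul-signed braiding of ``the usual'' tensor product alluded to in Remark \ref{rem-refinement-usual-tensor}.

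What the paper's terse proof does --- and what yours must do to be correct --- is use ``$(\Sigma^n)_{n\in\bbZ}$ being an equivalence'' to \emph{transport} the Day convolution structure to a new (Koszul-signed) symmetric monoidal structure on the codomain copy of $\prod_\bbZ\scrC$, for which $\undch\circ\aush$ then becomes symmetric monoidal by construction. This is consistent with the only downstream uses: Corollary \ref{cor-coh-tensor-formula} needs just the underlying formula (unchanged by the re-braiding), and Remark \ref{rem-refinement-usual-tensor} requires the Koszul-signed braiding on the heart. So the genuine obstacle in this proposition is not the symmetric monoidality of $\gr$, which you single out as the worry but which \cite{Gwilliam-Pavlov} supplies; it is accounting correctly for the sign in the shift, which your proof gets wrong by asserting that $\tau$ is symmetric monoidal for the naive Day convolution.
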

\begin{proof}
  This is a direct consequence of \cite[2.26]{Gwilliam-Pavlov} together
  with Remark \ref{rem-square-aush-gr-shift} and $(\Sigma^n)_{n\in\bbZ}$
  being an equivalence.
\end{proof}

\begin{cor}\label{cor-coh-tensor-formula}
  Let $\scrC$ be a stable presentably symmetric monoidal \cat.
  Let $C$ and $D$ be elements of $\coCh(\scrC)$. Then, we have that
  $$(C \tensor D)^n \simeq \bigoplus_{s+t=n} C^s \tensor_\scrC D^t .$$
\end{cor}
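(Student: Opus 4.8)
The plan is to reduce the claim to the componentwise Day convolution formula on $\prod_\bbZ\scrC$, using Proposition~\ref{prop-undch-aush-symm} to transport the tensor product of $\coCh(\scrC)$ along $\undch\circ\aush$. First I would record that $\aush\colon\Fild\scrC\to\coCh\scrC$ is itself symmetric monoidal: by Proposition~\ref{prop-imp-is-complete} (in the form of Remark~\ref{rem-general-adjunction}) it factors as the localization $L\colon(\Fild\scrC,\tensor_{\text{Day}})\to(\cFild\scrC,\widehat\tensor)$, which is symmetric monoidal by Remark~\ref{rem-day-localized}, followed by the equivalence $\caush\colon(\cFild\scrC,\widehat\tensor)\to(\coCh\scrC,\tensor)$, which is symmetric monoidal because by Definition~\ref{defi-coh-tensor-product-coch} the tensor product on $\coCh\scrC$ is \emph{defined} by transport along this equivalence. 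In particular $\aush$ is essentially surjective; concretely, for any $C\in\coCh\scrC$ one has $\aush(\imp C)\simeq\caush(L\imp C)\simeq\caush(\imp C)\simeq C$, using that $L$ restricts to the identity on complete objects and that $\caush\imp\simeq\id$ by Theorem~\ref{thm-general-equivalence}.

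Given $C,D\in\coCh\scrC$, I would set $F\coloneqq\imp C$ and $G\coloneqq\imp D$, so that $\aush F\simeq C$ and $\aush G\simeq D$. Since $\undch\circ\aush$ is symmetric monoidal (Proposition~\ref{prop-undch-aush-symm}) and the monoidal structure on $\prod_\bbZ\scrC$ is the Day convolution of Remark~\ref{rem-day-structures}, we obtain
$$\undch(C\tensor D)\simeq\undch\aush(F\tensor_{\text{Day}}G)\simeq(\undch\aush F)\tensor_{\text{Day}}(\undch\aush G).$$
By Remark~\ref{rem-square-aush-gr-shift} together with Lemma~\ref{lemma-aush-graded-same} we have $\undch\aush F\simeq(\gr^nF[n])_{n\in\bbZ}\simeq(C^n)_{n\in\bbZ}$, and likewise $\undch\aush G\simeq(D^n)_{n\in\bbZ}$. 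Since $\undch$ is, componentwise, the family of evaluation functors $\ev_n$, the $n$-th entry of $\undch(C\tensor D)$ is exactly $(C\tensor D)^n$; reading off the $n$-th component of the Day convolution on $\prod_\bbZ\scrC$ (Remark~\ref{rem-day-structures}) then gives
$$(C\tensor D)^n\simeq\bigoplus_{s+t=n}C^s\tensor_\scrC D^t.$$

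The argument involves no genuine computation: the only point requiring care is the bookkeeping in the first paragraph, namely verifying that $\aush$ is symmetric monoidal for the Day convolution on the source and the coherent cochains tensor product on the target, which amounts to assembling Remark~\ref{rem-day-localized} and Definition~\ref{defi-coh-tensor-product-coch}. Once that and the essential surjectivity of $\aush$ are in place, the statement follows formally from the componentwise description of Day convolution on $\prod_\bbZ\scrC$. (If one prefers, one can phrase the conclusion by first noting that $\undch$ itself is symmetric monoidal for the coherent cochains tensor product and the Day convolution on $\prod_\bbZ\scrC$ — which follows from the above since $\aush$ is an essentially surjective symmetric monoidal localization — and then applying the Day formula directly to $C$ and $D$; this is the main obstacle only in the mild sense of requiring the right order of assembly, not any new idea.)
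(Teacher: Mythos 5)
Your proposal is correct and takes essentially the same route as the paper: both arguments amount to writing $\aush\simeq\caush\circ L$, noting that $L$ is symmetric monoidal via Remark~\ref{rem-day-localized} and $\caush$ is symmetric monoidal by Definition~\ref{defi-coh-tensor-product-coch}, combining this with $\aush\imp\simeq\id$ and Proposition~\ref{prop-undch-aush-symm} to transport the computation to the Day convolution on $\prod_\bbZ\scrC$, and reading off the componentwise formula. The paper carries out exactly this as a single chain of equivalences applied to $\undch(C\tensor D)^n$, whereas you isolate ``$\aush$ is symmetric monoidal and essentially surjective'' as a preliminary observation before invoking the same ingredients; the content is identical.
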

\begin{proof}
  By definition of $\undch$ (see Lemma
  \ref{lemma-coch-to-prod-conservative}),
  $(C\tensor D)^n \simeq \undch(C\tensor D)^n$. The result follows from
  the following computation
  \begin{displaymath}
  \begin{split}
    \undch(C\tensor D)^n
    &\stackrel{\vphantom{(}^{(\ref{thm-cohch-fild-equiv})}}{\simeq}
      \undch \Big(\caush \imp C \tensor \caush \imp D\Big)^n \\
    &\stackrel{\vphantom{(}^{(\ref{defi-coh-tensor-product-coch})}}{\simeq}
      \undch \caush \Big(\imp C \ \widehat{\tensor} \ \imp D\Big)^n \\
    &\stackrel{\vphantom{(}^{(\ref{rem-day-localized})}}{\simeq}
      \undch \caush L \Big(\imp C \tensor_{\text{Day}} \imp D\Big)^n \\
    &\stackrel{\vphantom{(}^{(\ref{prop-imp-is-complete})}}{\simeq}
      \undch \aush \Big(\imp C \tensor_{\text{Day}} \imp D\Big)^n \\
    &\stackrel{\vphantom{(}^{(\ref{prop-undch-aush-symm})}}{\simeq}
      \Big(\undch \aush \imp C \tensor_{\text{Day}} \undch
      \aush \imp D\Big)^n \\
    &\stackrel{\vphantom{(}^{(\ref{prop-counit-almost-equiv})}}{\simeq}
    \vphantom{\Big(}\Big(uC \tensor_{\text{Day}} uD\Big)^n \\
    &\stackrel{\vphantom{(}^{(\ref{rem-day-structures})}}{\simeq}
    \bigoplus_{s+t=n} C^s \tensor_{\scrC} D^t.
  \end{split}
  \end{displaymath}
\end{proof}

We now analyze the interaction between the symmetric monoidal structures
introduced in this section, and the t-structures introduced in Section
\ref{section-t-structures}. We start by recalling the following definition
(see \cite[2.2.1]{HA} and \cite[A.2]{AnNik20} for more details about the
general theory of the interaction between t-structures and symmetric monoidal
structures).

\begin{defi}
  Let $\scrC$ be a stably symmetric monoidal \cat equipped with a t-structure
  $(\scrC_{\leq 0}, \scrC_{\geq 0})$.
  The t-structure is said to be \emph{compatible} with the symmetric monoidal
  structure if the following conditions hold:
  \begin{enumerate}
    \item The unit object for $\tensor$ lies in $\scrC_{\geq 0}$;
      \label{item-unit}
    \item Given any pair of connective objects $X,Y\in\scrC_{\geq 0}$,
      their product $X\tensor Y$ lies in $\scrC_{\geq 0}$ as well.
      \label{item-product}
  \end{enumerate}
\end{defi}

\begin{rem}
  Conditions (\ref{item-unit}) and (\ref{item-product}) guarantee that
  $\scrC_{\geq 0}$ inherits a symmetric monoidal structure from $\scrC$
  such that the fully faithful inclusion
  $\scrC_{\geq 0}\hookrightarrow\scrC$ is a (strong) symmetric monidal functor.
\end{rem}

\begin{prop}
  Let $\scrC$ be a presentably symmetric monoidal \cat equipped with
  a t-structure that is compatible with the monoidal structure. Then:
  \begin{enumerate}
    \item The Beilinson t-structure on $\Fild\scrC$ is compatible with
      the Day convolution product $\tensor_{\text{Day}}$;
      \label{item-day-t}
    \item The Beilinson t-structure on $\cFild\scrC$ is compatible
      with the completed Day convolution product $\widehat\tensor$;
      \label{item-complete-day-t}
    \item If $\scrC$ is moreover stable, the pointwise
      t-structure on $\coCh\scrC$ is compatible with the coherent cochains
      tensor product $\tensor$
      \label{item-coch-tensor-t}
  \end{enumerate}
\end{prop}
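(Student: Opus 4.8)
The plan is to verify, in each of the three cases, the two conditions defining compatibility of a t-structure with a symmetric monoidal structure: connectivity of the unit, and closure of the connective part under the tensor product. The single nontrivial ingredient is already in hand: by Remark~\ref{rem-square-aush-gr-shift} and Proposition~\ref{prop-undch-aush-symm}, the composite $\undch\circ\aush$ is symmetric monoidal and sends $F$ to $(\gr^nF[n])_{n\in\bbZ}$; since the shift $(\Sigma^n)_{n\in\bbZ}$ is a symmetric monoidal autoequivalence of $\prod_{\bbZ}\scrC$ equipped with Day convolution, this says that the associated graded $\gr$ itself is symmetric monoidal, i.e.\ there is a natural equivalence
\[
  \gr^n(F\tensor_{\text{Day}}G)\simeq\bigoplus_{s+t=n}\gr^sF\tensor_\scrC\gr^tG.
\]
Given this, I would prove part~(1) directly, deduce part~(2) by pushing part~(1) through the completion localization $L$, and obtain part~(3) formally by transporting part~(2) along the symmetric monoidal equivalence of Theorem~\ref{thm-general-equivalence}.

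For part~(1): the unit of $\tensor_{\text{Day}}$ is $\mathbbm{1}_{\langle\leq 0\rangle}$ by Proposition~\ref{prop-unit-day}, whose associated graded is $\mathbbm{1}$ in degree $0$ and $0$ elsewhere, so $\gr^i(\mathbbm{1}_{\langle\leq 0\rangle})\in\scrC_{\geq -i}$ for all $i$, using only $\mathbbm{1}\in\scrC_{\geq 0}$; hence $\mathbbm{1}_{\langle\leq 0\rangle}\in\Fild_{\geq 0}\scrC$. For closure under the product, let $F,G\in\Fild_{\geq 0}\scrC$, so $\gr^sF\in\scrC_{\geq -s}$ and $\gr^tG\in\scrC_{\geq -t}$ for all $s,t$; since $\tensor_\scrC$ is exact in each variable it commutes with shifts, and from $\gr^sF[s],\gr^tG[t]\in\scrC_{\geq 0}$ together with compatibility of the t-structure on $\scrC$ we get $\gr^sF\tensor_\scrC\gr^tG\in\scrC_{\geq -s-t}$; as $\scrC_{\geq -n}$ is closed under coproducts, the displayed formula gives $\gr^n(F\tensor_{\text{Day}}G)\in\scrC_{\geq -n}$, hence $F\tensor_{\text{Day}}G\in\Fild_{\geq 0}\scrC$.

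For part~(2): recall $F\ \widehat{\tensor}\ G\simeq L(F\tensor_{\text{Day}}G)$ (Remark~\ref{rem-day-localized}) and that the Beilinson t-structure on $\cFild\scrC$ is the restriction of the one on $\Fild\scrC$ (Corollary~\ref{cor-transferred}). The key point is that $L$ is right t-exact for these t-structures: from $LH^n\simeq H^n/H^{+\infty}$ (Proposition~\ref{prop-cfild-bousfield}) there is a fiber sequence $\const_{H^{+\infty}}\to H\to LH$ in $\Fild\scrC$, and since $\gr$ annihilates constant filtered objects, applying $\gr^n$ yields $\gr^n(LH)\simeq\gr^nH$; as $LH$ is moreover complete, $L$ carries $\Fild_{\geq 0}\scrC$ into $(\cFild\scrC)_{\geq 0}$. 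Now the unit of $\widehat{\tensor}$ is $L\mathbbm{1}_{\langle\leq 0\rangle}\simeq\mathbbm{1}_{\langle\leq 0\rangle}$ (Remark~\ref{rem-localized-unit}), connective by part~(1); and if $F,G\in(\cFild\scrC)_{\geq 0}$ then they are connective in $\Fild\scrC$, so $F\tensor_{\text{Day}}G\in\Fild_{\geq 0}\scrC$ by part~(1) and $F\ \widehat{\tensor}\ G=L(F\tensor_{\text{Day}}G)\in(\cFild\scrC)_{\geq 0}$.

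Part~(3) is then immediate: by Definition~\ref{defi-coh-tensor-product-coch} the equivalence of Theorem~\ref{thm-general-equivalence} is symmetric monoidal from $(\cFild\scrC,\widehat{\tensor})$ to $(\coCh\scrC,\tensor)$, and by Definition~\ref{defi-transferred} it identifies the Beilinson t-structure on $\cFild\scrC$ with the pointwise one on $\coCh\scrC$, so a t-structure compatible with $\widehat{\tensor}$ transports to one compatible with $\tensor$. Alternatively, one may argue directly from Corollary~\ref{cor-coh-tensor-formula}: the unit of $\tensor$ is the complex with $\mathbbm{1}$ in degree $0$ and $0$ elsewhere, hence pointwise connective, and if $C,D$ are pointwise connective then $(C\tensor D)^n\simeq\bigoplus_{s+t=n}C^s\tensor_\scrC D^t\in\scrC_{\geq 0}$ by compatibility on $\scrC$ and closure of $\scrC_{\geq 0}$ under coproducts. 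The only slightly delicate step I anticipate is the right t-exactness of $L$ in part~(2); everything else is a direct unwinding of the definition of compatibility using the symmetric monoidality of $\gr$ and the stability of connective subcategories under shifts, coproducts, and $\tensor_\scrC$.
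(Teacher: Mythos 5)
Your proof is correct and follows essentially the same route as the paper: connectivity of the unit via its associated graded, closure of connective objects under $\tensor_{\text{Day}}$ via the formula $\gr^i(F\tensor_{\text{Day}}G)\simeq\bigoplus_{s+t=i}\gr^sF\tensor\gr^tG$ (which the paper cites directly from Gwilliam--Pavlov, while you rederive it from Proposition~\ref{prop-undch-aush-symm} --- itself proved from the same citation), then $L$-localization for part~(2) and transport along the monoidal equivalence for part~(3). Incidentally, you correctly place the nontrivial graded piece of $\mathbbm{1}_{\langle\leq 0\rangle}$ in degree $0$ (since $\gr^0=\cofib(0\to\mathbbm{1})=\mathbbm{1}$), whereas the paper's displayed formula mistakenly says degree $1$; this does not affect the conclusion, as $\mathbbm{1}\in\scrC_{\geq 0}\subseteq\scrC_{\geq -i}$ for any $i\geq 0$.
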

\begin{proof}
  By Proposition \ref{prop-unit-day}, we have that
  $$
  \gr^i\mathbbm{1}_{\langle\leq 0\rangle}\simeq
  \begin{cases}
    0 &\text{ for } i\ne 1;\\
    \mathbbm{1} &\text{ for } i=1.
  \end{cases}
  $$
  and thus the unit is Beilinson connective.
  Let us now consider $F, G\in\Fild\scrC_{\geq 0}$.
  By \cite[2.26]{Gwilliam-Pavlov} we have that
  $$
  \gr^i(F \tensor_{\text{Day}} G)\simeq \bigoplus_{s+t=i} \gr^s F\tensor\gr^t G
  $$
  from which we immediately have that $F\tensor_{\text{Day}}G$ lies in
  $(\Fild\scrC)_{\geq 0}$. This completes the proof of (\ref{item-day-t}).
  By Proposition \ref{prop-cfild-bousfield}, applying the localization
  functor $L$ has no effect on associated gradeds, hence (\ref{item-day-t})
  immediately implies (\ref{item-complete-day-t}). Finally,
  (\ref{item-coch-tensor-t}) is a trivial consequence of
  (\ref{item-complete-day-t}) and the definition of $\tensor$ on $\coCh\scrC$.
\end{proof}

\begin{rem}\label{rem-refinement-usual-tensor}
  It follows from \cite[2.2.1.10]{HA} that Day convolution, and hence also the
  tensor product of coherent chain complexes, induce a symmetric monoidal
  structure on the hearts of the respective Beilinson t-structures.
  From Corollary \ref{cor-coh-tensor-formula}, we have
  that the induced symmetric monoidal structure on
  $$\cFild(\Sp)^\heartsuit\simeq\coCh(\Sp)^\heartsuit\simeq\coCh(\bbZ)$$
  is the usual one.
\end{rem}

\section{Coherent cochain complexes and Toda brackets}
\label{section-toda}

In this section, we will have a closer look at the relation between
Toda brackets and coherent cochain complexes.
Our main result will be characterizing coherent cochain complexes in suitable
pointed \cats as being precisely the sequences of objects
$$\cdots\to X^n \xto{f^n} X^{n+1}\xto{f^{n+1}} X^{n+2}\cdots$$
such that all pairwise composable morphisms are nullhomotopic (i.e. for all
$n\in\bbZ$ we have $f^{n+1}\circ f^n\simeq 0$) and all possible Toda brackets
are compatibly trivial (see Remark \ref{rem-informal-toda}).

Let us first start with a short recollection about Toda brackets.
Given a pointed \cat $\scrC$, let
$f^0\colon X^0 \to X^1$, $f^1\colon X^1 \to X^2$ and $f^2\colon X^2 \to X^3$
be morphisms in $\scrC$, such that $f^1 f^0\simeq f^2 f^1 \simeq 0$, and let
$\alpha\colon 0\Rightarrow f^1 f^0$ and $\beta\colon 0 \Rightarrow f^1 f^2$ be
two choices of nullhomotopies:
$$
\begin{tikzcd}
  X^0 \ar[r, "f^0"] \ar[rr, bend right=40, "0"']
  \ar[rr, phantom, bend right=20, "\rotatebox{90}{$\Rightarrow$} \alpha"]
& X^1 \ar[r, "f^1"]
& X^2 \ar[r, "f^2"]
& X^3. \ar[from=ll, bend left=40, "0"]
  \ar[from=ll, phantom, bend left=20, "\rotatebox{90}{$\Leftarrow$} \beta"]
\end{tikzcd}
$$
In such a situation, the two whiskerings $f^2 \alpha$ and $\beta f^0$ determine
two paths $0\Rightarrow f^2 f^1 f^0$ in $\Map_*(X^0,X^3)$.
By gluing them along their endpoints, we obtain a map (pointed at zero)
$T\colon S^{1}\to\Map_*(X^0,X^3)$, which in turns determines a class
$\langle f^2,f^1,f^0 \rangle_{(\alpha, \beta)} \in \pi_1(\Map(X^0,X^3))$.
The homotopy class $\langle f^2,f^1,f^0 \rangle_{(\alpha, \beta)}$ is known as
the Toda bracket determined by $\alpha$ and $\beta$.

The customary approach to
Toda brackets would be not fix $\alpha$ and $\beta$ as part of the datum,
and to instead define the Toda bracket to be a subset of $\pi_1(\Map(X^0,X^3))$,
whose elements are given by all the possible choices of homotopy classes of
paths $([\alpha], [\beta])$. According to this definition, the object
referred above as $\langle h,g,f \rangle_{(\alpha, \beta)}$ is a specified
element of this subset.

The notion can be generalized to longer sequences of maps, provided that the
Toda brackets of the shorter sub-sequences are nullhomotopic; to illustrate how
the generalization works, let us consider the case of a sequence of length 4
$$
\begin{tikzcd}
  X^0 \ar[r, "f^0"]
& X^1 \ar[r, "f^1"]
& X^2 \ar[r, "f^2"]
& X^3 \ar[r, "f^3"]
& X^4.
\end{tikzcd}
$$
and choices of nullhomotopies $\alpha$, $\beta$ and $\gamma$ for all consecutive
pairs of maps, for which $\langle f^2,f^1,f^0 \rangle_{(\alpha, \beta)}=0$
and $\langle f^3,f^2,f^1 \rangle_{(\beta, \gamma)}=0$; in such a situation,
we obtain a pair of nullhomotopies for a pair of maps $X^0\to\Sigma X^4$, one
for the composite
$$
\begin{tikzcd}[column sep=6em]
  X^0 \ar[r, "f^0"]
& X^1 \ar[r, "{\langle f^3,f^2,f^1 \rangle_{(\beta, \gamma)}}"]
& \Sigma X^4
\end{tikzcd}
$$
and the other one for composite
$$
\begin{tikzcd}[column sep=6em]
  X^0
  \ar[r, "{\langle f^2,f^1,f^0 \rangle_{(\alpha, \beta)}}"]
& \Sigma X^3 \ar[r, "\Sigma f^3"]
& \Sigma X^4;
\end{tikzcd}
$$
putting together these two as before, we obtain a pointed map
$S^1\to\Map_*(X^0,\Sigma X^4)$, and thus a class
$\langle f^3,f^2,f^1,f^0 \rangle\in\pi_2(\Map_*(X^0,X^3))$,
the \emph{4-fold Toda bracket} of the
sequence\footnote{again, we are working with a coherent notion, depending also
on the choices of all the five nullhomotopies involved}.
This idea clearly generalizes to longer sequences of maps,
provided that the shorter subsequences have nullhomotopic brackets.

Although it is possible to work with these notions using the approach sketched
above, in the rest of the section we will use a
slightly different perspective to define and generalize Toda brackets.
Our approach will be to define these classes by means
of the actions of certain algebra objects in graded pointed spaces;
the main advantage for us will be the ease of working
at once with all the possible $n$-fold Toda brackets in a $\bbZ$-indexed sequence
(provided that all the possible $(n-1)$-fold Toda brackets are
trivial);
of course, one can recover the case of finite sequences
by considering a $\bbZ$-indexed sequence where only finitely many
objects are non-zero (see Definition \ref{defi-toda-bracket} for the details).
One other pleasant feature
of our approach will be to obtain an inherently coherent notion of Toda brackets
(that is, encoding also choices for all the involved homotopies, rather than
defining the notion up to a coset of ``indeterminacies'').

Our strategy for proving that coherent chain complexes are precisely those
$\bbZ$-indexed sequences of morphisms where consecutive maps compose to zero
and all Toda brackets are coherently nullhomotopic requires a few steps:
in \S \ref{subsection-algebras},
we will prove that for a pointed \cat $\scrC$
(which we will for simplicity assume to be also presentable) both
$\Fun(\bbZ\op,\scrC)$\footnote{in the present section we will use the notation
  $\Fun(\bbZ\op,\scrC)$ instead of the previously introduced $\Fild(\scrC)$
  to stress that here we want to think not of structured
  objects of independent interest, but of sequences
  of morphisms upon which we intend to put relations;
  in particular, the reader should
  keep in mind that the appearence of this \cat here has nothing to do with
  the equivalence of Theorem \ref{thm-general-equivalence}}
and $\coCh(\scrC)$ are
\cats of modules for suitable $\bbE_1$-algebra objects in $\Gr\spaces_*$.
Then, in \S \ref{subsection-inductive-step}
we will construct a sequence of $\bbE_1$-algebras in $\Gr\spaces_*$
\begin{equation}\label{eq-sequence-of-algebras}
  R_1\to R_2\to R_3 \to \cdots
\end{equation}
such that the \cat of modules over $R_1$ will be equivalent to
$\Fun(\bbZ\op,\scrC)$, the \cat of modules over $\colim_n R_n$ will be
equivalent to $\coCh(\scrC)$, and such that the sequence induced
by (\ref{eq-sequence-of-algebras}) on
homology will be equivalent to the resolution of $\Lambda(e)$ described in
Theorem \ref{thm:e1presentation-ungraded}, with $e$ of degree 1.
Modules over $R_n$ for a fixed $n\geq 2$ will
be sequences where all pairwise composable morphisms will be nullhomotopic,
all possible $m$-fold Toda brackets for $m\leq n$ will be defined and trivial,
and all the relevant nullhomotopies will be encoded in the module structure
(see Remark \ref{rem-recursive-module-structure} for the details).
In order to prove the existence of the sequence (\ref{eq-sequence-of-algebras}),
we will use the results of Appendix \ref{appendix-achim}.
As it turns out, the construction of the above sequence up to $R_3$ is
slightly more subtle than its extension to all the other $R_n$'s,
hence we will first construct the sequence inductively for $n\geq 4$,
and defer the construction of the beginning of the sequence
to \S \ref{subsection-hard-cases}.

\subsection{Algebras in graded pointed spaces}\label{subsection-algebras}
The main goal of this section is to prove that, given a pointed \cat $\scrC$,
both $\Fun(\bbZ\op,\scrC)$ and $\coCh(\scrC)$ can be expressed as \cats of
modules over suitably defined $\bbE_1$-algebras in graded pointed spaces.
For simplicity, we will assume that $\scrC$ is also presentable, but with
some extra effort the results can be generalized further, using methods
along the lines of those employed in Section \ref{section-general-equiv}.

The stable analogue of the following proposition appeared as
\cite[2.4.4]{lurie2015rotation}.

\begin{prop}\label{prop-action-of-Z}
  Let $\scrC$ be a pointed \cat. The following data are equivalent:
  \begin{enumerate}
    \item self-equivalences $\scrC\to\scrC$;
      \label{item-selv-eq}
    \item (left or right) actions of $\bbZ^\delta$ on $\scrC$;
      \label{item-action}
    \item monoidal functors $\bbZ^\delta\to\Fun(\scrC,\scrC)$;
      \label{item-monoidal}
    \item monoidal functors $\bbZ^\delta\to\Fun^{\text{Rex}}(\scrC,\scrC)$;
      \label{item-mon-two}
    \item right exact monoidal functors
      $\Prefinpt{\bbZ^\delta}\to\Fun^{\text{Rex}}(\scrC,\scrC)$;
      \label{item-rex}
    \item (left or right) actions of $\Prefinpt{\bbZ^\delta}$ on $\scrC$
      such that the action map commutes with finite colimits in each variable.
      \label{item-act-two}

    {\phantom{x}}

    {\noindent If $\scrC$ is pointed presentable, then the above are equivalent to:}

    {\phantom{x}}

    \item left adjoint monoidal functors
      $\Prept{\bbZ^\delta}\to\Fun^{\text{Rex}}(\scrC,\scrC)$;
      \label{item-presentable-ladj}
    \item actions of $\Prept{\bbZ^\delta}$ on $\scrC$ such that the action
      map commutes with small colimits in each variable.
      \label{item-presentable-action}
  \end{enumerate}
  Here $\Prept{\bbZ^\delta}$ and $\Prefinpt{\bbZ^\delta}$ are defined as in \S
  \ref{subsec-appendix-stable-nerve}.
\end{prop}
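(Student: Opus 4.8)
The plan is to establish the equivalences by linking consecutive items in the list, using repeatedly three facts: (a) the \cat-level instance of the module/endomorphism-object correspondence of \cite[Section 4.7.1]{HA}, under which a left action of a monoidal \cat $\scrM$ on $\scrC$ is the same datum as a monoidal functor $\scrM\to\calE$, and a right action the same datum as a monoidal functor from $\scrM$ with the reversed monoidal structure to $\calE$, where $\calE\coloneqq\Fun(\scrC,\scrC)$ is the endomorphism \cat of $\scrC$ under composition; (b) the fact that $\bbZ^\delta$ is the free monoidal \cat on one invertible object, so that for every monoidal \cat $\scrD$ the space of monoidal functors $\bbZ^\delta\to\scrD$ is the space of invertible objects of $\scrD$; and (c) the universal properties of $\Prefinpt{\bbZ^\delta}$ and $\Prept{\bbZ^\delta}$, each with Day convolution, as free monoidal cocompletions of $\bbZ^\delta$, as recorded in \S\ref{subsec-appendix-stable-nerve}.

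First I would treat $(\ref{item-selv-eq})$, $(\ref{item-action})$, $(\ref{item-monoidal})$ and $(\ref{item-mon-two})$ together. By fact (b), the spaces of data in $(\ref{item-monoidal})$ and $(\ref{item-mon-two})$ are both the space of invertible objects of the respective endomorphism \cats; but an invertible endofunctor is an equivalence, and equivalences preserve the zero object and all colimits, so the invertible objects of $\Fun^{\text{Rex}}(\scrC,\scrC)$ coincide with those of $\Fun(\scrC,\scrC)$, namely the self-equivalences of $\scrC$, which is $(\ref{item-selv-eq})$. Then $(\ref{item-action})\Leftrightarrow(\ref{item-monoidal})$ is fact (a) applied with $\scrM=\bbZ^\delta$, and left and right actions coincide because $\bbZ^\delta$ is symmetric monoidal, so it agrees with its reverse. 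This is the \cat-level analogue of \cite[2.4.4]{lurie2015rotation}.

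Next I would pass to $(\ref{item-rex})$ and $(\ref{item-act-two})$. The point is that $\calE^{\mathrm{Rex}}\coloneqq\Fun^{\text{Rex}}(\scrC,\scrC)$ is an admissible target for the universal property of $(\Prefinpt{\bbZ^\delta},\otimes_{\mathrm{Day}})$: it is pointed, those finite colimits that exist in $\scrC$ are computed pointwise, and its monoidal product (composition) is right exact in each variable, since precomposition preserves all colimits and postcomposition by a right exact functor preserves finite ones. Restriction along the canonical monoidal functor $\bbZ^\delta\to\Prefinpt{\bbZ^\delta}$ then identifies right exact monoidal functors $\Prefinpt{\bbZ^\delta}\to\calE^{\mathrm{Rex}}$ with monoidal functors $\bbZ^\delta\to\calE^{\mathrm{Rex}}$, giving $(\ref{item-rex})\Leftrightarrow(\ref{item-mon-two})$; and $(\ref{item-rex})\Leftrightarrow(\ref{item-act-two})$ is fact (a) once more, now for the monoidal \cat $\Prefinpt{\bbZ^\delta}$, with the two halves of ``the action map commutes with finite colimits in each variable'' matching, respectively, the target being $\calE^{\mathrm{Rex}}$ rather than $\calE$ and the monoidal functor being right exact. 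For the pointed presentable case I would repeat this with $\Fun^{L}(\scrC,\scrC)$ in place of $\calE^{\mathrm{Rex}}$: it is a full monoidal subcategory of $\calE$ through which every monoidal functor out of $\bbZ^\delta$ factors (equivalences are left adjoints), it is presentably monoidal, and the universal property of $(\Prept{\bbZ^\delta},\otimes_{\mathrm{Day}})$ as the free pointed presentably monoidal \cat on $\bbZ^\delta$, namely $\Fun^{\otimes,L}(\Prept{\bbZ^\delta},\scrD)\simeq\Fun^{\otimes}(\bbZ^\delta,\scrD)$ for presentably monoidal $\scrD$, together with fact (a) in $\mathrm{Pr}^{L}$, yields $(\ref{item-presentable-ladj})\Leftrightarrow(\ref{item-monoidal})$ and $(\ref{item-presentable-ladj})\Leftrightarrow(\ref{item-presentable-action})$.

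The hard part will not be conceptual but a matter of hypotheses and conventions: because $\scrC$ is assumed only pointed in items $(\ref{item-selv-eq})$ through $(\ref{item-act-two})$, one must pin down the meaning of $\Fun^{\text{Rex}}(\scrC,\scrC)$ (reduced endofunctors preserving whatever finite colimits exist) and check carefully that it, and in the presentable case $\Fun^{L}(\scrC,\scrC)$, genuinely satisfies the conditions under which the universal properties of $\Prefinpt{\bbZ^\delta}$ and $\Prept{\bbZ^\delta}$ from \S\ref{subsec-appendix-stable-nerve} may be invoked. Once those universal properties are available in the appropriate form, every remaining link in the chain is a direct unwinding of definitions.
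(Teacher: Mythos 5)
Your proposal is correct and takes essentially the same route as the paper's proof: items $(\ref{item-selv-eq})$–$(\ref{item-mon-two})$ via the freeness of $\bbZ^\delta$ on an invertible object (the paper cites \cite[2.4.3]{lurie2015rotation}) together with right-exactness of equivalences, $(\ref{item-action})\Leftrightarrow(\ref{item-monoidal})$ and $(\ref{item-rex})\Leftrightarrow(\ref{item-act-two})$ as definitional reformulations of actions, and $(\ref{item-mon-two})\Leftrightarrow(\ref{item-rex})$ and the presentable items $(\ref{item-presentable-ladj})$, $(\ref{item-presentable-action})$ via the monoidal universal property of $\Prefinpt{\bbZ^\delta}$ and $\Prept{\bbZ^\delta}$ (Proposition \ref{prop-pointed-rex-univ} upgraded by the monoidality of the pointed Yoneda embedding). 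Your closing caveat about what $\Fun^{\text{Rex}}(\scrC,\scrC)$ should mean when $\scrC$ is merely pointed is the right thing to flag, and is exactly the point where the paper's appeal to Proposition \ref{prop-pointed-rex-univ} requires the reader to supply the same interpretation.
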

\begin{proof}
  The equivalence (\ref{item-selv-eq}) $\Leftrightarrow$
  (\ref{item-monoidal}) follows
  from \cite[2.4.3]{lurie2015rotation}: \emph{loc.\ cit.}\ gives an equivalence
  $$\Fun^\otimes\kern-0.2em\left(\bbZ^\delta,\Fun(\scrC,\scrC)\right)
  \simeq\Aut(\scrC)$$
  where the monoidal structure on $\Fun(\scrC,\scrC)$ is given by composition,
  and $\Aut(\scrC)\subset\Fun(\scrC,\scrC)$ denotes the full subcategory spanned
  by self-equivalences.
  (\ref{item-monoidal})$\Leftrightarrow$(\ref{item-action}) and
  (\ref{item-rex})$\Leftrightarrow$(\ref{item-act-two}) are just
  reformulations of the definition of action of a
  symmetric monoidal \cat.
  (\ref{item-monoidal})$\Leftrightarrow$(\ref{item-mon-two}) holds trivially, as
  $\bbZ^\delta$ is a discrete category.
  (\ref{item-mon-two})$\Leftrightarrow$(\ref{item-rex}) follows from
  Proposition \ref{prop-pointed-rex-univ} and the monoidality of the pointed
  Yoneda embedding (see \cite[\S 6]{NikStableOperads}).
  (\ref{item-presentable-ladj})$\Leftrightarrow$(\ref{item-rex}) and
  (\ref{item-presentable-action})$\Leftrightarrow$(\ref{item-act-two})
  follow at once from Proposition \ref{prop-pointed-rex-univ}.
\end{proof}

\begin{rem}\label{rem-terminology-action-tensoring}
  In what follows, unless otherwise specified, we will implicitly work with
  left actions whenever we apply Proposition \ref{prop-action-of-Z}.
  Recall that, in the terminology of \cite{HA}, a left action of a monoidal \cat
  $\scrC$ on some \cat $\scrM$ is equivalent to exhibiting $\scrM$ as
  left-tensored over $\scrC$, and to exhibiting $\scrM$ as a left $\scrC$-module
  in (a suitably sized) \cat of \cats (see \cite[p. 8]{lurie2015rotation}).
\end{rem}

\begin{ex}\label{ex-pfinpt-action}
  Let $\scrC$ be a pointed \cat. Then, precomposition with
  $-1\colon\bbZ\to\bbZ$ induces an automorphism
  $$(-)\{1\}\colon\coCh\scrC\to\coCh\scrC \ \colon
  C^\bullet\mapsto C\{1\}^\bullet \simeq C^{\bullet -1};$$
  by virtue of Proposition
  \ref{prop-action-of-Z}, this endowes $\coCh\scrC$ with the structure of
  an \cat left-tensored over $\Prefinpt{\bbZ^\delta}\simeq\Gr(\spaces_*^{\text{fin}})$
  (and when $\scrC$ is also presentable, the left-tensoring extends over
  $\Gr(\spaces_*)$);
  we will use the notation:
  $$(-)\{n\}\colon\coCh\scrC\to\coCh\scrC \quad \forall n\in\bbZ$$
  to denote the (right exact) endofunctor induced by $n$.
\end{ex}

\begin{ex}\label{ex-gr-action-on-fil}
  Let $\scrC$ be a pointed \cat.
  Similarly to the previous example, precomposition with
  $-1\colon\bbZ\op\to\bbZ\op$ induces an automorphism
  $$(-)\{1\}\colon\Fild\scrC\to\Fild\scrC;$$
  hence, $\Fild\scrC$ is also left-tensored over
  $\Gr(\spaces_*^{\text{fin}})$
  (and when $\scrC$ is also presentable, the left-tensoring extends over
  $\Gr(\spaces_*)$);
  also in this case, we will use the notation:
  $$(-)\{n\}\colon\Fild\scrC\to\Fild\scrC \quad \forall n\in\bbZ$$
  to denote the automorphism induced by $n$.
\end{ex}

\begin{notat}
  Given any $n\in\bbN$ and any $t\in\bbZ$, we will denote by
  $S^{n,t}\in\coCh\spaces_*$ the graded pointed space consisting of a copy of
  $S^n$ in degree $t$, and copies of $\pt$ in all other degrees.
\end{notat}

\begin{prop}\label{prop-graded-mapping-space}
  Let $C\in\coCh\scrC$ be a coherent cochain complex in a complete\footnote{or,
  more generally, in a pointed \cat where all the relevant right Kan extensions
  exist and are pointwise}, pointed \cat.
  The functor $-\tensor C \colon \Gr\spaces_* \to \coCh\scrC$ induced by
  the left-tensoring of Example \ref{ex-pfinpt-action} admits a right adjoint,
  denoted $\innmap{}_*(C,-)$,
  whose value on any $D\in\coCh\scrC$ is pointwise given by
  $$\innmap{}_*(C,D)^n \simeq\ptMap\left(C\{n\},D\right).$$
\end{prop}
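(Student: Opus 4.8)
The plan is to verify directly that the functor
$$\innmap{}_*(C,-)\colon\coCh\scrC\to\Gr\spaces_*,\qquad D\mapsto\big(\ptMap(C\{n\},D)\big)_{n\in\bbZ},$$
is right adjoint to $-\tensor C$, by exhibiting an equivalence $\Map_{\coCh\scrC}(K\tensor C,D)\simeq\Map_{\Gr\spaces_*}\big(K,\innmap{}_*(C,D)\big)$ natural in both variables; by the Yoneda lemma this simultaneously proves that the right adjoint exists and that it is given by the displayed pointwise formula. (When $\scrC$ is presentable one may alternatively first invoke the adjoint functor theorem, since then $\Gr\spaces_*$ and $\coCh\scrC$ are presentable and $-\tensor C$ preserves colimits; the computation below then merely identifies the right adjoint.)

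First I would record the degreewise decomposition of a graded pointed space. Writing $(-)_{[n]}\colon\spaces_*\to\Gr\spaces_*$ for the functor placing a pointed space in degree $n$ and basepoints in all other degrees, every $K\in\Gr\spaces_*$ is canonically the wedge $\bigvee_{n\in\bbZ}(K_n)_{[n]}$, this coproduct being computed degreewise in $\Gr\spaces_*=\Fun(\bbZ^\delta,\spaces_*)$. Since the action map underlying the left-tensoring of Example \ref{ex-pfinpt-action} commutes with colimits in the $\Gr\spaces_*$-variable, the functor $-\tensor C$ carries this wedge to $K\tensor C\simeq\bigvee_{n\in\bbZ}\big((K_n)_{[n]}\tensor C\big)$.

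Next I would identify the pure-degree summands. The monoidal unit of $(\Gr\spaces_*,\tensor_{\mathrm{Day}})$ is $(S^0)_{[0]}$, and by construction of the action via Proposition \ref{prop-action-of-Z} the automorphism $(-)\{1\}$ is $(S^0)_{[1]}\tensor-$; since $(X)_{[0]}\tensor_{\mathrm{Day}}(S^0)_{[n]}\simeq(X)_{[n]}$, monoidality of the action yields $(X)_{[n]}\tensor C\simeq X\odot C\{n\}$, where $\odot$ denotes the restriction of the left-tensoring to degree-zero pointed spaces, i.e.\ the tensoring of the pointed \cat $\coCh\scrC$ over $\spaces_*$. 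Combining the two steps gives $K\tensor C\simeq\bigvee_{n\in\bbZ}\big(K_n\odot C\{n\}\big)$.

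It remains to map out of this wedge. For any $D\in\coCh\scrC$ we obtain
\begin{align*}
  \Map_{\coCh\scrC}(K\tensor C,D)
  &\simeq\prod_{n\in\bbZ}\Map_{\coCh\scrC}\big(K_n\odot C\{n\},D\big)\\
  &\simeq\prod_{n\in\bbZ}\Map_{\spaces_*}\big(K_n,\ptMap(C\{n\},D)\big)\\
  &\simeq\Map_{\Gr\spaces_*}\Big(K,\big(\ptMap(C\{n\},D)\big)_{n\in\bbZ}\Big),
\end{align*}
where the first equivalence uses that wedges map out to products, the second that $-\odot C\{n\}$ is left adjoint to $\ptMap(C\{n\},-)$, and the last that mapping spaces in $\Gr\spaces_*$ are computed degreewise; all equivalences are natural in $K$ and $D$. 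I expect the only genuinely delicate point to be the bookkeeping with the shift functors $(-)\{n\}$ together with the verification that the infinite wedges appearing above exist in $\coCh\scrC$ and are preserved by $-\tensor C$ — this is exactly where completeness of $\scrC$ (or, more generally, the pointwise existence of the relevant Kan extensions, as in the footnote) is used, and it is automatic when $\scrC$ is presentable.
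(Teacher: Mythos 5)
Your argument is correct, and at its core it is the paper's argument unfolded: the paper identifies $-\tensor C$ as the pointed realization of $F=\ev_C\circ\{\}\colon\bbZ^\delta\to\coCh\scrC$ and cites Proposition \ref{prop-pt-mapping-nerve} (the pointed nerve formula), and that proposition, when unwound, is precisely the end/coend computation you do by hand, with the co/end over the discrete category $\bbZ^\delta$ collapsing to the product/wedge you write down. Your intermediate step $(X)_{[n]}\tensor C\simeq X\odot C\{n\}$ implicitly appeals to the fact that restricting the $\Gr\spaces_*$-tensoring along the monoidal inclusion $\spaces_*\hookrightarrow\Gr\spaces_*$ recovers the canonical $\spaces_*$-tensoring on $\coCh\scrC$; this is true by the essential uniqueness of compatible $\spaces_*$-tensorings on pointed \cats, but deserves a sentence. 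One quibble with your closing remark: completeness of $\scrC$ supplies limits, not colimits, so it does not by itself produce the infinite wedges in $\coCh\scrC$ that your decomposition of $K$ requires; what is genuinely needed is that the pointed realization $-\tensor C$ (a \emph{left} Kan extension) exist pointwise, which is automatic in the presentable case you also mention, and which is in fact a small wrinkle in the paper's own statement — the footnote invokes right Kan extensions, while the cited Proposition \ref{prop-pt-mapping-nerve} is stated under presentability.
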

\begin{proof}
  By unraveling the definitions, the $\Gr\spaces_*$-action
  on $\Fun^{\text{Rex}}(\coCh\scrC,\coCh\scrC)$ is obtained by Kan
  extension of the functor
  $\{\}\colon\bbZ^\delta\to\Fun^{\text{Rex}}(\coCh\scrC,\coCh\scrC)\colon
  n\mapsto\{n\}$ along the functor $y\colon\bbZ^\delta\to\Gr\spaces_*\colon
  n\mapsto S^{0,n}$. As the Kan extension is pointwise,
  $$-\tensor C\simeq \Lan_y(\ev_C \circ\{\})$$
  where $\ev_C\colon\Fun^{\text{Rex}}(\coCh\scrC,\coCh\scrC)\to\coCh\scrC$
  denotes the evaluation at $C$ functor.
  In particular, if we put $F\coloneqq\ev_C\circ\{\}$, we have that
  $-\tensor C$ is, in the language of Appendix \ref{appendix-first},
  the pointed $F$-realization functor. Thus, it admits as a right adjoint
  given by the pointed $F$-nerve functor $\nerve_F^{\text{pt}}$,
  which by Proposition \ref{prop-pt-mapping-nerve} is given by
  $$\nerve_F^{\text{pt}}(D)\simeq \ptMap(F(-),D)\simeq
  \ptMap\left(C\{-\},D\right)$$
  as desired.
\end{proof}

\begin{defi}
  We will refer to the right adjoint functor defined in Proposition
  \ref{prop-graded-mapping-space}
  as the \emph{graded pointed mapping space} functor.
\end{defi}

\begin{thm}\label{thm-ch-is-right-modules}
  Let $\scrC$ be a pointed presentable \cat. Then,
  there exists an $\bbE_1$-algebra in graded pointed spaces
  $A\in\Alg_{\Gr\spaces_*}$
  such that the \cat of coherent cochain
  complexes in $\scrC$ is equivalent to the \cat of right $A$-modules
  in graded objects of $\scrC$:
  $$\coCh\scrC\simeq\operatorname{RMod}_A(\Gr\scrC).$$
  Moreover, the underlying graded pointed space of $A$ is given by
  $S^{0,0}\amalg S^{0,1}$.
\end{thm}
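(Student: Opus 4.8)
The plan is to build the $\bbE_{1}$-algebra $A$ explicitly, as (the image in $\Gr\spaces_{*}$ of) a strictly associative monoid, and then recognize $\coCh\scrC$ as its category of right modules through the forgetful functor to graded objects; the homotopy-coherent content is all absorbed into the results of \S\ref{subsection-algebras}.

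First I would construct $A$. Let $A$ be the monoid in $\bbZ^{\delta}$-graded pointed sets whose underlying graded set has a copy of $S^{0}$ in degrees $0$ and $1$ and a single point in every other degree; write $1$ for the non-basepoint element in degree $0$ and $\epsilon$ for the one in degree $1$. There is a unique compatible multiplication: $1$ is the unit, the basepoint is absorbing, and $\epsilon\cdot\epsilon$ must land in the degree-$2$ part, which is a point, so $\epsilon^{2}=\ast$. This is manifestly associative, hence gives an $\bbE_{1}$-algebra $A\in\Alg_{\Gr\spaces_{*}}$, and by construction its underlying graded pointed space is $S^{0,0}\amalg S^{0,1}$. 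The key observation is that $\catCh\op$ \emph{is} the $\bbZ$-graded delooping $\mathbf{B}_{\bbZ}A$ of $A$: the category with object set $\bbZ$ in which $\mathrm{Hom}(n,m)$ is the degree-$(m-n)$ component of $A$ — i.e.\ $\{\id,0\}$ if $m=n$, $\{\partial,0\}$ if $m=n+1$, and $\{0\}$ otherwise — with composition induced by the multiplication of $A$; under $\partial\leftrightarrow\epsilon$ and $0\leftrightarrow\ast$ the relation $\partial^{2}=0$ in $\catCh\op$ is exactly $\epsilon^{2}=\ast$, and the basepoint of $\catCh$ is the absorbing element.

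Next I would set up the module recognition. By Proposition \ref{prop-action-of-Z} (applied to the degree-shift self-equivalence of $\prod_{\bbZ}\scrC$ and using that $\scrC$ is pointed presentable), $\Gr\scrC=\Fun(\bbZ^{\delta},\scrC)$ is left-tensored over $\Gr\spaces_{*}$ in a colimit-preserving way, so $\operatorname{RMod}_{A}(\Gr\scrC)$ makes sense and its forgetful functor is conservative with associated monad $X\mapsto X\otimes A$. On the other hand, the functor $\undch\colon\coCh\scrC\to\Gr\scrC$ of Lemma \ref{lemma-coch-to-prod-conservative} is conservative and, being restriction along $j\colon\bbZ^{\delta}\hookrightarrow\catCh\op$ between functor categories in which $\coCh\scrC$ is closed under all limits and colimits (Proposition \ref{prop-coch-pst-recoll}), it preserves all limits and colimits; by presentability it has a left adjoint $F=\ptize\circ j_{!}$. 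A direct computation of the left Kan extension $j_{!}$ followed by the pointification $\ptize$ gives $F(X)^{m}\simeq X^{m}\amalg X^{m-1}$ with differential the inclusion of the first wedge summand into the second — that is, $F(X)$ is exactly the free right $A$-module $X\otimes A$, right multiplication by $\epsilon$ being the differential and $\epsilon^{2}=\ast$ recording $\partial^{2}=0$. By the Barr--Beck--Lurie theorem (\cite[\S 4.7]{HA}) $\undch$ is monadic, so $\coCh\scrC\simeq\operatorname{LMod}_{\undch F}(\Gr\scrC)$; identifying the monad $\undch F$ with $(-)\otimes A$ — which, since the adjunction and the tensorings are $\Gr\spaces_{*}$-linear and colimit-preserving, amounts to the identification $\catCh\op\simeq\mathbf{B}_{\bbZ}A$ above promoted along Proposition \ref{prop-action-of-Z} — yields $\coCh\scrC\simeq\operatorname{RMod}_{A}(\Gr\scrC)$, with underlying graded pointed space $S^{0,0}\amalg S^{0,1}$ as claimed.

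The main obstacle is the last identification: one must check that $\undch F$, \emph{with its monad structure}, agrees with the free-$A$-module monad, equivalently that the homotopy-coherent functoriality of a pointed functor on $\catCh\op$ is the same datum as a coherent right $A$-action on its restriction to $\bbZ^{\delta}$. With $A$ a discrete, strictly associative monoid this is not circular — it reduces to a cofinality computation comparing the colimit (bar-type) presentation of $\mathbf{B}_{\bbZ}A$ as an $\infty$-category with the corresponding colimit presentation of the free-module monad — but it is the only non-formal step, and it is exactly why presentability (hence the extension of the $\Gr\spaces_{*}$-action from $\Gr\spaces_{*}^{\mathrm{fin}}$ to all of $\Gr\spaces_{*}$, and the availability of adjoint functor theorems) is assumed; for pointed but non-presentable $\scrC$ one would instead run the argument over $\Gr\spaces_{*}^{\mathrm{fin}}$ and re-internalize as in \S\ref{section-general-equiv}. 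The right adjoint to $-\tensor C$ computed in Proposition \ref{prop-graded-mapping-space} is the natural tool for this bookkeeping, as it exhibits the $\Gr\spaces_{*}$-internal hom concretely.
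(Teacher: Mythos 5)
Your explicit construction of $A$ as a \emph{discrete} monoid in graded pointed sets (with $\epsilon^2=\ast$), together with the observation that $\catCh\op$ is exactly its $\bbZ$-graded delooping $\mathbf{B}_{\bbZ}A$, is correct and a genuinely nice addition: the paper obtains $A$ only implicitly, as the endomorphism object $\innmap{}_*(M,M)$ of a specific $M\in\coCh(\spaces_*)$ supplied by \cite[4.8.5.8]{HA}, whereas you exhibit it as a strict $1$-categorical monoid and read off its underlying space. Your monadicity observation about $\undch$ (conservative, preserves all limits and colimits by Proposition \ref{prop-coch-pst-recoll}, hence Barr--Beck applies) is also correct, and the computation $F(X)^{m}\simeq X^{m}\amalg X^{m-1}$ for $F=\ptize\circ j_{!}$ checks out and matches the underlying endofunctor of $(-)\otimes A$.

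The gap is exactly where you flag it, but the remedy you sketch is not the right one. Matching the \emph{monad structure} on $\undch F$ with the free-right-$A$-module monad is not a cofinality computation comparing colimit presentations of $\mathbf{B}_{\bbZ}A$ and of the bar construction of the monad; it is a statement that the $\Gr\spaces_{*}$-linear monad $\undch F$ is corepresented by an algebra in $\Gr\spaces_{*}$, and this is precisely what \cite[4.8.5.8]{HA} is designed to prove. That theorem takes as input a $\Gr\spaces_{*}$-tensored category together with a generating object whose ``internal hom'' functor is conservative, preserves geometric realizations, and satisfies a projection formula, and returns the module-category identification \emph{with} the algebra structure on $\innmap{}_*(M,M)$ — in other words, it carries out internally the monad identification you are hand-waving at. The paper verifies its six hypotheses for $M$ equal to your $A$-as-a-cochain-complex (this is the same $M$: $S^{0}$ in degrees $0,1$ with identity differential), and the right adjoint $\underline{\Map}{}_*(M,-)$ is literally $\undch$. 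So the cleanest way to close your gap is to replace bare Barr--Beck plus an ad hoc monad comparison with a single invocation of \cite[4.8.5.8]{HA}, at which point your argument becomes essentially the paper's, enriched by the explicit model for $A$. Separately, note that the paper first reduces to $\scrC\simeq\spaces_*$ using \cite[4.8.1.17]{HA} and \cite[4.8.4.6]{HA}; your version works directly over $\scrC$, which is fine for producing the equivalence, but the reduction is what makes transparent that $A$ is a fixed algebra in $\Gr\spaces_*$ independent of $\scrC$ — worth keeping in mind if you flesh this out.
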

\begin{proof}
  Let us first notice that the claim for a generic pointed presentable
  \cat $\scrC$ can be deduced from the case $\scrC\simeq\spaces_*$,
  by means of Lurie's tensor product (see \cite[\S 4.8]{HA}):
  by \cite[4.8.1.17]{HA},
  $$\Fun(\catCh\op,\scrC)\simeq\Fun(\catCh\op,\spaces_*)\tensor\scrC;$$
  since preserving the zero object is a property, the equivalence above induces
  an equivalence $\coCh(\scrC)\simeq\coCh(\spaces_*)\tensor\scrC$.
  Now, by \cite[4.8.4.6]{HA},
  $$\operatorname{RMod}_A(\Gr\spaces_*)
  \tensor_{\Gr\spaces_*}\scrC
  \simeq \operatorname{RMod}_A(\Gr\scrC);$$
  it thus suffices to prove our claim for the case of pointed spaces.

  By Example \ref{ex-pfinpt-action}, together with Proposition
  \ref{prop-action-of-Z}, we have that $\coCh\spaces_*$ is left-tensored
  over $\Gr\spaces_*$ (see Remark \ref{rem-terminology-action-tensoring}).
  By \cite[4.8.5.8]{HA}, it is thus enough to prove that:
  \begin{enumerate}
    \item $\coCh(\spaces_*)$ admits geometric realizations of simplicial
      objects.
      \label{item-proof-graded-modules-one}
    \item The action map $\Gr\spaces_* \times \coCh(\spaces_*) \to
      \coCh(\spaces_*)$ preserves geometric realizations of simplicial objects.
      \label{item-proof-graded-modules-two}
    \item There exists an $M\in\coCh(\spaces_*)$ such that
      the functor $-\tensor M \colon \Gr\spaces_* \to \coCh(\spaces_*)$
      admits a right adjoint, denoted $\underline{\Map}{}_*(M,-)$.
      \label{item-proof-graded-modules-three}
    \item The functor $\underline{\Map}{}_*(M,-)$ preserves geometric realizations
      of simplicial objects.
      \label{item-proof-graded-modules-four}
    \item The functor $\underline{\Map}{}_*(M,-)$ is conservative.
      \label{item-proof-graded-modules-five}
    \item For every coherent chain complex $C$ and every graded pointed space
      $X$, the map
      $$X\tensor \underline{\Map}{}_*(M,C)\tensor M \xto{X\tensor\eps_C}
      X\tensor C$$
      is adjoint to an equivalence
      $$X\tensor \underline{\Map}{}_*(M,C)\to\underline{\Map}{}_*(M,X\tensor C).$$
      \label{item-proof-graded-modules-six}
  \end{enumerate}

  We take $M$ to be the coherent cochain complex consisting of a copy of
  $S^0$ sitting in degree $0$, a copy of $S^0$ sitting in degree $1$,
  copies of $\pt$ elsewhere, and
  the identity as its only possibly nontrivial differential.
  (\ref{item-proof-graded-modules-one}) follows from the cocompleteness of
  $\spaces_*$.
  (\ref{item-proof-graded-modules-two}) follows from presentability of
  $\spaces_*$ and Proposition \ref{prop-action-of-Z}.
  (\ref{item-proof-graded-modules-three}) and
  (\ref{item-proof-graded-modules-four}) hold for any choice of $M$, and
  follow from Proposition \ref{prop-graded-mapping-space}.
  Using the explicit description for the right adjoint given in Proposition
  \ref{prop-graded-mapping-space}, we get that for our choice of
  $M$, $\underline{\Map}{}_*(M,C)^n\simeq\ptMap(M\{n\},C)$
  is equivalent to the space of choices of $f$ and $g$'s making the square
  $$
  \begin{tikzcd}
    S^0 \ar[r, "\id"]\ar[d, "f"'] & S^0 \ar[d, "g"] \\
    C^n \ar[r, "\partial^n"]& C^{n+1}
  \end{tikzcd}
  $$
  commute; this space is in turn equivalent to $\ptMap(S^0,C^n)\simeq C^n$,
  and thus the functor $\underline{\Map}{}_*(M,-)$ is equivalent to the
  forgetful functor
  $\undch\colon\coCh\spaces_*\to\Gr\spaces_*$ introduced in Lemma
  \ref{lemma-coch-to-prod-conservative},
  implying (\ref{item-proof-graded-modules-five}).

  For (\ref{item-proof-graded-modules-six}), by definition of adjoint map,
  the adjoint map of interest can be factored as
  $$ X\tensor\underline{\Map}{}_*(M,C)
  \xto{\eta_{X\tensor\underline{\Map}{}_*(M,C)}}
  \underline{\Map}{}_*(M,X\tensor \underline{\Map}{}_*(M,C)\tensor M)
  \xto{\underline{\Map}{}_*(M,X\tensor\eps_C)}$$
  $$\xto{\underline{\Map}{}_*(M,X\tensor\eps_C)}
  \underline{\Map}{}_*(M,X\tensor C).$$
  By a pointwise check, we have that
  $\eta_{X\tensor\underline{\Map}{}_*(M,C)}\simeq
  \underline{\Map}{}_*(M,\eta_{X\tensor C})$,
  and $X\tensor\eps_c\simeq\eps_{X\tensor C}$;
  by the conservativity of $\underline{\Map}{}_*(M,-)$,
  it is sufficient to show that the composite
  $$X\tensor C\xto{\eta_{X\tensor C}}
  \underline{\Map}{}_*(M,X\tensor C)\tensor M
  \xto{\eps_{X\tensor C}} X\tensor C$$
  is an equivalence;
  but, as the latter is one of the triangle identities, the desired condition
  holds.

  Finally, in the proof of \cite[4.8.5.8]{HA} is shown that $A$ can be
  identified with the internal hom object
  $\innmap{}_*(M,M)$,
  thence we get the desired characterization for the graded pointed
  space underlying $A$.
\end{proof}

\begin{rem}
  As, by \cite[5.5]{ggn15}, the stabilization functor is symmetric
  monoidal on presentable \cats, we can stabilize Theorem
  \ref{thm-ch-is-right-modules}, to obtain an equivalence
  $$\coCh(\scrD)\simeq\operatorname{RMod}_{\bbS^{0,0}\oplus\bbS^{0,1}}
    (\Gr\scrD)$$
  for any stable presentable \cat $\scrD$.
\end{rem}

\begin{thm}\label{thm-fild-is-right-modules}
  Let $\scrC$ be a pointed presentable \cat. Then,
  the \cat of filtered objects of $\scrC$ is equivalent to the \cat of right
  $\rr{Free}_{\bbE_1}(S^{0,1})$-modules in graded objects:
  $$\Fun(\bbZ\op,\scrC)\simeq
  \operatorname{RMod}_{\rr{Free}_{\bbE_1}(S^{0,1})}(\Gr\scrC).$$
\end{thm}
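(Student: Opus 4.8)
The plan is to follow the proof of Theorem~\ref{thm-ch-is-right-modules} essentially verbatim, replacing $\coCh(\scrC)$ by $\Fun(\bbZ\op,\scrC)$ and the two-term generator by a pointed representable filtered object. First I would reduce to $\scrC=\spaces_*$: by \cite[4.8.1.17]{HA} one has $\Fun(\bbZ\op,\scrC)\simeq\Fun(\bbZ\op,\spaces_*)\otimes\scrC$ (and here, unlike in Theorem~\ref{thm-ch-is-right-modules}, there is no subcategory of pointed functors to keep track of, so this reduction is immediate), while \cite[4.8.4.6]{HA} gives $\operatorname{RMod}_A(\Gr\spaces_*)\otimes_{\Gr\spaces_*}\scrC\simeq\operatorname{RMod}_A(\Gr\scrC)$; hence it suffices to treat the universal case $\scrC=\spaces_*$.

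By Example~\ref{ex-gr-action-on-fil} together with Proposition~\ref{prop-action-of-Z}, $\Fun(\bbZ\op,\spaces_*)$ is left-tensored over $\Gr\spaces_*$, with $(-)\{k\}$ the degree-shift automorphism. I would then invoke the recognition theorem \cite[4.8.5.8]{HA} with generator $M\coloneqq\ptYo_0$, the image of $0\in\bbZ$ under the pointed Yoneda embedding $\bbZ\to\Fun(\bbZ\op,\spaces_*)$ (concretely $M^n\simeq S^0$ for $n$ on one side of $0$ and $M^n\simeq\ast$ on the other, with identity structure maps on the nonzero part). Since $M\{k\}\simeq\ptYo_k$, the filtered analogue of Proposition~\ref{prop-graded-mapping-space} (same proof, via the pointed nerve--realization machinery) identifies the right adjoint $\innmap{}_*(M,-)$ of $-\otimes M$ pointwise as $\innmap{}_*(M,F)^n\simeq\ptMap(M\{n\},F)\simeq\ptMap(\ptYo_n,F)\simeq F^n$; in other words $\innmap{}_*(M,-)$ is the forgetful functor $\Fun(\bbZ\op,\spaces_*)\to\Gr\spaces_*$. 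This functor is conservative and preserves all limits and colimits (everything being computed pointwise), in particular geometric realizations of simplicial objects; and the remaining hypotheses of \cite[4.8.5.8]{HA}---cocompleteness of $\Fun(\bbZ\op,\spaces_*)$, preservation of geometric realizations by the action, and the projection-formula equivalence $X\otimes\innmap{}_*(M,C)\xto{\sim}\innmap{}_*(M,X\otimes C)$---are checked exactly as in the proof of Theorem~\ref{thm-ch-is-right-modules}, the last one again coming down to one of the triangle identities. This yields $\Fun(\bbZ\op,\spaces_*)\simeq\operatorname{RMod}_A(\Gr\spaces_*)$ with $A\coloneqq\innmap{}_*(M,M)$.

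It then remains to identify $A$ with $\rr{Free}_{\bbE_1}(S^{0,1})$. Degreewise, $A^n\simeq\ptMap(M\{n\},M)\simeq\ptMap(\ptYo_n,M)\simeq M^n$, which (from the explicit description of $M$) is $S^0$ on one half-line of integers through $0$ and a point elsewhere, so---up to the degree normalization fixed in Example~\ref{ex-gr-action-on-fil}---the underlying graded pointed space of $A$ is $\coprod_{k\geq 0}S^{0,k}\simeq\coprod_{k\geq 0}(S^{0,1})^{\otimes k}$, that of $\rr{Free}_{\bbE_1}(S^{0,1})$ (recall $(S^{0,1})^{\otimes k}\simeq S^{0,k}$ for the Day convolution tensor on $\Gr\spaces_*$). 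To upgrade this to an equivalence of $\bbE_1$-algebras I would use the universal property of the free algebra: the inclusion of the degree-$1$ summand $S^{0,1}\hookrightarrow A$ is a morphism of pointed graded spaces into the $\bbE_1$-algebra $A$, hence induces a map of $\bbE_1$-algebras $\rr{Free}_{\bbE_1}(S^{0,1})\to A$; in degree $k$ this map picks out the $k$-fold product of the generator, and since the multiplication of $A$ is degreewise the canonical identification $A^s\wedge A^t\simeq S^0\wedge S^0\simeq S^0\simeq A^{s+t}$ it is a degreewise equivalence, hence an equivalence. As a structural sanity check, unwinding the identification shows that---the algebra being free---an $A$-module structure on a graded object amounts to one unconstrained structure map from each degree to the next, i.e.\ exactly the datum of a filtered object; this is the reason $\Fun(\bbZ\op,\scrC)$ (rather than $\coCh(\scrC)$, where the relation $\partial^2\simeq 0$ cuts $A$ down to the two-term algebra $S^{0,0}\amalg S^{0,1}$) appears here.

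I expect the main obstacle to be this last step: verifying that the internal endomorphism algebra $\innmap{}_*(M,M)$ is the \emph{free} $\bbE_1$-algebra on $S^{0,1}$---with the grading normalized correctly---and not merely an $\bbE_1$-algebra with the right underlying graded pointed space. The argument via the universal property is clean, but it relies on the degree-$1$ inclusion being genuinely a morphism of pointed graded spaces and on the induced comparison being the expected degreewise identity. Secondary points of care, both dictated by the conventions already fixed in Example~\ref{ex-gr-action-on-fil}, are keeping the shift direction straight (so that one lands on decreasing rather than increasing filtered objects) and the verification of the filtered analogue of Proposition~\ref{prop-graded-mapping-space}.
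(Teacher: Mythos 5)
Your approach is a genuine alternative to the paper's. You mimic the proof of Theorem~\ref{thm-ch-is-right-modules} via the recognition theorem \cite[4.8.5.8]{HA}, taking the pointed representable $M\simeq (\Yo_0)_+$ as generator, whereas the paper adapts \cite[3.1.6]{lurie2015rotation}: it observes that the forgetful functor $u\colon\Fun(\bbZ\op,\spaces_*)\to\Gr\spaces_*$, $uF(n)\coloneqq F(-n)$, is (lax) symmetric monoidal, sends $\mathbbm{1}_{\mathrm{Day}}$ to $\Free_{\bbE_1}(S^{0,1})$ by Proposition~\ref{prop-unit-day}, and so induces a comparison $\theta\colon\Fun(\bbZ\op,\spaces_*)\simeq\Mod_{\mathbbm{1}_{\mathrm{Day}}}\to\Mod_{u\mathbbm{1}_{\mathrm{Day}}}(\Gr\spaces_*)$, which is then checked directly to be fully faithful and essentially surjective. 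Your route has the virtue of making Theorems~\ref{thm-ch-is-right-modules} and~\ref{thm-fild-is-right-modules} two instances of one argument and of identifying the algebra intrinsically as $\innmap{}_*(M,M)$; the paper's route is shorter because it produces the algebra directly as the image of the Day unit and avoids the list of hypotheses in \cite[4.8.5.8]{HA}. The reduction to $\spaces_*$ via Lurie's tensor product is the same in both.

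There is, however, a genuine sign slip that ``up to the degree normalization fixed in Example~\ref{ex-gr-action-on-fil}'' does not repair. With that example's convention $F\{1\}^n\simeq F^{n-1}$, one indeed has $M\{n\}\simeq\ptYo_n$, and the filtered analogue of Proposition~\ref{prop-graded-mapping-space} gives $\innmap{}_*(M,M)^n\simeq\ptMap(\ptYo_n,M)\simeq M^n$; but $M^n\simeq S^0$ precisely for $n\leq 0$, so the endomorphism algebra you compute is concentrated in \emph{nonpositive} degrees, i.e.\ it is $\Free_{\bbE_1}(S^{0,-1})$, and there is no degree-$1$ summand to feed into your universal-property argument. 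Nothing in Example~\ref{ex-gr-action-on-fil} silently flips this; the reindexing $n\mapsto -n$ is inserted \emph{explicitly} in the paper's definition of $u$, and that is exactly what makes the target algebra land in nonnegative degrees. To salvage your proof you should either build that reindexing into your chosen right adjoint (arrange that $\innmap{}_*(M,-)$ implements $u$ rather than the naive restriction $F\mapsto (F^n)_n$), or run the recognition-theorem argument to the conclusion $\Fun(\bbZ\op,\scrC)\simeq\operatorname{RMod}_{\Free_{\bbE_1}(S^{0,-1})}(\Gr\scrC)$ and then post-compose with the monoidal autoequivalence of $\Gr\scrC$ given by $n\mapsto -n$, which carries $\Free_{\bbE_1}(S^{0,-1})$ to $\Free_{\bbE_1}(S^{0,1})$. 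Once the grading is set straight, the rest --- the reduction to $\spaces_*$, the hypotheses of \cite[4.8.5.8]{HA}, and the identification of $\innmap{}_*(M,M)$ as a free algebra via a degreewise check on the induced map --- goes through as you describe.
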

\begin{proof}
  Let us first prove the claim for the case $\scrC\simeq\spaces_*$.
  This is basically an adaptation of \cite[3.1.6]{lurie2015rotation} to
  the case of pointed spaces.

  First of all, notice that, as the functor
  $u\colon\Fun(\bbZ\op,\spaces_*)\to\Gr\spaces_* $
  given by precomposition with $\bbZ^\delta\to\bbZ\op\colon n\mapsto -n$
  is symmetric monoidal, it induces a functor
  $$
  \theta\colon
  \Fun(\bbZ\op,\spaces_*)\simeq\Mod_{\mathbbm{1}_{\text{Day}}}
  (\Fun(\bbZ\op,\spaces_*))\to\Mod_{u\mathbbm{1}_{\text{Day}}}(\Gr\spaces_*);
  $$
  by Proposition \ref{prop-unit-day} together with \cite[4.1.1.18]{HA}, we have
  that $u\mathbbm{1}_{\langle\leq 0\rangle}$ is an $\bbE_\infty$-algebra
  whose underlying $\bbE_1$-algebra is equivalent to
  $\rr{Free}_{\bbE_1}(S^{0,1})$.

  It is thus sufficient to show that the functor $\theta$ is an equivalence.
  Let us start with fully faithfulness.
  That is, we want to prove that the maps
  $$\phi_{X,Y}\colon\ptMap(X,Y)\to\ptMap(\theta X, \theta Y)$$
  are all equivalences.
  As the functor $\phi_{-,Y}\colon\Fun(\bbZ\op,\spaces_*)\to
  \Fun(\Delta^1,\spaces_*)$ sending a graded pointed space $X$ to the map
  $\phi_{X,Y}$ sends colimits to limits, the full subcategory spanned by those
  $X$ such that $\phi_{X,Y}$ is an equivalence is a pointed subcategory of
  $\Fun(\bbZ\op,\spaces_*)$ closed under colimits.
  Thus, as $\Fun(\bbZ\op,\spaces_*)$ is generated under colimits by objects
  of the form $\ptYo_n$ for $n\in\bbZ$ (see Proposition
  \ref{prop-pointed-rex-univ}), it suffices to show that
  $\phi_{-,Y}$ is an equivalence for such generators;
  this is indeed the case, as for $X\simeq\ptYo_n$ we have
  $\phi_{X,Y}\simeq\id_{Y^n}$.
  Essential surjectivity now follows from the observation that the essential
  image of $\theta$ is closed under colimits, as $\theta$ is fully faithful
  and commutes with colimits.

  Similarly to the proof of Theorem \ref{thm-ch-is-right-modules},
  the claim for general pointed presentable \cats follows from
  the properties of Lurie's tensor product.
  By \cite[4.8.1.17]{HA},
  $$\Fun(\bbZ\op,\scrC)\simeq\Fun(\bbZ\op,\spaces_*)\tensor\scrC$$
  and by \cite[4.8.4.6]{HA},
  $$\operatorname{RMod}_{\rr{Free}_{\bbE_1}(S^{0,1})}(\Gr\spaces_*)
  \tensor_{\Gr\spaces_*}\scrC
  \simeq \operatorname{RMod}_{\rr{Free}_{\bbE_1}(S^{0,1})}(\Gr\scrC)$$
  we thence have the desired claim.
\end{proof}

\begin{rem}\label{rem-mod-to-fil}
  We can describe more explicitly the equivalence of Theorem
  \ref{thm-fild-is-right-modules}.
  Let us first observe that the action of
  the free generator of $R_1 \coloneqq \rr{Free}_{\bbE_1}(S^{0,1})$ determines
  the action of all other spaces of $R_1$.
  By unraveling the definitions, specifying a map
  $t\colon S^{0,1}\tensor X\to X$ on a graded object $X$ is equivalent to
  specifying maps $X^n\to X^{n+1}$ for all $n\in\bbZ$.
  Hence, given an $R_1$-module, the associated filtered object is the one
  having as maps between consecutive objects the maps determined by $t$,
  and the actions of the $s$-th powers of the generator
  $t^s\colon S^{0,s}\tensor X\to X$ correspond to
  choices for all the possible $s$-fold composites for the structure
  maps of the resulting filtered object (i.e.\ maps
  $X^n\to X^{n+s}$ for all $n>1$). This description makes
  also clear the behavior of the functor going in the opposite direction.
\end{rem}

\subsection{The resolution sequence}\label{subsection-inductive-step}
We will now move to the construction of the sequence
(\ref{eq-sequence-of-algebras}). By Theorem
\ref{thm-fild-is-right-modules}, we can take
$R_1\simeq\rr{Free}_{\bbE_1}(S^{0,1})$.
We will define the first map of the sequence to be the one obtained by
killing the square of the free generator of $R_1$.

\begin{constr}\label{constr-r2}
  By \cite[4.1.1.18]{HA}, we have
  $\left(\rr{Free}_{\bbE_1}(S^{0,1})\right)^n\simeq S^n$ for all $n\geq 0$.
  In particular, the inclusion of pointed spaces
  $S^{0,2}\to\rr{Free}(S^{0,1})$ induces a map of $\bbE_1$-algebras
  $t^2\colon\rr{Free}_{\bbE_1}(S^{0,2})\to\rr{Free}_{\bbE_1}(S^{0,1})$.
  We define $R_2$ to be the $\bbE_1$-algebra in graded pointed spaces
  obtained as the pushout of $t^2$ along the augmentation map for the
  free algebra $\rr{Free}_{\bbE_1}(S^{0,2})$:
  \begin{equation}\label{eq-r2}
  \begin{tikzcd}
    \rr{Free}_{\bbE_1}(S^{0,2})\ar[r]\ar[d,"t^2"']& S^{0,0} \ar[d] \\
    \rr{Free}_{\bbE_1}(S^{0,1})\ar[r]& R_2
  \end{tikzcd}
  \end{equation}
  We pick the bottom horizontal map $\rr{Free}_{\bbE_1}(S^{0,1})
  \simeq R_1 \to R_2$ to be the first map in the sequence
  (\ref{eq-sequence-of-algebras}).
\end{constr}

\begin{rem}
  Notice that, as passing to homology preserves colimits, the pushout
  square (\ref{eq-r2}) induces on homology the square of Theorem
  \ref{thm:e1presentation} for the case $n=2$.
\end{rem}

In Lemma \ref{lemma-toda-massey-r2}, we will show that there exists a map
$S^{1,3}\to R_2$, inducing on homology the map
$\mathrm{Free}_{\bbE_1}(r_3)\to \Lambda^{(2)}$ of Theorem \ref{thm:e1presentation}.
Given such a map, we can consider the pushout diagram of $\bbE_1$-algebras
$$
\begin{tikzcd}
  \rr{Free}_{\bbE_1}(S^{1,3})\ar[r]\ar[d]& S^{0,0} \ar[d] \\
  R_2\ar[r]& R_3
\end{tikzcd}
$$
and pick the bottom horizontal map to be the second map in the sequence
(\ref{eq-sequence-of-algebras}).

Similarly, in Lemma \ref{lemma-toda-massey-r3},
we will show that there exists a map $S^{2,4}\to R_2$, inducing on homology
the map $\mathrm{Free}_{\bbE_1}(r_4)\to \Lambda^{(3)}$ of Theorem
\ref{thm:e1presentation}.

Given such maps, we can construct the rest of the sequence by induction.

\begin{constr}\label{constr-inductive-rn}
  Let us assume we are given a sequence
  $R_1\to R_2\to\cdots\to R_{n}$ with $n\geq 3$, such that every
  $R_j$ comes equipped with a map $S^{j-1,j+1}\to R_j$
  inducing on homology the map
  $\mathrm{Free}_{\bbE_1}(r_{j+1})\to \Lambda^{(j)}$ of
  Theorem \ref{thm:e1presentation},
  and that each map $R_{j-1}\to R_{j}$ for $2\leq j\leq n$
  fits in a commutative square of $\bbE_1$-algebras
  $$
  \begin{tikzcd}
    \rr{Free}_{\bbE_1}(S^{j-2,j})\ar[r]\ar[d]& S^{0,0} \ar[d] \\
    R_{j-1}\ar[r]& R_{j}.
  \end{tikzcd}
  $$
  Our goal is to produce inductively
  an $\bbE_1$-algebra $R_{n+1}$ and a map
  $S^{n,n+2}\to R_{n+1}$ inducing on homology the map
  $\mathrm{Free}_{\bbE_1}(r_{n+2})\to \Lambda^{(n+1)}$ of
  Theorem \ref{thm:e1presentation}.

  Let us denote by $R_{n+1}$ the object obtained pushing out
  the map $\rr{Free}_{\bbE_1}(S^{n-1,n+1})\to R_n$ we have by inductive
  hypothesis along the augmentation map of the free algebra:
  $$
  \begin{tikzcd}
    \rr{Free}_{\bbE_1}(S^{n-1,n+1})\ar[r]\ar[d]& S^{0,0} \ar[d] \\
    R_n\ar[r]& R_{n+1}.
  \end{tikzcd}
  $$
  We pick the bottom map of the above square to be the one for the
  sequence (\ref{eq-sequence-of-algebras}).
  By passing to homology, we get precisely the pushout square
  of Theorem \ref{thm:e1presentation}.
  By Lemma \ref{lem:pnhomology}, we have an element $r_{n+1}$
  of bidegree $(n+1,n-1)$ in
  $\Lambda^{(n)}\simeq\Lambda(e)\tensor\bbZ[r_{n+1}]$.
  By Hurewicz's Theorem, the cohomology class $r_{n+1}$ determines
  in an essentially unique way a map $S^{n,n+2}\to R_{n+1}$, which we will
  denote $\langle t\rangle^{n+2}$ and call the
  \emph{$(n+2)$-fold Toda power of $t$}.
  This completes the induction step needed to extend the sequence for
  $m\geq 4$.
\end{constr}

\begin{rem}\label{rem-recursive-module-structure}
  We can now understand the relations between $R_n$-modules for varying $n$:
  \begin{enumerate}
    \item It follows from Construction \ref{constr-r2} together with
      Remark \ref{rem-mod-to-fil} that the $R_1$-module associated to
      a filtered space $X$ can be given the structure of an $R_2$-module
      if and only if all the composites of pairs of consecutive maps of $X$
      are nullhomotopic, and specifying the structure of an
      $R_2$-module is equivalent to choosing a nullhomotopy for each
      such pair.
    \item It follows from Construction \ref{constr-inductive-rn} together with
      Remark \ref{rem-mod-to-fil} that given any $R_{n-1}$-module $C$,
      it can be given the structure of an $R_n$-module if and only if
      the map $\langle t\rangle^n\colon S^{n-2,n}\to\innmap{}_*(C,C)$ of
      Construction \ref{constr-inductive-rn} \footnote{or,
        in the case of $n=3$ (resp. $n=4$), the map
        $\langle t \rangle^3$ defined in Construction \ref{constr-t3}
        (resp. the map $\langle t\rangle^4$ defined in Construction
        \ref{constr-r3})}
      is trivial.
      By unraveling the definitions, we see that if we denote
      by $X$ the filtered space associated to $C$ (obtained by restriction of
      scalars along $R_1\to R_{n-1}$), the components of $\langle t\rangle^n$
      are maps
      $$S^{n-2}\to\Map_*(X^m,X^{m+n})$$
      for all $m\in\bbZ$; specifying the structure of an $R_n$-module on $C$
      is equivalent to choosing nullhomotopies for each such map.
  \end{enumerate}
\end{rem}

\begin{defi}\label{defi-toda-bracket}
  Let $X^0\to X^1\to\cdots\to X^{n}$ be a sequence of pointed spaces and
  maps between them, and assume that the filtered pointed space $X$ obtained
  by extending the given sequence by zeroes
  can be given the structure of an $R_{n-1}$-module.
  Let us moreover fix one such structure and denote the resulting object by
  $C_X$.
  We define the \emph{n-fold Toda bracket} of $C_X$ to be the class
  $$S^{n-2}\to\Map_*(X^0,X^n)$$
  given by the only nontrivial component of the map
  $\langle t\rangle^n\colon S^{n-2,n}\to\innmap{}_*(C_X,C_X)$ of Construction
  \ref{constr-inductive-rn}, or,
  in the case of $n=3$ (resp. $n=4$), the map
  $\langle t \rangle^3$ defined in Construction \ref{constr-t3}
  (resp. the map $\langle t\rangle^4$ defined in Construction
  \ref{constr-r3}).
\end{defi}

\begin{defi}
  Given any filtered pointed space $X\in\Fun(\bbZ\op,\spaces_*)$, we say
  it is a \emph{naive cochain complex} if its corresponding $R_1$-module
  can be given a structure of $R_2$-module. Given a naive cochain complex
  $X$, we say that it has \emph{uniformly trivial n-fold Toda
  brackets} if it can be recursively given the structure of an $R_n$-module
  after choosing $R_m$-module structures for all $m<n$.
\end{defi}

\begin{rem}
  As all pointed \cats are canonically enriched over pointed spaces, the
  definition of Toda brackets generalizes in an obvious way to sequences of
  maps in an arbitrary pointed \cat.
  In particular, it extends to sequences of maps in any stable \cat.
\end{rem}

\begin{prop}\label{lemma-colim-is-A}
  The $\bbE_1$-algebra $A$ of Theorem \ref{thm-ch-is-right-modules}
  is equivalent to the colimit
  $\colim_n R_n$ of the sequence (\ref{eq-sequence-of-algebras});
  i.e.\ for $\scrC$ pointed presentable with a special extremal separator,
  $$\coCh(\scrC)\simeq\operatorname{RMod}_{\colim_{\bbZ\op} R_n}
  (\Gr\scrC).$$
\end{prop}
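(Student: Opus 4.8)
The plan is to reduce everything to the assertion that $\colim_n R_n \simeq A$ as $\bbE_1$-algebras in $\Gr\spaces_*$. Granting this, restriction of scalars along the resulting equivalence, together with Theorem \ref{thm-ch-is-right-modules}, yields $\operatorname{RMod}_{\colim_n R_n}(\Gr\scrC) \simeq \operatorname{RMod}_A(\Gr\scrC) \simeq \coCh(\scrC)$; the passage from $\scrC=\spaces_*$ to an arbitrary pointed presentable $\scrC$ (and the stabilization to the stable case) is exactly the one already carried out in Theorems \ref{thm-ch-is-right-modules} and \ref{thm-fild-is-right-modules} by means of Lurie's tensor product and \cite[4.8.4.6]{HA}.

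First I would build the comparison map. Recall from Theorem \ref{thm-ch-is-right-modules} that the underlying graded pointed space of $A$ is $S^{0,0}\amalg S^{0,1}$; let $f_1\colon R_1 = \rr{Free}_{\bbE_1}(S^{0,1})\to A$ be the $\bbE_1$-algebra map adjoint to the inclusion of the degree-$1$ summand $S^{0,1}\hookrightarrow A$. By Constructions \ref{constr-r2} and \ref{constr-inductive-rn} (and the analogous constructions of \S\ref{subsection-hard-cases}), each $R_{n+1}$ is a pushout of $\bbE_1$-algebras $R_{n+1}\simeq R_n\amalg_{\rr{Free}_{\bbE_1}(S^{a_n,b_n})}S^{0,0}$ along an augmentation, and in every case $b_n\geq 2$. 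Since $A$ is concentrated in internal degrees $0$ and $1$, the space $\Map_{\Alg(\Gr\spaces_*)}(\rr{Free}_{\bbE_1}(S^{a_n,b_n}),A)\simeq \Map_{\spaces_*}(S^{a_n},A_{b_n})$ is contractible; feeding the pushout square into $\Map_{\Alg}(-,A)$ then shows that restriction along $R_n\to R_{n+1}$ is an equivalence $\Map_{\Alg}(R_{n+1},A)\xto{\sim}\Map_{\Alg}(R_n,A)$, whence $\Map_{\Alg}(\colim_n R_n,A)\simeq\Map_{\Alg}(R_1,A)\simeq A_1\simeq S^0$. I take $\alpha\colon\colim_n R_n\to A$ to be the algebra map lying in the non-basepoint component, i.e.\ the one extending $f_1$.

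It then remains to prove that $\alpha$ is an equivalence, which may be checked on underlying graded pointed spaces. The truncation functors $\tau^{\mathrm{deg}}_{\leq D}$ discarding internal degrees $>D$ are symmetric monoidal and colimit-preserving on connective graded objects, hence preserve pushouts of $\bbE_1$-algebras; since $b_n\geq 2$, the tower $(\tau^{\mathrm{deg}}_{\leq 1}R_n)_n$ is already constant, so $\tau^{\mathrm{deg}}_{\leq 1}(\colim_n R_n)\simeq\tau^{\mathrm{deg}}_{\leq 1}R_1\simeq S^{0,0}\amalg S^{0,1}$, and by the choice of $f_1$ the map $\tau^{\mathrm{deg}}_{\leq 1}\alpha$ is an equivalence. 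Thus one is reduced to showing that $\colim_n R_n$ vanishes in internal degrees $\geq 2$: applying $\tau^{\mathrm{deg}}_{\leq D}$ for $D\geq 2$ shows the tower stabilizes at a finite stage, so $(\colim_n R_n)_d\simeq (R_N)_d$ for $N\gg d$, while the homology computation $H_{\ast,\ast}(\colim_n R_n)\cong\colim_n H_{\ast,\ast}(R_n)=\colim_n\Lambda^{(n)}\cong\Lambda(e)$ of Theorem \ref{thm:e1presentation-ungraded} gives $H_{\ast,d}(R_N)=0$ for $N\gg d$ and $d\geq 2$. Promoting this homological vanishing to contractibility of the spaces $(R_N)_d$ — equivalently, controlling their fundamental groups along the cellular tower $R_1\to R_2\to\cdots$, each of whose attaching maps cones off a class originating in a single internal degree — is the step I expect to be the main obstacle. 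An alternative that avoids the space-level analysis altogether is to argue entirely with module categories: one uses that $R\mapsto\operatorname{RMod}_R(\Gr\scrC)$ carries the sequential colimit $\colim_n R_n$ to the inverse limit $\lim_n\operatorname{RMod}_{R_n}(\Gr\scrC)$ taken along restriction of scalars, and then identifies this limit with $\coCh(\scrC)$ using Theorem \ref{thm-fild-is-right-modules} and the recursive description of $R_n$-module structures in Remark \ref{rem-recursive-module-structure}.
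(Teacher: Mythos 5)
Your proposal diverges substantially from the paper's argument and, as you yourself acknowledge, stops short of a proof. What you do have is sound up to the flagged gap: since the cells in the tower are attached in internal degree $\geq 2$, the mapping space $\Map_{\Alg}(\rr{Free}_{\bbE_1}(S^{a_n,b_n}),A)\simeq\Map_{\spaces_*}(S^{a_n},A_{b_n})$ is a point, so $\Map_{\Alg}(\colim_n R_n,A)\simeq A_1\simeq S^0$ and you get a well-defined comparison map $\alpha$; the degree-truncation argument for the degreewise stabilization of the tower is likewise correct. The unresolved step is exactly the one you name: $H_{*,d}(R_N)=0$ for $N\gg d$ and $d\geq 2$ gives no control on $\pi_0$ or $\pi_1$ of the pointed spaces $(R_N)_d$, so there is no Hurewicz/Whitehead argument available to deduce contractibility without a separate analysis of low-dimensional homotopy along the cell attachments. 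Your alternative route via $\operatorname{RMod}_{\colim R_n}(\Gr\scrC)\simeq\lim_n\operatorname{RMod}_{R_n}(\Gr\scrC)$ is a valid formal reduction, but identifying that inverse limit with $\coCh(\scrC)$ would require essentially the same unwinding, so it does not sidestep the difficulty.

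The idea you are missing is the one the paper exploits to avoid constructing a comparison map at all: once the underlying graded pointed space of $\colim_n R_n$ is identified with $S^{0,0}\amalg S^{0,1}$ (your truncation argument already handles degrees $0$ and $1$; only the vanishing in degrees $\geq 2$ remains), the relevant object is $0$-truncated, and on a $0$-truncated object the space of $\bbE_1$-algebra structures is itself discrete — such a structure is nothing more than an associative monoid structure on the discrete graded pointed set $S^{0,0}\amalg S^{0,1}$. A direct check shows there is a unique nontrivial such structure, and both $A$ (from the explicit description in Theorem~\ref{thm-ch-is-right-modules}) and $\colim_n R_n$ (which is augmented over $S^{0,0}$, hence cannot have trivial multiplication) carry it; so they coincide, with no need for an explicit map $\alpha$ or a check that it is an equivalence. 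Be aware, though, that the paper dispatches the identification of the underlying graded space of $\colim_n R_n$ by pointing to "the associated sequence in homology" without further elaboration; the precise mechanism by which the homology computation of Theorem~\ref{thm:e1presentation} pins down the \emph{homotopy type} of $(R_N)_d$ for $d\geq 2$ is exactly the subtlety you flagged, and the paper's proof does not spell it out either.
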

\begin{proof}
  It follows from the resolution sequence (\ref{eq-sequence-of-algebras}),
  together with the associated sequence in homology, that the underlying
  graded space of $\colim_n R_n$ is equivalent to $S^{0,0}\amalg S^{0,1}$, as
  is also the underlying space of the $\bbE_1$-algebra $A$ of Theorem
  \ref{thm-ch-is-right-modules}. As $S^{0,0}\amalg S^{0,1}$ is 0-truncated,
  the $\bbE_1$-algebra structures on it are determined by the associative
  ring structures on the discrete object $S^{0,0} \amalg S^{0,1}$
  (living in the 1-category $\coCh(\mathbf{Top}_*)$).
  But, by a direct check, there exists only
  one nontrivial associative ring structure on $S^{0,0} \amalg S^{0,1}$.
  As $\colim_n R_n$ is augmented over $S^{0,0}$, it cannot have trivial
  multiplication; likewise, $A$ cannot be trivial
  by Theorem \ref{thm-ch-is-right-modules}. Therefore $A$ and $\colim_n R_n$
  must be equivalent as $\bbE_1$ algebras.
\end{proof}

\begin{rem}\label{rem-informal-toda}
  The above proposition can be informally rephrased by saying that the datum
  of a coherent cochain complex is equivalent to the datum of a naive
  chain complex together with choices for all the nullhomotopies of pairs
  of consecutive maps, and recursively defined choices of nullhomotopies for
  all possible $n$-fold Toda brackets, for all $n\geq 3$.
\end{rem}

\subsection{The resolution sequence for $n\leq 3$}\label{subsection-hard-cases}
\begin{notat}
Let $\scrC$ be a pointed \cat. Let $X=(X^n)_{n\in\bbZ}\in\Gr\scrC$.
Given any $s\in\bbZ$ and any $t\in\bbN$ will denote by
$\Omega^{t,s}$ the composite functor
$\Omega_\scrC^t\circ\{-s\} \simeq \{-s\}\circ\Omega_\scrC^t.$
That is,
$$(\Omega^{t,s}X)^n\simeq\Omega^t(X^{n+s}).$$
Notice that, for $S^{0,0}\in\Gr\spaces_*$, we have
$\Omega^{0,s}\left(S^{0,0}\right)\simeq S^{0,-s}$.
\end{notat}

\begin{rem}
  Let $X\in\coCh\spaces_*$; we have that:
  \begin{enumerate}
    \item For all $t\in\bbN$, $s\in\bbZ$,
      $$\innmap{}_*(S^{t,s},X)\simeq(\Omega^t X)\{-s\}\simeq \Omega^{t,s}X;$$
    \item For all $t\in\bbN$, $s\in\bbZ$,
      $$S^{t,s}\tensor_{\text{Day}}X \simeq (\Sigma^t X)\{s\}.$$
  \end{enumerate}
\end{rem}
  \begin{rem}\label{rem-r2-toda-situation}
    The structure map $\rr{Free}_{\bbE_1}(S^{0,1})\to R_2$ defined in
    Construction \ref{constr-r2} determines a
    degree $1$ square-zero element
    $t\in(\pi_0(R_2))^1$, whose action
    induces the following diagram of right $R_2$-modules:
    $$
    \begin{tikzcd}
      R_2 \ar[r, "t"] \ar[rr, bend right=40, "0"']
      \ar[rr, phantom, bend right=20, "\rotatebox{90}{$\Rightarrow$}"]
    & \Omega^{0,1}R_2 \ar[r, "t"]
    & \Omega^{0,2}R_2 \ar[r, "t"]
    & \Omega^{0,3}R_2. \ar[from=ll, bend left=40, "0"]
      \ar[from=ll, phantom, bend left=20, "\rotatebox{90}{$\Leftarrow$}"]
    \end{tikzcd}
    $$
  \end{rem}

  \begin{constr}\label{constr-t3}
    In the situation of Remark \ref{rem-r2-toda-situation},
    let us denote by $F_2$ the fiber (in $R_2$-modules) of multiplication
    by $t$, considered as a map $R_2\to\Omega^{0,1}R_2$.
    Choosing a nullhomotopy $\alpha\colon 0\Rightarrow t^2$
    is equivalent to chosing a factorization of $t$
    through $\Omega^{0,1}F_2$.
    By the following diagram (where all squares are Cartesian)
    $$
    \begin{tikzcd}
      R_2 \ar[r] \ar[rd, dotted, "\langle t\rangle^3"']
      & \Omega^{0,1}F_2 \ar[r]\ar[d]
      & \Omega^{0,1}R_2 \ar[d]\ar[from=ll, bend left=40, "t"]& \\
      & \Omega^{0,3} R_2\ar[r]\ar[d]
      & \Omega^{0,2}F_2\ar[d]\ar[r] & \pt\ar[d] \\
      & \pt \ar[r]& \Omega^{0,2}R_2\ar[r]& \Omega^{0,3}R_2 \\
    \end{tikzcd}
    $$
    we see that there exists a map
    $R_2\to\Omega^{1,3}R_2$ induced by such a factorization.
    We call this map the \emph{3-fold Toda power} of $t$,
    and denote it by $\langle t \rangle^3$.
    With a little abuse of terminology, we will reserve the same name
    for the map $S^{1,3}\to\innmap{}_*(R_2,R_2)$ obtained by adjoining over
    twice:
    $$
    \cfrac{R_2\to\innmap{}_*(S^{1,3},R_2)}
    {\cfrac{R_2\tensor S^{1,3}\to R_2}
    {S^{1,3}\to\innmap{}_*(R_2,R_2)}}
    $$
    As $R_2$ is a free $R_2$-module of rank $1$, we have that
    $\innmap{}_*(R_2,R_2)\simeq R_2$, hence the triple Toda power
    can be equivalently expressed as a map $S^{1,3}\to R_2$.
  \end{constr}

  \begin{lemma}\label{lemma-toda-massey-r2}
    The map $S^{1,3}\to R_2$ constructed above induces on homology the map
    $\rr{Free}_{\bbE_1}(r_3)\to \Lambda^{(2)}$ defined in
    Theorem \ref{thm:e1presentation}.
  \end{lemma}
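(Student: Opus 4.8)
The plan is to reduce the lemma to the computation of a single homology class and then to extract that class from the fibre sequences of Construction~\ref{constr-t3}. \textbf{Reduction.} By Theorem~\ref{thm:e1presentation} together with Lemma~\ref{lem:pnhomology}, $H_*(R_2)\cong\Lambda^{(2)}\simeq\Lambda(e)\tensor\bbZ[r_3]$, with $e$ the image of the free generator $t$ and $r_3$ the polynomial generator; the bidegree of $r_3$ is that of the fundamental class of $S^{1,3}$, and in that bidegree $H_*(R_2)$ is infinite cyclic on $r_3$. The $\bbE_1$-algebra map $\rr{Free}_{\bbE_1}(S^{1,3})\to R_2$ adjoint to $\langle t\rangle^3$ induces on homology a map out of $\rr{Free}_{\bbE_1}(r_3)$ (using the Künneth theorem and freeness of $\tilde H_*(S^{1,3})\cong\bbZ$), which is entirely determined by the image of $r_3$, namely the class $[\langle t\rangle^3]\in H_*(R_2)$ represented by $\langle t\rangle^3$ itself. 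Thus it suffices to prove that $[\langle t\rangle^3]$ is a generator of $\bbZ\cdot r_3$; since the generator is anyway only canonical up to sign, we may then take $[\langle t\rangle^3]=r_3$, and the resulting map is the one of Theorem~\ref{thm:e1presentation}.

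\textbf{Identification of the class.} By Remark~\ref{rem-r2-toda-situation}, multiplication by $t$ acts on $H_*(R_2)$ as multiplication by $e$ on $\Lambda(e)\tensor\bbZ[r_3]$, which has kernel $e\cdot\bbZ[r_3]$ and cokernel $\bbZ[r_3]$. Running this through the long exact sequence of $R_2$-modules attached to $F_2=\fib(t\colon R_2\to\Omega^{0,1}R_2)$ computes $H_*(F_2)$, and running the result through the long exact sequences of the two further Cartesian squares of Construction~\ref{constr-t3} computes $H_*(\Omega^{0,1}F_2)$ and the induced map $\Omega^{0,1}F_2\to\Omega^{1,3}R_2$. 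In the bidegree relevant to $\langle t\rangle^3$ the flanking terms of each of these sequences vanish, by the explicit shape of $\Lambda^{(2)}$ and the connectivity of the spaces involved, so the pertinent connecting homomorphisms are isomorphisms there; chasing the generator of $\tilde H_1(S^{1,3})$ through this chain of isomorphisms and through the chosen factorization of $t$ across $\Omega^{0,1}F_2$ lands it on a generator of $\bbZ\cdot r_3$. The choice of nullhomotopy $\alpha$ of $t^2$ is immaterial: the indeterminacy of $\langle t\rangle^3$ lives in bidegrees in which $H_*(R_2)$ vanishes, so here there is none.

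\textbf{Main obstacle.} The diagram chase itself is routine; the two delicate points are the bidegree bookkeeping through the functors $\Omega^{t,s}$ and the shifts of Construction~\ref{constr-t3}, and --- the genuinely essential one --- verifying that the composite of the connecting isomorphisms produces $r_3$ on the nose, not a nonzero multiple of it. Getting this unit right is exactly what makes the square induced on homology by the pushout defining $R_3$ coincide with the square of Theorem~\ref{thm:e1presentation}, and hence makes $H_*(R_3)\simeq\Lambda^{(3)}$ downstream. I would settle it by matching the chain of connecting isomorphisms above with the description of $r_3$ in the appendix as the canonical class detecting the relation $e^2=0$: the topological construction of $\langle t\rangle^3$ is precisely the shadow of that algebraic class, so the two generators agree.
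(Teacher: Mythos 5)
Your reduction in the first paragraph is correct, and it is a genuinely different opening move than the paper's: the paper goes straight for the explicit dga model $\wt P_2 = \bbZ\langle e_1,e_2\rangle$ with $\partial e_2 = e_1^2$, writes out the strictly commuting diagram of dg-modules underlying Construction~\ref{constr-t3}, and reads off that the diagonal map is $(e_1e_2 - e_2e_1)\cdot$, which \emph{is} the chosen generator $r_3$ from Remark~\ref{rem:explicitdga}. Your plan instead reduces to showing $[\langle t\rangle^3]$ is a generator of $H_{1,3}(R_2)\cong\bbZ\cdot r_3$ and then declaring $r_3 := [\langle t\rangle^3]$, which is legitimate since Lemma~\ref{lem:pnhomology} only fixes $r_3$ up to sign.

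However, the second paragraph has a genuine gap. The claim that ``the flanking terms of each of these sequences vanish, so the pertinent connecting homomorphisms are isomorphisms'' is false for the one sequence that actually matters. Tracking $1\in H_{0,0}(R_2)$ through the factorization $R_2\to\Omega^{0,1}F_2\to\Omega^{1,3}R_2$: the first map is an isomorphism on $H_{0,0}$ for degree reasons (both sides are $\bbZ$, and the flanking terms $H_{1,2}(R_2)$, $H_{0,2}(R_2)$ vanish). But the second map $\Omega^{0,1}F_2\to\Omega^{1,3}R_2$ is a map $\bbZ\to\bbZ$ between two copies of $\bbZ$ whose degree-$\pm 1$-ness is \emph{not} determined by the shape of $\Lambda^{(2)}$. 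Concretely, if $G$ denotes the common fiber of the two vertical maps in the Cartesian square defining $\langle t\rangle^3$, then $G\simeq\Omega^{0,1}\fib\!\left(R_2\to\Omega^{0,1}F_2\right)$, and the LES for that fiber sequence in bidegree $(0,1)$ reads
\[
0\to H_{0,1}(G')\to H_{0,1}(F_2)\xrightarrow{\;\phi\;}H_{1,3}(R_2)\to H_{-1,1}(G')\to 0,
\]
with both middle terms equal to $\bbZ$. Whether $\phi$ is $\pm 1$ or, say, $0$ or $2$ is exactly the secondary operation you are trying to compute; the bidegree bookkeeping cannot decide it, and if $\phi$ failed to be an isomorphism your chain of ``isomorphisms'' would break exactly there. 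Your own third paragraph acknowledges this as ``the genuinely essential'' obstacle, but ``the topological construction of $\langle t\rangle^3$ is precisely the shadow of that algebraic class, so the two generators agree'' is an analogy, not an argument. The paper's dga computation is precisely what fills that hole, by producing the two explicit nullhomotopies $e_1e_2\cdot$ and $e_2e_1\cdot$ of $t^3$ and taking their difference; if you want to stay with the LES-chase strategy you must supply an independent proof that $\phi$ is an isomorphism, e.g.\ by choosing the dga model for $R_2$ and computing $\phi$ there, at which point you have reconstructed the paper's proof.
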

  \begin{proof}
    Upon passing to homology degree-wise, using that
    $H(R_2)\simeq \Lambda^{(2)}$ in the notation of Theorem
    \ref{thm:e1presentation},
    and that, by Remark \ref{rem:explicitdga}, we can use
    $\wt{P}_2 \coloneqq
    \bbZ\langle e_1,e_2\rangle/(\partial e_2 = e_1^2)\simeq \Lambda^{(2)}$
    as a representative
    for the homology in the $1$-category of dg-modules,
    the defining diagram can be represented by the strictly commuting
    diagram of dg-modules
    $$
    \begin{tikzcd}[ampersand replacement =\&]
      \wt P_2 \ar[r, "\binom{e_1}{- e_2}"] \ar[rd]
      \& \Sigma^{0,-1}\wt P_2 \oplus \Sigma^{-1,-2}\wt P_2
        \ar{r}{\begin{pmatrix} 1 & 0 \end{pmatrix}}
        \ar{d}{\begin{pmatrix}e_2 & e_1\end{pmatrix}}
        \& \Sigma^{0,-1}\wt P_2 \ar[d, "\binom{e_1}{- e_2}"]\& \\[3.5em]
      \& \Sigma^{-1,-3}\wt P_2\ar[r,"\binom{0}{1}"]\ar[d]
      \& \Sigma^{0,-2}\wt P_2\oplus\Sigma^{-1,-3}\wt P_2
        \ar{d}{\begin{pmatrix}1 & 0\end{pmatrix}}\ar[r] \& 0\ar[d] \\[3.5em]
      \& 0 \ar[r]\& \Sigma^{0,-2}\wt P_2\ar[r,"e_1"]\& \Sigma^{0,-3}\wt P_2 \\
    \end{tikzcd}
    $$
    (where the negative signs are due to the direction of the homotopy,
    going \emph{from} $0$ \emph{to} $e_2$).
    Thus, the diagonal map is given by the action of $e_1e_2-e_2e_1$,
    which by Remark \ref{rem:explicitdga}
    represents exactly the homology class
    pointed by $r_3$ in the defining diagram for $\Lambda^{(3)}$.
  \end{proof}

  \begin{constr}\label{constr-r3}
    Let us now consider the pushout square
    $$
    \begin{tikzcd}
      \rr{Free}_{\bbE_1}(S^{1,3})\ar[r]\ar[d]& S^{0,0} \ar[d] \\
      R_2\ar[r]& R_3
    \end{tikzcd}
    $$
    where the vertical map is the one induced by the 3-fold Toda power
    costructed above.
    We pick the bottom horizontal map to be the second map in the sequence
    (\ref{eq-sequence-of-algebras}).

    By the commutativity of the following diagram of $R_3$-modules
    (where we know that $\langle t\rangle^3\simeq 0$ by the defining pushout for
    $R_3$)
    $$
    \begin{tikzcd}
      R_3 \ar[r] \ar[rd, "t"']
      & \Omega^{0,1}F_3 \ar[r]\ar[d]
      & \Omega^{1,3}R_3 \ar[d]\ar[from=ll, bend left=40, "0"] \\
      & \Omega^{0,1}R_3\ar[r]
      & \Omega^{0,2}F_3 \\
    \end{tikzcd}
    $$
    we see that $t$ factors through
    $L_3\coloneqq\fib(\Omega^{0,1}R_3\to\Omega^{0,2}F_3)$.
    If we denote by $G_3\coloneqq\fib(\Omega^{0,1}F_3\to \Omega^{1,3} R_3)$,
    we have the following diagram (where all squares are Cartesian)
    $$
    \begin{tikzcd}
      R_3 \ar[r] \ar[rd, dotted, "\langle t\rangle^4"']
      & L_3 \ar[r]\ar[d]
      & \Omega^{0,1}R_3 \ar[d]\ar[from=ll, bend left=40, "t"]& \\
      & \Omega^{2,4} R_3\ar[r]\ar[d]
      & \Omega^{0,1} G_3\ar[d]\ar[r] & \pt\ar[d] \\
      & \pt \ar[r]& \Omega^{0,2}F_3\ar[r]& \Omega^{1,4} R_3 \\
    \end{tikzcd}
    $$
    and we define the composite
    $$R_3\to L_3\to \Omega^{2,4}R_3$$ to be the
    \emph{4-fold Toda power} of $t$, denoted $\langle t\rangle^4$.
    As for the 3-fold case, the datum of a 4-fold Toda bracket
    uniquely determines a map $S^{2,4}\to R_3$ that we will refer to by the
    same name.
  \end{constr}

  \begin{lemma}\label{lemma-toda-massey-r3}
    The map $S^{2,4}\to R_3$ constructed above induces on homology the map
    $\rr{Free}_{\bbE_1}(r_4)\to \Lambda^{(3)}$ defined in
    Theorem \ref{thm:e1presentation}.
  \end{lemma}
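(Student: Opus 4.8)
The plan is to mirror the proof of Lemma \ref{lemma-toda-massey-r2}, carried out one homological stage deeper. First I would pass to homology degreewise. Since the pushout defining $R_3$ realizes, by Lemma \ref{lemma-toda-massey-r2}, the pushout defining $\Lambda^{(3)}$ in Theorem \ref{thm:e1presentation}, we have $H(R_3)\simeq\Lambda^{(3)}$, and Remark \ref{rem:explicitdga} supplies an explicit cofibrant dg-model for it, namely the evident extension $\wt{P}_3$ of $\wt{P}_2$ obtained by adjoining a generator $e_3$ with $\partial e_3 = e_1e_2 - e_2e_1$, placed in the bidegree dictated by Theorem \ref{thm:e1presentation}. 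The upshot is that all of Construction \ref{constr-r3} --- the auxiliary modules $F_3$, $L_3$, $G_3$, the nested Cartesian squares, and the resulting diagonal $\langle t\rangle^4$ --- can then be executed strictly inside the $1$-category of right dg-$\wt{P}_3$-modules.

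Next I would produce strict dg-models for those auxiliary modules: $F_3$ as the (shifted) mapping cone of multiplication by $t = e_1$ on $\wt{P}_3$, then $L_3 \coloneqq \fib(\Omega^{0,1}\wt{P}_3 \to \Omega^{0,2}F_3)$ and $G_3 \coloneqq \fib(\Omega^{0,1}F_3 \to \Omega^{1,3}\wt{P}_3)$, each a finite complex of free $\wt{P}_3$-modules. Since all the modules in sight are thus cofibrant, the homotopy-Cartesian squares of Construction \ref{constr-r3} rigidify to strictly commuting squares of dg-modules, exactly as the $3\times 3$ square did in the proof of Lemma \ref{lemma-toda-massey-r2}; the nullhomotopies of $t^2$ and of $\langle t\rangle^3$ used along the way get encoded by the generators $e_2$ and $e_3$, which is where the pushout relations cutting out $R_2$ and then $R_3$ enter.

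The key computation is to trace the lift of $t$ through $L_3$ and down to $\Omega^{2,4}\wt{P}_3$ in this rigidified diagram. As in the $n=2$ case this exhibits the diagonal $\langle t\rangle^4$ as multiplication by an explicit cycle of $\wt{P}_3$ built from $e_1,e_2,e_3$ --- the iterated-commutator / Massey-type expression one stage above $e_1e_2 - e_2e_1$ --- and Remark \ref{rem:explicitdga} identifies this cycle with a representative of the homology class named $r_4$ in the defining diagram for $\Lambda^{(4)}$, equivalently the polynomial generator of the $\bbZ[r_4]$-factor of $H(\Lambda^{(3)})$ from Lemma \ref{lem:pnhomology}. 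Since both the map $S^{2,4}\to R_3$ and the algebraic map $\rr{Free}_{\bbE_1}(r_4)\to\Lambda^{(3)}$ are completely determined --- via the Hurewicz identification used throughout Construction \ref{constr-inductive-rn}, respectively via freeness --- by the image of their single generator, matching those images on homology proves the lemma.

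I expect the rigidification step to be the main obstacle. A priori the tower of Cartesian squares in Construction \ref{constr-r3} commutes only up to coherent homotopy, and replacing the iterated-fiber descriptions of $L_3$ and $G_3$ by honest complexes for which everything commutes on the nose --- while respecting the sign conventions forced by the directions of the homotopies, such as the negative signs already visible in the $\wt{P}_2$ diagram --- takes some bookkeeping. One must also check that the dg-model genuinely computes $\langle t\rangle^4$ and not merely some representative of its homotopy class; this is automatic because $\wt{P}_3$ is a cofibrant replacement of $R_3$ and all the modules involved are cofibrant, so the strict dg constructions compute the homotopy-theoretic ones.
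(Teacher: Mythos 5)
Your proposal is correct and takes essentially the same approach as the paper: pass to homology degree-wise, use $\wt{P}_3$ from Remark \ref{rem:explicitdga} as an explicit dg-model for $H(R_3)\simeq\Lambda^{(3)}$, rigidify Construction \ref{constr-r3}'s nested Cartesian squares into a strictly commuting diagram of dg-modules, and read off the diagonal as multiplication by the cycle $r_4=e_1e_3 - e_2^2 + e_3e_1$. The paper simply writes out this dg-module diagram directly (mirroring the $n=2$ case) rather than discussing the rigidification in prose, but the underlying identification is the same.
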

  \begin{proof}
    As in Lemma \ref{lemma-toda-massey-r2}, we can consider the diagram
    of dg-modules obtained from the defining diagram of the $4$-fold Toda
    power by passing to homology:
    $$
    \begin{tikzcd}[ampersand replacement =\&, row sep=4em, column sep=1.3em]
      \wt{P}_3 \ar{d}{\begin{pmatrix}e_1 \\ - e_2 \\ e_3\end{pmatrix}}
        \ar[dd, bend right=80] \&\& \\
      \Sigma^{0,-1}\wt P_3 \oplus \Sigma^{0,-2}\wt P_2\oplus\Sigma^{-1,-3}\wt P_2
      \ar{r}{\begin{pmatrix}1 & 0 & 0\end{pmatrix}}
      \ar{d}{\begin{pmatrix}e_3 & e_2 & e_1\end{pmatrix}}
      \& \Sigma^{0,-1}\wt P_3
        \ar{d}{\begin{pmatrix}e_1 \\ - e_2 \\ -e_3\end{pmatrix}}\& \\
        \Sigma^{-2,-4} \wt{P}_3\ar{r}[swap]{\begin{pmatrix}0\\ 0\\ 1\end{pmatrix}}\ar[d]
      \& \Sigma^{0,-2}\wt P_2\oplus\Sigma^{-1,-3}\wt P_2 \oplus \Sigma^{-2,-4} \wt P_3
      \ar{d}{\begin{pmatrix}1 & 0 & 0 \\ 0 & 1 & 0\end{pmatrix}}\ar[r] \& 0\ar[d] \\
      0 \ar[r]
      \&\Sigma^{0,-2}\wt P_2\oplus\Sigma^{-1,-3}\wt P_2
      \ar{r}{\begin{pmatrix}e_2 & e_1\end{pmatrix}}\& \Sigma^{-1,-4} \wt P_3 \\
    \end{tikzcd}
    $$
    (again, the signs are consistent with the choice of having the nullhomotopies
    \emph{starting} at $0$)
    and we see that the $4$-fold Toda power induces on homology the map
    given by the action of
    $$e_1e_3 - e_2^2 + e_3e_1$$
    which by Remark \ref{rem:explicitdga}
    represents exactly the homology class
    pointed by $r_4$ in the defining diagram for $\Lambda^{(4)}$.
  \end{proof}

\section{Total homology and k-decompositions}
\label{section-decomp}

The Postnikov tower construction makes clear how a spectrum is
uniquely determined by its homotopy groups together with its k-invariants.
In particular, any spectrum $X$ uniquely determines co/fiber sequences of the
form
$$\tau_{[n,n+1]}X\to\pi_{n}X[n]\xto{k_{n}}\pi_{n+1}X[n+2]$$
for each $n\in\bbZ$, and it's easy to verify that the $k_n$'s are such that
$$k_{n}\circ k_{n-1}[-1]\simeq 0$$
for all $n\in\bbZ$.
It is widely known among the experts
that the above observation admits a converse precisely when
the nullhomotopies for such pairwise compositions of
``truncated k-invariants'' are suitably compatible
(see e.g. \cite[Section 4]{SagaveToda}, where an instance of this fact
is discussed for objects with finite Postnikov filtrations in the setting of
triangulated categories);
to be precise, a spectrum is uniquely determined by the datum of its homotopy
groups together with a collection of maps
$\{k_{n}\colon\pi_{n}X[n]\to\pi_{n+1}X[n+2]\}_{n\in\bbZ}$ such
that all pairs of composable maps compose to a nullhomotopic one,
and all the possible higher Toda brackets are trivial.

In this section we provide a rigorous formulation of the idea discussed above
using the language of coherent cochain complexes developed in the previous
sections, generalizing it to the context of stable \cats satisfying some
mild hypothesis\footnote{that is, admitting sequential limits, sequential
colimits and equipped with a right complete t-structure}
and general homotopy objects in (ordinary) Abelian categories.
The process of recostructing an object from its decomposition in homotopy
objects and maps between their shifts will be given by the \emph{total homology}
construction (see \ref{defi-tothom}), a concept further investigated in the next
section, in relation to the spectral sequence generated by a coherent cochain
complex.

Let us begin by constructing an \cat that encapsulates
the data needed to reconstruct the objects from their homotopy groups and their
k-invariants.

\begin{defi}
  Let $\scrC$ be a stable \cat equipped with a
  t-structure.
  We will denote by $\Ch_{\scrC}^{+}(\scrC^\heartsuit)$ the full
  subcategory of $\coCh\scrC$ spanned by those objects $C$ such that
  $C^n\in\scrC^\heartsuit[2n]$ for all $n\in\bbZ$, and refer to it as the
  \emph{\cat of degenerate cochain complexes} of $\scrC$.
\end{defi}

\begin{rem}
  The notation $\Ch_\scrC^+(\scrC^\heartsuit)$ is slightly abusive, as the
  object depends also on the t-structure $\scrC$ is equipped with.
\end{rem}

We can now construct a functor that incarnates the idea discussed in the
introduction of this section.

\begin{rem}\label{rem-factor-tower}
  Let $\scrC$ be a stable \cat equipped with a right complete t-structure.
  By the definition of right completeness, there exists an equivalence
  $$
  \scrC\simeq\lim\left(\cdots\to
  \left(\scrC\right)_{\geq n}\xto{\tau_{\geq n+1}}
  \left(\scrC\right)_{\geq n+1}\to\cdots\right).
  $$
  By the right completeness of $\cFild\scrC$ (see Remark
  \ref{rem-coch-same-completeness}, together with Theorem
  \ref{thm-general-equivalence}), the fully faithful
  inclusion $\scrC\hookrightarrow\Fild(\scrC)$ factors through
  the subcategory of complete objects:
  $$
  \wtrunc{\istar}\colon\scrC\hookrightarrow\cFild(\scrC).
  $$
\end{rem}

\begin{defi}\label{defi-k-decomp}
  Let $\scrC$ be a stable presentable \cat equipped with a right complete
  t-structure.
  Notice that, by Remark \ref{rem-factor-tower},
  the composite $\caush\circ\wtrunc\istar\colon\scrC\to\coCh(\scrC)$
  factors through the inclusion
  $\Ch_\scrC^+(\scrC^\heartsuit)\subset\coCh(\scrC)$.
  We define the \emph{k-decomposition} functor
  \begin{displaymath}
  \begin{split}
    \kinv\colon\scrC&\to\Ch_\scrC^+(\scrC^\heartsuit)\\
    C&\mapsto(\cdots\to\pi_{n-1}X[2n-2]\to\pi_nX[2n]\to\pi_{n+1}[2n+2]\to\cdots)
  \end{split}
  \end{displaymath}
  to be such factorization of
  $\caush\circ\wtrunc{\istar}$ through $\Ch_\scrC^+(\scrC^\heartsuit)$.
  We refer to the differentials of the complex $\kinv C$ as the
  \emph{k-invariants} of $C$.
\end{defi}

\begin{ex}
  Let $R$ be an ordinary commutative ring, and let $\rrD(R)$ denote its
  derived category (considered as a stable \cat equipped with the usual
  t-structure).
  Given any object $X\in\rrD(R)$, its k-decomposition $\kinv X$ is given
  by the coherent cochain complex
  $$\cdots\to (H^{-n+1}X)[2n-2]\to(H^{-n} X)[2n]\to(H^{-n-1})X[2n+2]\to\cdots$$
  with $H^{-n}X \in \Ab$.
\end{ex}

\begin{ex}\label{ex-ch-spectra}
  Given any $X\in\Sp$, its k-decomposition $\kinv X$ is given by the
  coherent cochain complex
  $$\cdots\to(\pi_{n-1}X)[2n-2]\to(\pi_{n}X)[2n]
  \to(\pi_{n+1}X)[2n+2]\to\cdots$$
  with $\pi_n X \in \Ab$.
  The equivalence $\Ch_\Sp^+(\Ab)\simeq\Sp$ can be interpreted as saying
  that any spectrum can be expressed as a coherent chain complex
  of (suitably shifted) Abelian groups, and, viceversa, one can construct
  a spectrum from the datum of objects in $\bigoplus_\bbZ \Ab[2n]$ and maps
  between consecutive objects $A_n[2n]\to A_{n+1}[2n+2]$ such that all possible
  Toda brackets between them are trivial.
\end{ex}

\begin{rem}
  The previous two examples make clear how, from the point of view of
  coherent chain complexes, the only difference between an object in
  $\rrD(\bbZ)$ and a spectrum lies in the $\bbZ$-linearity of the k-invariants:
  $$\rrD(\bbZ)\simeq\Ch_{\rrD(\bbZ)}^+(\Ab)\subset\Ch_\Sp^+(\Ab)\simeq\Sp.$$
  Of course, this is just a consequence of a theorem of Shipley's
  (see \cite{shipleyHZ}) showing that the derived \cat $\rrD(R)$ of a discrete
  commutative ring $R$ is equivalent to the \cat of $HR$-modules
  $\Mod_{HR}$.
\end{rem}

We now turn our attention to the functor inverse to $\kinv$. It turns out that
the process reconstructing an object of $\scrC$ from its k-decomposition
is a particular case of the more general construction assigning to every
coherent cochain complex $C$ the object underlying its piled-up filtration
$\imp C$.

\begin{defi}\label{defi-tothom}
  Let $\scrC$ be a stable \cat with sequential limits and sequential colimits.
  We define the \emph{total homology} functor to be the composite functor
  $\tothom\coloneqq\colim\circ\imp$
  \begin{displaymath}
  \begin{split}
    \tothom\colon\coCh\scrC&\to\scrC\\
    C&\mapsto(\imp C)^{-\infty}.
  \end{split}
  \end{displaymath}
\end{defi}

\begin{thm}\label{thm-k-inv-tothom-inverses}
  Let $\scrC$ be a stable \cat with sequential limits and sequential colimits,
  equipped with a right complete t-structure.
  The restriction of $\tothom$ to $\Ch_\scrC^+(\scrC^\heartsuit)$
  induces an equivalence
  $$\Ch_\scrC^+(\scrC^\heartsuit)\xto{\sim}\scrC$$
  whose inverse is given by the k-decomposition functor of Definition
  \ref{defi-k-decomp}.
\end{thm}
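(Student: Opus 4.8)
The plan is to deduce the statement from the equivalence $\caush\dashv\imp$ of Theorem~\ref{thm-general-equivalence}, by identifying, on both sides, the relevant subcategory with the essential image of the Whitehead-tower embedding $\wtrunc{\istar}\colon\scrC\hookrightarrow\cFild\scrC$ of Remark~\ref{rem-factor-tower}. First I would record the easy half. Since $\kinv$ is by construction the corestriction of $\caush\circ\wtrunc{\istar}$ (Definition~\ref{defi-k-decomp}) and $\tothom=\colim\circ\imp$ (Definition~\ref{defi-tothom}), we get
$$\tothom\circ\kinv\simeq\colim\circ\imp\circ\caush\circ\wtrunc{\istar}\simeq\colim\circ\wtrunc{\istar}\simeq\id_{\scrC},$$
using $\imp\circ\caush\simeq\id_{\cFild\scrC}$ (Theorem~\ref{thm-general-equivalence}) and the fact, built into Remark~\ref{rem-factor-tower}, that by right completeness of $\scrC$ the underlying object of the Whitehead tower of $X$ is again $X$. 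Moreover $\kinv$ is fully faithful, being the composite of the fully faithful $\wtrunc{\istar}$ with the equivalence $\caush$ corestricted along the fully faithful inclusion $\Ch_{\scrC}^{+}(\scrC^\heartsuit)\hookrightarrow\coCh\scrC$ (this corestriction exists by the observation in Definition~\ref{defi-k-decomp}; concretely $\gr^n(\wtrunc{\istar}X)\simeq(\pi_nX)[n]$ and $(\caush F)^m\simeq\gr^mF[m]$ by Remark~\ref{rem-explicit-description-general-case}, so $(\kinv X)^n\simeq(\pi_nX)[2n]\in\scrC^\heartsuit[2n]$). A fully faithful functor admitting a retraction is an equivalence onto its essential image, with inverse the restriction of that retraction; so it remains only to show $\kinv$ is essentially surjective.

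Transporting this question along $\caush\dashv\imp$, essential surjectivity of $\kinv$ says that the essential image of $\wtrunc{\istar}$ equals $\imp\big(\Ch_{\scrC}^{+}(\scrC^\heartsuit)\big)$. By Proposition~\ref{prop-gr-of-imp} and its general-case version in Remark~\ref{rem-explicit-description-general-case}, $\gr^n(\imp C)\simeq C^n[-n]$, so $\imp\big(\Ch_{\scrC}^{+}(\scrC^\heartsuit)\big)$ is exactly the full subcategory $\mathcal D\subseteq\cFild\scrC$ of complete filtered objects $F$ with $\gr^nF\in\scrC^\heartsuit[n]$ for every $n$ (and $\caush$ restricts to an equivalence $\mathcal D\simeq\Ch_{\scrC}^{+}(\scrC^\heartsuit)$). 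The inclusion ``essential image of $\wtrunc{\istar}$'' $\subseteq\mathcal D$ is the computation of $\gr^n(\wtrunc{\istar}X)$ above, so the whole theorem reduces to the converse, which is the key point: \emph{every complete filtered object $F$ with $\gr^nF\in\scrC^\heartsuit[n]$ for all $n$ satisfies $F^n\simeq\tau_{\geq n}(F^{-\infty})$, naturally in $F$} (from which $F\simeq\wtrunc{\istar}(F^{-\infty})$, and chasing the equivalences identifies the inverse of $\kinv$ with $\tothom|_{\Ch_{\scrC}^{+}(\scrC^\heartsuit)}$).

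To prove the key claim I would argue on homotopy objects. Completeness gives $F^n\simeq\lim_k F^n/F^{n+k}$, and each $F^n/F^{n+k}$ carries a finite filtration with graded pieces $\gr^nF,\dots,\gr^{n+k-1}F$ concentrated in homotopical degrees $n,\dots,n+k-1$; feeding this into the Milnor $\lim^{1}$-sequence for the tower (whose relevant $\pi_\bullet$-systems are eventually constant, so the $\lim^{1}$-term vanishes) gives $\pi_jF^n\cong\pi_j\gr^jF$ for $j\geq n$ and $\pi_jF^n=0$ for $j<n$, whence $F^n\in\scrC_{\geq n}$ by right separatedness. Dually, $F^{-\infty}/F^n\simeq\colim_{m\to-\infty}F^m/F^n$ with each $F^m/F^n\in\scrC_{\leq n-1}$; since sequential colimits in a stable \cat are computed by telescopes and hence commute with the $\pi_j$, this cofiber has $\pi_j=0$ for $j\geq n$ and so lies in $\scrC_{\leq n-1}$. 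Then the cofiber sequence $F^n\to F^{-\infty}\to F^{-\infty}/F^n$ is, by uniqueness of truncation triangles in a $t$-structure, the canonical one for $F^{-\infty}$ at level $n$, giving $F^n\simeq\tau_{\geq n}(F^{-\infty})$ compatibly in $n$ and naturally in $F$. The main obstacle is exactly this key claim, and within it the bookkeeping around the $t$-structure: one must ensure the homotopy-object computations converge — which is where completeness of $F$ and the right completeness of $\scrC$ are used — and that $\scrC_{\geq n}$ and $\scrC_{\leq n-1}$ absorb the sequential limits and colimits in play. Everything else is formal once Theorem~\ref{thm-general-equivalence} and the identification $\gr^n(\imp C)\simeq C^n[-n]$ are in hand.
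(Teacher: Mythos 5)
Your overall strategy is the same as the paper's: reduce to the equivalence $\caush\dashv\imp$, note that $\kinv$ is fully faithful with $\tothom\kinv\simeq\id$, and reduce to identifying the essential image of $\wtrunc{\istar}$ with $\imp\big(\Ch^+_\scrC(\scrC^\heartsuit)\big)$ inside $\cFild\scrC$. The paper does this by invoking Lurie's description of the limit $\lim_n\scrC_{\geq n}$ from \cite[right before 1.2.1.17]{HA} (conditions (1), (2) on a filtered object), together with Lemma~\ref{lemma-aush-graded-same}, and then checking that these conditions are equivalent to ``complete with $\gr^n\in\scrC^\heartsuit[n]$''. You instead prove the hard inclusion $\imp(\Ch^+_\scrC(\scrC^\heartsuit))\subseteq\wtrunc{\istar}\scrC$ by hand, via homotopy-object computations. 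This is a legitimate route, and the easy inclusion, the fully-faithfulness of $\kinv$, and the reduction are all done correctly.

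There is, however, a genuine gap in your hands-on argument for the connectivity half, $F^n\in\scrC_{\geq n}$. You invoke the Milnor $\lim^1$-sequence for the tower $F^n/F^{n+k}$, but for a stable \cat with an arbitrary $t$-structure, the $\lim^1$-exact sequence is \emph{not} automatic: it requires that $\pi_j$ commute with countable products, equivalently that $\scrC_{\geq j}$ be closed under countable products (see \cite[1.2.1.19]{HA}). That condition is part of \emph{left} completeness; the theorem only assumes \emph{right} completeness, which gives the dual fact that $\scrC_{\leq j}$ is closed under countable coproducts. (Note that $\scrC_{\geq j}$ being closed under colimits and $\scrC_{\leq j}$ being closed under limits, as in \cite[1.2.1.6]{HA}, are automatic, but these are the wrong-sided closure properties for what you need here.) By contrast, your dual telescope argument for $F^{-\infty}/F^n\in\scrC_{\leq n-1}$ \emph{is} fine under the hypotheses: by right completeness, $\scrC_{\leq n-1}$ absorbs the countable coproduct $\oplus_m F^m/F^n$, and the injectivity of $1-\sigma$ on $\pi_{n-1}$ of that coproduct forces $\pi_n$ of the telescope cofiber to vanish, so the colimit lands in $\scrC_{\leq n-1}$. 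You flag exactly this ``absorption'' issue yourself, but you attribute both halves to right completeness, which is not correct for the connective side. To make the argument rigorous one should instead proceed as the paper does, going through the HA description of $\lim_n\scrC_{\geq n}$ directly rather than extracting $F^n\in\scrC_{\geq n}$ from a $\lim^1$-computation.
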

\begin{proof}
  By the definition of right completeness, there exists an equivalence
  $$
  \scrC\simeq\lim\left(\cdots\to
  \left(\scrC\right)_{\geq n}\xto{\tau_{\geq n+1}}
  \left(\scrC\right)_{\geq n+1}\to\cdots\right).
  $$
  By the discussion in \cite{HA} right before Proposition 1.2.1.17, we can
  give an alternative description of the right hand side as the full
  subcategory of $\Fild(\scrC)$ spanned by those filtered objects for
  which
  \begin{enumerate}
    \item For each $n\in\bbZ$, $F^n\in\scrC_{\geq n}$;
      \label{item-defi-whitehead-one}
    \item For each $m\geq n$, the associated map $F^m\to F^n$ induces an
      equivalence $F^m\xto{\sim}\wtrunc{m}F^n$.
      \label{item-defi-whitehead-two}
  \end{enumerate}
  We first notice that, as long as (\ref{item-defi-whitehead-one}) holds,
  we can replace (\ref{item-defi-whitehead-two})
  with the weaker assumption that
  \begin{enumerate}
    \item[($\spadesuit$)] For each $n\in\bbZ$, the associated map
      $F^{n+1}\to F^n$ induces an equivalence
      $F^{n+1}\xto{\sim}\wtrunc{n+1}F^n$.
  \end{enumerate}
  By induction, assume that for a fixed $n$ and some $m\geq n$
  $F^m\to F^n$ induces an equivalence $F^m\to \wtrunc{m} F^n$. We want to prove
  that $F^{m+1}\to F^n$ induces an equivalence $F^{m+1}\to\wtrunc{m+1}F^n$.
  By ($\spadesuit$), $F^{m+1}\to F^m$
  induces an equivalence $F^{m+1}\to\wtrunc{m+1}F^m$, and by our inductive
  hypothesis
  $$\wtrunc{m+1}F^m\to\wtrunc{m+1}\circ\wtrunc{m}F^n\simeq\wtrunc{m+1}F^n$$
  is an equivalence, hence the composite map $F^{m+1}\to\wtrunc{m+1}F^m\to
  \wtrunc{m+1}F^n$
  (which, by (\ref{item-defi-whitehead-one}), is precisely the image
  under $\wtrunc{m+1}$ of the composite $F^{m+1}\to F^m\to F^n$)
  is an equivalence as desired.

  As $\wtrunc{\istar}$ is fully faithful, it induces an equivalence with its
  essential image in $\cFild(\scrC)$ (see Remark \ref{rem-factor-tower}).
  As $\caush$ is an equivalence, it is sufficient to check that
  $\imp\left(\Ch_\scrC^+\scrC^\heartsuit\right)\simeq\wtrunc{\istar}\scrC$.
  By the definition of $\Ch_\scrC^+\scrC^\heartsuit$ and Lemma
  \ref{lemma-aush-graded-same}, all objects in
  $\imp\left(\Ch_\scrC^+\scrC^\heartsuit\right)$ satisfy
  (\ref{item-defi-whitehead-one}) and
  (\ref{item-defi-whitehead-two}).
  On the other hand, again by Lemma \ref{lemma-aush-graded-same},
  any filtered object satisfying conditions
  (\ref{item-defi-whitehead-one}) and
  (\ref{item-defi-whitehead-two})
  is such that its shelled complex lies
  in $\Ch_\scrC^+\scrC^\heartsuit$. Hence,
  $\imp\left(\Ch_\scrC^+\scrC^\heartsuit\right)$ and
  $\wtrunc{\istar}\scrC$ are two full subcategories of $\cFild\scrC$ spanned
  by the same objects.
  To identify its inverse, it now suffices to notice that
  \begin{displaymath}
  \begin{split}
    \tothom\circ\kinv
    &\simeq \colim\circ\kern.25em\imp\circ\caush\circ\wtrunc{\istar}\\
    &\simeq \colim\circ\kern.25em\id\circ\wtrunc{\istar}\\
    &\simeq \id
  \end{split}
  \end{displaymath}
  as desired.
\end{proof}

We now turn our attention to the behavior of $\tothom$ on objects coming from
the (ordinary) category of chain complexes in the heart.

\begin{prop}\label{prop-tothom-eilenberg}
  Let $\scrC$ be as in Theorem \ref{thm-k-inv-tothom-inverses}.
  Then the t-structure homotopy objects of the total homology of an ordinary
  chain complex in $\oC \in \coCh(\scrC^\heartsuit)$ (considered as an
  element of $(\coCh\scrC)^\heartsuit$) are given by the cohomology of
  $\oC$:
  $$\pi_{-n}\tothom\oC\simeq \rrH^n\oC.$$
\end{prop}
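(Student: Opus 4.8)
The plan is to exploit that $\tothom\oC\simeq\colim\imp\oC$, where by the general form of Proposition \ref{prop-gr-of-imp} (cf.\ Remark \ref{rem-explicit-description-general-case}) the complete filtered object $\imp\oC$ has graded pieces $\gr^n\imp\oC\simeq\oC^n[-n]$, and to compute its homotopy objects one at a time by cutting this (typically infinite) filtration down to a finite piece. The one structural fact I rely on is that $\imp\colon\coCh\scrC\xto{\sim}\cFild\scrC$ carries the pointwise t-structure to the Beilinson t-structure (Corollary \ref{cor-transferred}), so that $\imp\oC$ lies in the Beilinson heart; in particular $(\imp\oC)^k\in\scrC_{\leq -k}$ for all $k$, by Definition \ref{defi-fild-beilinson-t-str}.

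\emph{Step 1 (reduction to a finite subquotient).} Fix $n$. Since $\tothom\oC\simeq\colim_{p\geq 0}(\imp\oC)^{-p}$ and the cofiber sequences $(\imp\oC)^{k+1}\to(\imp\oC)^k\to\oC^k[-k]$ show that the transition map $(\imp\oC)^{-p}\to(\imp\oC)^{-p-1}$ has cofiber $\oC^{-p-1}[p+1]\in\scrC_{\geq p+1}$, this map becomes an equivalence after applying $\tau_{\leq -n}$ as soon as $p\geq -n$. As $\tau_{\leq -n}$ is a \emph{left} adjoint it commutes with the colimit, so $\pi_{-n}\tothom\oC\simeq\pi_{-n}(\imp\oC)^{m}$ for every sufficiently negative $m$. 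For such an $m$ and any $k$ large, the object $(\imp\oC)^k$ lies in $\scrC_{\leq -k}$, hence has vanishing $\pi_{-n}$ and $\pi_{-n-1}$; the long exact sequence of $(\imp\oC)^k\to(\imp\oC)^m\to(\imp\oC)^m/(\imp\oC)^k$ then gives $\pi_{-n}(\imp\oC)^m\simeq\pi_{-n}\big((\imp\oC)^m/(\imp\oC)^k\big)$.

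\emph{Step 2 (the finite computation).} By the general form of Remark \ref{rem-intermediate-subquotients} (see Remark \ref{rem-explicit-description-general-case}), $(\imp\oC)^m/(\imp\oC)^k$ is, up to the shift $[-k+1]$, the iterated cofiber of the finite diagram $\oC^m\to\oC^{m+1}\to\cdots\to\oC^{k-1}$ — all terms placed in homotopy degree $0$, the maps being the differentials of $\oC$. I would then prove by induction on length that for a finite diagram $\oC^a\xto{\partial}\cdots\xto{\partial}\oC^b$ of objects of $\scrC^\heartsuit$, its iterated cofiber $T$ satisfies $\pi_l T\simeq\rrH^{b-l}(\oC^a\to\cdots\to\oC^b)$ (and $\pi_l T=0$ for $l\notin[0,b-a]$): splitting off the top term expresses $T$ as $\cofib(T'\to\oC^b)$ with $T'$ the iterated cofiber of $\oC^a\to\cdots\to\oC^{b-1}$, and the long exact sequence of this cofiber sequence, together with the inductive description of $\pi_\ast T'$, yields the claim once the connecting map $T'\to\oC^b$ is identified on homotopy objects with $\partial^{b-1}$. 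Combined with Step 1 — and noting that $\rrH^n(\oC^m\to\cdots\to\oC^{k-1})$ equals $\rrH^n\oC$ once $m$ is small and $k$ is large — this gives $\pi_{-n}\tothom\oC\simeq\rrH^n\oC$.

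The one point requiring real work is the identification of the connecting maps with the differentials of $\oC$ in Step 2: this is where one uses that $\oC$ is an \emph{honest} cochain complex, carrying no higher coherence (equivalently, with all Toda brackets compatibly trivial, cf.\ Section \ref{section-toda}). Concretely, $\caush\imp\oC\simeq\oC$ by Theorem \ref{thm-cohch-fild-equiv}, and for a complete filtered object $F$ the differentials of $\caush F$ are by construction the $d_1$-maps $\gr^nF\to\gr^{n+1}F[1]$ of the filtration (the morphisms of (\ref{eq-intro-gr})); for $F=\imp\oC$ these are, under $\gr^n\imp\oC\simeq\oC^n[-n]$, precisely the $\partial^n$, and they are exactly the composites governing the long exact sequences above. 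Everything else is routine bookkeeping with cofiber sequences.
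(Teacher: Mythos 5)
Your proposal is correct and follows essentially the same approach as the paper's proof: both reduce $\pi_{-n}\tothom\oC$ to $\pi_{-n}$ of a finite subquotient of the filtration $\imp\oC$ using the coconnectivity $(\imp\oC)^k\in\scrC_{\leq -k}$ and the stabilization of low homotopy objects along the colimit, then compute that subquotient via Remark \ref{rem-intermediate-subquotients} and long exact sequences, with the identification of the connecting maps with the differentials of $\oC$ as the key step. The only cosmetic difference is that the paper uses the minimal three-term window $\imp\oC^{n-1}/\imp\oC^{n+2}$ (so no induction on length is needed), whereas you allow an arbitrary-length window and close it with an induction; this is a matter of bookkeeping, not substance.
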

\begin{proof}
  By Proposition \ref{prop-gr-of-imp},
  together with the hypothesis that $\oC$ lies in
  $\coCh(\scrC^\heartsuit)$, we have that
  $\gr^n\imp\oC\simeq \oC^n[-n]$; in particular, (as $\imp\oC$
  is complete) we see that for any integer $n$,
  $\imp\oC^n$ is $(-n)$-coconnective, and that
  \begin{equation}\label{eq-pi-tothom}
    \pi_m\left(\imp\oC^n\right) \cong
    \pi_m\left(\imp\oC^{n-1}\right) \text{ for all } m\leq-n.
  \end{equation}
  By Remark \ref{rem-intermediate-subquotients}, we have a co/fiber sequence
  $$\oC^{n+1}[-n-1]\to \imp \oC^n/\imp \oC^{n+2} \to \oC^n[-n]$$
  whose associated long exact sequence on homotopy objects lets us identify
  $$
  \pi_m\left(\imp \oC^n/\imp \oC^{n+2}\right)\cong
  \begin{cases}
    \ker(d^n_{\oC}) \quad &\text{ for } m=-n;\\
    \coker(d^n_{\oC}) \quad &\text{ for } m=-n-1;\\
    0 \quad &\text{ else.}
  \end{cases}
  $$
  Again by Remark \ref{rem-intermediate-subquotients},
  we have a co/fiber sequence
  $$
  \imp \oC^n/\imp \oC^{n+2}\to\imp \oC^{n-1}/\imp \oC^{n+2}\to \oC^{n-1}[-n+1]
  $$
  whose associated long exact sequence starting at $-n+1$ looks as follows
  $$
  \cdots 0 \to \pi_{-n+1}\left(\imp \oC^{n-1}/\imp \oC^{n+2}\right)
    \to \oC^{n-1} \to \ker(d^n) \to
    \pi_{-n}\left(\imp \oC^{n-1}/\imp \oC^{n+2}\right) \to 0 \cdots
  $$
  letting us identify $\pi_{-n}\left(\imp \oC^{n-1}/\imp \oC^{n+2}\right)$ as
  $$\coker \left(\oC^{n-1}\xto{d^{n-1}} \ker(d^n)\right) \cong \rrH^n\oC.$$
  As $\imp \oC^{n+2}$ is $(-n-2)$-coconnective, the long exact sequence
  associated to the co/fiber sequence
  $$\imp \oC^{n+2}\to\imp\oC^{n-1}\to\imp \oC^{n-1}/\imp \oC^{n+2}$$
  shows that
  $$\pi_m\left(\imp\oC^{n+2}\right)\cong\left(\pi_m\imp\oC^{n-1}\right)
  \text{ for }m\geq-n-1$$
  and thus in particular that $\pi_{-n}\left(\imp\oC^{n-1}\right)
  \cong\rrH^n\oC$.
  Finally, (\ref{eq-pi-tothom}) implies the desired result.
\end{proof}

\begin{rem}\label{rem-tothom-eilenberg-abelian}
  In the special case of $\scrC=\Sp$, the proof of Proposition
  \ref{prop-tothom-eilenberg} shows that $\imp\oC$ for an ordinary chain
  complex of Abelian groups $\oC$ gives precisely the tower obtained through
  the ``brutal truncations''\footnote{sometimes referred to also as the
  ``stupid truncation'' in the literature} of the complex $\oC$. That is,
  $$\imp\oC^n\simeq H(\tau^{\geq n}\oC),$$
  where $\tau^{\geq n}$ here denotes the brutal truncation (1-)functor and $H$
  denotes the Eilenberg--MacLane ($\infty$-)functor $\coCh(\Ab)\to\Sp$.

  In particular, we have that in the case of spectra the
  total homology functor $\tothom$ restricted to $\coCh(\Ab)\subset\coCh(\Sp)$
  coincides with the Eilenberg--MacLane functor.
\end{rem}

We conclude the section with the following variant of Definition
\ref{defi-k-decomp}.

\begin{variant}\label{variant-k-decomp}
  Let $\scrC$ be a stable presentable \cat equipped with a right separated
  t-structure, and let $q\in\bbN$ be fixed.
  Let $\Ch_{\scrC}^{+q}(\scrC^\heartsuit)$ denote the full
  subcategory of $\coCh\scrC$ spanned by those objects $C$ such that
  $C^n\in\scrC^\heartsuit[n(q+1)]$ for all $n\in\bbZ$.
  Consider the full subcategory $\scrC_{q\text{-periodic}}
  \subset\scrC$
  spanned by the objects $X$ such that the t-structure homotopy objects
  $\pi_n X$ are isomorphic to $0$ for $n$ not a multiple
  of $q$:
  $$
  \scrC_{q\text{-periodic}}\coloneqq\{X \ | \ \pi_n X \cong 0 \text { for }
  n\not\equiv 0 \mod q\}.
  $$
  We define $\kinv_q$ to be the factorization of
  $\caush\circ\wtrunc{q\istar}$
  (where $\wtrunc{q\istar}$ denotes the sub-filtration of $\wtrunc{\istar}$
  obtained by skipping all the stages of the Whitehead tower that are
  not multiples of $q$)
  through the inclusion
  $\Ch_\scrC^{+q}(\scrC^\heartsuit)\subset\coCh(\scrC)$,
  and refer to it as the
  \emph{$q$-periodic k-decomposition} functor.
  Similarly to what happens for $\kinv$, the functor $\kinv_q$ induces
  an equivalence $\scrC_{q\text{-periodic}}\simeq\Ch_\scrC^{+q}
  (\scrC^\heartsuit)$.
\end{variant}

We learned about the following example from Achim Krause.
\begin{ex}
  As an instance of Variant \ref{variant-k-decomp}, we can consider
  the $2(p^n-1)$-periodic k-decomposition of the $n$-th Morava
  K-theory ($n\geq1$) spectrum, for some fixed prime $p$.
  As
  $$\bbF_p[2m(p^n-1)+m]\simeq\bbF_p[m(p^n-1)]$$
  the coherent cochain complex $\kinv_{2(p^n-1)}K(n)$ looks as follows
  $$\cdots\to\bbF_p[(m-1)(p^n-1)]\to\bbF_p[m(p^n-1)]\to
  \bbF_p[(m+1)(p^n-1)]\to\cdots$$
  (where $\bbF_p[m(p^n-1)]$ sits in degree $m$), and the differentials are
  given by suitable shifts of the $n$-th Milnor primitive for the mod $p$
  Steenrod algebra:
  $$\partial^m\simeq Q_n[m]\in\calA_p.$$
\end{ex}

\section{The spectral sequence associated to a coherent cochain complex}
\label{section-spseq}

In this section, we discuss how coherent cochain complexes give rise to
spectral sequences.
We have the following result.

\begin{thm}\label{thm-specseq-of-coch}
  Let $\scrC$ be a stable \cat with sequential limits and sequential colimits,
  equipped with a right complete t-structure.
  Then, every coherent cochain complex $C\in\coCh\scrC$ generates a spectral
  sequence
  $$E_1^{i,j}\cong\pi_{-j}C^i$$
  whose $E_1$ page is given by the homotopy groups of the components of
  $C$ and whose $E_1$ differentials
  $$d_1^{i,j}=\pi_{-j}\partial^i_C$$
  are obtained from the coherent differentials of $C$ by passing to
  homotopy.
  When the spectral sequence collapses at a finite stage, it converges
  strongly to the homotopy groups of the total homology of $C$
  $$E_1^{i,j}\cong\pi_{-j}C^i \Longrightarrow \pi_{-i-j}\tothom C.$$
\end{thm}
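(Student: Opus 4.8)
The plan is to deduce the statement from the classical spectral sequence of a filtered object, applied to the complete filtered object $\imp C\in\cFild\scrC$ obtained by transporting $C$ across the equivalence of Theorem~\ref{thm-general-equivalence}. First I would recall the construction of the spectral sequence of an object $F\in\Fild\scrC$: applying the homotopy object functors $\pi_*$ of the given t-structure to the cofiber sequences $F^{n+1}\to F^n\to\gr^n F$ produces an unrolled exact couple, hence (in the cohomological Serre grading) a spectral sequence with $E_1^{i,j}\cong\pi_{-i-j}(\gr^i F)$ whose $d_1$-differential is the composite of the connecting map $\pi_{-i-j}(\gr^i F)\to\pi_{-i-j-1}(F^{i+1})$ with the projection $\pi_{-i-j-1}(F^{i+1})\to\pi_{-i-j-1}(\gr^{i+1}F)$; equivalently, $d_1$ is induced by the map of objects $\gr^i F\to\gr^{i+1}F[1]$. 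The relevant general facts about such exact couples and their convergence can be taken from \cite[1.2.2]{HA}.

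Specializing to $F=\imp C$, the $E_1$-page is identified using Proposition~\ref{prop-gr-of-imp} together with Remark~\ref{rem-explicit-description-general-case}, which give $\gr^i(\imp C)\simeq C^i[-i]$; hence $E_1^{i,j}\cong\pi_{-i-j}(C^i[-i])\cong\pi_{-j}(C^i)$. For the $d_1$-differential, observe that the map $\gr^i(\imp C)\to\gr^{i+1}(\imp C)[1]$ identifies, after shifting by $[-i]$, with a map $C^i\to C^{i+1}$; by the construction of the shelling functor (see the discussion around~(\ref{eq-intro-gr}) and Construction~\ref{constr-preaush}) these maps are precisely the differentials of $\aush(\imp C)$, and since $\aush\imp C\simeq C$ by Proposition~\ref{prop-counit-almost-equiv}, this underlying map is $\partial^i_C$. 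Applying $\pi_{-j}$ then yields $d_1^{i,j}=\pi_{-j}(\partial^i_C)$. The abutment is $\tothom C=\colim_i(\imp C)^i$ by Definition~\ref{defi-tothom}; since $\pi_*$ commutes with sequential colimits, the images of the maps $\pi_n((\imp C)^i)\to\pi_n(\tothom C)$ form an exhaustive filtration of $\pi_n(\tothom C)$, and the standard comparison identifies its associated graded with the $E_\infty$-page.

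It then remains to address convergence. Since $\imp C$ is complete (Proposition~\ref{prop-imp-is-complete} and its generalization in Remark~\ref{rem-explicit-description-general-case}), one has $\varprojlim_i(\imp C)^i\simeq 0$, which gives conditional convergence of the spectral sequence; when it collapses at a finite page $E_r$, the $\varprojlim^1$-obstruction to strong convergence vanishes, and one obtains strong convergence $E_1^{i,j}\cong\pi_{-j}C^i\Rightarrow\pi_{-i-j}\tothom C$. I expect the main obstacle to be keeping the grading and shift conventions consistent between the homological t-structure grading and the cohomological Serre grading, and in particular pinning down the $d_1$-identification carefully — namely, that the exact-couple boundary map agrees, up to the relevant shifts, with the map that $\aush$ extracts as the coherent differential of $C$. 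The convergence statement, although standard, also warrants care: the filtration $\imp C$ is complete but typically unbounded on the colimit side, so the hypothesis "collapses at a finite stage" must be formulated precisely and the appropriate conditional/strong convergence result invoked.
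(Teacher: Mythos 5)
Your proposal is mathematically sound, but it takes the route that the paper explicitly declines to take. The paper itself acknowledges, in the sentence immediately following the statement of the theorem, that the result ``follows at once from Theorem \ref{thm-general-equivalence} together with Theorem \ref{thm-beil-recoll} and the existence of the spectral sequence associated to a filtered object'', which is precisely your strategy: transport $C$ to the complete filtered object $\imp C$, feed it into the classical exact-couple machinery of \cite[1.2.2]{HA}, and identify the $E_1$-page via $\gr^i(\imp C)\simeq C^i[-i]$ and the $d_1$-differentials via the counit equivalence $\aush\imp C\simeq C$. Your grading check $\pi_{-i-j}(\gr^i\imp C)\cong\pi_{-i-j}(C^i[-i])\cong\pi_{-j}(C^i)$ is correct. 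What the paper actually does is different: it builds the spectral sequence intrinsically in $\coCh\scrC$, without ever reducing to the exact couple of a filtered object. The pages are defined directly by iterating the d\'{e}calage endofunctor, setting $\pi_{-j}(\dec^n C)^i \coloneqq E_{n+1}^{(n+1)i+nj,\,-ni+(-n+1)j}$, and the fact that $E_{n+1}$ is the cohomology of $E_n$ is established by induction using Proposition \ref{prop-tothom-eilenberg} (which identifies $\pi_{-j}(\dec C)^i$ with $\rrH^{j+2i}(\pi_i^{\mathrm{lvl}}C)$). The convergence statement is then extracted from Remark \ref{rem-dec-same-tothom}, which shows $\tothom C\simeq\tothom\dec^n C$ for all $n$. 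The trade-off is the expected one: your argument is shorter and leans on the general theory of filtered spectral sequences, so it is less self-contained; the paper's argument is longer but produces all the higher pages at once as honest coherent cochain complexes $\dec^n C$ rather than as homotopy groups of subquotients, which is in keeping with the paper's philosophy. Your closing remarks about conditional versus strong convergence and the $\varprojlim^1$-obstruction are consistent with the caveats raised in Remark \ref{rem-convergence-meaning}, though they are sketched rather than carried out.
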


We defer the proof to later in this section.
Of course, the above theorem follows at once from Theorem
\ref{thm-general-equivalence} together with Theorem \ref{thm-beil-recoll}
and the existence of the spectral sequence
associated to a filtered object in a suitable stable \cat (whose incarnation in
the case of spectra has been known by the experts for a long time and an
account of which can be found in \cite[1.2.2]{HA}, in the generality of this
paper).
The goal of this section is to give a self-contained construction of the above
spectral sequence.

\begin{rem}\label{rem-convergence-meaning}
  The convergence statement in Theorem \ref{thm-specseq-of-coch}
  is far from being optimal; it is somewhat of a folk result that the
  corresponding spectral sequence for a filtered object is conditionally
  convergent. In forthcoming work by Hedenlund--Krause--Nikolaus, the authors
  give a new proof of this fact, using the filtered counterpart of
  the d\'{e}calage construction we illustrate in this section using the
  coherent cochain complexes perspective.
\end{rem}

We learned about the relation between the Beilinson t-structure and Deligne's
d\'{e}calage functor from Benjamin Antieau.
Most of the ideas discussed in this section are already
present in some form in \cite{An2019periodic} and \cite[5.5]{BMS2};
we believe that the language of coherent cochain complexes gives a
particularly pleasant perspective on the topic.

The key ingredient in the construction of the spectral sequence of a coherent
cochain complex is given by the following construction.

\begin{defi}\label{defi-decalage}
  Let $\scrC$ be a stable \cat with sequential limits and sequential colimits,
  equipped with a right separated t-structure, and
  let $C\in\coCh\scrC$ be a coherent cochain complex. We can apply the functor
  $\kinv$ levelwise, to obtain a bicomplex (denoted $\kinv^\text{lvl}C$)
  $$
  \begin{tikzcd}
    \cdots \ar[r] & \kinv C^0 \ar[r] \ar[d, phantom, "\rotatebox{90}{=}"]
                  & \kinv C^1 \ar[r] \ar[d, phantom, "\rotatebox{90}{=}"]
                  & \kinv C^2 \ar[r] \ar[d, phantom, "\rotatebox{90}{=}"]
                  & \cdots \\
    & \vdots \ar[d]& \vdots \ar[d] & \vdots \ar[d] & \\
    \cdots \ar[r] & \pi_0C^0[0] \ar[r] \ar[d]
                  & \pi_0C^1[0] \ar[r] \ar[d]
                  & \pi_0C^2[0] \ar[r] \ar[d] & \cdots \\
    \cdots \ar[r] & \pi_1C^0[2] \ar[r] \ar[d]
                  & \pi_1C^1[2] \ar[r] \ar[d]
                  & \pi_1C^2[2] \ar[r] \ar[d] & \cdots \\
    \cdots \ar[r] & \pi_2C^0[4] \ar[r] \ar[d]
                  & \pi_2C^1[4] \ar[r] \ar[d]
                  & \pi_2C^2[4] \ar[r] \ar[d] & \cdots \\
    & \vdots & \vdots & \vdots &
  \end{tikzcd}
  $$
  i.e.\ such that $(\kinv^\text{lvl}C)^{i,j}\simeq\pi_jC^i[2j]$,
  where the horizontal differentials are the k-invariants for the relevant
  objects, and the vertical ones are induced from the differentials of $C$.
  We define the
  \emph{d\'{e}cal\'{e}e complex}\footnote{our choice of terminology
    is justified by the construction of the spectral sequence of
    Theorem \ref{thm-specseq-of-coch} (see the proof at the end of the section):
    the functor $\dec$ provides an
    incarnation in coherent chain complexes of Deligne's
    \emph{d\'{e}cal\'{e}e} filtration (see \cite[1.3]{DelHodge2})}
  denoted $\dec C$ to be the levelwise
  total homology of the bicomplex $\kinv^\text{lvl} C$ with respect to the
  horizontal maps of the above diagram.
  That is,
  $$
  \begin{tikzcd}
    \vdots \ar[d]&[-2em]&[-2em]& \vdots \ar[d]& \vdots \ar[d] &\vdots \ar[d]&\\
    \dec C^0 \ar[d]&=&
      \tothom\bigg(\cdots \ar[r] & \pi_0C^0[0] \ar[r] \ar[d]
      & \pi_0C^1[0] \ar[r] \ar[d]
      & \pi_0C^2[0] \ar[r] \ar[d] & \cdots\bigg) \\
    \dec C^1\ar[d]&=&
      \tothom\bigg(\cdots \ar[r] & \pi_1C^0[2] \ar[r] \ar[d]
                  & \pi_1C^1[2] \ar[r] \ar[d]
                  & \pi_1C^2[2] \ar[r] \ar[d] & \cdots\bigg) \\
    \dec C^2\ar[d]&=&
      \tothom\bigg(\cdots \ar[r] & \pi_2C^0[4] \ar[r] \ar[d]
                  & \pi_2C^1[4] \ar[r] \ar[d]
                  & \pi_2C^2[4] \ar[r] \ar[d] & \cdots\bigg) \\
    \vdots&&& \vdots & \vdots & \vdots &
  \end{tikzcd}
  $$
  or, more precisely:
  $$\dec \coloneqq \coCh(\tothom)\circ\kinv^\text{lvl}.$$
  As $\dec$ gives an endofunctor for $\coCh(\scrC)$, we will denote
  by
  $$\dec^n\colon\coCh(\scrC)\to\coCh(\scrC), \text{ with } n\geq 1$$
  its iterations.
  We will also use the convention $\dec^0\coloneqq\id$.
\end{defi}

\begin{rem}
  In the situation of Definition \ref{defi-decalage}, using
  Remark \ref{rem-comparison-formulae} together with
  Remark \ref{rem-tothom-eilenberg-abelian} we see that (omitting as usual
  the Eilenberg--Maclane functor from the notations)
  \begin{displaymath}
  \begin{split}
    (\dec C)^n
    &\simeq \tothom\left(\left(\pi_n^\text{lvl}C\right)[2n]\right)\\
    &\simeq \colim\imp\left(\left(\pi_n^\text{lvl}C\right)[2n]\right)\\
    &\simeq \left(\colim\imp\left(\pi_n^\text{lvl}C\right)\right)[2n]\\
    &\simeq \left(\tothom\left(\pi_n^\text{lvl}C\right)\right)[2n]\\
    &\simeq \left(\pi_n^\text{lvl}C\right)[2n]\\
    &\simeq \left(\pi_n^B\imp C\right)[2n]\\
  \end{split}
  \end{displaymath}
  that is, $\dec C^n$ is equivalent (up to a shift) to the $n$-th Beilinson
  homotopy chain complex of the piled up filtered object of $C$.
\end{rem}

\begin{rem}\label{rem-dec-same-tothom}
  In the situation of Definition \ref{defi-decalage},
  we have that
  $\tothom C \simeq \tothom\dec C$
  for any $C\in\coCh\scrC$, and thus
  $$\tothom C\simeq \tothom\dec^n C$$
  for any $n\in\bbN$.
  To see this, let us consider the bifiltered object
  $$\imp^2\left(\kinv^\text{lvl}C\right)\coloneqq
  \cFild(\imp)\circ\imp\left(\kinv^\text{lvl}C\right)
  \simeq\imp\circ\coCh(\imp)\left(\kinv^\text{lvl}C\right)$$
  associated to the levelwise k-decomposition of $C$ (where we are using the
  fact that the diagram
  $$
  \begin{tikzcd}
    \coCh(\coCh\scrC) \ar[r, "\coCh(\imp)"]\ar[d, "\imp"]&
    \coCh(\cFild\scrC) \ar[d, "\imp"]\\
    \cFild(\coCh\scrC) \ar[r, "\cFild(\imp)"]& \cFild(\cFild\scrC)
  \end{tikzcd}
  $$
  commutes).
  It follows from Theorem \ref{thm-general-equivalence} that given any
  $F\in\cFild(\scrC)$, we have an equivalence of bifiltered objects
  $$\tau^B_{\geq \istar}F\simeq\imp\caush\tau^B_{\geq \istar}F
  \simeq\imp\left(\pi^B_{*} F[2*]\right) \quad \in
  \cFild\left(\cFild(\scrC)\right).$$
  In particular, for $F\simeq\imp C$, we have
  $$\tau^B_{\geq \istar}\imp C
  \simeq\imp\left(\pi^B_{*} \imp C[2*]\right)
  \simeq\imp\left(\imp\pi^\text{lvl}C[2*]\right)
  \simeq\imp^2\left(\kinv^\text{lvl}C\right).
  $$
  Now, we can use the above observation to deduce our initial claim:
  \begin{displaymath}
  \begin{split}
    \tothom C
    &\simeq \colim_n \imp C^n\\
    &\simeq \colim_{m,n} \left(\tau^B_{\geq m}\imp C\right)^n\\
    &\simeq \colim_{m,n} \imp^2\left(\kinv^\text{lvl}C\right)^{m,n}\\
    &\simeq \colim_m \imp\left(
      \colim_n \imp(\pi^\text{lvl}_{*} C[2*])^n
      \right)^m\\
    &\simeq \colim_m \imp\left(\dec C\right)^m\\
    &\simeq \tothom \dec C.
  \end{split}
  \end{displaymath}
\end{rem}

\begin{proof}[Proof of Theorem \ref{thm-specseq-of-coch}]
  Given any $n\in\bbZ$, we have that $\pi_n C^\bullet\in
  \coCh(\scrC^\heartsuit)$
  is an ordinary chain complex.
  Let
  $$\pi_{-j}\left(\dec^nC\right)^i\coloneqq E_{n+1}^{(n+1)i+nj,-ni+(-n+1)j}$$
  we want to prove that the above objects define the pages of a spectral
  sequence.
  We start by defining the differentials of the $E_1$ page to be
  $$d_1^{i,j}\coloneqq\pi_{-j}(\partial^i_C)$$
  Using Proposition \ref{prop-tothom-eilenberg}, we have
  \begin{displaymath}
  \begin{split}
    \pi_{-j}(\dec C)^i
    &\cong \pi_{-j}\tothom \left(\pi_i^\text{lvl} C\right) [2i]\\
    &\cong \pi_{-j-2i}\tothom \left(\pi_i^\text{lvl} C\right)\\
    &\cong \rrH^{j+2i}\left(\pi_i^\text{lvl} C\right);
    \end{split}
  \end{displaymath}
  the above isomorphism allows us to define
  $$d_2^{2i+j,-i}\coloneqq\pi_{-j}(\partial^i_{\dec C})$$
  and it's easy to check that $d_2$ has the correct bidegree.
  As
  $$
    \pi_{-j}\left(\dec^nC\right)^i
    \cong \rrH^{j+2i}\left(\pi_i^\text{lvl}\dec^{n-1}C\right)
  $$
  we can now proceed by induction to show that the $E_{n+1}$ page is given by the
  cohomology of the $E_{n}$ page: using that
  \begin{displaymath}
  \begin{split}
    \pi_i(\dec^{n-1}C)^{j+2i}
    &\coloneqq E_n^{n(j+2i)+(n-1)(-i), (-n+1)(j+2i)+(-n+2)(-i)}\\
    &\cong E_n^{(n+1)i+nj,-ni+(-n+1)j}
  \end{split}
  \end{displaymath}
  we see that
  \begin{displaymath}
  \begin{split}
    E&\vphantom{x}_{n+1}^{(n+1)i+nj , -ni+(-n+1)j}
    \coloneqq \pi_{-j}\left(\dec^nC\right)^i \\
    &\cong \rrH^{j+2i} \left(\pi_i^\text{lvl}\dec^{n-1}C\right) \\
    &\cong \frac{\ker\left(\pi_i(\dec^{n-1}C)^{j+2i}\to
    \pi_i(\dec^{n-1}C)^{j+2i+1}\right)}
    {\im\left(\pi_i(\dec^{n-1}C)^{j+2i-1}\to\pi_i(\dec^{n-1}C)^{j+2i}\right)} \\
    &\cong \frac{\ker\left(E_n^{(n+1)i+nj,-ni+(-n+1)j}\to
    E_n^{(n+1)i+n(j+1),-ni+(-n+1)(j+1)}\right)}
    {\im\left(E_n^{(n+1)i+n(j-1),-ni+(-n+1)(j-1)}\to
    E_n^{(n+1)i+nj,-ni+(-n+1)j}\right)} \\
    &\cong \rrH^{(n+1)i+nj,-ni+(-n+1)j}E_n
  \end{split}
  \end{displaymath}
  concluding the existence proof.

  The convergence statement follows at once from Remark \ref{rem-dec-same-tothom}.
\end{proof}

\section{Examples}
\label{section-examples}
In what follows, we present some results which appeared recently in the
literature using the language developed in the rest of the paper.
The aim of this section is to give a new perspective on known results, hence
we make no claims of originality regarding the results presented; still, we
believe that looking at these results through the lenses of
coherent cochain complexes buys some insight about these results and adds to
the clarity of their statement. In fact, some of the examples discussed here
motivated the investigation of the formalism of coherent chain complexes in
the first place.

\begin{ex} [\cite{An2019periodic}]
  Let $k$ be a commutative ring, and let $\rr{Sch}^{\natural}_k$ denote
  the category of qcqs $k$-schemes whose cotangent complex $\mathrm{L}_{X/k}$
  has Tor-amplitude concentrated in $[0,1]$. Then
  \begin{enumerate}
    \item there exists a functor $\cHCm(-/k)\colon
      \rr{Sch}^{\natural, \mathrm{op}}_k\to\coCh(\rrD(k))$
      sending each scheme $X$ to a coherent cochain complex $\cHCm(X/k)$
      such that
      \begin{enumerate}
        \item the total homology of $\cHCm(X/k)$ is given by negative cyclic
          homology:
          $$\tothom(\cHCm(X/k)) \simeq \rr{HC}^{-}(X/k)$$
        \item the components of $\cHCm(X)$ are given up to a shift by
          truncations of the Hodge-completed derived de Rham complex:
          $$\cHCm(X/k)^n \simeq \widehat{\rrL\Omega}^{\geq n}_{X/k}[3n]$$
        \item the coherent cochain complex $\cHCm(X/k)$ generates a conditionally
          convergent spectral sequence
          $$E_1^{i,j}\cong H^{j+3i}
          \left(\widehat{\rrL\Omega}^{\geq i}_{X/k}\right)\Longrightarrow
          \pi_{-i-j} \rr{HC}^{-}(X/k)$$
      \end{enumerate}
    \item there exists a functor $\cHP(-/k)
      \colon\mathrm{Sch}^{\natural, \mathrm{op}}_k\to \coCh(\rrD(k))$
      sending each scheme $X$ to a coherent cochain complex $\cHP(X/k)$
      such that
      \begin{enumerate}
        \item the total homology of $\cHP(X/k)$ is given by periodic homology:
          $$\tothom(\cHP(X/k)) \simeq \rr{HP}(X/k)$$
        \item the components of $\cHP(X/k)$ are given by shifts of
          the Hodge-completed derived de Rham complex:
          $$\cHP(X/k)^n \simeq \widehat{\rrL\Omega}_{X/k}[3n]$$
        \item the coherent cochain complex $\cHP(X/k)$ generates a conditionally
          convergent spectral sequence
          $$E_1^{i,j}\cong H^{j+3i}_{\rr{dR}} (X/k)
          \Longrightarrow \pi_{-i-j} \rr{HP}(X/k).$$
      \end{enumerate}
  \end{enumerate}
\end{ex}

\begin{rem}\label{rem-derham}
  Let $k$ be an animated ring\footnote{following \cite{cesnavicius2019purity},
  an element of the \cat obtained by Dwyer--Kan localization of the 1-category
  of simplicial commutative rings at the subcategory of weak equivalences};
  then, the hull-shelled complex of the Hodge-completed derived de Rham complex
  $\widehat{\rrL\Omega}_{X/k}$ with the Hodge filtration
  $$\bbL\Omega_{X/k}\coloneqq
  \caush\left(\widehat{\rrL\Omega}^{\geq \istar}_{X/k}\right)$$
  is such that its components are successive exterior powers of the cotangent
  complex:
  $$\left(\bbL\Omega_{X/k}\right)^n \simeq \Lambda^n\rrL_{R/k};$$
  thus, we can represent $\bbL\Omega_{X/k}$ as
  $$\cdots 0\to R\to \rrL_{R/k}\to\Lambda^2\rrL_{R/k}\to
  \Lambda^3\rrL_{R/k}\to\cdots$$
  showing how it can be seen as a homotopy coherent
  generalization of the usual algebraic de Rham complex.
\end{rem}

\begin{ex} [\cite{raksit}]
  With notation as in \ref{rem-derham}, $\bbL\Omega_{X/k}$
  can be given the structure of a derived commutative algebra in coherent
  cochain complexes (i.e.\ a module over a suitably defined
  $\infty$-operad $\rr{LSym}$; see \cite[\S 4]{raksit} for details);
  the resulting object is initial in the full subcategory of
  $\rr{LMod}_{\rr{LSym}}\coCh(\Mod_k)$ consisting of objects $C$ that are
  concentrated in non-negative degrees and are equipped with a map of $k$-algebras
  $R\to C^0$.
  Moreover, the first differential of the complex $\partial^0\colon
  R\to\rrL_{R/k}$ is given by the universal derivation.
\end{ex}

\begin{ex} [\cite{BMS2}, \cite{AnNik20}]
  Let $k$ be a perfect field of characteristic $p$, and let
  $\rr{Alg}^{\text{ind-sm}}_k$ denote
  the category of ind-smooth $k$-algebras. Then, there exists a functor
  $\cTP\colon\rr{Alg}^{\text{ind-sm}}_k\to\coCh(\Sp)$ sending a $k$-algebra
  $R$ to a coherent cochain complex $\cTP(R)$ such that:
  \begin{enumerate}
    \item the total homology of $\cTP(R)$ is given by topological periodic
      homology: $$\tothom(\cTP(R)) \simeq \rr{TP}(R).$$
    \item the components of $\cTP(R)$ are given by the de Rham--Witt complex
      $$\cTP(R)^n \simeq \rrW\Omega^\bullet_{R}[3n];$$ in particular,
      when $R$ is smooth, the components are given by crystalline
      cohomology
      $$\cTP(R)^n \simeq \rrR\Gamma_{\text{crys}}(R/\rrW(k))[3n];$$
    \item the coherent cochain complex $\cTP(R)$ generates a conditionally
      (strongly, if $\Spec R$ has finite dimension over $\Spec k$)
      convergent spectral sequence
      $$E_1^{i,j}\cong H_{\rrW\Omega}^{j+3i}(R)
      \Longrightarrow \pi_{-i-j} \rr{TP}(R);$$
      in particular, for $R$ smooth, we get
      $$E_1^{i,j}\cong H_{\text{crys}}^{j+3i}(R/W(k))
      \Longrightarrow \pi_{-i-j} \rr{TP}(R).$$
  \end{enumerate}
\end{ex}

\appendix
\section{$\infty$-categorical and stable nerve-realization paradigm}
\label{appendix-first}
The content of this appendix is well-known to experts, but to the best of
our knowledge there is no systematic presentation of these results in the
existing literature. Thus we thought it appropriate to include them here for
the convenience of the reader.
We begin by showing a ``density formula'' for $\infty$-categorical presheaves,
then proceed to describe the nerve-realization paradigm in the unstable and
in the stable case. For the unstable case, we closely follow the
presentation of the analogue results in ordinary category theory given in
\cite[Chapter 3]{coend}, adapting the few things that need to be adapted in
order to translate the arguments to the $\infty$-categorical setting.
For the stable case, we leverage on results of \cite{bgt} and
\cite{NikStableOperads} to give a description of stable nerves.

\subsection{A ``density formula'' for presheaves}

One of the features making the \cat of presheaves of a small \cat $\scrC$ so
important, is the fact that the Yoneda embedding is in some sense the
``free cocompletion functor'' (see \cite[5.1.5.6]{HTT} for a precise statement
and a proof). In particular, every presheaf is the colimit of representable
ones (see \cite[5.1.5.8]{HTT}). The proof contained in \cite{HTT} of this last
fact is somewhat indirect, as it does not explicitly give a way to
construct for any presheaf $P$, a diagram having $P$ as a colimit. As
later we will need such an explicit diagram, we will now show that, similarly
to what happens in the ordinary case, any presheaf is the colimit of the
inclusion of a suitable comma category in the category of presheaves.

The following is just a special case of the most general definition of a
comma \cat it is possible to give, but it is general enough for our purposes.
\begin{defi}\label{defi-comma-cat}
  Let $F\colon\scrC\to\scrD$ be a functor between \cats, and $d\in\scrD$
  an object. We will denote by $(F \downarrow d)$ the \emph{comma category}
  obtained as the following pullback
  \begin{equation}\label{diag-defi-comma}
  \begin{tikzcd}
    (F \downarrow d) \ar[r] \ar [d, "\pjct"'] & \scrD_{/d} \ar[d] \\
    \scrC \ar[r, "F"] & \scrD.
  \end{tikzcd}
  \end{equation}
\end{defi}

\begin{lemma}\label{lemma-comma-class}
  The map $(F\downarrow d)\to\scrC$ is a right fibration
  classifying $\Map_{\scrD}\left(F(-),d\right)$.
\end{lemma}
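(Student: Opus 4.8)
The plan is to unwind the definition of the comma \cat as a pullback and identify the resulting map with the fibration associated to a presheaf via unstraightening. First I would recall that, by Definition \ref{defi-comma-cat}, $(F\downarrow d)$ sits in the Cartesian square \eqref{diag-defi-comma}, so that $\pjct\colon(F\downarrow d)\to\scrC$ is the base change along $F$ of the projection $\scrD_{/d}\to\scrD$. Since right fibrations are stable under base change, it suffices to show that $\scrD_{/d}\to\scrD$ is a right fibration classifying the functor $\Map_{\scrD}(-,d)\colon\scrD\op\to\spaces$; the claim for $(F\downarrow d)$ then follows because straightening commutes with pullback, i.e.\ the presheaf classified by the pulled-back fibration is the composite $\Map_{\scrD}(F(-),d)$.

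The core input is thus the standard fact that the slice projection $\scrD_{/d}\to\scrD$ is a right fibration whose straightening is the functor corepresented... more precisely \emph{represented} by $d$. This is \cite[2.4.4.3, 4.4.4.5]{HTT} (or, in the language of \cite[Chapter 4]{coend}, the statement that $\Yo_d$ is classified by $\scrD_{/d}$): the fibre of $\scrD_{/d}\to\scrD$ over an object $c\in\scrD$ is the space $\Map_{\scrD}(c,d)$, and the straightening-unstraightening equivalence identifies this right fibration with the image of $\Map_{\scrD}(-,d)$ under $\Un\colon\Fun(\scrD\op,\spaces)\xrightarrow{\sim}\mathrm{RFib}(\scrD)$. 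Then I would invoke the compatibility of unstraightening with pullback along $F\colon\scrC\to\scrD$ (see \cite[3.1.1.8]{HTT}, or the discussion of base change for (co)Cartesian fibrations): pulling back the right fibration classifying $G\colon\scrD\op\to\spaces$ along $F$ yields the right fibration over $\scrC$ classifying $G\circ F\op$. Applying this with $G=\Map_{\scrD}(-,d)$ gives precisely that $\pjct\colon(F\downarrow d)\to\scrC$ is a right fibration classifying $\Map_{\scrD}(F(-),d)$.

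The step I expect to require the most care is the base-change compatibility: one must make sure that the equivalence of \cats $(F\downarrow d)\simeq\scrC\times_{\scrD}\scrD_{/d}$ coming from the pullback square is the \emph{same} as the one produced by unstraightening the composite presheaf, rather than merely an abstract equivalence. Concretely, I would note that both the pullback $\scrC\times_\scrD\scrD_{/d}$ and $\Un_{\scrC}(\Map_{\scrD}(F(-),d))$ represent the functor sending a \cat $\scrE$ over $\scrC$ (via some $p\colon\scrE\to\scrC$) to the space of lifts, and that $F$-pullback of right fibrations corresponds under straightening to precomposition with $F\op$ — this is exactly \cite[3.1.1.8, 3.2.1.4]{HTT}. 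Once that identification is in place the lemma is immediate. I would keep the write-up short, citing the slice-fibration fact and the base-change fact and assembling them, since neither requires any genuinely new argument here.
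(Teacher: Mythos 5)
Your proof is correct and takes essentially the same route as the paper's: identify $\scrD_{/d}\to\scrD$ as the right fibration classifying $\Yo_d$, then use the compatibility of straightening with pullback (HTT 3.2.1.4) to conclude that the base change along $F$ classifies $\Yo_d\circ F=\Map_\scrD(F(-),d)$. The paper's write-up is a one-liner citing exactly this; your extra care about whether the two equivalences agree is sensible but already built into HTT 3.2.1.4.
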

\begin{proof}
  As the projection $\scrD_{/d}\to\scrD$ classifies $\Yo_{d}$,
  \cite[3.2.1.4]{HTT} implies that the pullback
  (\ref{diag-defi-comma}) classifies the composite $\Yo_d\circ F$.
\end{proof}

The above proposition, together with the Yoneda lemma, specialize to
presheaves as follows.

\begin{rem}\label{rem-class-P}
  By Lemma \ref{lemma-comma-class}, the right fibration
  $(\Yo\downarrow P)\to\scrC$ classifies the functor
  $\Map_{\Pre{\scrC}}(\Yo_{-},P)$,
  but, by \cite[5.5.2.1]{HTT}, this is precisely $P$, hence
  $(\Yo\downarrow P)\to\scrC$ is a right fibration classifying $P$.
\end{rem}

\begin{prop}\label{prop-dens}
  Let $\scrC$ be a small \cat, and $P$ any presheaf on $\scrC$.
  Then
  $$P \simeq \colim\left( (\Yo\downarrow P)\xto{\pjct}\scrC\xto{\Yo}
  \Pre{\scrC} \right).$$
\end{prop}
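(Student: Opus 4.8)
The plan is to reduce the statement to the general fact that a presheaf, viewed as a colimit, can be computed via the (co)Cartesian/right fibration classifying it. The key inputs are already assembled in Remark \ref{rem-class-P}, which identifies $(\Yo\downarrow P)\to\scrC$ as the right fibration classifying $P$, and the standard description of colimits along the Yoneda embedding. First I would recall that for any \cat $\scrC$ and any presheaf $P\in\Pre{\scrC}$, the unstraightening $\Un(P^\op)\to\scrC\op$ — equivalently, the right fibration $\pjct\colon(\Yo\downarrow P)\to\scrC$ associated to $P$ under the straightening/unstraightening equivalence (\cite[3.2.0.1]{HTT}) — provides the canonical presentation of $P$. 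Concretely, by \cite[5.1.5.3]{HTT} (or the discussion around \cite[5.1.3.1]{HTT}), the colimit of the constant-at-the-fibers diagram recovers $P$; but here we want the colimit of the composite $\Yo\circ\pjct$, not of the fibers.

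The core of the argument is therefore the following chain of identifications. By the Yoneda lemma in the form \cite[5.5.2.1]{HTT}, we have $\Map_{\Pre{\scrC}}(\Yo_{-},P)\simeq P$, so Remark \ref{rem-class-P} tells us $(\Yo\downarrow P)\to\scrC$ is the right fibration classifying $P$. Now I would invoke \cite[5.1.5.8]{HTT}: for any functor $f\colon\scrC\to\scrD$ with $\scrD$ admitting the relevant colimits, and any presheaf $P$ on $\scrC$, the left Kan extension of $f$ along $\Yo$ evaluated at $P$ is computed as $\colim\big((\Yo\downarrow P)\xto{\pjct}\scrC\xto{f}\scrD\big)$. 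Applying this with $\scrD=\Pre{\scrC}$ and $f=\Yo$, the left Kan extension of $\Yo$ along itself is the identity functor on $\Pre{\scrC}$ (since $\Yo$ is fully faithful and $\Pre{\scrC}$ is the free cocompletion, by \cite[5.1.5.6]{HTT}), so
$$
P \simeq (\Lan_{\Yo}\Yo)(P) \simeq \colim\left((\Yo\downarrow P)\xto{\pjct}\scrC\xto{\Yo}\Pre{\scrC}\right),
$$
which is exactly the claim. Alternatively, and perhaps more self-containedly, one can argue pointwise: for each $C\in\scrC$, evaluation $\ev_C\colon\Pre{\scrC}\to\spaces$ preserves colimits, so it suffices to check that $\colim_{(\Yo\downarrow P)}\Map_{\scrC}(C,\pjct(-))\simeq P(C)$; but $\Map_{\scrC}(C,\pjct(-))$ is the space of sections, and the colimit over the right fibration classifying $P$ of the functor corepresented by $C$ is, by the (dual of the) formula for colimits over fibrations together with cofinality, precisely $P(C)$ — this is essentially \cite[5.1.5.8]{HTT} unwound, or follows from \cite[3.3.4.5]{HTT}.

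The main obstacle is purely a matter of bookkeeping: making sure the variance conventions match. Since we work with presheaves $\Pre{\scrC}=\Fun(\scrC\op,\spaces)$ but the comma category $(\Yo\downarrow P)$ is defined via $\scrC\xto{\Yo}\Pre{\scrC}\leftarrow\Pre{\scrC}_{/P}$ as in Definition \ref{defi-comma-cat} and Lemma \ref{lemma-comma-class}, one must be careful that $(\Yo\downarrow P)\to\scrC$ is a \emph{right} fibration (hence classifies a presheaf, not a copresheaf) — this is exactly what Lemma \ref{lemma-comma-class} and Remark \ref{rem-class-P} have already pinned down, so no genuine difficulty remains. I would also remark that no smallness hypothesis beyond $\scrC$ being small is needed, since $\Pre{\scrC}$ is then presentable and in particular cocomplete, so all colimits in sight exist; this is why the hypothesis "$\scrC$ a small \cat" appears in the statement.
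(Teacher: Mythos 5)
Your argument is correct in substance, but it takes a genuinely different route from the paper and has a citation problem worth flagging.

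The paper's proof is bottom-up: it shows that $\colim_{(\Yo\downarrow P)}\Yo\circ\pjct$ and $P$ corepresent the same functor, by identifying $\lim_{(\Yo\downarrow P)}\Map_{\Pre\scrC}(\Yo_{\pjct(-)},Y)$ with a space of sections of a right fibration (via Lemma \ref{lemma-comma-class} and \cite[3.3.3.4]{HTT}), massaging that through a pasting of pullbacks, and finally invoking the straightening/unstraightening equivalence. You instead argue top-down: you recognize $\id_{\Pre\scrC}\simeq\Lan_\Yo\Yo$ via the universal property of free cocompletion, and then evaluate the Kan extension pointwise as a colimit over the comma category. This is a valid and cleaner route, and it is not circular with anything downstream — the paper itself cites the pointwise Kan-extension formula directly from HTT in the proof of Lemma \ref{lemma-coend-formula}, independently of Proposition \ref{prop-dens}.

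However, your key citation is wrong, and this matters because the cited fact is precisely what is being proved. You attribute to \cite[5.1.5.8]{HTT} the statement that $\Lan_\Yo f(P)\simeq\colim\bigl((\Yo\downarrow P)\to\scrC\xto{f}\scrD\bigr)$. As the paper explains immediately above Proposition \ref{prop-dens}, that reference only asserts that every presheaf is a colimit of representables, without exhibiting the comma category as the explicit indexing diagram; exhibiting that diagram is exactly the content of the proposition. To make your argument rigorous you would instead cite the pointwise Kan-extension formula (\cite[4.3.2.2]{HTT}, \cite[4.3.3.2]{HTT}, as the paper does in Lemma \ref{lemma-coend-formula}) together with \cite[5.1.5.5]{HTT} for the claim that the identity on $\Pre\scrC$ is the left Kan extension of $\Yo$ along itself. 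You would also want to briefly address the compatibility between HTT's slice-style comma object appearing in the Kan-extension formula and the pullback definition used in Definition \ref{defi-comma-cat}; this is a standard but nontrivial identification.

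One minor garbling in your alternative pointwise argument: after reducing to $\colim_{(\Yo\downarrow P)}\Map_{\scrC}(C,\pjct(-))\simeq P(C)$, you say "$\Map_{\scrC}(C,\pjct(-))$ is the space of sections," which is a category error (it is a functor, not a space). The intended conclusion — that the colimit of this functor over the right fibration classifying $P$ computes $P(C)$ — is correct, but the phrasing should be fixed.
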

\begin{proof}
  We will prove that $P$ and $X$ corepresent the same functor in $\Pre\scrC$.
  Given any presheaf $Y$, have that
  \begin{equation}\label{eq-dens-first}
    \Map_{\Pre\scrC}
      \left(\colim_{\alpha\in(\Yo\downarrow P)}\Yo\circ\pi(\alpha),Y\right)
    \simeq\lim_{\alpha\in(\Yo\downarrow P)}
      \Map_{\Pre\scrC}(\Yo_{\pi(\alpha)},Y)
  \end{equation}
  Let us now notice that, by Lemma \ref{lemma-comma-class}, the functor
  \begin{displaymath}
    \Map_{\Pre\scrC}(\Yo\circ \pi(-),Y)
    \colon(\Yo\downarrow P)\to\spaces
  \end{displaymath}
  is classified by the right fibration
  $$(\Yo\circ\pi \downarrow Y) \to (\Yo \downarrow P)$$
  hence, by \cite[3.3.3.4]{HTT}, (\ref{eq-dens-first})
  is equivalent to the space of sections
  \begin{equation}\label{yo-P-sections}
  \Map_{/(\Yo\downarrow P)}\left((\Yo\downarrow P),
    (\Yo\circ\pi \downarrow Y)\right).
  \end{equation}
  Let us note that, as $(\Yo\circ\pi \downarrow Y)$ fits in the following
  pasting of pullback squares
  $$
  \begin{tikzcd}
    (\Yo\circ\pi \downarrow Y) \ar[r] \ar[d]
      & (\Yo\downarrow Y) \ar[r] \ar[d]
      & \Pre\scrC_{/ Y} \ar[d] \\
    (\Yo\downarrow P) \ar[r] & \scrC \ar[r] & \Pre\scrC
  \end{tikzcd}
  $$
  we have that (\ref{yo-P-sections}) is equivalent to
  $$
  \Map_{/ \scrC}\left((\Yo\downarrow P),
    (\Yo\downarrow Y)\right),
  $$
  and, by Remark \ref{rem-class-P}, and by virtue of the
  straightening/unstraightening equivalence, we have that
  \begin{displaymath}
  \begin{split}
    \Map_{/ \scrC}\left((\Yo\downarrow P),(\Yo\downarrow Y)\right)
      &\simeq \Map_{/ \scrC}\left(\Un(P),\Un(Y)\right)\\
      &\simeq \Map_{\Pre\scrC}\left(\St\left(\Un(P)\right),Y\right)\\
      &\simeq \Map_{\Pre\scrC}\left(P,Y\right),
  \end{split}
  \end{displaymath}
  completing the proof.
\end{proof}

\subsection{The unstable nerve-realization paradigm}

With the density formula at our disposal, we can now easily adapt the
results on nerves and realizations from ordinary category theory to the
coherent setting. In order to do so, it will be useful to express Kan
extensions using co/ends, hence we will start by recollecting some definitions
and proving a few useful lemmata.

\begin{defi}\cite[5.2.1.1]{HA},\cite[2.2]{GHN17}
  Let $\scrC$ be an \cat, and let $\eps\colon\simplex\to\simplex$ be the functor
  given by $[n]\mapsto[n]\star[n]\op$.
  The \emph{twisted arrow category} $\tw\scrC$ is the simplicial set given
  by $\eps^*\scrC$.
\end{defi}

\begin{rem}
  In particular, we have
  $$
  (\tw\scrC)_n\cong\Hom(\Delta^n\star(\Delta^n)\op,\scrC)
  $$
  and two canonical projections $\tw\scrC\to\scrC$ and $\tw\scrC\to\scrC\op$
  induced by the natural transformations $\Delta^\bullet\to\Delta^\bullet\star
  (\Delta^\bullet)\op$ and $(\Delta^\bullet)\op\to\Delta^\bullet\star
  (\Delta^\bullet)\op$. By \cite[5.2.1.3]{HA}, the induced map
  $\tw\scrC\to\scrC\times\scrC\op$ is a right fibration, and by
  \cite[5.2.1.11]{HA} it classifies the bifunctor $\Map_\scrC\colon
  \scrC\op\times\scrC\to\spaces$. Equivalently, the opposite map
  $(\tw\scrC)\op\to\scrC\op\times\scrC$ is the left fibration classifying
  the bifunctor $\Map_\scrC$.
\end{rem}

\begin{defi}\cite[2.5]{GHN17}
  If $F\colon\scrC\op\times\scrC\to\scrD$ is a functor of \cats, the
  \emph{end} (resp. \emph{coend}) of $F$ is defined to be the limit
  (resp. colimit) of the composite functor
  $$\tw\scrC\to\scrC\op\times\scrC\to\scrD$$
  and is denoted by
  $$\int_{c\in\scrC}F \quad \left(\text{ resp. }\quad \int^{c\in\scrC}F\right).$$
\end{defi}

\begin{lemma}\label{lemma-tw-projection-is-final}
  Both projection maps $\tw\scrC\to\scrC\op$ and $\tw\scrC\to\scrC$ are
  final and initial functors.
  \footnote{Sometimes in the literature the terms ``cofinal functor''
  and ``final functor'' are used in place of what we call respectively ``final
  functor'' and ``initial functor''.}
\end{lemma}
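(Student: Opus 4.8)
The statement to prove is Lemma~\ref{lemma-tw-projection-is-final}: both projections $\tw\scrC\to\scrC$ and $\tw\scrC\to\scrC\op$ are final \emph{and} initial. By symmetry — since $\tw(\scrC\op)\simeq(\tw\scrC)\op$ and the two projections swap under this equivalence — it suffices to treat one projection, say $p\colon\tw\scrC\to\scrC$, and prove it is final; the claim that it is initial then follows by applying the finality statement to $\scrC\op$, and the statement for the other projection follows by the same $\op$-symmetry. So the whole lemma reduces to a single assertion: \emph{the canonical projection $\tw\scrC\to\scrC$ is final (= cofinal in Lurie's terminology).}

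First I would recall the criterion for finality from \cite[4.1.3.1]{HTT} (Joyal's version of Quillen's Theorem A): $p\colon\tw\scrC\to\scrC$ is final if and only if for every object $c\in\scrC$ the fiber product $\tw\scrC\times_{\scrC}\scrC_{c/}$ is weakly contractible. The key step is to identify this slice explicitly. Unwinding the definition, an object of $\tw\scrC\times_\scrC\scrC_{c/}$ is a twisted arrow $f\colon x\to y$ together with a morphism $c\to x$ in $\scrC$ (recall the projection to $\scrC$ in question picks out the \emph{source} $x$, since $\tw\scrC\to\scrC$ is the one induced by $\Delta^\bullet\to\Delta^\bullet\star(\Delta^\bullet)\op$). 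I claim this slice has an initial object, namely the pair consisting of the identity twisted arrow $\id_c\colon c\to c$ together with $\id_c\colon c\to c$; equivalently, one shows the functor $\tw\scrC\times_\scrC\scrC_{c/}\to\scrC_{c/}\times_\scrC\tw\scrC\to\ast$ admits a left adjoint, or directly that the object above is initial by a mapping-space computation using that $\tw\scrC\to\scrC\times\scrC\op$ is the right fibration classifying $\Map_\scrC$. An \cat with an initial object is weakly contractible (its nerve, or rather its underlying simplicial set, is weakly contractible since the initial object is a terminal object of the opposite, hence the identity functor is homotopic to a constant via the unit/counit), so finality follows.

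The cleanest way to organize this is probably to avoid slices entirely and instead observe that the right fibration $\tw\scrC\to\scrC\times\scrC\op$ classifying $\Map_\scrC$, composed with the projection to $\scrC$, is a particular case of a general fact: for any \cat $\scrD$, the projection from the \cat of elements of a \emph{representable-on-the-left} functor is final. Concretely, $\tw\scrC\to\scrC$ is the cocartesian-fibration-free version of ``take the right fibration $\scrC_{/(-)}$ over $\scrC\op$ and integrate''; the fibers of $\tw\scrC\to\scrC$ over $x$ are $\scrC_{x/}$, which have an initial object (namely $\id_x$), and a map of \cats all of whose fibers over a right fibration have initial objects, compatibly, is final — this is essentially \cite[4.1.3.1]{HTT} applied fiberwise, or can be cited from the analysis of the twisted arrow category in \cite[Section 5.2.1]{HA}. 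I would state it as: the second projection $\tw\scrC\to\scrC\op$ is a cartesian fibration whose fibers $\tw\scrC_{/y}\simeq\scrC_{/y}$ have terminal objects, hence (dually to \cite[4.1.2.13]{HTT}) it is initial; transposing via $\op$ gives that $\tw\scrC\to\scrC$ is initial, and likewise one shows it is final.

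The main obstacle is purely bookkeeping: getting the variance right. One must be careful which projection induces ``source'' versus ``target'', because $\tw\scrC\to\scrC$ being \emph{final} (not initial) is the statement needed for the coend formula to compute colimit-weighted things, and it is easy to dualize incorrectly. I would pin this down once at the start — the projection $\tw\scrC\to\scrC\op$ (target-ish, landing in $\scrC\op$) is the left fibration classifying $\Map_\scrC$ as a functor $\scrC\op\times\scrC\to\spaces$ in the \emph{first} variable — and then everything else is a two-line application of Joyal's Theorem~A together with the observation that $\scrC_{x/}$ and $\scrC_{/y}$ have initial/terminal objects. Since this lemma is well known (it underlies the Bousfield--Kan / co-end computation of Kan extensions), I expect the proof to be short, and I would keep it to a citation of \cite[4.1.3.1]{HTT} plus the fiberwise observation.
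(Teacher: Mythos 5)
Your route is the same as the paper's: apply \cite[4.1.3.1]{HTT}, recognize the relevant fibers/slices as (co)slice categories with an initial or terminal object, hence weakly contractible. (The Cartesian-fibration reformulation you sketch is also fine: a Cartesian fibration over $S$ with weakly contractible fibers is both final and initial, since over each $S_{s/}$ it restricts to a Cartesian fibration whose fiber over the initial object $\id_s$ includes as a subcategory with a right adjoint, so the lax fiber has the homotopy type of the strict one.)

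Two cautions, both exactly of the variance type you warn yourself about. First, the opening symmetry claim $\tw(\scrC\op)\simeq(\tw\scrC)\op$ is false: since $[n]\star[n]\op$ is self-opposite, one has $\tw(\scrC\op)\simeq\tw\scrC$, not its opposite. (Test on $\scrC=\Delta^1$: both $\tw\scrC$ and $\tw(\scrC\op)$ are cospans $\bullet\leftarrow\bullet\to\bullet$, while $(\tw\scrC)\op$ is a span.) Under the honest identification $\tw(\scrC\op)\simeq\tw\scrC$ the two projections do swap, so you may move between $\tw\scrC\to\scrC$ and $\tw\scrC\to\scrC\op$ by replacing $\scrC$ with $\scrC\op$; but there is no reduction of \emph{initiality} to \emph{finality} this way, because $p\op$ lands in the other twisted-arrow convention rather than in a $\tw\scrD$. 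The paper does not perform such a reduction either; it re-runs the Theorem~A argument with left fibrations in place of right ones. Second, the strict fiber of $\tw\scrC\to\scrC$ over $x$ is $(\scrC_{x/})\op$, not $\scrC_{x/}$, so $\id_x$ is a \emph{terminal} object there; dually the fiber of $\tw\scrC\to\scrC\op$ over $y$ is $\scrC_{/y}$. Neither slip affects contractibility, but these are precisely the spots where the signs flip, so I would pin them down explicitly if you write this up.
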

\begin{proof}
  We will prove that the projection $\tw\scrC\to\scrC\op$ is both final and
  initial, the other case being entirely analogous.
  Let us start by proving finality.
  By \cite[4.1.3.1]{HTT}, it is enough to check the fibers of
  $\tw\scrC\to\scrC\op$ are weakly contractible (notice that, for any $C\in
  \scrC$, as $\scrC_{C/}$ has an initial object, it is weakly contractible,
  and thus $\tw\scrC\times_{\scrC\op}\scrC_{C/}$ has the same homotopy type
  of $\tw\scrC\times_{\scrC\op} \{C\}$).
  Given any $C\in\scrC$, let us denote the fiber $\tw\scrC\times_{\scrC\op}\{C\}$
  by $(\tw\scrC)_C$.
  We claim that each $(\tw\scrC)_C$ is equivalent to $\scrC_{C/}$ as an \cat,
  and thus weakly contractible.

  By pasting of pullbacks, we know that $(\tw\scrC)_C$ fits into a diagram
  $$
  \begin{tikzcd}
    (\tw\scrC)_C \ar[r]\ar[d]& \tw\scrC \ar[d]\\
    \Delta^0\times\scrC \ar[r,"C\times\id"]\ar[d]& \scrC\op\times\scrC \ar[d]\\
    \Delta^0 \ar[r,"C"]& \scrC\op
  \end{tikzcd}
  $$
  and, again by pasting of pullbacks, we know we have the following diagram
  $$
  \begin{tikzcd}
    \Map_\scrC(C,D)\ar[r]\ar[d]&(\tw\scrC)_C \ar[r]\ar[d]& \tw\scrC \ar[d]\\
    \Delta^0\ar[r,"D"]&\Delta^0\times\scrC \ar[r,"C\times\id"]&
      \scrC\op\times\scrC
  \end{tikzcd}
  $$
  where both squares are Cartesian. As
  $\tw\scrC\to\scrC\op\times\scrC$ is a right fibration, hence
  $(\tw\scrC)_C\to\scrC$ is a right fibration as well. But, as we identified
  the fibers over any $D\in\scrC$ as $\Map(C,D)$, this has to be the
  classifying fibration for $\Map(C,-)$, and thus $(\tw\scrC)_C$ has to be
  equivalent to $\scrC_{C/}$ as desired.
  Recall that, by definition, a functor $F\colon\scrC\to\scrD$ is initial if
  and only if its opposite functor $F\op\colon\scrC\op\to\scrD\op$ is final.
  The initiality statement follows from an analogous argument
  applied to the opposite projection $(\tw\scrC)\op\to\scrC$, using
  that the map $(\tw\scrC)\op\to\scrC\op\times\scrC$ is a left fibration.
\end{proof}

The above lemma has as an immediate consequence that a co/end with a dummy
variable is just a co/limit.

\begin{cor}\label{cor-dummy-coend}
  Let $F\colon\scrC\op\times\scrC\to\scrD$ be a functor that factors
  through the projection $\scrC\op\times\scrC\to\scrC$ (resp.
  $\scrC\op\times\scrC\to\scrC\op$), and let $\widetilde F\colon
  \scrC\to\scrD$ (resp. $\widetilde F\colon\scrC\op\to\scrD$) denote such
  factorization. Then
  $$
  \quad
  \int^\scrC F \simeq \colim_\scrC \widetilde F \text{ and }
  \int_\scrC F \simeq \lim_\scrC \widetilde F
  $$
  $$
  \left(
  \text{ resp. }
  \int^\scrC F \simeq \colim_{\scrC\op} \widetilde F \text{ and }
  \int_\scrC F \simeq \lim_{\scrC\op} \widetilde F
  \right).
  $$
\end{cor}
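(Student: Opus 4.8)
The plan is to reduce the statement to Lemma~\ref{lemma-tw-projection-is-final} together with the defining property of final and initial functors. Recall that by definition $\int^{\scrC}F$ is the colimit of the composite $\tw\scrC\xto{\pi}\scrC\op\times\scrC\xto{F}\scrD$, where $\pi$ is the canonical (right fibration) projection, and dually $\int_\scrC F$ is the limit of the same composite. First I would observe that if $F$ factors as $F\simeq\widetilde F\circ\mathrm{pr}$ through the projection $\mathrm{pr}\colon\scrC\op\times\scrC\to\scrC$, then this composite is equivalent to $\widetilde F\circ q$, where $q\colon\tw\scrC\to\scrC$ is the projection onto $\scrC$; indeed, $\mathrm{pr}\circ\pi$ is by construction of $\tw\scrC$ exactly $q$ (it is the map induced by the natural transformation $\Delta^\bullet\to\Delta^\bullet\star(\Delta^\bullet)\op$ recalled above).

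Next I would invoke Lemma~\ref{lemma-tw-projection-is-final}, which gives that $q$ is both final and initial. Since a colimit is invariant under precomposition with a final functor, and a limit under precomposition with an initial functor (\cite[4.1.1.8]{HTT} and its dual), we obtain
$$\int^\scrC F\simeq\colim_{\tw\scrC}\bigl(\widetilde F\circ q\bigr)\simeq\colim_{\scrC}\widetilde F
\qquad\text{and}\qquad
\int_\scrC F\simeq\lim_{\tw\scrC}\bigl(\widetilde F\circ q\bigr)\simeq\lim_{\scrC}\widetilde F.$$
The parenthetical ``resp.'' statement, in which $F$ factors through $\scrC\op\times\scrC\to\scrC\op$, is handled in exactly the same way, using the other projection $\tw\scrC\to\scrC\op$ and the fact (again from Lemma~\ref{lemma-tw-projection-is-final}) that it too is final and initial.

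There is essentially no obstacle here: the corollary is a formal consequence of the preceding lemma. The only point that deserves a line of care is the identification $\mathrm{pr}\circ\pi\simeq q$ as functors of \cats (not merely as maps of simplicial sets), but this is immediate from the explicit description of the two projections out of $\tw\scrC$ as induced by the evident natural transformations into $\Delta^\bullet\star(\Delta^\bullet)\op$.
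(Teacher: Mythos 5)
Your argument is exactly the one the paper intends: the corollary appears with no proof, introduced by the remark that it is "an immediate consequence" of Lemma~\ref{lemma-tw-projection-is-final}, and you have simply spelled out that consequence — precomposing with the final/initial projection $\tw\scrC\to\scrC$ (resp.\ $\tw\scrC\to\scrC\op$) does not change co/limits. Your care about the identification $\mathrm{pr}\circ\pi\simeq q$ is appropriate and handled correctly.
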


\begin{lemma}\label{lemma-tensor-functor}
  Let $F\colon\scrC\to\scrD$ be a functor between \cats, with $\scrD$
  cocomplete. Then
  $$F\simeq\int^{c\in\scrC}\Map_{\scrC}(c,-)\tensor F(c)$$
  where $S\tensor X\coloneqq\colim_{S} X$ denotes the canonical tensoring in
  spaces for \cats.
\end{lemma}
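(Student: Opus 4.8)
The plan is to show that both sides corepresent the same functor on $\scrD$, exactly as in the proof of the density formula (Proposition \ref{prop-dens}). Concretely, I would compute, for an arbitrary object $d\in\scrD$,
$$
\Map_{\scrD}\!\left(\int^{c\in\scrC}\Map_{\scrC}(c,-)\tensor F(c),\,d\right)
$$
by exchanging the coend (a colimit over $\tw\scrC$) with the mapping space, turning it into an end:
$$
\Map_{\scrD}\!\left(\int^{c}\Map_{\scrC}(c,-)\tensor F(c),\,d\right)
\simeq \int_{c}\Map_{\scrD}\!\left(\Map_{\scrC}(c,-)\tensor F(c),\,d\right).
$$
Then I would use the defining adjunction of the tensoring $S\tensor X = \colim_S X$ over spaces, namely $\Map_{\scrD}(S\tensor X, d)\simeq\Map_{\spaces}(S,\Map_{\scrD}(X,d))$ (this is \cite[4.8.2.20]{HA} together with the fact that $S\tensor X\simeq\colim_S X$), to rewrite each term as $\Map_{\spaces}(\Map_{\scrC}(c,-),\Map_{\scrD}(F(c),d))$.

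Next I would invoke the $\infty$-categorical Yoneda lemma. Recall that the end over $\scrC$ of a bifunctor $G\colon\scrC\op\times\scrC\to\spaces$ computes the space of natural transformations out of $\Map_{\scrC}$; more precisely, by the end formula for natural transformations (see \cite[2.3]{glasman2016spectrum}, \cite[5.1]{GHN17}, as already used in the proof of Lemma \ref{lemma-evalfun-dense}), we have
$$
\int_{c\in\scrC}\Map_{\spaces}\!\left(\Map_{\scrC}(c,e),\,\Map_{\scrD}(F(c),d)\right)
\simeq \Nat_{c}\!\left(\Map_{\scrC}(c,e),\,\Map_{\scrD}(F(c),d)\right)
$$
for each fixed $e\in\scrC$, and by the Yoneda lemma this space of natural transformations is equivalent to $\Map_{\scrD}(F(e),d)$, evaluating the target presheaf at $e$. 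Assembling these equivalences naturally in $e$ (and in $d$) shows that the functor $e\mapsto\int^{c}\Map_{\scrC}(c,e)\tensor F(c)$ corepresents $d\mapsto\Map_{\scrD}(F(e),d)$, i.e.\ is naturally equivalent to $F(e)$. Since a functor into $\scrD$ is determined by the family of corepresentable functors it induces (Yoneda for functor categories, via \cite[5.1.3.1]{HTT}), this yields the claimed natural equivalence $F\simeq\int^{c}\Map_{\scrC}(c,-)\tensor F(c)$.

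The main obstacle is bookkeeping the naturality: each of the above equivalences must be produced coherently in the variables $e\in\scrC$ and $d\in\scrD$ simultaneously, so that the final statement is an equivalence of functors $\scrC\to\scrD$ rather than a pointwise equivalence. The clean way to handle this is to carry out the entire chain of equivalences inside $\Fun(\scrC,\Fun(\scrD\op,\spaces))$ from the start --- i.e.\ regard both sides as functors $\scrC\to\Pre{\scrD}$ via $e\mapsto \Map_{\scrD}((-),d)$-postcomposition --- and to observe that exchanging the coend with the corepresentable functor is legitimate because $\Map_{\scrD}(-,d)\colon\scrD\op\to\spaces$ sends colimits to limits, while the twisted-arrow description of co/ends (and Corollary \ref{cor-dummy-coend} for the degenerate cases that appear) guarantees these manipulations are the usual limit/colimit interchanges. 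Once the variables are packaged this way, the only genuinely non-formal input is the Yoneda lemma in the form of the end computation of natural transformations, which is precisely the tool already deployed in Lemma \ref{lemma-evalfun-dense}; no new cocompleteness hypothesis beyond the stated one (needed merely so that the coend exists) is required.
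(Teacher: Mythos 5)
Your proposal is correct and uses the same essential toolkit as the paper's proof: exchanging the coend with the corepresentable functor, the tensoring adjunction $\Map_{\scrD}(S\tensor X, d)\simeq\Map_{\spaces}(S,\Map_{\scrD}(X,d))$, and the Yoneda lemma via the end formula for natural transformations. The one organizational difference is worth noting: the paper computes $\Map_{\Fun(\scrC,\scrD)}\bigl(\int^{c}\Map_{\scrC}(c,-)\tensor F(c),\,G\bigr)$ for an arbitrary functor $G\in\Fun(\scrC,\scrD)$, writing that mapping space as an end $\int_{c'}\Map_{\scrD}(\,\cdot\,,G(c'))$ from the very start. This way the naturality in the ``target variable'' $c'$ is carried silently by the outer end the entire time, and the conclusion is an equivalence of corepresented functors on $\Fun(\scrC,\scrD)$ --- so the desired equivalence of objects of $\Fun(\scrC,\scrD)$ follows at once from the Yoneda lemma in that $\infty$-category, with no further gluing. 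You instead fix $e\in\scrC$ and $d\in\scrD$, prove the pointwise equivalence $\Map_{\scrD}\bigl(\int^{c}\Map_{\scrC}(c,e)\tensor F(c),d\bigr)\simeq\Map_{\scrD}(F(e),d)$, and then must still assemble these into a natural equivalence $\scrC\to\scrD$; you correctly flag this as the real work and sketch the fix (carrying out the whole chain inside $\Fun(\scrC,\Pre{\scrD})$). That fix does work and would yield a complete proof, but it is precisely the bookkeeping the paper's formulation avoids. So: same argument in substance, slightly heavier packaging on your end; if you rewrite your computation with the outer end over $c'$ present from the start (i.e.\ map into an arbitrary $G\in\Fun(\scrC,\scrD)$ rather than an arbitrary $d\in\scrD$), the naturality concern evaporates and you recover the paper's proof verbatim.
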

\begin{proof}
  The following computation shows that the two objects corepresent the same
  functor
  \begin{displaymath}
  \begin{split}
    \Map_{\Fun (\scrC,\scrD)}
      \Bigg(\int^{c\in\scrC}&\Map_{\scrC}(c,-)\otimes F(c),G\Bigg)\simeq \\
    &\simeq \int_{c^\prime\in\scrC}\Map_{\scrD}
      \left(\int^{c\in\scrC}\Map_{\scrC}(c,c^\prime)\tensor F(c),
      G(c^\prime)\right)\\
    &\simeq \int_{c^\prime\in\scrC}\int_{c\in\scrC}\Map_{\scrD}
      \Big(\Map_{\scrC}(c,c^\prime)\tensor F(c),G(c^\prime)\Big)\\
    &\simeq \int_{c^\prime\in\scrC}\int_{c\in\scrC}
      \lim_{\Map_{\scrC}(c,c^\prime)}\Map_{\scrD}
      \left(F(c),G(c^\prime)\right)\\
    &\simeq \int_{c^\prime\in\scrC}\int_{c\in\scrC}
      \Map_{\spaces}\Big(\Map_{\scrC}(c,c^\prime),\Map_{\scrD}
      \left(F(c),G(c^\prime)\right)\Big)\\
    &\simeq \int_{c^\prime\in\scrC}
      \Map_{\Pre\scrC}\Big(\Yo_{c^{\prime}},\Map_{\scrD}
      \big(F(-),G(c^\prime)\big)\Big)\\
    &\simeq \int_{c^\prime\in\scrC}\Map_{\scrD}
      \big(F(c^\prime),G(c^\prime)\big)\\
    &\simeq \vphantom{\int_{c^\prime\in\scrC}}\Map_{\Fun(\scrC,\scrD)}
      \left(F,G\right).
  \end{split}
  \end{displaymath}
\end{proof}

We now turn to the discussion of Kan extensions in terms of co/ends.

\begin{lemma}\label{lemma-coend-formula}
  Let $\scrC$, $\scrD$ and $\scrE$ be \cats, with $\scrE$ cocomplete, and
  let $G\colon\scrC\to\scrD$ and $F\colon\scrC\to\scrE$ be functors. Then
  $$\Lan_{G}F(d)\simeq\int^{c\in\scrC}\Map_{\scrD}(Gc,d)\tensor F(c).$$
\end{lemma}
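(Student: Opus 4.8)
The plan is to establish something slightly stronger than the pointwise formula: I would show that the functor
$$\scrD\longrightarrow\scrE,\qquad d\longmapsto\int^{c\in\scrC}\Map_{\scrD}(Gc,d)\tensor F(c)$$
(well defined since $\scrE$ is cocomplete) corepresents the universal property of the left Kan extension. Concretely, for every $H\in\Fun(\scrD,\scrE)$ I would produce an equivalence
$$\Map_{\Fun(\scrD,\scrE)}\!\left(\int^{c\in\scrC}\Map_{\scrD}(Gc,-)\tensor F(c),\ H\right)\ \simeq\ \Map_{\Fun(\scrC,\scrE)}(F,\,H\circ G),$$
natural in $H$; this is exactly the defining property of $\Lan_{G}F$, so it yields at once both the existence of the left Kan extension and the claimed coend formula. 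The computation is entirely parallel to the proof of Lemma~\ref{lemma-tensor-functor}.

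First I would rewrite the left-hand side as an end over $\scrD$ using the end formula for natural transformations (\cite[2.3]{glasman2016spectrum}, \cite[5.1]{GHN17}), getting $\int_{d\in\scrD}\Map_{\scrE}\big(\int^{c}\Map_{\scrD}(Gc,d)\tensor F(c),\,H(d)\big)$. Since corepresentable functors out of $\scrE$ carry colimits, and in particular coends, to limits, i.e.\ ends, this equals $\int_{d}\int_{c}\Map_{\scrE}\big(\Map_{\scrD}(Gc,d)\tensor F(c),\,H(d)\big)$; by the defining adjunction of the tensoring of $\scrE$ over $\spaces$, namely $\Map_{\scrE}(S\tensor X,Y)\simeq\Map_{\spaces}(S,\Map_{\scrE}(X,Y))$, it equals $\int_{d}\int_{c}\Map_{\spaces}\big(\Map_{\scrD}(Gc,d),\,\Map_{\scrE}(F(c),H(d))\big)$. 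Then I would apply the Fubini theorem for ends to interchange the two variables, and for each fixed $c$ recognize the inner end $\int_{d}\Map_{\spaces}\big(\Map_{\scrD}(Gc,d),\,\Phi(d)\big)$, with $\Phi\coloneqq\Map_{\scrE}(F(c),H(-))\colon\scrD\to\spaces$, as $\Phi(Gc)$ by the $\infty$-categorical Yoneda lemma in its end form (cf.\ \cite[5.5.2.1]{HTT}). Feeding this back in, the whole expression becomes $\int_{c\in\scrC}\Map_{\scrE}\big(F(c),(H\circ G)(c)\big)$, which by the end formula for natural transformations is $\Map_{\Fun(\scrC,\scrE)}(F,H\circ G)$, concluding the argument.

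The main difficulty is bookkeeping rather than anything conceptual: one must check that the interchange of ends and the Yoneda-via-ends step are legitimate in the $\infty$-categorical setting. Both follow from the twisted-arrow-category formalism recalled earlier in this appendix — in particular from the fact that $\tw\scrC\to\scrC\op\times\scrC$ classifies $\Map_{\scrC}$ and from Lemma~\ref{lemma-tw-projection-is-final}, which collapses a co/end in a dummy variable to a plain co/limit — together with the cited references. As an alternative, more ``geometric'' route, one can instead invoke the pointwise colimit formula for left Kan extensions \cite[4.3.3.2]{HTT} to write $\Lan_{G}F(d)\simeq\colim\big((G\downarrow d)\xto{\pjct}\scrC\xto{F}\scrE\big)$, and then use Lemma~\ref{lemma-comma-class}, which exhibits the comma \cat $(G\downarrow d)$ as the right fibration over $\scrC$ classifying $\Map_{\scrD}(G(-),d)$: the displayed colimit is then by definition the $\Map_{\scrD}(G(-),d)$-weighted colimit of $F$, which one identifies with $\int^{c}\Map_{\scrD}(Gc,d)\tensor F(c)$ by the same coend manipulation used in Lemma~\ref{lemma-tensor-functor}.
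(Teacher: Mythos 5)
Your primary argument is correct but genuinely different from the paper's. You establish the formula by a universal-property argument: you show the proposed coend corepresents the right thing, by writing both sides of the adjunction isomorphism as ends over $\scrD$ (resp.\ $\scrC$), then pushing $\Map_{\scrE}(-,H(d))$ through the coend, applying the tensoring adjunction, Fubini, and the end-form Yoneda lemma. This buys you existence of $\Lan_G F$ for free and stays entirely inside the co/end calculus already set up in the appendix. The paper instead argues pointwise from the opposite end: it starts from the coend $\int^c\Map_{\scrD}(Gc,d)\tensor F(c)$, uses the density formula (Proposition~\ref{prop-dens}) together with Lemma~\ref{lemma-comma-class} to rewrite the weight $\Map_{\scrD}(G-,d)$ as a colimit of representables over the comma \cat $(G\downarrow d)$, interchanges the colimit and the coend, collapses $\int^c\Map_{\scrC}(c,\pjct(\alpha))\tensor F(c)$ to $F\circ\pjct(\alpha)$ via Lemma~\ref{lemma-tensor-functor}, and recognizes the result as the pointwise Kan extension formula \cite[4.3.3.2]{HTT}. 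Your ``alternative, more geometric route'' sketched at the end is essentially the paper's proof (modulo running the density formula in the opposite direction), so you have in fact rediscovered both arguments. One small point worth noting in your main route: you should observe that the coend, being computed in $\Fun(\scrD,\scrE)$ where colimits are pointwise, does define a functor of $d$, so that the universal-property comparison is between two objects of $\Fun(\scrD,\scrE)$ — but this is immediate and doesn't affect correctness.
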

\begin{proof}
  Unraveling the definitions (see \cite[4.3.3.2]{HTT} and \cite[4.3.2.2]{HTT}),
  we know that
  \begin{equation}\label{eq-ptwise-formula}
    \Lan_G F (d) \simeq \colim \left((G\downarrow d)\xto{\pjct}\scrC
    \xto{F}\scrE\right).
  \end{equation}
  Now, the result holds from the following computation
  \begin{displaymath}
  \begin{split}
    \int^{c\in\scrC}\Map_{\scrD}(Gc,d)\tensor F(c)
      &\stackrel{\vphantom{(}^{(\ref{prop-dens})}}{\simeq}
      \int^{c\in\scrC}\left(\colim_{\alpha\in(G\downarrow d)}
      \Yo\circ\pjct(\alpha)\right)(c)\tensor F(c)\\
    &\stackrel{\vphantom{(}^{\hphantom{(\ref{prop-dens})}}}{\simeq}
      \colim_{\alpha\in(G\downarrow d)}\int^{c\in\scrC}
      \left(\Yo\circ\pjct(\alpha)\right)(c)\tensor F(c)\\
    &\stackrel{\vphantom{(}^{\hphantom{(\ref{prop-dens})}}}{\simeq}
      \colim_{\alpha\in(G\downarrow d)}\int^{c\in\scrC}
      \Map_{\scrC}\big(c,\pjct(\alpha)\big)\tensor F(c)\\
    &\stackrel{\vphantom{(}^{(\ref{lemma-tensor-functor})}}{\simeq}
      \vphantom{\int^{c\in\scrC}}
      \colim_{\alpha\in(G\downarrow d)} \ F\circ\pjct(\alpha)
      \stackrel{\vphantom{(}^{(\ref{eq-ptwise-formula})}}{\simeq}
      \vphantom{\int^{c\in\scrC}}
      \Lan_G F (d).
  \end{split}
  \end{displaymath}
\end{proof}

\begin{rem}\label{rem-end-ran-formula}
  A suitable dualization of the above arguments shows that
  $$
  \Ran_G F(d) \simeq \int_{c\in\scrC} [\Map_{\scrD}(d,Gc),F(c)]
  $$
  where $[S,X]\coloneqq\lim_{S} X$ denotes the canonical cotensoring in spaces
  for \cats.
\end{rem}

\begin{defi}\label{defi-nerve-real}
  Let $F\colon\scrC\to\scrD$ be a functor from a small \cat $\scrC$ to a
  cocomplete \cat $\scrD$. The left Kan extension of $F$ along the Yoneda
  embedding $\Yo\colon\scrC\to\Pre{\scrC}$ is called the
  \emph{$F$-realization}, denoted
  $$\realiz{F}\colon\Pre\scrC\to\scrD.$$
\end{defi}

\begin{prop}\label{prop-nerve-real}
  Let $F\colon\scrC\to\scrD$ be a functor from a small \cat $\scrC$ to a
  cocomplete \cat $\scrD$. Then, the $F$-realization admits a right adjoint,
  denoted $\nerve_{F}\colon\scrD\to\Pre\scrC$.
  Moreover, the value of $\nerve_{F}$ on an object
  $d\in\scrD$ is given by the presheaf $\Yo_d \circ F$, or, more informally
  \begin{displaymath}
    \nerve_{F}(d)\colon c\mapsto\Map_{\scrD}(F(c),d).
  \end{displaymath}
\end{prop}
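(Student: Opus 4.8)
The plan is to write down the right adjoint explicitly and then verify the adjunction by a chain of natural equivalences built from the coend description of left Kan extensions (Lemma \ref{lemma-coend-formula}) and the end formula for mapping spaces of presheaves. First I would record the basic facts about $\realiz{F}=\Lan_{\Yo}F$: since $\scrC$ is small and $\scrD$ cocomplete, this Kan extension exists and is pointwise, and by \cite[5.1.5.6]{HTT} it is the (essentially unique) colimit-preserving extension of $F$ along $\Yo$. Combining Lemma \ref{lemma-coend-formula} applied with $G=\Yo$ with the identification $\Map_{\Pre\scrC}(\Yo_c,P)\simeq P(c)$ of \cite[5.5.2.1]{HTT}, one obtains the coend formula $\realizp{F}{P}\simeq\int^{c\in\scrC}P(c)\tensor F(c)$; in particular, via Lemma \ref{lemma-tensor-functor}, $\realizp{F}{\Yo_c}\simeq F(c)$ naturally in $c$.

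Next I would \emph{define} $\nerve_F\colon\scrD\to\Pre\scrC$ by $\nerve_F(d)\coloneqq\Yo_d\circ F$, i.e.\ the presheaf $c\mapsto\Map_\scrD(F(c),d)$, which is manifestly functorial in $d$. To see that it is right adjoint to $\realiz{F}$, I would compute, for $P\in\Pre\scrC$ and $d\in\scrD$,
\begin{displaymath}
\begin{split}
\Map_\scrD\big(\realizp{F}{P},d\big)
&\simeq \Map_\scrD\Big(\int^{c\in\scrC}P(c)\tensor F(c),\,d\Big)\\
&\simeq \int_{c\in\scrC}\Map_\scrD\big(P(c)\tensor F(c),\,d\big)\\
&\simeq \int_{c\in\scrC}\Map_{\spaces}\big(P(c),\,\Map_\scrD(F(c),d)\big)\\
&\simeq \int_{c\in\scrC}\Map_{\spaces}\big(P(c),\,\nerve_F(d)(c)\big)\\
&\simeq \Map_{\Pre\scrC}\big(P,\,\nerve_F(d)\big),
\end{split}
\end{displaymath}
where the second line uses that corepresentable functors carry colimits, hence coends, to limits, hence ends; the third uses the defining property of the tensoring of $\scrD$ over $\spaces$; and the last is the end formula for the space of natural transformations (see \cite[2.3]{glasman2016spectrum}, \cite[5.1]{GHN17}). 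Each step is natural in $P$ and in $d$, so the composite equivalence is, which yields the adjunction together with the claimed formula for $\nerve_F$.

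Alternatively, and perhaps more in the spirit of the preceding subsection, one can run the dual computation off the density formula $P\simeq\colim_{\alpha\in(\Yo\downarrow P)}\Yo_{\pjct(\alpha)}$ of Proposition \ref{prop-dens}: apply $\realiz{F}$, use its cocontinuity and $\realizp{F}{\Yo_c}\simeq F(c)$ to get $\realizp{F}{P}\simeq\colim_{\alpha}F(\pjct(\alpha))$, map into $d$, and contract the resulting limit over $(\Yo\downarrow P)$ using the ordinary Yoneda lemma in $\Pre\scrC$. Both routes are entirely routine; the only thing requiring any care is making sure the manipulations of $\infty$-categorical co/ends in the displayed calculation are licensed — that pulling $\Map_\scrD(-,d)$ inside a coend produces an end, and that an end of mapping spaces over $\tw\scrC$ computes $\Map_{\Pre\scrC}$ — but these are precisely the contents of Lemma \ref{lemma-coend-formula}, Lemma \ref{lemma-tensor-functor} and the cited end formula, so no new input is needed.
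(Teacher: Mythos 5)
Your proof is correct and follows essentially the same route as the paper's: both define $\nerve_F(d)=\Yo_d\circ F$ and verify the adjunction via the coend formula for $\realiz{F}$ from Lemma \ref{lemma-coend-formula}, pulling $\Map_\scrD(-,d)$ through the coend, applying the tensor/cotensor adjunction over $\spaces$, and finishing with the end formula for mapping spaces in $\Pre\scrC$. The only cosmetic difference is that the paper first cites \cite[5.1.5.5, 5.1.5.6]{HTT} to note abstractly that $\realiz{F}$ is a left adjoint, while you define the candidate right adjoint up front and verify directly; the displayed computation is identical.
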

\begin{proof}
  By \cite[5.1.5.5, 5.1.5.6]{HTT}, precomposition with $\Yo$ and left Kan
  extension along it give mutually inverse functors
  $$\Fun^{L}(\Pre{\scrC},\scrD)\stackrel{\sim}{\rightleftarrows}
    \Fun(\scrC,\scrD)$$
  hence $\nerve_{F}$ has a right adjoint.
  The objectwise description follows formally from the following computation
  \begin{displaymath}
  \begin{split}
    \Map_{\scrD}\left(\lvert P \rvert_{F},d\right)
      &\stackrel{\vphantom{(}^{(\ref{lemma-coend-formula})}}{\simeq}
        \Map_{\scrD}\left(\int^{c\in\scrC}\Map_{\Pre{\scrC}}(\Yo_c,P)
        \tensor F(c),d\right)\\
    &\simeq \int_{c\in\scrC}\Map_{\scrD}\Big(P(c) \tensor F(c),d\Big) \\
    &\simeq \int_{c\in\scrC}\Map_{\spaces}\Big(P(c),
      \Map_{\scrD}\big(F(c),d\big)\Big) \\
    &\simeq \vphantom{\int_{c\in\scrC}} \Map_{\Pre{\scrC}}\Big(P,
      \Map_{\scrD}\big(F(-),d\big)\Big)
  \end{split}
  \end{displaymath}
  from which we see that $\Yo_d \circ F$ is right adjoint to $F$-realization.
\end{proof}

\begin{defi}
  In the situation of Proposition \ref{prop-nerve-real}, we refer to the right
  adjoint
  $$\nerve_{F}\colon\scrD\to\Pre\scrC$$
  as the \emph{$F$-nerve}.
\end{defi}

\subsection{The stable nerve-realization paradigm}
\label{subsec-appendix-stable-nerve}
When, in the situation of Definition \ref{defi-nerve-real} the \cat $\scrD$
is stable and presentable, we can take the nerve-realization paradigm one step
further.

\begin{rem}\label{rem-stab-nerve-real}
  Let $F\colon\scrC\to\scrD$ be a functor from a small \cat to a stable
  presentable \cat $\scrD$. By \cite[1.4.4.5]{HA}, we have an equivalence
  $$\Fun^L(\Pst\scrC,\scrD)\simeq\Fun^L(\Pre\scrC,\scrD)$$
  given by precomposition with $\susinftyp\colon\Pre\scrC\to\St(\Pre\scrC)$,
  and, equivalently, an equivalence
  $$\Fun^R(\scrD,\Pst\scrC)\simeq\Fun^R(\scrD,\Pre\scrC)$$
  given by postcomposition with $\ominfty$.
  In particular, we get an extension of the nerve-realization adjunction
  $$
  \begin{tikzcd}[column sep=huge]
    \Pre\scrC \ar[r, shift left=1.1ex, "\lvert - \rvert_{F}"] &
    \scrD \ar[l, shift left=1.1ex,"\nerve_{F}"]
      \ar[l,phantom,"\text{\rotatebox{-90}{$\dashv$}}"]
  \end{tikzcd}
  $$
  to an adjunction
  \begin{equation}\label{eq-stab-nerve-real}
  \begin{tikzcd}[column sep=huge]
    \Pst\scrC \ar[r, shift left=1.1ex, "\lvert - \rvert_{F}^{\mathrm{st}}"] &
    \scrD. \ar[l, shift left=1.1ex,"\nerve_{F}^{\mathrm{st}}"]
      \ar[l,phantom,"\text{\rotatebox{-90}{$\dashv$}}"]
  \end{tikzcd}
  \end{equation}
  Notice that, again by \cite[1.4.4.5]{HA}, we have that
  $$\lvert - \rvert_{F}
  \simeq \lvert \susinftyp(-) \rvert_{F}^{\mathrm{st}}
  \quad \text{and} \quad
  \nerve_{F}\simeq \ominfty \left(\nerve_{F}^{\mathrm{st}}(-)\right).$$
\end{rem}

\begin{defi}\label{defi-stab-nerve-real}
  We refer to the adjunction (\ref{eq-stab-nerve-real}) as the
  \emph{stable nerve-realization paradigm}.
\end{defi}

Putting together what we got this far, we get the following equivalence.

\begin{lemma}\label{st-nerve-real}
  Let $\scrC$ be a small \cat and $\scrD$ be a stable, presentable \cat. Then,
  the stable nerve-realization paradigm gives an equivalence of \cats
  \begin{displaymath}
  \begin{split}
    \Fun(\scrC,\scrD)&\to\Fun^{L}(\Pst\scrC,\scrD) \\
    F&\mapsto \lvert - \rvert_{F}^{\mathrm{st}}
  \end{split}
  \end{displaymath}
\end{lemma}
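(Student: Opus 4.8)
The plan is to assemble the stated equivalence by composing two equivalences already available in the excerpt, and then to check what the composite does to a functor $F$. First I would recall, exactly as in the proof of Proposition \ref{prop-nerve-real}, that \cite[5.1.5.5, 5.1.5.6]{HTT} provide mutually inverse equivalences
$$\Fun^{L}(\Pre\scrC,\scrD)\;\xto{\sim}\;\Fun(\scrC,\scrD),\qquad \Phi\longmapsto\Phi\circ\Yo,$$
the inverse sending a functor $F\colon\scrC\to\scrD$ to its $F$-realization $\realiz{F}=\Lan_{\Yo}F$ (which exists and is colimit-preserving because $\scrD$, being presentable, is cocomplete).

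Next I would invoke Remark \ref{rem-stab-nerve-real}: since $\scrD$ is stable and presentable, \cite[1.4.4.5]{HA} yields an equivalence
$$\Fun^{L}(\Pst\scrC,\scrD)\;\xto{\sim}\;\Fun^{L}(\Pre\scrC,\scrD),\qquad \Psi\longmapsto\Psi\circ\susinftyp,$$
given by precomposition with $\susinftyp\colon\Pre\scrC\to\Pst\scrC$, whose inverse sends a colimit-preserving functor out of $\Pre\scrC$ to its essentially unique colimit-preserving extension along $\susinftyp$. Composing these two displayed equivalences produces an equivalence $\Fun(\scrC,\scrD)\xto{\sim}\Fun^{L}(\Pst\scrC,\scrD)$.

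It then remains only to identify the image of $F$ under the composite: it is the colimit-preserving extension of $\realiz{F}$ along $\susinftyp$, which by the construction in Remark \ref{rem-stab-nerve-real} is precisely $\lvert - \rvert_{F}^{\mathrm{st}}$. Indeed, the relation $\realiz{F}\simeq\lvert\susinftyp(-)\rvert_{F}^{\mathrm{st}}$ recorded there says exactly that $\lvert - \rvert_{F}^{\mathrm{st}}\circ\susinftyp\simeq\realiz{F}$, and colimit-preserving extensions along $\susinftyp$ are unique, so the composite equivalence is the asserted assignment $F\mapsto\lvert - \rvert_{F}^{\mathrm{st}}$.

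I do not anticipate a genuine obstacle here: the lemma is a bookkeeping assembly of Proposition \ref{prop-nerve-real} and Remark \ref{rem-stab-nerve-real}. The one point worth a careful sentence is that the composite functor \emph{coherently} agrees with $F\mapsto\lvert - \rvert_{F}^{\mathrm{st}}$, rather than merely with some a priori different colimit-preserving functor taking the same values on representables; this is precisely what uniqueness of colimit-preserving extensions along $\susinftyp$ provides. If preferred, the argument can equivalently be run on right adjoints, matching postcomposition with $\ominfty$ against the $F$-nerve construction via the identity $\nerve_{F}\simeq\ominfty\bigl(\nerve_{F}^{\mathrm{st}}(-)\bigr)$ from Remark \ref{rem-stab-nerve-real}.
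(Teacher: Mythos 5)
Your proposal matches the paper's proof essentially line for line: both compose the equivalence $\Fun(\scrC,\scrD)\simeq\Fun^{L}(\Pre\scrC,\scrD)$ from \cite{HTT} with the equivalence $\Fun^{L}(\Pre\scrC,\scrD)\simeq\Fun^{L}(\Pst\scrC,\scrD)$ from \cite[1.4.4.5]{HA} as packaged in Remark \ref{rem-stab-nerve-real}. The only small difference is that the paper cites \cite[5.1.5.7]{HTT} rather than 5.1.5.5/5.1.5.6, and you spell out the identification of the composite with $F\mapsto\lvert-\rvert_{F}^{\mathrm{st}}$ via uniqueness of colimit-preserving extensions, which the paper leaves implicit in the definitions of Remark \ref{rem-stab-nerve-real}.
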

\begin{proof}
  As $\scrC$ is small, and $\scrD$ is cocomplete, by \cite[5.1.5.7]{HTT}, we
  have
  $$\Fun(\scrC,\scrD)\simeq\Fun^{L}(\Pre\scrC,\scrD)$$
  and the equivalence is given by $F\mapsto\lvert - \rvert_{F}$.
  Moreover, as both $\Fun(\scrC\op,\spaces)$ and $\scrD$ are presentable and
  $\scrD$ is stable, by \cite[1.4.4.5]{HA}, we have a further equivalence
  $$\Fun^L(\Pst\scrC,\scrD)\simeq\Fun^L(\Pre\scrC,\scrD)$$
  as reviewed in Remark \ref{rem-stab-nerve-real}, completing the proof.
\end{proof}

It is also possible to give an explicit description of stable nerves, similar
to the one we gave in the unstable case, using mapping spectra instead of
mapping spaces. To this end, let us first recall the stable Yoneda embedding.

\begin{defi}[](See \cite[Definiton 2.15]{bgt}\footnote{notice that we use
  the opposite capitalization convention of \cite{bgt} to distinguish between
  mapping spaces and the mapping spectra.})
  \label{defi-st-yo}
  Let $\scrD$ be a stable \cat. Recall that, by \cite[1.4.2.23]{HA},
  postcomposition with $\ominfty$ induces an equivalence
  $$\Fun^{\rm{lex}}(\scrD,\Pst\scrD)\simeq\Fun^{\rm{lex}}(\scrD,\Pre\scrD).$$
  We call the \emph{stable Yoneda embedding} the map corresponding to
  $\Yo$ under the above equivalence and denote it as $\stYo$.
  We denote the adjoint functor $\scrD\op\times\scrD\to\Sp$ as
  $$\map_{\scrD}(=,-)$$
  and call it the \emph{mapping spectrum functor}.
\end{defi}

\begin{rem}\label{rem-map-colim-commute}
  In particular, the functor $\map_{\scrD}(=,-)$ is such that
  $$\ominfty\map_{\scrD}(=,-)\simeq\Map_{\scrD}(=,-)$$
  and is the unique such functor sending finite colimits in the first
  variable to limits and finite limits in the second variable to limits.
  As shown in \cite[Remark 6.2]{NikStableOperads}, the functor actually sends
  \emph{all} (small) colimits in the first variable to limits, and \emph{all}
  (small) limits in the second variable to limits.
\end{rem}

\begin{rem}\label{rem-styo-is-fff}
  The functor $\stYo$ is extensively studied in \cite{NikStableOperads}. In
  particular, in \emph{op. cit.} Section 6 it is proven that it is fully
  faithful.
\end{rem}

We have the following variant of Yoneda's Lemma for spectral presheaves
of general \cats.

\begin{prop}\label{prop-st-yo-kinda}
  Let $\scrC$ be a small \cat; then
  $$
  \map_{\Pst \scrC}(\susinftyp \Yo , - ) \simeq \id_{\Pst\scrC}.
  $$
\end{prop}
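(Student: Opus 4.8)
The plan is to recognize $\map_{\Pst\scrC}(\susinftyp\Yo,-)$ as the stable nerve functor attached to $\susinftyp\Yo\colon\scrC\to\Pst\scrC$ itself, and then to show that the corresponding stable realization is the identity, so that its right adjoint is too.

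First I would record the explicit description of stable nerves. Exactly as in Proposition~\ref{prop-nerve-real}, but with mapping spectra in place of mapping spaces (which is legitimate since, by Remark~\ref{rem-map-colim-commute}, $\map_\scrD(=,-)$ still carries colimits in the first variable to limits and has $\ominfty\map_\scrD(=,-)\simeq\Map_\scrD(=,-)$), for any functor $F\colon\scrC\to\scrD$ into a stable presentable \cat the stable nerve $\nerve_F^{\mathrm{st}}\colon\scrD\to\Pst\scrC$ is given on objects by $\nerve_F^{\mathrm{st}}(d)\colon c\mapsto\map_\scrD(F(c),d)$. Taking $\scrD=\Pst\scrC$ and $F=\susinftyp\Yo$, this reads
$$
\nerve_{\susinftyp\Yo}^{\mathrm{st}}(P)\colon c\mapsto\map_{\Pst\scrC}(\susinftyp\Yo_c,P),
$$
i.e.\ $\map_{\Pst\scrC}(\susinftyp\Yo,-)\simeq\nerve_{\susinftyp\Yo}^{\mathrm{st}}$ as functors $\Pst\scrC\to\Pst\scrC$. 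By Remark~\ref{rem-stab-nerve-real} this functor is right adjoint to the stable realization $\realizp{\susinftyp\Yo}{-}^{\mathrm{st}}\colon\Pst\scrC\to\Pst\scrC$.

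It then remains to identify $\realizp{\susinftyp\Yo}{-}^{\mathrm{st}}$ with $\id_{\Pst\scrC}$. Unwinding Remark~\ref{rem-stab-nerve-real} together with the standard fact that $\Lan_\Yo F\circ\Yo\simeq F$ (as $\Yo$ is fully faithful), one gets $\realizp{\susinftyp\Yo}{-}^{\mathrm{st}}\circ(\susinftyp\Yo)\simeq\susinftyp\Yo$. Now $\realizp{\susinftyp\Yo}{-}^{\mathrm{st}}$ and $\id_{\Pst\scrC}$ are both colimit-preserving endofunctors of $\Pst\scrC$ whose restriction along $\susinftyp\Yo\colon\scrC\to\Pst\scrC$ is $\susinftyp\Yo$; since by Lemma~\ref{st-nerve-real} restriction along $\susinftyp\Yo$ is an equivalence $\Fun^L(\Pst\scrC,\Pst\scrC)\xrightarrow{\ \sim\ }\Fun(\scrC,\Pst\scrC)$, the two functors agree. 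Hence $\map_{\Pst\scrC}(\susinftyp\Yo,-)\simeq\nerve_{\susinftyp\Yo}^{\mathrm{st}}$ is the (essentially unique) right adjoint of $\id_{\Pst\scrC}$, namely $\id_{\Pst\scrC}$.

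As a sanity check — and essentially the same computation made pointwise — one can also argue directly: the evaluation functor $\ev_c\colon\Pst\scrC\to\Sp$ has a left adjoint $(\ev_c)_!$ given by left Kan extension along $c\colon\Delta^0\to\scrC\op$, and the coend formula of Lemma~\ref{lemma-coend-formula} identifies $(\ev_c)_!\bbS$ with the spectral presheaf $d\mapsto\susinftyp\Map_\scrC(d,c)$, that is, with $\susinftyp\Yo_c$ (using that $\susinftyp\colon\Pre\scrC\to\Pst\scrC$ is computed pointwise by $\susinftyp$). Since $\Pst\scrC$ and $\Sp$ are stable and $(\ev_c)_!,\ev_c$ are exact, the adjunction enhances to mapping spectra, giving $\map_{\Pst\scrC}(\susinftyp\Yo_c,P)\simeq\map_{\Pst\scrC}((\ev_c)_!\bbS,P)\simeq\map_\Sp(\bbS,P(c))\simeq P(c)$, naturally in $c$ and $P$. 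The only delicate point in either route is the bookkeeping needed to upgrade equivalences of mapping spaces to equivalences of mapping spectra; in the first approach this is absorbed into the stable analogue of Proposition~\ref{prop-nerve-real}, in the second into the fact that an adjunction of exact functors between stable \cats lifts to mapping spectra — both routine.
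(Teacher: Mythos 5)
Both of your routes are correct, but each differs from the paper's own (rather shorter) argument. The paper notes that $\map_{\Pst\scrC}(\susinftyp\Yo,-)$ is left exact (as $\map$ sends colimits in the first variable to limits, by Remark~\ref{rem-map-colim-commute}), as is $\id_{\Pst\scrC}$, and that under the equivalence $\Fun^{\rm lex}(\Pst\scrC,\Pst\scrC)\simeq\Fun^{\rm lex}(\Pst\scrC,\Pre\scrC)$ from \cite[1.4.2.23]{HA} (postcomposition with $\ominfty$) the first becomes $\Map_{\Pst\scrC}(\susinftyp\Yo,-)$ and the second becomes $\ominfty$; these agree by Yoneda and the $\susinftyp\dashv\ominfty$ adjunction, which finishes the proof. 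Your main route instead works on the left-adjoint side: it identifies $\map_{\Pst\scrC}(\susinftyp\Yo,-)$ with the stable $\susinftyp\Yo$-nerve, observes that its left adjoint (the stable realization) and $\id_{\Pst\scrC}$ are both colimit-preserving endofunctors restricting to $\susinftyp\Yo$, and concludes by Lemma~\ref{st-nerve-real}. This is sound, but it leans on the pointwise description of stable nerves (``Exactly as in Proposition~\ref{prop-nerve-real} but with mapping spectra''), which is precisely Proposition~\ref{prop-mapping-nerve}; in the paper that proposition comes \emph{after} the present one, and its proof itself goes through the left-exactness/uniqueness argument of Remark~\ref{rem-map-colim-commute}. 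There is no circularity (the proof of \ref{prop-mapping-nerve} does not use \ref{prop-st-yo-kinda}), but a reader should be aware that the dependency order is being shuffled, and that the claimed stable analogue of \ref{prop-nerve-real} isn't a ``plug in $\map$ for $\Map$'' cosmetic change --- one really needs the uniqueness clause of Remark~\ref{rem-map-colim-commute}, which is the same ingredient the paper's direct proof uses anyway. Your second route (the direct computation via $(\ev_c)_!\bbS\simeq\susinftyp\Yo_c$ and the spectral enhancement of the $(\ev_c)_!\dashv\ev_c$ adjunction) is clean and self-contained; the only cost is the extra bookkeeping of naturality in $c$ that you yourself flag.
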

\begin{proof}
  By \cite[1.4.2.23]{HA}, we know that
  \begin{equation}\label{eq-prop-st-yo-kinda}
    \Fun^{\rm{lex}}(\Pst\scrC,\Pst\scrC)\simeq
    \Fun^{\rm{lex}}(\Pst\scrC,\Pre\scrC)
  \end{equation}
  and that $\id_{\Pst\scrC}$ on the left hand side, corresponds to $\ominfty$
  on the right hand side.
  Now, by the usual $\infty$-categorical Yoneda lemma (see \cite[5.5.2.1]{HTT})
  we have
  $$\ominfty\simeq\Map_{\Pre\scrC}(\Yo,\ominfty)\simeq
    \Map_{\Pst\scrC}(\susinftyp \Yo, -).$$
  But, by Remark \ref{rem-map-colim-commute}, we know that the latter
  corresponds to
  $$\map_{\Pst\scrC}(\susinftyp \Yo , -)$$
  under the equivalence (\ref{eq-prop-st-yo-kinda}).
\end{proof}

We conclude the section with an explicit description of stable nerves.

\begin{prop}\label{prop-mapping-nerve}
  Let $F\colon\scrC\to\scrD$ be a functor from a small \cat $\scrC$ to a
  stable presentable \cat $\scrD$. Then, the value of the stable $F$-nerve
  on an object $d\in\scrD$ is given by the stable presheaf
  $\map_{\scrD}(F(-),d)$.
\end{prop}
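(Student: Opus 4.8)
The plan is to identify $\map_{\scrD}(F(-),d)$ with $\nerve_F^{\mathrm{st}}(d)$ by pinning down the latter through the stable Yoneda lemma. First I would unwind the construction of the stable realization: by Remark \ref{rem-stab-nerve-real}, $\realiz{F}^{\mathrm{st}}\colon\Pst\scrC\to\scrD$ is the unique colimit-preserving functor with $\realiz{F}^{\mathrm{st}}\circ\susinftyp\simeq\realiz{F}$, so that $\realiz{F}^{\mathrm{st}}(\susinftyp\Yo_c)\simeq\realiz{F}(\Yo_c)\simeq F(c)$, the last equivalence because $\Yo$ is fully faithful and left Kan extension along a fully faithful functor is a genuine extension; moreover this is natural in $c\in\scrC$.

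Next I would upgrade the adjunction $\realiz{F}^{\mathrm{st}}\dashv\nerve_F^{\mathrm{st}}$ to an equivalence of mapping \emph{spectra}. Since $\realiz{F}^{\mathrm{st}}$ preserves colimits between stable \cats it is exact, hence commutes with the shift functors $[\pm n]$; feeding $P[-n]$ into the defining equivalence $\Map_{\Pst\scrC}(-,\nerve_F^{\mathrm{st}}d)\simeq\Map_{\scrD}(\realiz{F}^{\mathrm{st}}(-),d)$ and using $\realiz{F}^{\mathrm{st}}(P[-n])\simeq(\realiz{F}^{\mathrm{st}}P)[-n]$ yields, compatibly in $n$, a natural equivalence $\map_{\Pst\scrC}(P,\nerve_F^{\mathrm{st}}d)\simeq\map_{\scrD}(\realiz{F}^{\mathrm{st}}P,d)$ of spectra. (Equivalently, one invokes that a colimit-preserving functor between stable presentable \cats is $\Sp$-linear, hence respects the canonical spectral enrichments and spectral adjunctions.)

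Finally I would combine the two observations. Taking $P=\susinftyp\Yo_c$ gives $\map_{\Pst\scrC}(\susinftyp\Yo_c,\nerve_F^{\mathrm{st}}d)\simeq\map_{\scrD}(F(c),d)$, naturally in $c$. On the other hand, Proposition \ref{prop-st-yo-kinda} identifies the functor $\map_{\Pst\scrC}(\susinftyp\Yo,-)$ with $\id_{\Pst\scrC}$, so the left-hand side is precisely $\nerve_F^{\mathrm{st}}(d)(c)$. Assembling these natural equivalences over $c\in\scrC\op$ gives $\nerve_F^{\mathrm{st}}(d)\simeq\map_{\scrD}(F(-),d)$ as stable presheaves, which is the claim.

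The only step that is not purely formal is the spectral refinement of the adjunction in the second paragraph: one must check that the levelwise equivalences $\Map_{\Pst\scrC}(P[-n],\nerve_F^{\mathrm{st}}d)\simeq\Map_{\scrD}((\realiz{F}^{\mathrm{st}}P)[-n],d)$ are compatible with the structure maps of the two mapping spectra, which follows from naturality of the adjunction (unit and counit) together with exactness of $\realiz{F}^{\mathrm{st}}$. It is worth noting that one cannot shortcut the argument through $\ominfty$: although Proposition \ref{prop-nerve-real} and Remark \ref{rem-map-colim-commute} already give $\ominfty\nerve_F^{\mathrm{st}}(d)\simeq\Map_{\scrD}(F(-),d)\simeq\ominfty\map_{\scrD}(F(-),d)$, the functor $\ominfty\colon\Sp\to\spaces$ is not conservative, so this does not by itself determine $\nerve_F^{\mathrm{st}}(d)$ as a spectral presheaf — hence the detour through mapping spectra and the stable Yoneda lemma.
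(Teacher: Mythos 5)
Your proof is correct, but it takes a genuinely different route from the paper's. The paper curries $\nerve_F^{\mathrm{st}}$ to a bifunctor $\chi\colon\scrC\op\times\scrD\to\Sp$ and then to $\xi\colon\scrC\op\to\Fun(\scrD,\Sp)$; it observes that each $\xi(c)$ is \emph{left exact} (being evaluation of the right adjoint $\nerve_F^{\mathrm{st}}$), computes $\ominfty\xi(c)\simeq\Map_{\scrD}(Fc,-)$ from Proposition \ref{prop-nerve-real} and Remark \ref{rem-stab-nerve-real}, and then concludes $\xi(c)\simeq\map_{\scrD}(Fc,-)$ from the uniqueness clause of Remark \ref{rem-map-colim-commute}, i.e.\ from the fact (\cite[1.4.2.23]{HA}, also used in Definition \ref{defi-st-yo}) that $\ominfty$-postcomposition induces an equivalence $\Fun^{\mathrm{lex}}(\scrD,\Pst\scrD)\simeq\Fun^{\mathrm{lex}}(\scrD,\Pre\scrD)$. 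You instead probe $\nerve_F^{\mathrm{st}}(d)$ directly against $\susinftyp\Yo_c$ via the stable Yoneda lemma (Proposition \ref{prop-st-yo-kinda}), after upgrading the adjunction $\realiz{F}^{\mathrm{st}}\dashv\nerve_F^{\mathrm{st}}$ to one of mapping spectra. Both work; the paper's argument is a bit shorter because it avoids the spectral-enrichment upgrade, outsourcing the uniqueness to \cite[1.4.2.23]{HA}, whereas yours makes the mechanism more visible at the cost of the (standard but not entirely trivial) step that colimit-preserving functors between stable presentable \cats respect mapping spectra.

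One point worth flagging in your closing remark: you say one ``cannot shortcut through $\ominfty$'' because $\ominfty$ is not conservative. That is true for arbitrary functors $\scrD\to\Sp$, but the paper's proof is exactly such a shortcut, made legitimate by first recording that $\xi(c)\colon\scrD\to\Sp$ is left exact in the $\scrD$-variable — and $\ominfty$-postcomposition \emph{is} conservative (indeed an equivalence) on $\Fun^{\mathrm{lex}}(\scrD,\Sp)$. So the obstruction you raise is real at the level of a single fixed presheaf in $\Pst\scrC$, but disappears when the comparison is organized functorially in $d\in\scrD$. Noting this would sharpen the final paragraph.
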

\begin{proof}
  The functor $\nerve_F^{\rm{st}}$ is adjoint to a functor
  $\chi\colon\scrC\op\times\scrD\to\Sp$,
  which in turn is adjoint to a functor $\xi\colon\scrC\op\to\Pst{\scrD\op}$.
  By Remark \ref{rem-stab-nerve-real}, together with Proposition
  \ref{prop-nerve-real}, we have that
  $$\ominfty\xi(c)\simeq\Map_{\scrD}(Fc,-)\colon\scrD\to\spaces$$
  and that each $\xi(c)$ is left exact (as $\nerve_F^{\rm{st}}$ is so),
  thus, by Remark \ref{rem-map-colim-commute}, $\xi(c)\simeq
  \map_{\scrD}(Fc,-)$, from
  which it follows that $\chi\simeq\map_{\scrD}(F(=),-)$, and thus that
  $$\nerve_F^{\rm{st}}(d)\simeq\map_{\scrD}(F(-),d)$$
  as requested.
\end{proof}

\subsection{The case of pointed \cats}

We recollect some variations of the results of \S
\ref{subsec-appendix-stable-nerve} that apply to pointed \cats.
We will not give full proofs for them, as the arguments are entirely analogue
to the ones used in the stable case.

We will denote by
$\Prept\scrC\coloneqq\Fun(\scrC\op,\spaces_*)\simeq \Pre\scrC\tensor\spaces_*$
the \cat of presheaves with values in pointed spaces.

\begin{defi}\label{defi-pt-yo}
  Let $\scrC$ be a pointed \cat. By \cite[4.8.2.12]{HA}, postcomposition
  with the functor forgetting the basepoint induces an equivalence
  $$\Fun^0{}(\scrC,\Prept\scrC)\simeq
    \Fun^{\prime}(\scrC,\Pre\scrC)$$
  where $\Fun^{0}$ denotes the \cat of pointed functors,
  and $\Fun^{\prime}$ denotes the \cat of functors carrying the zero objects
  of $\scrC$ to terminal objects of $\Pre\scrC$.
  We call the \emph{pointed Yoneda embedding} the map corresponding to
  $\Yo$ under the above equivalence, and denote it as $\ptYo$.
  We denote the adjoint functor $\scrC\op\times\scrC\to\spaces_*$ as
  $$\Map{}_{\scrC,*}(=,-)$$
  (or simply as $\Map{}_*(=,-)$, when there is no risk of confusion)
  and call it the \emph{pointed mapping space functor}.
\end{defi}

\begin{defi}
  Let $\scrC$ be a pointed essentially small \cat. Then $\Prefinpt \scrC$
  denotes the smallest full subcategory of
  $\Prept\scrC$ containing the essential image
  of $\ptYo\colon\scrC\to\Prept\scrC$ that is closed under finite colimits.
\end{defi}

\begin{rem}\label{rem-pt-nerve-real}
  Let $F\colon\scrC\to\scrD$ be a functor from a small \cat to a pointed
  presentable \cat $\scrD$.
  Then, we have an extension of the nerve-realization adjunction
  \begin{equation}\label{eq-pt-nerve-real}
  \begin{tikzcd}[column sep=huge]
    \Prept\scrC \ar[r, shift left=1.1ex, "\lvert - \rvert_{F}^{\mathrm{pt}}"] &
    \scrD \ar[l, shift left=1.1ex,"\nerve_{F}^{\mathrm{pt}}"]
      \ar[l,phantom,"\text{\rotatebox{-90}{$\dashv$}}"]
  \end{tikzcd}
  \end{equation}
  such that
  $$\lvert - \rvert_{F}
  \simeq \lvert (-)_+ \rvert_{F}^{\mathrm{pt}}
  \quad \text{and} \quad
  \nerve_{F}\simeq U \left(\nerve_{F}^{\mathrm{pt}}(-)\right)$$
  where $U$ denotes the forgetful functor $\spaces_*\to\spaces$ (usually omitted
  from notations), and $(-)_+$ denotes its left adjoint.
\end{rem}

\begin{defi}\label{defi-pt-nerve-real}
  We refer to the adjunction (\ref{eq-pt-nerve-real}) as the
  \emph{pointed nerve-realization paradigm}.
\end{defi}

\begin{prop}\label{prop-pointed-rex-univ}
  Let $\scrC$ be an essentially small \cat. For any pointed and finitely
  cocomplete \cat $\scrD$, composition with
  $\ptYo\colon\scrC\to\Prefinpt\scrC$
  induces an equivalence
  $$\Fun^{\text{Rex}}(\Prefinpt\scrC,\scrD)\to\Fun(\scrC,\scrD).$$
  If $\scrD$ is also presentable, we have
  $$\Fun^{\text{L}}(\Prept\scrC,\scrD)\simeq
  \Fun^{\text{Rex}}(\Prefinpt\scrC,\scrD)\simeq\Fun(\scrC,\scrD)$$
  and the right-to-left composite is given by
  $$F\mapsto \lvert - \rvert_{F}^{\mathrm{pt}}.$$
\end{prop}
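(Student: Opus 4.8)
The statement is the pointed incarnation of the universal properties of the free cocompletion and of the free finite cocompletion of an \cat, so the plan is to mimic the proofs of \cite[5.1.5.6]{HTT} and \cite[5.3.6.2]{HTT} (and of Lemma \ref{st-nerve-real} in the stable case), with the pointed-objects functor $(-)_+\colon\Pre\scrC\to\Prept\scrC$ playing the role of the stabilization $\susinftyp$. Since $\Prefinpt\scrC$ is by definition the closure of $\ptYo(\scrC)$ under finite colimits inside $\Prept\scrC$, the content of the first assertion is exactly that every $F\colon\scrC\to\scrD$ with $\scrD$ pointed and finitely cocomplete extends along $\ptYo$, essentially uniquely, to a finite-colimit-preserving functor, and that when $\scrD$ is moreover presentable this extension coincides with the pointed $F$-realization $\lvert-\rvert_F^{\mathrm{pt}}=\Lan_{\ptYo}F$ of Remark \ref{rem-pt-nerve-real}.

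First I would record the pointed analogue of the fact that a presheaf \cat is compactly generated by finite colimits of representables, namely $\Prept\scrC\simeq\operatorname{Ind}(\Prefinpt\scrC)$: under $\Prept\scrC\simeq\Pre\scrC\otimes\spaces_*$ each $\ptYo_c$ is compact, the compact objects of $\Prept\scrC$ are the retracts of finite colimits of the $\ptYo_c$'s (i.e.\ the idempotent completion of $\Prefinpt\scrC$), and since idempotents split in $\operatorname{Ind}$ one gets $\Prept\scrC\simeq\operatorname{Ind}\big((\Prept\scrC)^{\omega}\big)\simeq\operatorname{Ind}(\Prefinpt\scrC)$. Next I would prove the presentable sub-case of the first assertion: for $\scrD$ pointed presentable, composition with $\ptYo$ gives $\Fun^{\mathrm{L}}(\Prept\scrC,\scrD)\xrightarrow{\ \sim\ }\Fun(\scrC,\scrD)$. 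This runs exactly as in Remark \ref{rem-stab-nerve-real} and Lemma \ref{st-nerve-real}, using that $\spaces_*$ is the unit of $\mathrm{Pr}^{\mathrm{L}}_*$ — so $\Fun^{\mathrm{L}}(\spaces_*,\scrD)\simeq\scrD$ for $\scrD$ pointed presentable, by \cite[4.8.2.12]{HA} — to get $\Fun^{\mathrm{L}}(\Prept\scrC,\scrD)\simeq\Fun^{\mathrm{L}}(\Pre\scrC\otimes\spaces_*,\scrD)\simeq\Fun^{\mathrm{L}}(\Pre\scrC,\scrD)\simeq\Fun(\scrC,\scrD)$ by \cite[5.1.5.6]{HTT}, a chase of these equivalences identifying the quasi-inverse with $F\mapsto\lvert-\rvert_F^{\mathrm{pt}}$. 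Combining this with the universal property of $\operatorname{Ind}$ (\cite[5.3.5.10]{HTT}) and with $\Prept\scrC\simeq\operatorname{Ind}(\Prefinpt\scrC)$ yields, for pointed presentable $\scrD$,
$$\Fun^{\mathrm{L}}(\Prept\scrC,\scrD)\simeq\Fun^{\mathrm{Rex}}(\Prefinpt\scrC,\scrD)\simeq\Fun(\scrC,\scrD),$$
which is the second displayed assertion; the composite is again $F\mapsto\lvert-\rvert_F^{\mathrm{pt}}$, since the unique cocontinuous extension of $\Lan_{\ptYo}F$ from $\Prefinpt\scrC$ to $\Prept\scrC$ is $\Lan_{\ptYo}F$ itself.

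It then remains to upgrade the first assertion from pointed presentable to an arbitrary pointed finitely cocomplete $\scrD$. Replacing $\scrD$ by the smallest full subcategory containing the essential image of $F$ and closed under finite colimits — which is essentially small, finitely cocomplete, and has inclusion into $\scrD$ preserving finite colimits — we may assume $\scrD$ essentially small; then $\operatorname{Ind}(\scrD)$ is pointed presentable and $\scrD\hookrightarrow\operatorname{Ind}(\scrD)$ is fully faithful, preserves finite colimits, and has image closed under finite colimits. Because $\Prefinpt\scrC$ is generated under finite colimits by $\ptYo(\scrC)$, a finite-colimit-preserving functor $\Prefinpt\scrC\to\operatorname{Ind}(\scrD)$ factors through $\scrD$ if and only if its restriction along $\ptYo$ does; restricting the equivalence $\Fun^{\mathrm{Rex}}(\Prefinpt\scrC,\operatorname{Ind}\scrD)\simeq\Fun(\scrC,\operatorname{Ind}\scrD)$ just proved to these full subcategories gives $\Fun^{\mathrm{Rex}}(\Prefinpt\scrC,\scrD)\simeq\Fun(\scrC,\scrD)$, still implemented by composition with $\ptYo$.

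The main obstacle I anticipate is not any single hard computation but the non-formal bookkeeping around these identifications: checking carefully that $\ptYo$, the functor $(-)_+$, the tensoring by $\spaces_*$, and idempotent completion interact as in the unpointed theory — concretely, that $\Prept\scrC\simeq\operatorname{Ind}(\Prefinpt\scrC)$ and that the ``lands in $\scrD$'' full subcategories match up under the presentable equivalence — given that $\ptYo$ is defined through the equivalence of \cite[4.8.2.12]{HA} rather than by an explicit formula. Everything downstream is a direct transcription of the stable (resp.\ unstable) nerve-realization arguments already given in this appendix.
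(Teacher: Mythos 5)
Your proof is correct, and it fills a gap the paper leaves open: the paper gives no proof of this proposition, saying only that "the arguments are entirely analogue to the ones used in the stable case." That remark covers the second displayed equivalence — the pointed variant of Remark~\ref{rem-stab-nerve-real} and Lemma~\ref{st-nerve-real}, which is exactly your chain $\Fun^{\mathrm{L}}(\Prept\scrC,\scrD)\simeq\Fun^{\mathrm{L}}(\Pre\scrC\otimes\spaces_*,\scrD)\simeq\Fun^{\mathrm{L}}(\Pre\scrC,\scrD)\simeq\Fun(\scrC,\scrD)$ — but the paper's appendix has no stable analogue of the first assertion (the merely finitely-cocomplete case), so your Ind/idempotent-completion argument and the subsequent passage to small $\scrD_0\subset\scrD$ is genuinely additional content. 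It is the right argument, and the only route that does not require replaying all of \cite[\S 5.3.6]{HTT} in the pointed setting.

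Two points worth tightening. First, the ``chase of equivalences'' needs the observation that the middle equivalence is precomposition with $(-)_+\colon\Pre\scrC\to\Prept\scrC$ (tensoring with the unit $S^0$ of the $\spaces_*$ factor), so the composite is precomposition with $(-)_+\circ\Yo$; this identifies with composition with $\ptYo$ only if one reads $\ptYo_c\simeq(\Yo_c)_+$. That is the right reading for the non-pointed $\scrC$ appearing in the application (Proposition~\ref{prop-action-of-Z} uses $\scrC=\bbZ^\delta$), but note that Definition~\ref{defi-pt-yo} defines $\ptYo$ only for pointed $\scrC$, where the pointed Yoneda embedding is \emph{not} $(-)_+\circ\Yo$ and the correct right-hand side would then be $\Fun^0(\scrC,\scrD)$. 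This is an imprecision in the paper's statement rather than in your proof, but your argument silently fixes the convention, and you should say so. Second, ``we may assume $\scrD$ essentially small'' should be unpacked: the target of the replacement depends on the data under consideration, so one checks essential surjectivity of composition-with-$\ptYo$ by choosing, for each $F\in\Fun(\scrC,\scrD)$, a small finite-colimit-closed $\scrD_0\supseteq F(\scrC)$, and full faithfulness by choosing, for each pair $G,G'\in\Fun^{\mathrm{Rex}}(\Prefinpt\scrC,\scrD)$, a small finite-colimit-closed $\scrD_0\supseteq G(\Prefinpt\scrC)\cup G'(\Prefinpt\scrC)$ (possible since $\Prefinpt\scrC$ is essentially small); in both cases mapping spaces and images are unaffected by restricting to the full subcategory $\scrD_0$. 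With those two clarifications spelled out, the argument is complete.
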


\begin{prop}\label{prop-pt-mapping-nerve}
  Let $F\colon\scrC\to\scrD$ be a functor from a small \cat $\scrC$ to a
  pointed presentable \cat $\scrD$. Then, the value of the pointed $F$-nerve
  on an object $d\in\scrD$ is given by the pointed presheaf
  $\ptMap(F(-),d)$.
\end{prop}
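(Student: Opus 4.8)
The plan is to rerun the proof of Proposition~\ref{prop-mapping-nerve} in the pointed setting, substituting $\spaces_*$ for $\Sp$, the pointed mapping space functor $\ptMap$ of Definition~\ref{defi-pt-yo} for the mapping spectrum functor, and the equivalence of Definition~\ref{defi-pt-yo} (that is, \cite[4.8.2.12]{HA}) for the stable equivalence \cite[1.4.2.23]{HA} used there. Concretely, by Remark~\ref{rem-pt-nerve-real} the pointed $F$-nerve $\nerve_{F}^{\mathrm{pt}}\colon\scrD\to\Prept\scrC$ exists as the right adjoint of $\lvert-\rvert_{F}^{\mathrm{pt}}$; it is adjoint to a bifunctor $\chi\colon\scrC\op\times\scrD\to\spaces_*$, which in turn is adjoint to a functor $\xi\colon\scrC\op\to\Prept{\scrD\op}$. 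It then suffices to identify $\xi(c)$ with $\ptMap(F(c),-)$ for every $c\in\scrC$, since uncurrying twice gives $\nerve_{F}^{\mathrm{pt}}(d)\simeq\ptMap(F(-),d)$.

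First I would note that, being a right adjoint, $\nerve_{F}^{\mathrm{pt}}$ preserves all limits, so each $\xi(c)\colon\scrD\to\spaces_*$ is left exact; in particular it carries the zero object of $\scrD$ to the zero object of $\spaces_*$, i.e.\ it is a reduced (pointed) functor in the sense of Definition~\ref{defi-pt-yo}. On the other hand, combining the identity $\nerve_{F}\simeq U\circ\nerve_{F}^{\mathrm{pt}}$ of Remark~\ref{rem-pt-nerve-real} (with $U\colon\spaces_*\to\spaces$ the functor forgetting the basepoint) with Proposition~\ref{prop-nerve-real} shows that $U\circ\xi(c)\simeq\Map_{\scrD}(F(c),-)$; and since $0\in\scrD$ is terminal, this functor $\scrD\to\spaces$ sends zero objects to terminal objects.

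To conclude, I would invoke the equivalence of Definition~\ref{defi-pt-yo}: postcomposition with $U$ restricts to an equivalence between reduced functors $\scrD\to\spaces_*$ and functors $\scrD\to\spaces$ sending zero objects to terminal objects, so $\Map_{\scrD}(F(c),-)$ has an essentially unique reduced lift. Both $\xi(c)$ and $\ptMap(F(c),-)$ are such lifts, hence $\xi(c)\simeq\ptMap(F(c),-)$ naturally in $c$; uncurrying yields $\chi\simeq\ptMap(F(=),-)$ and so $\nerve_{F}^{\mathrm{pt}}(d)\simeq\ptMap(F(-),d)$. The one genuinely delicate point---exactly as in the stable case---is that a presheaf of spaces may admit many inequivalent pointed refinements, so the whole argument turns on using left exactness of $\nerve_{F}^{\mathrm{pt}}$ to force $\xi(c)$ into the reduced functors, where the lift becomes unique; the remaining ingredients (that $\ptMap(F(=),-)$ is objectwise reduced, which reduces to terminality of $0\in\scrD$, and the currying bookkeeping) are purely formal.
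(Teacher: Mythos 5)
Your proof is correct, and it is precisely the filling-in the paper intends when it declares (at the start of the subsection) that the pointed statements follow by arguments ``entirely analogue to the ones used in the stable case'': you run the same three-step argument as in Proposition~\ref{prop-mapping-nerve} (curry to $\xi\colon\scrC\op\to\Prept{\scrD\op}$, compute $U\circ\xi(c)$ via Remark~\ref{rem-pt-nerve-real} and Proposition~\ref{prop-nerve-real}, then invoke uniqueness of the lift), with the one substantive substitution being that the uniqueness input is now the equivalence of pointed functors with ``zero-to-terminal'' functors from Definition~\ref{defi-pt-yo}, rather than the left-exact lifting uniqueness of Remark~\ref{rem-map-colim-commute}. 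Two small remarks: (i) the equivalence you cite from Definition~\ref{defi-pt-yo} is stated there for functor categories $\Prept\scrC$ and $\Pre\scrC$ as targets, whereas you need the instance with targets $\spaces_*$ and $\spaces$ --- this is the same underlying fact from \cite[4.8.2.12]{HA} but is not literally the displayed statement, so it is worth flagging; (ii) your observation that left exactness of $\xi(c)$ is only used to get the weaker property of being reduced is exactly the right adaptation of the stable argument, since that weaker condition is what the pointed lifting equivalence requires.
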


\section{An $\bbE_1$ presentation of exterior algebras (by Achim Krause)}
\label{appendix-achim}

In this appendix, we produce an explicit $\bE_1$ presentation of exterior algebras over $\Z$. The main result is the following:

\begin{thm}
\label{thm:e1presentation-ungraded}
Let $\Lambda(e_k)$ denote the $\bE_1$-algebra in $\cD(\Z)$ represented by the dga
which is exterior on a generator in degree $k$, with zero differential.
Then $\Lambda(e_k)$ admits a description as a colimit 
\[
\colim_{\Z_{\geq 1}} \Lambda^{(n)} \simeq \Lambda(e_k)
\]
where $\Lambda^{(1)}$ is a free $\bE_1$-algebra on a generator of degree $n$, and each of the maps $\Lambda^{(n-1)}\to \Lambda^{(n)}$ is an $\bE_1$-pushout
\[
\begin{tikzcd}
\Free_{\bE_1}(r_n)\rar\dar & \Lambda^{(n-1)}\dar\\
\Z\rar & \Lambda^{(n)}
\end{tikzcd}
\]
Here $r_n$ is an element of degree $nk+n-2$.
\end{thm}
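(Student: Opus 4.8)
The plan is to build the tower $\Lambda^{(1)}\to\Lambda^{(2)}\to\cdots$ by an explicit induction, identify the homology of each term, and then identify the colimit with $\Lambda(e_k)$ by combining this homology computation with an obstruction‑theoretic construction of a comparison map. First I would set $\Lambda^{(1)}:=\Free_{\bE_1}(e_k)$; its underlying complex is $\bigoplus_{j\geq0}\Z[jk]$, so $H_*\Lambda^{(1)}$ is the free associative ring on a degree‑$k$ class $e$, which (being on one generator) is the polynomial ring $\Z[e]$, and I would record it as $\Lambda(e)\otimes\Z[y_1]$ with $y_1\leftrightarrow e^2$ in degree $2k$. Inductively, suppose $\Lambda^{(n-1)}$ has been constructed with $H_*\Lambda^{(n-1)}\cong\Lambda(e)\otimes\Z[y_{n-1}]$ for a polynomial generator $y_{n-1}$ in degree $nk+n-2$. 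One then chooses a map of $\bE_1$-algebras $\Free_{\bE_1}(r_n)\to\Lambda^{(n-1)}$, with $r_n$ in degree $nk+n-2$, carrying the generator to a representative of $y_{n-1}$, and defines $\Lambda^{(n)}$ as the $\bE_1$-pushout along the augmentation. A convenient model replaces $\Free_{\bE_1}(r_n)\to\Z$ by the weak equivalence $\Free_{\bE_1}(r_n)\to\Free_{\bE_1}(r_n,\bar s_n)$ with $d\bar s_n=r_n$ and $\bar s_n$ in degree $nk+n-1$ (a weak equivalence since the free $\bE_1$-algebra on a contractible complex is contractible); then $\Lambda^{(n)}$ is, up to equivalence, $\Lambda^{(n-1)}$ with a noncommuting variable $\bar s_n$ freely adjoined and $d\bar s_n$ a chosen cocycle representing $y_{n-1}$.

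The heart of the argument is the homology computation $H_*\Lambda^{(n)}\cong\Lambda(e)\otimes\Z[y_n]$ for a polynomial generator $y_n$ in degree $(n+1)k+n-1$ — which is exactly the degree of $r_{n+1}=(n+1)k+(n+1)-2$, so the induction closes. Using the model above, the underlying complex of $\Lambda^{(n)}$ is $\bigoplus_{m\geq0}\Lambda^{(n-1)}\otimes(\Z\bar s_n\otimes\Lambda^{(n-1)})^{\otimes m}$, filtered by the number of occurrences of $\bar s_n$; the internal differential preserves this count and the $d\bar s_n$-term lowers it by one, so the associated filtration spectral sequence has $E_1$-term a bar‑type complex built from tensor powers of $H_*\Lambda^{(n-1)}$ with differential "multiply by $y_{n-1}$". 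Since $y_{n-1}$ is central and a non‑zero‑divisor in $\Lambda(e)\otimes\Z[y_{n-1}]$, this $E_1$-complex computes the quotient $\Lambda(e)\otimes\Z[y_{n-1}]/(y_{n-1})\cong\Lambda(e)$ together with one new (Tor/syzygy) class, whose degree one checks to be $(n+1)k+n-1$; the spectral sequence degenerates for sparsity reasons, and an explicit cocycle — a Koszul‑type syzygy of the schematic shape $[e\bar s_n\pm\bar s_n e\pm\bar s_{n-1}^{\,2}\pm\cdots]$ — exhibits $y_n$ as a polynomial generator, giving $H_*\Lambda^{(n)}\cong\Lambda(e)\otimes\Z[y_n]$. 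Moreover $r_n$ becomes a boundary in $\Lambda^{(n)}$ by construction, so the map $H_*\Lambda^{(n-1)}\to H_*\Lambda^{(n)}$ is the ring map fixing $e$ and killing $y_{n-1}$, i.e.\ the quotient $\Lambda(e)\otimes\Z[y_{n-1}]\twoheadrightarrow\Lambda(e)$ followed by $\Lambda(e)\hookrightarrow\Lambda(e)\otimes\Z[y_n]$.

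Finally I would pass to the colimit. One assembles a compatible family of $\bE_1$-maps $\varphi_n\colon\Lambda^{(n)}\to\Lambda(e_k)$, starting from $\varphi_1\colon\Free_{\bE_1}(e_k)\to\Lambda(e_k)$ selecting $e_k$ and extending at each stage across the pushout defining $\Lambda^{(n)}$; this extension exists because the only obstruction is the image of $r_n$ in $H_*\Lambda(e_k)\cong\Lambda(e)$, which vanishes for degree reasons (the degree $nk+n-2$ of $r_n$ is neither $0$ nor $k$ for $n\geq2$). The resulting map $\varphi\colon\colim_n\Lambda^{(n)}\to\Lambda(e_k)$ is then a quasi‑isomorphism: homology commutes with sequential colimits, and by the previous paragraph $H_*\colim_n\Lambda^{(n)}$ is the colimit of $\Lambda(e)\otimes\Z[y_n]\to\Lambda(e)\otimes\Z[y_{n+1}]\to\cdots$, in which every $y_n$ maps to $0$, so it equals $\Lambda(e)=H_*\Lambda(e_k)$, and $\varphi$ realizes this identification. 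This gives $\colim_{\Z_{\geq1}}\Lambda^{(n)}\simeq\Lambda(e_k)$.

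The main obstacle is plainly the homology computation of the intermediate algebras: one must control the coproduct of $\bE_1$-algebras — equivalently the homology of the genuinely noncommutative free‑variable extension $\Lambda^{(n-1)}\langle\bar s_n\rangle$ — prove convergence and degeneration of the filtration spectral sequence, and pin down both the degree and an explicit cocycle representative for the new generator $y_n$. Crucially, the additive statement is not enough: the inductive hypothesis must be upgraded to the ring‑level assertion that $H_*\Lambda^{(n)}$ is again \emph{$\Lambda(e)$ tensor a polynomial ring on a single central generator}, so that the next coning step behaves as predicted. This is where an explicit dg‑model for each $\Lambda^{(n)}$, together with a second (weight) grading counting the adjoined generators alongside the homological degree, does the real work.
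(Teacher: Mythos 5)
Your plan diverges from the paper in its central step: you propose to compute $H_*\Lambda^{(n)}$ directly, by a word-length filtration spectral sequence on the semi-free model $\Lambda^{(n-1)}\langle\bar s_n\rangle$ of the $\bE_1$-pushout, whereas the paper first establishes a bigraded version (Theorem \ref{thm:e1presentation}) and there gets the inductive homology computation from Koszul duality: $\Lambda^{(n)}\simeq\Cobar(C^{(n)})$ for the explicit finite sub-coalgebra $C^{(n)}=\Z\{x_1,\ldots,x_n\}$ of $\BBar(\Lambda(e))$ (Lemma \ref{lem:barexterior}), so that $H_*\Lambda^{(n)}\cong\Ext_{\Z[x]/x^{n+1}}(\Z,\Z)$ is the classical two-periodic computation (Lemma \ref{lem:pnhomology}). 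The rest of your plan — choosing an attaching cycle representing $y_{n-1}$, building a comparison map $\varphi\colon\colim\Lambda^{(n)}\to\Lambda(e_k)$ by obstruction theory, and identifying homology in the colimit — is sound, and is essentially how the paper's ungraded version specializes from the bigraded one.

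The genuine gap is precisely what you single out as the "main obstacle" and then assert rather than prove: that the $E_1$-page "computes $\Lambda(e)$ together with one new Tor/syzygy class" and "degenerates for sparsity reasons." The $E_1$-page $\bigoplus_m R\otimes\bar R^{\otimes m}$ (with $R=\Lambda(e)\otimes\Z[y_{n-1}]$ and differential inserting $y_{n-1}$) is a full bar-type complex, not a two-term Koszul complex: its homology carries contributions in every word length $m$ (where the powers $y_n^m$ must live), and extracting $\Lambda(e)\otimes\Z[y_n]$ from it — including solving the multiplicative extension problems for the filtration so the \emph{ring} structure comes out right and the induction can close — is exactly the work the paper's $\Ext_{\Z[x]/x^{n+1}}$ computation replaces with a textbook minimal resolution. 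Your closing observation that a second weight grading "does the real work" is correct; that is the paper's formal bigrading, and it should be introduced from the start: attacking the ungraded statement directly risks degree collisions (your obstruction check $nk+n-2\ne 0,k$ fails for $k=0$, $n=2$). Two smaller slips: for $n=1$, $H_*\Lambda^{(1)}=\Z[e]$ is not isomorphic to $\Lambda(e)\otimes\Z[y_1]$ as a ring (in the former $e^2\ne 0$, in the latter it must vanish), though the attaching class for $\Lambda^{(2)}$ is indeed $e^2$; and the syzygy representative should read $\sum_{i+j=n+1}\pm\,\bar s_i\bar s_j$ rather than $e\bar s_n\pm\bar s_n e\pm\bar s_{n-1}^2\pm\cdots$ (cf.\ Remark \ref{rem:explicitdga}) — the square term, when present, is $\bar s_{(n+1)/2}^2$, not $\bar s_{n-1}^2$.
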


The significance of Theorem \ref{thm:e1presentation-ungraded} is that it allows us to describe an obstruction theory for $\bE_1$-maps out of exterior algebras.
Indeed, to find a map $\Lambda(e_k) \to R$, we need to provide a cycle $e$ of degree $k$ in $R$,
a nullhomotopy of $e\cdot e$, a nullhomotopy of a certain element of degree $3k+1$
(depending on the chosen nullhomotopy of $e\cdot e$), and so on. Having provided the first $i$ pieces of this data, i.e. a map $\Lambda^{(i)}\to R$,
the next obstruction, i.e. the image of $r_{i+1}$, is a version of an $i+1$-fold Massey power of $e$.

Theorem \ref{thm:e1presentation-ungraded} will follow from the following bigraded version, where the degree of the generator is abstracted away into a second, formal grading.
We work in $\gr \cD(\Z) = \Fun(\Z^{\delta}, \cD(\Z))$ (where $\Z^{\delta}$ denotes the ``discrete'' category $\Z$, i.e. without nontrivial morphisms). An object is thus simply a list of objects of $\cD(\Z)$.
The monoidal structure is Day convolution. Homology groups now have two degrees: If our object is given by $(X_n)_{n\in \Z}$, we write $H_{k,n}(X) = H_k(X_n)$, and similarly we can shift in both those degrees,
writing $\Sigma^{k,n}$. Note that $\Sigma^{1,0}$ is ordinary suspension.
Whenever confusion is imminent, we will refer to the second grading (i.e. the $n$ in $(k,n)$ above) as ``formal'' degree.
Similarly, we will call an object of $\cD(\Z)$ ``formally $n$-connective'' if it vanishes in formal degrees $<n$, regardless of ``topological'' connectivity of the individual components $X_n$.
Note that the notion of formal connectivity is somewhat simpler than usual topological connectivity: The $n$-connective objects are closed under any limits and colimits, since those are formed pointwise.
(In particular, the notion of formal connectivity is unrelated to t-structures.)

\begin{thm}
\label{thm:e1presentation}
Let $\Lambda(e)$ denote the $\bE_1$-algebra in $\gr \cD(\Z)$ represented by the graded exterior algebra on one generator $e$ in bidegree $(0,1)$. Then $\Lambda(e)$ admits a description as colimit
\[
\colim_{\Z_{\geq 1}} \Lambda^{(n)} \simeq \Lambda(e)
\]
where $\Lambda^{(1)}$ is a free $\bE_1$ algebra on a generator of bidegree $(0,1)$, and each of the maps $\Lambda^{(n-1)}\to \Lambda^{(n)}$ is an $\bE_1$-pushout
\[
\begin{tikzcd}
\Free_{\bE_1}(r_{n})\dar\rar &\Lambda^{(n-1)}\dar\\
\Z \rar & \Lambda^{(n)}.
\end{tikzcd}
\]
Here $r_{n}$ is an element of bidegree $(n-2, n)$.
\end{thm}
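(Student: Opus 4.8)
The plan is to construct the tower $\Lambda^{(n)}$ explicitly and verify its properties by a direct computation of homology in the $1$-category of bigraded dg-modules, exploiting that the second (formal) grading makes connectivity estimates entirely combinatorial. First I would set $\Lambda^{(1)} \coloneqq \Free_{\bE_1}(e)$ with $e$ in bidegree $(0,1)$, whose homology is the tensor algebra $\bbZ\langle e\rangle$ with $e$ in formal degree $1$; the map to $\Lambda(e)$ quotients by $e^2$. To build $\Lambda^{(2)}$ I would kill $e^2$: since $e^2$ has bidegree $(0,2)$ it is detected by a map $\Free_{\bE_1}(S^{0,2}) \to \Lambda^{(1)}$, and $\Lambda^{(2)}$ is the $\bE_1$-pushout of this along the augmentation, exactly as in Construction~\ref{constr-r2}. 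For the inductive step, assuming $\Lambda^{(n-1)}$ is built together with a class $r_n$ of bidegree $(n-2,n)$ inducing a map $\Free_{\bE_1}(r_n)\to\Lambda^{(n-1)}$, I would define $\Lambda^{(n)}$ as the pushout killing $r_n$. The key claim to maintain as an inductive invariant is a precise description of the homology: $H_{*,*}(\Lambda^{(n)}) \cong \Lambda(e)\otimes \bbZ[r_{n+1}]$ as a bigraded ring (with $r_{n+1}$ of bidegree $(n-1,n+1)$), together with the statement that the map $\Lambda^{(n)}\to\Lambda(e)$ is an isomorphism on homology in formal degrees $\leq n$ and that $\Lambda^{(n)}$ is formally $0$-connective. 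This is the content that Lemma~\ref{lem:pnhomology} (referenced in Construction~\ref{constr-inductive-rn}) must supply.

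The main computational work is the inductive homology calculation, and here the bigrading is what rescues us. When we form the pushout $\Lambda^{(n)} = \Lambda^{(n-1)}\amalg_{\Free_{\bE_1}(r_n)}\bbZ$, the bar resolution expresses $\Lambda^{(n)}$ as a two-sided bar construction, and the relevant spectral sequence (or the explicit dg-model $\widetilde P_n = \bbZ\langle e_1,\ldots,e_n\rangle/(\partial e_{j} = e_{j-1}^2)$ hinted at in Remark~\ref{rem:explicitdga}) has inputs concentrated in controlled formal degrees. Because formal $n$-connective objects are closed under all limits and colimits (as stressed in the appendix preamble), the formal grading furnishes a convergence and vanishing argument with no subtlety: the pushout only alters homology in formal degrees $\geq n$, so the isomorphism with $\Lambda(e)$ in formal degrees $\leq n-1$ is preserved, and in formal degree $n$ one checks by hand that killing $r_n$ exactly kills the class $e^n$ that was the obstruction. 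The new polynomial generator $r_{n+1}$ then arises as the ``loop'' class: in the bar complex it is represented by $[r_n]$, a class of bidegree $(n-1, n+1)$, and one verifies $H_{*,*}(\Lambda^{(n)})\cong \Lambda(e)\otimes\bbZ[r_{n+1}]$ by identifying the bar spectral sequence with a Koszul-type computation — this is where the explicit dga representative $\widetilde P_n$ is most useful, since there the differential $\partial e_{n+1} = e_n^2$ can be written down and the homology read off directly.

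Finally I would assemble the colimit: $\colim_{\bbZ_{\geq 1}}\Lambda^{(n)}$ has homology $\colim_n \big(\Lambda(e)\otimes\bbZ[r_{n+1}]\big)$, and under the structure maps the generator $r_{n+1}$ of $\Lambda^{(n)}$ maps to zero in $\Lambda^{(n+1)}$ (it is precisely the class being killed), so the colimit of the homology rings is $\Lambda(e)$. Since homology commutes with filtered colimits, this shows $\colim_n\Lambda^{(n)}\simeq\Lambda(e)$ as $\bE_1$-algebras, which is the assertion. Deducing the ungraded Theorem~\ref{thm:e1presentation-ungraded} is then purely formal: one applies the symmetric monoidal colimit-preserving functor $\gr\cD(\bbZ)\to\cD(\bbZ)$ that evaluates the Day convolution grading at a generator of topological degree $k$ — concretely, left Kan extension along $\bbZ^\delta\to\cD(\bbZ)$, $n\mapsto \bbZ[nk]$ — which sends $\Lambda(e)$ to $\Lambda(e_k)$, sends the pushout squares to pushout squares, and carries the bidegree $(n-2,n)$ class $r_n$ to a class of degree $(n-2) + nk = nk+n-2$, as claimed.

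I expect the genuine obstacle to be the inductive homology identification $H_{*,*}(\Lambda^{(n)})\cong\Lambda(e)\otimes\bbZ[r_{n+1}]$, and in particular pinning down the \emph{ring} structure (not merely the additive structure) and the exact bidegree of the new polynomial generator; the bar spectral sequence degenerates for formal-degree reasons, but verifying there are no hidden multiplicative extensions, and that the map from $\Free_{\bE_1}(r_n)$ indeed hits the class $e^{n}$ on the nose rather than a unit multiple or a class plus decomposables, requires the careful bookkeeping with the dg-model $\widetilde P_n$. Everything else — the formal-connectivity bookkeeping, passing to the colimit, and the reduction from the bigraded to the ungraded statement — should be routine.
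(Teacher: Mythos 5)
Your strategy is genuinely different from the paper's: you propose an algebra-side induction, building $\Lambda^{(n)}$ by cell attachment and maintaining $H_{*,*}(\Lambda^{(n)})\cong\Lambda(e)\otimes\Z[r_{n+1}]$ as the inductive invariant. The paper instead works almost entirely on the \emph{coalgebra} side: it establishes bar--cobar duality for formally connected algebras (Proposition~\ref{prop:barcobar}), proves a general cell-structure existence theorem for connected coalgebras via formal truncations (Theorem~\ref{thm:existence}), computes $\BBar(\Lambda(e))$ once and for all as the explicit coalgebra $\Z\{x_1,x_2,\ldots\}$ (Lemma~\ref{lem:barexterior}), and transports the cell structure back across bar--cobar. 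What this buys is that pushouts of \emph{coalgebras} are computed in the underlying category, so the cell-attaching squares are immediate; and the homology of $\Lambda^{(n)}$ (Lemma~\ref{lem:pnhomology}) is a single $\Ext$ computation over the dual truncated polynomial algebra $\Z[x]/x^{n+1}$, rather than an inductive step.

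The genuine gap in your sketch is the inductive homology computation. The ``bar spectral sequence'' you invoke does not compute an $\bE_1$-pushout: $\Lambda^{(n-1)}\amalg_{\Free_{\bE_1}(r_n)}\Z$ is an amalgamated \emph{free} product, whose simplicial bar resolution has coproducts of algebras, not tensor products of underlying objects, in each level. Taking the Eilenberg--Moore/$\Tor$-type intuition at face value would give $\Tor^{\Z[r_n]}\bigl(\Lambda(e)\otimes\Z[r_n],\Z\bigr)\cong\Lambda(e)$ --- the wrong answer, since it never produces the new polynomial generator $r_{n+1}$, which is a genuinely noncommutative phenomenon (cf.\ $r_4 = e_1e_3 - e_2^2 + e_3e_1$). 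Relatedly, the differential you write for the dg-model, $\partial e_j = e_{j-1}^2$, is incorrect: the actual formula in Remark~\ref{rem:explicitdga} is $\partial e_k = \sum_{i+j=k}(-1)^{i-1}e_ie_j$, and with your formula $\wt P_n$ has different homology. The aside that ``killing $r_n$ exactly kills $e^n$'' is also misleading for $n\geq 3$, since $e^n$ already vanishes in $\Lambda^{(2)}$; the $r_n$ live in different bidegrees and are Massey-power classes rather than powers of $e$. The route via explicit dg-models can be made rigorous, but one must first justify that freely adjoining a noncommutative generator with prescribed boundary models the $\bE_1$-pushout, and then carry out the noncommutative homology calculation honestly; the paper sidesteps all of this by passing to the coalgebra side.
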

The exterior algebra discussed in the theorem has underlying object of the form $(X_n)_{n\in \Z}$ with $X_0$ and $X_1$ both given by the complexes $\Z[0]$.
Another way to express this is to say that there is a functor
\[
\gr \Ab \to \gr \cD(\Z),
\]
simply given by applying the ``discrete complex'' functor pointwise, and it is lax symmetric monoidal.
The $\bE_1$ algebra $\Lambda(e)$ is precisely the image of the ordinary graded exterior algebra on a degree $1$ generator in $\gr\Ab$.

Theorem \ref{thm:e1presentation} should be regarded as a universal version of \ref{thm:e1presentation-ungraded}.
Indeed, the following lemma shows how \ref{thm:e1presentation-ungraded} is a special cases of Theorem \ref{thm:e1presentation}:

\begin{lemma}
Let $\cC$ be a closed symmetric-monoidal $\infty$-category and colimit-preserving symmetric-monoidal functor $\cD(\Z)\to \cC$, and let $L\in \cC$ be an invertible object.
Then there is a colimit-preserving monoidal functor
\[
\gr\cD(\Z)\to \cC
\]
taking the graded object $X$ to $\bigoplus_{\Z} L^{\otimes n} X_n$.
\end{lemma}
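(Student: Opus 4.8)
The plan is to construct the functor by generalizing the Day convolution picture so that the degree-$n$ summand gets twisted by $L^{\otimes n}$. First I would recall that $\gr\cD(\Z) = \Fun(\Z^\delta,\cD(\Z))$, and that, since $\Z^\delta$ is discrete, giving a functor $\Z^\delta \to \cC$ is simply giving a $\Z$-indexed family of objects of $\cC$; so the assignment $n \mapsto L^{\otimes n}$ defines a functor $\Lambda\colon\Z^\delta\to\cC$, which is moreover (strong) monoidal because $L^{\otimes m}\otimes L^{\otimes n}\simeq L^{\otimes(m+n)}$ and $L^{\otimes 0}\simeq\mathbbm 1_\cC$ (invertibility of $L$ guarantees these equivalences are coherent, i.e.\ $\Lambda$ is a monoidal functor out of the symmetric monoidal category $(\Z^\delta,+)$). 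Composing with the given colimit-preserving symmetric monoidal $\Phi\colon\cD(\Z)\to\cC$ and using that $\cC$ is cocomplete, I would take the functor in question to be a left Kan extension along the (monoidal) pointed Yoneda-type embedding, or more concretely the coend
\[
X \longmapsto \int^{n\in\Z^\delta} \Lambda(n)\otimes \Phi(X_n) \;\simeq\; \bigoplus_{n\in\Z} L^{\otimes n}\otimes \Phi(X_n),
\]
the last identification because a coend over a discrete category is just a coproduct (this is the analogue of Corollary \ref{cor-dummy-coend}).

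Next I would check the two claimed properties. Colimit-preservation is immediate: colimits in $\gr\cD(\Z)=\Fun(\Z^\delta,\cD(\Z))$ are computed pointwise, $\Phi$ preserves colimits by hypothesis, and $L^{\otimes n}\otimes(-)$ preserves colimits since $\cC$ is closed symmetric monoidal (tensoring with an object has a right adjoint, here even an inverse); finally coproducts commute with colimits. For monoidality, I would use the Day convolution description of the tensor product on $\gr\cD(\Z)$ recalled in Remark \ref{rem-day-structures}: $(X\otimes Y)_n\simeq\bigoplus_{s+t=n}X_s\otimes Y_t$. Applying our functor and expanding,
\[
\bigoplus_n L^{\otimes n}\otimes\Phi\Big(\bigoplus_{s+t=n}X_s\otimes Y_t\Big)
\simeq \bigoplus_{s,t} L^{\otimes(s+t)}\otimes\Phi(X_s)\otimes\Phi(Y_t)
\simeq \Big(\bigoplus_s L^{\otimes s}\otimes\Phi(X_s)\Big)\otimes\Big(\bigoplus_t L^{\otimes t}\otimes\Phi(Y_t)\Big),
\]
using that $\Phi$ is symmetric monoidal and commutes with the direct sums, and that tensoring in $\cC$ distributes over coproducts (again closedness). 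The unit of $\gr\cD(\Z)$ is the graded object with $\Z[0]$ in degree $0$ and $0$ elsewhere, which maps to $L^{\otimes 0}\otimes\Phi(\Z[0])\simeq\mathbbm 1_\cC$, so the unit is preserved.

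The step I expect to be the main obstacle is not the existence of the underlying functor but the coherence of the monoidal structure: upgrading the pointwise equivalences displayed above to a genuine (symmetric) monoidal functor between symmetric monoidal $\infty$-categories, rather than just a collection of equivalences on objects. The clean way to handle this is to exhibit our functor as induced by an operadic left Kan extension: the family $(L^{\otimes n})_n$ together with $\Phi$ assembles into a monoidal functor $\Z^\delta\to\cC$, hence (Day convolution being the universal monoidal structure, cf.\ the universal property used in Section \ref{section-monoidal} and in \cite{GlasmanDay}, \cite[2.2.6]{HA}) into a monoidal left adjoint $\Prept{\Z^\delta}\simeq\gr\spaces_*\to\cC$, and finally — after tensoring with $\cD(\Z)$ via Lurie's tensor product as in the proof of Theorem \ref{thm-ch-is-right-modules} — into a colimit-preserving monoidal functor $\gr\cD(\Z)\simeq\gr\spaces_*\otimes\cD(\Z)\to\cC$ whose value on objects is the stated formula. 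I would present the argument in this form, relegating the explicit object-level computation above to a remark confirming the formula. I do not expect symmetry of the monoidal functor to cause extra trouble since $+$ on $\Z^\delta$ is symmetric and $L$ is invertible, but if one only wants a monoidal (not symmetric monoidal) statement — which is all the application to Theorem \ref{thm:e1presentation-ungraded} needs — the $\bbE_1$-version of the same Day-convolution universal property suffices.
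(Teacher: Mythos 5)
Your proposal is correct and takes essentially the same approach as the paper: invertibility of $L$ determines a monoidal functor out of $\Z$ (the paper phrases this as $\Z$ being the free $\bE_1$-monoid on an invertible object), this is combined with the given functor $\cD(\Z)\to\cC$, and the result is monoidally left Kan extended along the evident embedding into $\gr\cD(\Z)$, with the pointwise Kan extension formula giving the displayed $\bigoplus_n L^{\otimes n}\otimes\Phi(X_n)$. The only cosmetic difference is that the paper Kan extends directly along $\Z\times\cD(\Z)\to\Fun(\Z,\cD(\Z))$ rather than factoring through $\gr\spaces_*\otimes\cD(\Z)$ as in your "clean way" remark; both encode the same Day-convolution universal property.
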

\begin{proof}
An invertible object determines a unique monoidal functor $\Z\to \cC$, since $\Z$ is the free $\bE_1$ monoid on a single invertible object.
Together with the symmetric-monoidal functor $\cD(\Z)\to \cC$, we obtain a monoidal functor $\Z\times \cD(\Z)\to \cC$, as the composite
\[
F: \Z\times \cD(\Z)\to \cC\times \cC\xrightarrow{\otimes} \cC.
\]
We also have a symmetric-monoidal functor $\Z\times \cD(\Z)\to \Fun(\Z,\cD(\Z))$, which simply takes $(n,C)$ to the graded object which is $C$ in degree $n$.
The left Kan extension of $F$ along this embedding is the desired functor, as one directly computes from the pointwise formula for Kan extensions.
\end{proof}

For example, we have a monoidal functor from $\gr\cD(\Z)$ to $\cD(\Z)$ sending $\Z$ in bidegree $(0,1)$ to $\Z$ in arbitrary degree $n$.
This sends the exterior algebra $\Lambda(z)$ to the exterior algebra in $\cD(\Z)$ on a degree $n$ generator, and our $\bE_1$ presentation to an $\bE_1$-presentation in $\cD(\Z)$.

The proof of Theorem \ref{thm:e1presentation} is based on bar-cobar duality.
Specifically, we say that an algebra $A$ in $\gr\cD(\Z)$ is \emph{formally connected} if the unit map $\Z\to \gr\cD(\Z)$ induces an equivalence of the ``formal degree'' $0$ part,
and the algebra $A$ vanishes in negative formal degrees.
Analogously, we call a coalgebra $C$ formally connected if the counit map $C\to \Z$ induces an equivalence in nonpositive formal degrees.
More generally, we call an algebra $A$ formally $n$-connective if the cofiber $\widetilde{A}$ of $\Z\to A$ is formally $n$-connective as an object of $\gr\cD(\Z)$, and analogously for coalgebras.
With this convention, an algebra or coalgebra is formally connected if it is formally $1$-connective.
We write $\Alg^{cn}(\gr\cD(\Z))$ for the full subcategory of $\Alg(\gr\cD(\Z))$ on all formally connected algebras, and analogously for coalgebras.

\begin{prop}
\label{prop:barcobar}
There is an equivalence
\[
\Alg^{cn}(\gr\cD(\Z))\to \CoAlg^{cn}(\gr\cD(\Z))
\]
given by the bar-cobar adjunction.
\end{prop}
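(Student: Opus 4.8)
The functor in the statement is the bar construction $\BBar$, with inverse the cobar construction $\Cobar$. The plan is to recall the bar--cobar adjunction on the presentably symmetric monoidal stable $\infty$-category $\gr\cD(\Z)$, observe that it restricts to the formally connected subcategories, and then prove that its unit and counit are equivalences by filtering the composites $\Cobar\BBar$ and $\BBar\Cobar$ by total (co)bar-length and exploiting the fact that, since the formal grading is computed pointwise, these filtrations are finite --- hence complete and exhaustive --- in each fixed formal degree.

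First I would recall the bar--cobar adjunction $\BBar\colon\Alg^{\mathrm{aug}}(\gr\cD(\Z))\rightleftarrows\CoAlg^{\mathrm{coaug}}(\gr\cD(\Z))\colon\Cobar$, where $\BBar(A)=\mathbf 1\otimes_A\mathbf 1$ is the geometric realization of the two-sided bar simplicial object and $\Cobar(C)=\mathbf 1\times_C\mathbf 1$ the totalization of the cobar cosimplicial object (see \cite{HA}); this makes sense because Day convolution on $\gr\cD(\Z)$ preserves colimits in each variable. A formally connected algebra is canonically augmented, via the projection onto the formal degree $0$ part, and dually a formally connected coalgebra is canonically coaugmented; so the content is that $\BBar$ sends $\Alg^{cn}$ into $\CoAlg^{cn}$ and $\Cobar$ sends $\CoAlg^{cn}$ into $\Alg^{cn}$, and that the unit $A\to\Cobar\BBar(A)$ and counit $\BBar\Cobar(C)\to C$ are equivalences. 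For the first point: writing $\widetilde A$ for $\cofib(\mathbf 1\to A)$, the reduced bar object has $q$-simplices $\widetilde A^{\otimes q}$, and since Day convolution forces $\widetilde A^{\otimes q}$ to be formally $\geq q$ whenever $\widetilde A$ is formally $\geq 1$, the skeletal filtration of $\BBar(A)$ is eventually constant in each formal degree; in particular $\BBar(A)$ is again formally connected (and formally $n$-connective if $A$ is), and the realization is a finite colimit degreewise. The dual statement for $\Cobar$ of a formally connected coalgebra follows from the coskeletal filtration with $q$-cosimplices $\widetilde C^{\otimes q}$.

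For the unit, I would filter $\Cobar\BBar(A)$ by the total bar-length: its building blocks are the terms $\widetilde A^{\otimes(q_1+\cdots+q_p)}$ with $p\geq 0$, $q_i\geq 1$, and the $N$-th associated graded ($N=q_1+\cdots+q_p$) is $\widetilde A^{\otimes N}$ tensored with the reduced chains of a contractible object for $N\geq 2$ --- this is the $\infty$-categorical shadow of the classical ``extra degeneracy'' contraction that makes $\Cobar\BBar$ compute the identity on augmented algebras --- while it is $\mathbf 1$ for $N=0$ and $\widetilde A$ for $N=1$. Matching this against the trivial $N$-filtration on $A$ itself, one gets that $A\to\Cobar\BBar(A)$ is an equivalence on associated graded; and since $\widetilde A^{\otimes N}$ is formally $\geq N$, in each formal degree both filtrations are finite, so an equivalence on graded pieces is an equivalence. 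The counit is handled symmetrically, filtering $\BBar\Cobar(C)$ by total cobar-length and using that $\widetilde C^{\otimes N}$ is formally $\geq N$.

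The step I expect to be the main obstacle is the analysis of the length filtration on $\Cobar\BBar(A)$: pinning down, in the $\infty$-categorical language, the contractibility of the $N$-th graded piece for $N\geq 2$ --- equivalently, producing the simplicial contraction of the bar--cobar composite in a form robust enough to pass to $\gr\cD(\Z)$ --- and checking that the length filtrations on $A$ and on $\Cobar\BBar(A)$ are compared by a genuine filtered map. Everything else (the adjunction, the connectivity estimates, and the reduction of ``equivalence'' to ``equivalence on associated graded'' via the degreewise finiteness supplied by the formal grading) is then formal.
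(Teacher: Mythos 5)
Your strategy is genuinely different from the paper's, and while plausible in outline it leaves its hardest step as an unfilled gap. The paper's proof avoids the unit--counit symmetry entirely: it shows that it suffices to verify that (a) the unit $A\to\Cobar\BBar(A)$ is an equivalence and (b) $\Cobar$ is conservative. Conservativity is established by the connectivity estimate (Lemma~\ref{lem:conservative}), and the unit is checked only on \emph{free} algebras --- where $\BBar(\Free(X))$ is computed to be the square-zero coalgebra $\Z\oplus\Sigma^{1,0}X$ and $\Cobar$ of a square-zero coalgebra is free (Lemma~\ref{lem:squarezerofree}) --- and then propagated to all connected algebras via the fact that every algebra is a sifted colimit of free ones and $\Cobar$ commutes with sifted colimits (Lemma~\ref{lem:cobarsifted}, again by degreewise finiteness). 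You instead try to prove both the unit and the counit are equivalences directly, by filtering $\Cobar\BBar(A)$ by total bar-length and claiming the graded pieces for $N\geq 2$ are contractible via an ``extra degeneracy.'' Your connectivity and degreewise-finiteness remarks are correct and match the paper's Lemmas~\ref{lem:connectivitypowers}--\ref{lem:conservative}, so the reduction to graded pieces would go through. But the contractibility of the graded pieces --- which you yourself flag as ``the main obstacle'' --- is precisely the content that must be supplied and is not supplied. Constructing the coherent simplicial contraction on the length filtration of the bar--cobar bicomplex in the $\infty$-categorical setting is a substantial undertaking (essentially reproving an operadic Koszul-duality statement by hand), whereas the paper's reduction to free algebras replaces it with a one-line calculation. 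You also miss the structural shortcut that once $\Cobar$ is known to be conservative, one never needs to analyze the counit at all. So: the approach is legitimate in spirit, but the key graded-piece contraction is a genuine gap, and the paper's route is considerably more economical.
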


This is an instance of more general results on Koszul duality, and can for example be extracted out of \cite[Proposition 4.1.2]{francis-gaitsgory},
where the requisite pronilpotency condition is guaranteed by our graded connected setup.
Since this in particular involves an identification of the usual 2-sided bar construction with the operadic one for $\bE_1$,
we also give a slightly more elementary argument,
relying on connectivity behaviour of the bar and cobar constructions which we will require anyways.
The overall structure of this argument is similar to the structure of the argument in Section 4 of \emph{loc. cit.}

\begin{lemma}
\label{lem:connectivitypowers}
Let $X\to Y$ be a map of formally $1$-connective objects of $\gr\cD(\Z)$ whose fiber is formally $(k+1)$-connective. Then the fiber of
\[
X^{\otimes n}\to Y^{\otimes n}
\]
is formally $(n+k)$-connective
\end{lemma}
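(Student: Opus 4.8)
The plan is to factor $X^{\otimes n}\to Y^{\otimes n}$ into $n$ elementary maps, each replacing a single tensor factor of $X$ by $Y$, and to bound the formal connectivity of the fiber of each elementary map by means of the Day convolution formula. The first ingredient I would record is the (immediate) fact that if $A$ is formally $a$-connective and $B$ is formally $b$-connective, then $A\otimes B$ is formally $(a+b)$-connective: from $(A\otimes B)_m\simeq\bigoplus_{p+q=m}A_p\otimes_{\cD(\Z)}B_q$ one sees that for $m<a+b$ every summand has $A_p\simeq 0$ or $B_q\simeq 0$. In particular, iterating, $Y^{\otimes j}$ is formally $j$-connective for all $j\geq 0$ since $Y$ is formally $1$-connective, and likewise for $X$.

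Writing $F$ for the fiber of $X\to Y$, which is formally $(k+1)$-connective by hypothesis, I would then consider the chain of maps
\[
X^{\otimes n}\longrightarrow Y\otimes X^{\otimes(n-1)}\longrightarrow Y^{\otimes 2}\otimes X^{\otimes(n-2)}\longrightarrow\cdots\longrightarrow Y^{\otimes n},
\]
whose $i$-th arrow ($1\leq i\leq n$) is induced by $X\to Y$ applied to the leftmost surviving copy of $X$, so that it has the form $Y^{\otimes(i-1)}\otimes X^{\otimes(n-i+1)}\to Y^{\otimes i}\otimes X^{\otimes(n-i)}$. Since $\cD(\Z)$ is presentably symmetric monoidal and (co)limits in $\gr\cD(\Z)=\Fun(\Z^{\delta},\cD(\Z))$ are computed pointwise, the Day convolution product on $\gr\cD(\Z)$ preserves colimits, hence (being a functor of stable categories that preserves finite colimits) is exact, in each variable; in particular tensoring with a fixed object preserves fiber sequences. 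Therefore the fiber of the $i$-th arrow is $Y^{\otimes(i-1)}\otimes F\otimes X^{\otimes(n-i)}$, which by the connectivity observation above is formally $\big((i-1)+(k+1)+(n-i)\big)$-connective, i.e.\ formally $(n+k)$-connective.

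Finally, the fiber of the $n$-fold composite $X^{\otimes n}\to Y^{\otimes n}$ is an iterated extension of the fibers of the individual arrows (apply repeatedly the fiber sequence $\fib(f)\to\fib(g\circ f)\to\fib(g)$). Since the full subcategory of formally $(n+k)$-connective objects of $\gr\cD(\Z)$ is closed under all limits and colimits — these being computed pointwise — it is in particular closed under extensions, and we conclude that $\fib(X^{\otimes n}\to Y^{\otimes n})$ is formally $(n+k)$-connective. There is no genuine obstacle in this argument; the only point deserving a moment's care is the exactness of $\otimes$ in each variable on $\gr\cD(\Z)$, which however is forced by the pointwise computation of (co)limits together with the corresponding property of $\cD(\Z)$.
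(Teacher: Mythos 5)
Your proof is correct and takes essentially the same route as the paper: factor $X^{\otimes n}\to Y^{\otimes n}$ into $n$ elementary maps each replacing a single tensor factor of $X$ by $Y$, observe that each has fiber $Y^{\otimes(i-1)}\otimes F\otimes X^{\otimes(n-i)}$ of formal connectivity $(i-1)+(k+1)+(n-i)=n+k$, and conclude for the composite. You are a bit more explicit than the paper about the exactness of Day convolution in each variable and the closure of formally $(n+k)$-connective objects under extensions, but the argument is the same.
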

\begin{proof}
The map $X^{i}\otimes Y^{n-i}\to X^{i-1} \otimes Y^{n-i+1}$ is obtained from tensoring the map $X\to Y$ with the formally $(n-1)$-connective object $X^{i-1}\otimes Y^{n-i}$.
Thus its fiber is formally $(n+k)$-connective, i.e. it induces isomorphisms in formal degrees $\leq (n+k)$. Since we can write the map $X^{\otimes n}\to Y^{\otimes n}$ as a composite of $n$ such maps, the claim follows.
\end{proof}

For a coalgebra $C$, $\Cobar(C)$ is the limit of a cosimplicial diagram with $C^{\otimes n}$ in level $n$. The $\Tot^n$-tower for that diagram takes the form
\[
\Cobar(C) \simeq \lim (\ldots \to \Tot^n(\Cobar^{\bullet}(C)) \to \Tot^{n-1}(\Cobar^{\bullet}(C)) \to\ldots)
\]
with fiber of $\Tot^n\to \Tot^{n-1}$ given by $\Omega^n \widetilde{C}^{\otimes n}$. Here $\widetilde{C}$ is the fiber of the counit map $C\to \Z$. If $C$ is connected, note that $\Omega^n \widetilde{C}^{\otimes n}$ is formally $n$-connective (since $\Omega$ does not change formal degree!), so it follows that $\Cobar(C)\to \Tot^n(\Cobar^{\bullet}(C))$ is an equivalence in formal degrees $\leq n$.

The connectivity of the $\Tot^n$-tower has the following two immediate consequences:

\begin{lemma}
\label{lem:cobarsifted}
$\Cobar: \CoAlg^{cn}(\gr\cD(\Z))\to \gr\cD(\Z)$ commutes with sifted colimits.
\end{lemma}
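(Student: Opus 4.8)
The statement to prove is Lemma \ref{lem:cobarsifted}: that $\Cobar\colon \CoAlg^{cn}(\gr\cD(\Z))\to \gr\cD(\Z)$ commutes with sifted colimits.

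The plan is to leverage the connectivity estimate established just before the statement: for a formally connected coalgebra $C$, the natural map $\Cobar(C)\to \Tot^n(\Cobar^\bullet(C))$ is an equivalence in formal degrees $\leq n$, because the fiber of $\Tot^n\to\Tot^{n-1}$ is $\Omega^n\widetilde{C}^{\otimes n}$, which is formally $n$-connective (since $\Omega$ preserves formal degree and $\widetilde C$ is formally $1$-connective, so $\widetilde{C}^{\otimes n}$ is formally $n$-connective by Lemma \ref{lem:connectivitypowers} with $X=\widetilde C$, $Y=0$, $k=0$ — or directly). First I would observe that $\gr\cD(\Z) = \Fun(\Z^\delta,\cD(\Z))$ has colimits computed pointwise, and in particular formally $n$-connective objects are closed under all colimits, and a map in $\gr\cD(\Z)$ is an equivalence if and only if it is an equivalence in each formal degree. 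Hence to check that $\Cobar$ preserves a sifted colimit $C \simeq \colim_{i\in I} C_i$ of formally connected coalgebras, it suffices to check this in each fixed formal degree $d$.

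Next I would fix $d$ and choose $n \geq d$. By the connectivity of the $\Tot$-tower, the canonical maps $\Cobar(C)\to \Tot^n(\Cobar^\bullet(C))$ and $\Cobar(C_i)\to \Tot^n(\Cobar^\bullet(C_i))$ are equivalences in formal degrees $\leq n$, hence in particular in formal degree $d$. Therefore it is enough to prove that $C\mapsto \Tot^n(\Cobar^\bullet(C))$ commutes with sifted colimits (in each formal degree, or even on the nose). Now $\Tot^n$ is a \emph{finite} limit — a limit over the truncated cosimplicial indexing category $\Delta_{\leq n}$ — and finite limits in $\gr\cD(\Z)$ commute with sifted colimits, since this already holds in $\cD(\Z)$ (filtered colimits are exact in $\cD(\Z)$, and geometric realizations/sifted colimits commute with finite limits in any stable, or more generally any $\infty$-category where sifted colimits commute with finite limits — here $\cD(\Z)$ works because it is the derived $\infty$-category of a ring, where this is standard; alternatively one reduces sifted colimits to filtered colimits and geometric realizations and treats each separately). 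Finally, each individual term $C\mapsto \widetilde{C}^{\otimes n} = \Cobar^k(C)$-type functor is built from the forgetful functor $\CoAlg^{cn}(\gr\cD(\Z))\to\gr\cD(\Z)$ — which preserves sifted colimits, since sifted colimits of coalgebras are computed on underlying objects (the comonad in question, or more simply: $\CoAlg$ of a presentable symmetric monoidal category has colimits created by the forgetful functor when the tensor preserves them, and Day convolution on $\gr\cD(\Z)$ does) — followed by taking cofibers and iterated tensor powers and loops, all of which preserve sifted (indeed all) colimits pointwise in $\gr\cD(\Z)$. Assembling a simplicial/cosimplicial object levelwise preserves colimits, and a finite limit of such preserves sifted colimits, which gives the claim.

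The main obstacle I expect is bookkeeping the interchange of the finite totalization $\Tot^n$ with the sifted colimit \emph{together with} the reduction from $\Cobar$ to $\Tot^n$: one has to be careful that the connectivity estimate is applied uniformly (same $n$ works for all $C_i$ and for $C$, which it does since it depends only on formal connectivity of the coalgebras, and all formally connected coalgebras are formally $1$-connective), and that passing to formal degree $d$ genuinely detects equivalences. A secondary point to get right is that the forgetful functor from $\CoAlg^{cn}$ preserves sifted colimits; this is where one invokes that colimits of coalgebras over a presentably symmetric monoidal $\infty$-category whose tensor product commutes with the relevant colimits are created in the underlying category (as in \cite[Section 3]{francis-gaitsgory} or the dual of \cite[3.2.3.1]{HA}), restricting along the inclusion of the connected subcategory which is closed under these colimits by the formal-connectivity argument above.
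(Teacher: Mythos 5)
Your proof is correct and follows essentially the same route as the paper's: both reduce $\Cobar$ to the partial totalizations $\Tot^n(\Cobar^\bullet)$, which stabilize in each formal degree by the connectivity estimate, and then observe that $\Tot^n$ is a finite limit of terms of the form $C^{\otimes k}$ and hence commutes with sifted colimits. You supply more detail than the paper (notably, spelling out that the forgetful functor from $\CoAlg^{cn}$ preserves sifted colimits, which the paper leaves implicit), but the core argument is the same.
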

\begin{proof}
$\Tot^n(\Cobar^\bullet)$ commutes with sifted colimits, since it is a finite limit of terms of the form $C^{\otimes k}$. Since the $\Tot^n$-tower stabilizes degreewise, the result follows.
\end{proof}

\begin{lemma}
\label{lem:conservative}
For a map of connected coalgebras $C\to D$, the formal connectivity of $C\to D$ (i.e. the lowest degree in which $C\to D$ is not an equivalence) agrees with the formal connectivity of $\Cobar(C)\to \Cobar(D)$. Dually the formal connectivity of a map of connected algebras agrees with the formal connectivity after $\BBar$. In particular both $\Cobar$ and $\BBar$ are conservative.
\end{lemma}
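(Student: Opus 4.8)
\textbf{Proof plan for Lemma~\ref{lem:conservative}.}
The statement to prove is that for a map $C\to D$ of formally connected coalgebras, the formal connectivity of $C\to D$ equals the formal connectivity of $\Cobar(C)\to\Cobar(D)$, and dually for algebras after $\BBar$; conservativity follows.
My plan is to reduce everything to the connectivity estimate on the $\Tot^n$-tower that was established right before the statement. First I would fix the notation: write $\phi\colon C\to D$ and let $m$ be the formal connectivity of $\phi$, i.e.\ the least integer with $\mathrm{fib}(\phi)$ \emph{not} formally $(m+1)$-connective (equivalently, $\phi$ is an equivalence in formal degrees $<m$ but not in degree $m$). Since $C$ and $D$ are formally connected, $\widetilde C$ and $\widetilde D$ are formally $1$-connective, and $m\geq 1$.

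The core step is to control $\mathrm{fib}(\Cobar(\phi))$ using the two towers
\[
\Cobar(C)\simeq\lim_n\Tot^n(\Cobar^\bullet(C)),\qquad
\Cobar(D)\simeq\lim_n\Tot^n(\Cobar^\bullet(D)),
\]
whose associated graded pieces are $\Omega^n\widetilde C^{\otimes n}$ and $\Omega^n\widetilde D^{\otimes n}$ respectively. The map $\widetilde C\to\widetilde D$ is formally $1$-connective with fiber formally $(m+1)$-connective (this is exactly the translation of ``$\phi$ has formal connectivity $m$'', using that the formal degree $0$ parts of $C$ and $D$ are both $\Z$). By Lemma~\ref{lem:connectivitypowers}, applied with $k=m$, the fiber of $\widetilde C^{\otimes n}\to\widetilde D^{\otimes n}$ is formally $(n+m)$-connective; since $\Omega^n$ does not change formal degree, the fiber of $\Omega^n\widetilde C^{\otimes n}\to\Omega^n\widetilde D^{\otimes n}$ is formally $(n+m)$-connective, in particular formally $(n+1)$-connective for all $n\geq 1$ (as $m\geq 1$). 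Inducting up the towers, the map $\Tot^n(\Cobar^\bullet(\phi))$ has fiber formally $\geq (m+1)$-connective for every $n$; passing to the limit (formal connectivity is stable under limits, as those are computed pointwise), $\mathrm{fib}(\Cobar(\phi))$ is formally $(m+1)$-connective, so $\Cobar(\phi)$ has formal connectivity $\geq m$. For the reverse inequality I would use that $\Cobar(C)\to\Tot^1(\Cobar^\bullet(C))\simeq\widetilde C\oplus\Z$ (up to the relevant shift) is an equivalence in formal degrees $\leq 1$, and more generally that the bottom nonzero formal degree of $\mathrm{fib}(\Cobar(\phi))$ is detected already on the first nontrivial layer $\Omega\widetilde C\to\Omega\widetilde D$, whose fiber in formal degree $m$ is nonzero by assumption; chasing the tower on the bottom-degree layer (where all higher correction terms $\Omega^n\widetilde C^{\otimes n}$ with $n\geq 2$ vanish in degrees $\leq m+1$ once $m<\text{something}$... ) shows $\Cobar(\phi)$ is \emph{not} an equivalence in formal degree $m$. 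The dual statement for algebras and $\BBar$ is obtained by the symmetric argument using the $\BBar$-filtration with layers $\Sigma^n\widetilde A^{\otimes n}$. Conservativity is then immediate: if $\Cobar(\phi)$ (resp.\ $\BBar(\phi)$) is an equivalence, its formal connectivity is $+\infty$, hence so is that of $\phi$, i.e.\ $\phi$ is an equivalence.

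The main obstacle I anticipate is the reverse inequality --- showing that $\Cobar$ does not \emph{increase} formal connectivity. The forward direction is a clean induction from Lemma~\ref{lem:connectivitypowers}, but for the reverse one must argue that the genuine lowest-degree discrepancy of $\phi$ survives the totalization and is not accidentally cancelled by higher cosimplicial terms. The point is precisely that in the lowest relevant formal degree $m$ the only contributing layer of the $\Tot$-tower is $n=1$ (because $\Omega^n\widetilde C^{\otimes n}$ is formally $n$-connective, so for $n\geq 2$ and $m$ equal to the connectivity of $\phi$, one needs $m\geq 2$ for any interaction, and even then the $n=1$ term maps isomorphically onto the bottom of the tower). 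Making this bookkeeping precise --- i.e.\ identifying $H_{*,m}(\mathrm{fib}\,\Cobar(\phi))$ with $H_{*,m}(\Omega\,\mathrm{fib}(\phi))$, which is nonzero --- is the step that requires care; everything else is a formal consequence of the tower estimates already in place.
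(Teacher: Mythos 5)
Your plan is essentially the same as the paper's proof: filter $\Cobar$ by its $\Tot^n$-tower, use Lemma~\ref{lem:connectivitypowers} to kill the layers $n\geq 2$ in the critical degree, and observe that the lowest nonzero degree of $\fib(\Cobar(\phi))$ is governed entirely by the $n=1$ layer $\Omega\fib(\widetilde C\to\widetilde D)$. Two things are worth flagging.

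First, the paper's argument is tighter than your two-direction split and in fact dissolves the worry you flag as the ``main obstacle.'' Instead of separately proving $\geq m$ and $\leq m$, the paper shows that \emph{in the critical formal degree $k$} one has an actual equivalence $\fib(\Cobar(C)\to\Cobar(D))_k\simeq \Omega\fib(\widetilde C\to\widetilde D)_k$: looking at the total fiber of the $\Tot^n$ vs.\ $\Tot^{n-1}$ square, the layer-$n$ contribution is $\Omega^n$ of $\fib(\widetilde C^{\otimes n}\to\widetilde D^{\otimes n})$, which for $n\geq 2$ vanishes in degree $k$, so the limit stabilizes at $\Tot^1$ in that degree. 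This single identification gives both inequalities at once, and the ``does the lowest-degree discrepancy survive totalization?'' question answered in your placeholder ``(\dots once $m<\text{something}$\dots)'' is precisely the content of that identification --- there is nothing further to check.

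Second, there is an off-by-one slip in your connectivity bookkeeping: if $\phi$ has formal connectivity $m$, then $\fib(\widetilde C\to\widetilde D)=\fib(\phi)$ is formally $m$-connective (vanishes in degrees $<m$, nonzero in degree $m$), not $(m+1)$-connective. Lemma~\ref{lem:connectivitypowers} should therefore be applied with $k+1=m$, yielding that $\fib(\widetilde C^{\otimes n}\to\widetilde D^{\otimes n})$ is formally $(n+m-1)$-connective, exactly as in the paper's statement ``$n+k-1$-connective.'' Your conclusion that the higher layers do not contribute in degree $m$ still stands (since $n+m-1\geq m+1$ for $n\geq 2$), but the intermediate claim that $\fib(\Tot^n(\Cobar^\bullet\phi))$ is formally $(m+1)$-connective for all $n$ is one degree too strong and, if it were literally true, would force $\Cobar(\phi)$ to be an equivalence in degree $m$, contradicting the reverse inequality you then try to prove.
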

\begin{proof}
Assume $C\to D$ is (at least) $k$-connective, with $k\geq 1$. Then so is $\widetilde{C}\to\widetilde{D}$, and by Lemma \ref{lem:connectivitypowers}, the map $\widetilde{C}^{\otimes n}\to \widetilde{D}^{\otimes n}$ is $n+k-1$-connective. 
By looking at the total homotopy fiber of the square
\[
\begin{tikzcd}
\Tot^n(\Cobar^{\bullet}(C)) \rar\dar &  \Tot^n(\Cobar^{\bullet}(D))\dar\\
\Tot^{n-1}(\Cobar^{\bullet}(C)) \rar & \Tot^{n-1}(\Cobar^{\bullet}(D)),
\end{tikzcd}
\]
we now see that the fiber of $\Tot^n(\Cobar^{\bullet}(C)) \to \Tot^n(\Cobar^{\bullet}(D))$ agrees with the fiber on $\Tot^1$ in degree $k$. This fiber is simply $\Omega \fib(\widetilde{C}\to \widetilde{D})$. In the limit, we see that 
\[
\Omega \fib(\widetilde{C}\to\widetilde{D})_k \simeq \fib(\Cobar(C)\to \Cobar(D))_k.
\]
This shows inductively that $\Cobar(C)\to \Cobar(D)$ has exactly the same formal connectivity as $C\to D$. The argument for $\BBar$ proceeds analogously, using the skeletal filtration.
\end{proof}

\begin{proof}[Proof of Proposition \ref{prop:barcobar}]
Since formally connected algebras are canonically augmented (the map $A\to \Z$ is determined as the inverse equivalence to the unit in degree $0$, and as $0$ in all other degrees, both of which are contractible choices), and formally connected coalgebras are canonically coaugmented, we can equivalently state that Bar and Cobar constitute inverse equivalences between
\[
\Alg(\gr\cD(\Z)^{cn}_{\Z/\kern-0.2em/\Z})\to \CoAlg(\gr\cD(\Z)^{cn}_{\Z/\kern-0.2em/\Z}),
\]
where the subscript denotes the ``double slice'' category of pointed augmented objects, and the formal connectivity assumption is now stated on the pointing or augmentation morphism. Recall that formally connected objects are actually closed under limits and colimits.

In this setting, \cite[5.2.2.19]{HA} guarantees that we at least have an adjunction, with the left adjoint taking an algebra to a coalgebra with underlying object given by the bar construction, and right adjoint taking a coalgebra to an algebra having underlying object given by the cobar construction.

To check that this adjunction is an equivalence, it suffices to check that the unit $A\to \Cobar(\BBar(A))$ is an equivalence, and that $\Cobar$ is conservative. The latter follows from Lemma \ref{lem:conservative}. For the former, since we can write every algebra $A$ as sifted colimit of free ones, and Lemma \ref{lem:cobarsifted} tells us that $\Cobar$ commutes with sifted colimits (while $\BBar$ is even a left adjoint), it suffices to check that
\[
A\to \Cobar(\BBar(A))
\]
is an equivalence for $A$ free.
If $A$ is free on a single element of bidegree $(k,n)$, then $\BBar(A)$ has homology concentrated in degree $(k+1,n)$ (and $(0,0)$). For degree readons, it is thus a square-zero coalgebra. Since both $\Free$ and $\BBar$ commute with colimits (the latter because it is left adjoint), it follows that $\BBar(\Free(X)) = \Z\oplus \Sigma^{1,0} X$ with square-zero coalgebra structure for \emph{any} $X\in \gr_+\cD(\Z)$. Finally, for such a square-zero coalgebra, we have that $\Cobar(\Z\oplus \Sigma^{1,0} X) = \bigoplus X^{\otimes n} \simeq \Free(X)$.
\end{proof}

In the proof, we have also shown
\begin{lemma}
\label{lem:squarezerofree}
$\BBar$ and $\Cobar$ take the free algebra $\Free_{\bE_1}(\Sigma^{0,n} X)$ to the square-zero coalgebra $\Z\oplus \Sigma^{1,n} X$ and vice-versa.\qed
\end{lemma}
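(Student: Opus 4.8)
The statement is precisely the pair of identifications extracted in the course of the proof of Proposition \ref{prop:barcobar}, so the plan is to isolate and repackage that sub-argument into a self-contained proof. Write $Y \coloneqq \Sigma^{0,n}X$ and assume, as throughout this appendix, $n \geq 1$, so that $Y$ lies in $\gr_+\cD(\Z)$ and both $\BBar$ and $\Cobar$ are available. First I would treat the case $Y = \Sigma^{k,n}\Z$ of a single ``sphere'' in formal degree $n$. For $A = \Free_{\bE_1}(Y)$ the reduced bar complex is computed by the standard two-term model coming from the length-one free-bimodule resolution of a tensor algebra; equivalently, as already noted in the proof of Proposition \ref{prop:barcobar}, one checks degreewise that $\BBar(A)$ has homology concentrated in bidegrees $(0,0)$ and $(k+1,n)$. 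For sparsity reasons — the reduced part $M$ sits in formal degree exactly $n$, while $M^{\otimes j}$ sits in formal degree $jn > n$ for $j \geq 2$ (by Lemma \ref{lem:connectivitypowers}) — every map $M \to M^{\otimes j}$ with $j \geq 2$ vanishes, and in fact the space of $\bE_1$-coalgebra structures on $\Z \oplus M$ refining the evident primitive comultiplication is contractible, since all higher comultiplications and coherence homotopies would be forced to live in even more formally connective pieces. Hence $\BBar(\Free_{\bE_1}(\Sigma^{k,n}\Z)) \simeq \Z \oplus \Sigma^{k+1,n}\Z$ as square-zero coalgebras.

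Next I would pass to general $X$. Since $\gr\cD(\Z)$ is generated under colimits by the shifted spheres $\Sigma^{k,0}\Z$, every $X$ is such a colimit; as both $\Free_{\bE_1}$ and $\BBar$ preserve colimits ($\BBar$ because it is a left adjoint, by \cite[5.2.2.19]{HA}), we obtain $\BBar(\Free_{\bE_1}(\Sigma^{0,n}X)) \simeq \Z \oplus \Sigma^{1,n}X$ on underlying objects, and the same formal-connectivity sparsity shows the coalgebra structure is again square-zero. For the $\Cobar$ half one may simply invoke Proposition \ref{prop:barcobar}: $\Cobar$ is the inverse equivalence to $\BBar$ on formally connected (co)algebras, so applying it to the equivalence just produced yields $\Cobar(\Z \oplus \Sigma^{1,n}X) \simeq \Free_{\bE_1}(\Sigma^{0,n}X)$. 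Alternatively one computes directly: for the square-zero coalgebra $C = \Z \oplus M$ with $M = \Sigma^{1,n}X$, the fiber of $\Tot^k \to \Tot^{k-1}$ in the cobar tower is $\Omega^k M^{\otimes k} \simeq \Omega^k \Sigma^{k,kn}X^{\otimes k} \simeq \Sigma^{0,kn}X^{\otimes k}$, which is formally $kn$-connective, so the tower converges and $\Cobar(C) \simeq \bigoplus_{k \geq 0}(\Sigma^{0,n}X)^{\otimes k} \simeq \Free_{\bE_1}(\Sigma^{0,n}X)$, the algebra structure being forced by formal connectivity (or by conservativity of $\BBar$, Lemma \ref{lem:conservative}).

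The computations of underlying objects are routine; the only genuinely delicate point is the structural identifications — upgrading the evident equivalences of underlying objects to equivalences of $\bE_1$-(co)algebras. This is exactly where the rigidity of formal connectivity in $\gr\cD(\Z)$ does the work: because formal connectivity is preserved by all limits and colimits (these being computed pointwise), the relevant mapping spaces of higher (co)multiplications and coherence data are contractible, so there is nothing to choose. I therefore expect this rigidity argument, rather than any calculation, to be the crux of a careful write-up.
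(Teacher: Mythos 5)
Your proof is correct and follows the same route as the paper: the lemma carries an immediate \texttt{\textbackslash qed} precisely because it is extracted from the body of the proof of Proposition \ref{prop:barcobar}, whose last paragraph computes $\BBar$ of a free algebra on a single generator, deduces the square-zero structure by a formal-degree sparsity argument, extends to arbitrary $X$ by colimit-preservation of $\Free_{\bE_1}$ and $\BBar$, and finally asserts the $\Cobar$ identification for square-zero coalgebras. Your write-up reconstructs exactly that.

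One warning, though: of the two options you offer for the $\Cobar$ half, the first — invoking Proposition \ref{prop:barcobar} to deduce $\Cobar(\Z\oplus\Sigma^{1,n}X)\simeq\Free_{\bE_1}(\Sigma^{0,n}X)$ from the already-proven $\BBar$ half — is circular, since the proof of Proposition \ref{prop:barcobar} itself uses precisely this $\Cobar$-on-square-zero computation to verify that the unit $A\to\Cobar(\BBar(A))$ is an equivalence on free algebras. The second option (direct computation via the $\Tot$-tower) is the legitimate one. There, the formal-connectivity estimate on the layers shows the tower converges, but you also need the square-zero structure to see it \emph{splits}: the reduced cobar cosimplicial object has all coface maps coming from the reduced comultiplication, which vanishes, so the tower is degenerate and $\Cobar(\Z\oplus M)$ decomposes as $\bigoplus_{k\geq 0}(\Omega M)^{\otimes k}$ with the evident tensor-algebra multiplication. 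Your appeal to ``formal connectivity forces the algebra structure'' is then fine, for the same degree-sparsity reason you used to pin down the coalgebra structure on the $\BBar$ side. Finally, your phrase ``$\gr\cD(\Z)$ is generated under colimits by the shifted spheres $\Sigma^{k,0}\Z$'' should read ``$\cD(\Z)$ is generated by the $\Sigma^k\Z$'': the $X$ here is an object of $\cD(\Z)$ that gets placed into formal degree $n$, and colimits of objects in formal degree $0$ remain in formal degree $0$.
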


Under this bar-cobar equivalence, the statement of Theorem \ref{thm:e1presentation} amounts to giving a similar decomposition of $\BBar(\Lambda(e))$. Morally, since $\BBar$ takes a free $\bE_1$-algebra to a square zero coalgebra, and pushouts in coalgebras can be computed underlying, where $\BBar$ turns an $\bE_1$-presentation of an algebra into a sort of square-zero cell structure on the resulting coalgebra. In particular, just the underlying homology of $\BBar(A)$ should already tell us how many cells we need.

\begin{lemma}
\label{lem:crucialconnectivity}
Let $C\to D$ be a map of connected coalgebras which is an equivalence in formal degrees $\leq n$. Assume
\[
\begin{tikzcd}
F \rar\dar & C\dar\\
\Z \rar & D
\end{tikzcd}
\]
is a pullback diagram of coalgebras. Then $F$ is a formally $(n+1)$-connective coalgebra, and in formal degree $n+1$ the diagram is a pullback (and pushout) square in $\cD(\Z)$.
\end{lemma}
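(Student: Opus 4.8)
The plan is to transport the whole square to the algebra side via the bar--cobar equivalence of Proposition~\ref{prop:barcobar}, where pullbacks are computed on underlying objects, and then to feed the resulting situation into the connectivity estimates of Lemmas~\ref{lem:connectivitypowers} and~\ref{lem:conservative}.

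First I would apply $\Cobar$ to the given pullback square. Since $\Cobar$ is the right adjoint in the equivalence $\BBar\dashv\Cobar$ of Proposition~\ref{prop:barcobar}, it preserves limits, so we obtain a pullback square of connected augmented algebras with corners $\Cobar F$, $\Cobar C$, $\Z$, $\Cobar D$. The forgetful functor $\Alg(\gr\cD(\Z))\to\gr\cD(\Z)$ creates limits, so this is a pullback square in $\gr\cD(\Z)$; as the map $\Z\to\Cobar D$ splits the augmentation, passing to augmentation ideals yields a natural equivalence $\widetilde{\Cobar F}\simeq\fib\bigl(\widetilde{\Cobar C}\to\widetilde{\Cobar D}\bigr)$ in $\gr\cD(\Z)$. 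By Lemma~\ref{lem:conservative}, $\Cobar C\to\Cobar D$ has the same formal connectivity as $C\to D$, so its fibre $\widetilde{\Cobar F}$ is formally $(n+1)$-connective, i.e.\ $\Cobar F$ is a formally $(n+1)$-connective algebra; applying the dual (algebra) statement of Lemma~\ref{lem:conservative} to the unit map $\Z\to\Cobar F$, whose image under $\BBar$ is $\Z\to F$, shows that $\widetilde F$ is formally $(n+1)$-connective. This gives the first assertion.

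For the statement in formal degree $n+1$ I would study the canonical comparison map $\gamma\colon F\to P$, where $P\coloneqq C\times_{D}\Z$ is now the pullback computed in $\cD(\Z)$; since $\Z$ vanishes in formal degree $n+1$ and $\cD(\Z)$ is stable, the assertion is precisely that $\gamma$ is an equivalence in formal degree $n+1$. Unwinding the splittings of the connected coalgebras $C,D$ (counit plus coaugmentation) one finds $P\simeq\Z\oplus L$ with $L\coloneqq\fib(\widetilde C\to\widetilde D)$, which is formally $(n+1)$-connective and satisfies $L_{n+1}\simeq\fib(C_{n+1}\to D_{n+1})$. Now I would compare skeletal filtrations: $\widetilde F=\widetilde{\BBar\Cobar F}$ has associated graded $\Sigma^{p}(\widetilde{\Cobar F})^{\otimes p}$, which is formally $p(n+1)$-connective because $\widetilde{\Cobar F}$ is formally $(n+1)$-connective, so $\widetilde F$ coincides in formal degree $n+1$ with the first stage $\Sigma\,\widetilde{\Cobar F}$ of its skeletal filtration. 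Since $\fib$ commutes with skeletal filtrations, the first stage of the filtration on $L=\fib(\widetilde C\to\widetilde D)$ is $\Sigma\,\fib(\widetilde{\Cobar C}\to\widetilde{\Cobar D})\simeq\Sigma\,\widetilde{\Cobar F}$ by Step~1, while Lemma~\ref{lem:connectivitypowers} applied to $\widetilde{\Cobar C}\to\widetilde{\Cobar D}$ (whose fibre $\widetilde{\Cobar F}$ is formally $(n+1)$-connective) shows the higher skeletal gradeds of $L$ are formally $\geq(n+2)$-connective, so $L$ too coincides in formal degree $n+1$ with $\Sigma\,\widetilde{\Cobar F}$. Finally $\gamma$ is compatible with these filtrations, and on the first stage the relevant map $\widetilde{\Cobar F}\to\fib(\widetilde{\Cobar C}\to\widetilde{\Cobar D})$ is by construction the identity; hence $\gamma$ is an equivalence in formal degree $n+1$, and stability of $\cD(\Z)$ upgrades the pullback square to a pushout square there.

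The hard part will be the last compatibility in the previous paragraph: the chain of identifications produces an a~priori abstract equivalence $F_{n+1}\simeq\fib(C_{n+1}\to D_{n+1})$, and one must check it is the one induced by the maps $F\to C$ and $C\to D$ of the square. I would isolate this by tracking the nullhomotopy of $F\to C\to D$ supplied by the pullback square of coalgebras through $\Cobar$, through the underlying pullback of algebras, and back through $\BBar$: after passing to augmentation ideals it becomes the tautological nullhomotopy exhibiting $\widetilde{\Cobar F}$ as $\fib(\widetilde{\Cobar C}\to\widetilde{\Cobar D})$, so the induced map on first skeletal stages is an equivalence, which is exactly what the computation above requires. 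Everything else is connectivity bookkeeping of the same kind already carried out for $\BBar$ and $\Cobar$ in this appendix.
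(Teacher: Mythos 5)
Your argument is correct, but it takes a genuinely different route from the paper's for both halves of the statement. For the first part (formal $(n+1)$-connectivity of $F$), the paper simply runs an induction on $n$, feeding the degree-$(n+1)$ pullback statement for smaller $n$ back into the connectivity; you instead apply $\Cobar$, identify $\widetilde{\Cobar F}$ as the fibre of $\widetilde{\Cobar C}\to\widetilde{\Cobar D}$, invoke Lemma~\ref{lem:conservative} for $C\to D$ to get its connectivity, and then invoke the dual direction of Lemma~\ref{lem:conservative} for $\Z\to\Cobar F$ to come back to $F$ -- a direct argument that avoids the induction and is arguably cleaner. For the second part, the paper works entirely with the $\Tot^k$-tower of $\Cobar$ on the pullback square of algebras and compares \emph{total homotopy fibres}: the total fibre of the $\Cobar$-square is known to vanish (it is a pullback in $\gr\cD(\Z)$), and the associated-graded connectivity estimates show this total fibre agrees in formal degree $n+1$ with $\Omega$ of the total fibre of the original $\widetilde{(-)}$-square, which then must vanish. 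This sidesteps entirely the issue you flag as ``the hard part'': by comparing total fibres rather than building a comparison map $\gamma\colon F\to P$ and identifying it stage-by-stage on a filtration, the paper never has to verify that an abstract equivalence is the canonically induced one. Your approach, via the skeletal filtration of $\BBar$ applied to $\Cobar$, does work -- once one checks that $\gamma$ upgrades to a map of filtered objects (which follows from functoriality of the skeletal filtration and the fact that the pullback $P^{(\bullet)}$ of filtered objects has colimit $P$, using that pullbacks are pushouts in a stable $\infty$-category), and that $\gr_1\gamma$ is $\Sigma$ of the pullback comparison from Step~1 -- but the paper's total-fibre formulation makes that bookkeeping unnecessary. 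So: same toolkit (bar--cobar, Lemmas~\ref{lem:connectivitypowers} and~\ref{lem:conservative}, a filtration and a connectivity estimate on its associated graded), but applied on opposite sides of the duality, with the paper's choice yielding a noticeably shorter argument.
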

\begin{proof}
By an induction on $n$, we may automatically assume the first part of the claim, namely that $F$ is zero in formal degrees $\leq n$.
From bar-cobar duality, we see that the diagram
\begin{equation}
\label{diag:cobarpullback}
\begin{tikzcd}
\Cobar(F) \dar\rar & \Cobar(C)\dar\\
\Z \rar & \Cobar(D)
\end{tikzcd}
\end{equation}
is a pullback diagram of algebras, in particular an underlying pullback diagram.

Since the fiber of the map $\widetilde{C}\to \widetilde{D}$ is formally $(n+1)$-connective by assumption, the fiber of the map $\widetilde{C}^{\otimes k}\to \widetilde{D}^{\otimes k}$ is formally $n+k$-connective by Lemma \ref{lem:connectivitypowers}. As $\widetilde{F}^{\otimes k}$ is even formally $(n+1)k= nk+k$-connective, it follows that the total homotopy fiber of the diagram
\[
\begin{tikzcd}
\widetilde{F}^{\otimes k} \rar\dar & \widetilde{C}^{\otimes k}\dar\\
0 \rar & \widetilde{D}^{\otimes k}
\end{tikzcd}
\]
is formally $n+k$-connective. It follows that the total homotopy fiber of the diagram \eqref{diag:cobarpullback} agrees in formal degree $(n+1)$ with the total homotopy fiber of the corresponding diagram for $\Tot^1(\Cobar(-))$. Since the total fiber of the corresponding diagram for $\Tot^0$ is simply $0$, we get that the total fiber of \eqref{diag:cobarpullback} agrees in formal degree $(n+1)$ with $\Sigma^{0,-1}$ of the total fiber of
\[
\begin{tikzcd}
\widetilde{F} \rar\dar & \widetilde{C}\dar\\
0 \rar & \widetilde{D}.
\end{tikzcd}
\]
But since \eqref{diag:cobarpullback} is a pullback diagram in $\gr\cD(\Z)$, this proves the claim.
\end{proof}

\begin{defi}
By a \emph{generalized cell structure} on a connected coalgebra $C$ we mean a sequence of $C^{(n)}\to C$, equivalences in formal degrees $\leq n$, with $C^{(0)}=\Z$, and compatible pushout diagrams
\[
\begin{tikzcd}
\Z\oplus \Sigma^{-1,n} X_{n} \rar\dar & C^{(n-1)}\dar\\
\Z \rar & C^{(n)}.
\end{tikzcd}
\]
We will call $X_n$ the \emph{complex of $n$-cells}.
\end{defi}

Analogously, we define on the algebra side:
\begin{defi}
By a \emph{generalized $\bE_1$-cell structure} on a connected algebra $A$ we mean a sequence of $A^{(n)}\to A$, equivalences in formal degrees $\leq n$, with $A^{(0)}=\Z$, and compatible pushout diagrams
\[
\begin{tikzcd}
\Free_{\bE_1}(\Sigma^{-1,n}X_{n}) \rar\dar & A^{(n-1)}\dar\\
\Z \rar & A^{(n)}.
\end{tikzcd}
\]
We will call $X_n$ the \emph{complex of $n$-cells}.
\end{defi}

\begin{thm}
\label{thm:existence}
Every connected coalgebra $C$ admits a generalized cell structure whose complex of $n$-cells is given by $C_n$. Every connected algebra $A$ admits a generalized $\bE_1$-cell structure whose complex of $n$-cells is given by $\Sigma^{-1} \BBar(A)_n$.
\end{thm}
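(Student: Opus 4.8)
The plan is to prove both assertions at once, deducing the algebra statement from the coalgebra one via bar--cobar duality (Proposition~\ref{prop:barcobar}); the crucial point is that on the coalgebra side the cells are square-zero, so the relevant filtration is nothing but the formal truncation tower. For a connected coalgebra $C$, I would first note that the formal truncation $\tau_{\leq n}C$ (the graded object obtained by setting all formal degrees $>n$ equal to zero) carries a canonical sub-coalgebra structure: formal truncation is oplax symmetric monoidal on $\gr\cD(\Z)$ and comes with a natural transformation to the identity, so it sends coalgebras to coalgebras and the inclusions $\tau_{\leq n-1}C\hookrightarrow\tau_{\leq n}C\hookrightarrow C$ are coalgebra maps; each $\tau_{\leq n}C$ is connected since $C_0\simeq\Z$, and $\tau_{\leq n}C\to C$ is an equivalence in formal degrees $\leq n$. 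One then takes $C^{(n)}\coloneqq\tau_{\leq n}C$, so that $C^{(0)}=\Z$ by connectedness, and the coalgebra statement reduces to producing the cell-attachment pushout squares.

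For the inductive step, I would form the pullback of coalgebras
\[
\begin{tikzcd}
E_n\rar\dar & \tau_{\leq n-1}C\dar\\
\Z\rar & \tau_{\leq n}C,
\end{tikzcd}
\]
with the right-hand map the inclusion (an equivalence in formal degrees $\leq n-1$) and the bottom map the coaugmentation. Lemma~\ref{lem:crucialconnectivity} then shows that $E_n$ is formally $n$-connective and that this square is a pullback \emph{and} a pushout in $\cD(\Z)$ after evaluation in formal degree $n$. Since $\tau_{\leq n-1}C$ and $\tau_{\leq n}C$ vanish in formal degrees $>n$, a direct inspection shows $E_n$ is concentrated in formal degree $n$, where it equals $\Omega C_n=\Sigma^{-1,n}C_n$ (as $\Z$ lives only in degree $0$), and that the square is a pushout in $\cD(\Z)$ in \emph{every} formal degree; hence it is a pushout in $\gr\cD(\Z)$, and therefore also in $\CoAlg(\gr\cD(\Z))$, since the forgetful functor creates colimits of coalgebras. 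Finally, formal $n$-connectivity forces the reduced comultiplication of $E_n$ to take values in the formally $2n$-connective object $\widetilde{E_n}^{\otimes 2}$ and hence to vanish, so $E_n\simeq\Z\oplus\Sigma^{-1,n}C_n$ as a square-zero coalgebra, the map to $\tau_{\leq n-1}C$ is the canonical one (trivial on reduced parts, for degree reasons), and the map to $\Z$ is the counit. This is exactly the required cell attachment with complex of $n$-cells $C_n$, and $\colim_n C^{(n)}\simeq C$ because the tower is eventually constant in each formal degree.

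For the algebra statement, given a connected algebra $A$ I would set $C\coloneqq\BBar(A)$ and apply $\Cobar$ to the tower just built. By Proposition~\ref{prop:barcobar}, $\Cobar$ restricts to an equivalence between connected coalgebras and connected algebras, so it preserves the pushout squares and the colimit above; thus $A^{(n)}\coloneqq\Cobar(\tau_{\leq n}C)$ defines a tower with $A^{(0)}=\Cobar(\Z)=\Z$, with maps $A^{(n)}=\Cobar(\tau_{\leq n}C)\to\Cobar(C)=A$ that are equivalences in formal degrees $\leq n$ by Lemma~\ref{lem:conservative}, and with cell-attachment pushout squares inherited from the coalgebra side. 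By Lemma~\ref{lem:squarezerofree}, $\Cobar$ carries the square-zero coalgebra $\Z\oplus\Sigma^{-1,n}C_n$ to the free algebra $\Free_{\bE_1}(\Sigma^{-2,n}C_n)=\Free_{\bE_1}\!\left(\Sigma^{-1,n}(\Sigma^{-1}C_n)\right)$; since $\BBar\Cobar\simeq\id$ identifies $\BBar(A)_n$ with $C_n$, the complex of $n$-cells is $\Sigma^{-1}\BBar(A)_n$, as claimed.

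The main obstacle, and the step I would treat most carefully, is the interplay between pushouts of coalgebras and pushouts of underlying graded objects. Lemma~\ref{lem:crucialconnectivity} only pins down the critical formal degree, so one must check separately that the defining square is a pushout in each of the remaining formal degrees before concluding, via the fact that the forgetful functor from coalgebras creates colimits, that it is a pushout of coalgebras. On the algebra side the corresponding passage is legitimate only because $\Cobar$ is an honest equivalence on connected objects---as a mere right adjoint it would not preserve these pushouts a priori---so one must ensure that every object appearing (the truncations $\tau_{\leq n}C$ and the square-zero coalgebras) is genuinely connected. A more routine point to verify is the oplax symmetric monoidality of formal truncation, which underlies the existence of the sub-coalgebra structures on the $\tau_{\leq n}C$ and the fact that the tower consists of coalgebra maps.
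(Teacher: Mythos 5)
Your approach is a genuine variant of the paper's --- you pull back over $\tau^f_{\leq n}C$ rather than over $C$, aiming to bypass the paper's truncation step --- but it fails at precisely the point where that step becomes necessary. The unjustified (and false) claim is that the coalgebra pullback $E_n$ is concentrated in formal degrees $0$ and $n$. Lemma~\ref{lem:crucialconnectivity} gives formal $n$-connectivity and identifies $E_n$ in the single degree $n$, but says nothing in degrees $>n$; and the forgetful functor $\CoAlg^{cn}(\gr\cD(\Z))\to\gr\cD(\Z)$ preserves colimits (being a left adjoint), not limits, so the coalgebra pullback is \emph{not} the underlying pullback. It is instead computed, via Proposition~\ref{prop:barcobar}, by taking the underlying pullback of the Cobar algebras and then applying $\BBar$, and $\Cobar$ spreads everything out over all formal degrees. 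Thus the vanishing of $\tau^f_{\leq n-1}C$ and $\tau^f_{\leq n}C$ above degree $n$ tells you nothing about $E_n$ above degree $n$. Concretely, for $C=\BBar(\Lambda(e))$ and $n=2$ one has $\Cobar(E_2)\simeq\Z\times_{\Lambda^{(2)}}\Lambda^{(1)}$; in formal degree $3$, $\Lambda^{(1)}_3\simeq\Z$ while $\Lambda^{(2)}_3\simeq\Z[1]$ (by Lemma~\ref{lem:pnhomology}), the comparison map is zero for degree reasons, and the fiber is $\Z\oplus\Z\neq 0$. Since $\Cobar(E_2)$ is formally $2$-connective, $(E_2)_3\simeq\Sigma\Cobar(E_2)_3\neq 0$; so $E_2$ is not square-zero, and your square is not a pushout in $\gr\cD(\Z)$ in degree $3$ (it would produce $\Sigma(E_2)_3\neq 0 = (\tau^f_{\leq 2}C)_3$).

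The paper handles exactly this: it too forms a coalgebra pullback $F$ which is \emph{not} concentrated in low degrees, and then explicitly passes to $\tau^f_{\leq n}F$, which \emph{is} concentrated in degrees $0$ and $n$ and hence square-zero for degree reasons, before forming the pushout (which, unlike the pullback, \emph{is} computed underlying in coalgebras). If you insert the analogous truncation $\tau^f_{\leq n}E_n\simeq\Z\oplus\Sigma^{-1,n}C_n$ in your argument before pushing out, your variant should close up; as written, the identification $E_n\simeq\Z\oplus\Sigma^{-1,n}C_n$ is where the gap lies.
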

\begin{proof}
We first discuss the coalgebra case. We let $C^{(n)}$ be the formal truncation which is $0$ in formal degrees $>n$. Just like t-structure truncation, the corresponding functor $\tau^f_{\leq n}: \gr_{\geq 0} \cD(\Z)\to \gr_{\geq 0}\cD(\Z)$ arises from an adjunction between $\gr_{\geq 0} \cD(\Z)$ and $\gr_{[0,n]} \cD(\Z)$, where the functors are the obvious ones (forgetting degrees $>n$, and extending by $0$). Here we write $\gr_{[0,n]}(\cD(\Z))$ for objects concentrated in formal degrees between and including $0$ and $n$. Unlike the case of t-structures, these functors are adjoint in both possible ways! Since the functor $\gr_{\geq 0} \cD(\Z)\to \gr_{[0,n]}\cD(\Z)$ is strict monoidal, we obtain a lax and an oplax structure on the composite $\tau^f_{\leq n}$. This gives us a canonical coalgebra structure on $C^{(n)}$. Informally, it is given by the composite
\[
C^{(n)} \to C \to C\otimes C \to C^{(n)}\otimes C^{(n)}.
\]

If we define the coalgebra $F$ by the pullback diagram
\[
\begin{tikzcd}
F\rar\dar & C^{(n-1)}\dar\\
\Z \rar & C,
\end{tikzcd}
\]
Lemma \ref{lem:crucialconnectivity} implies that $F$ is concentrated in formal degrees $\geq n$, and that in degree $n$ the diagram is a pullback in $\cD(\Z)$, given by
\[
\begin{tikzcd}
F_n\rar\dar & 0\dar\\
0\rar & C_n.
\end{tikzcd}
\]
In particular, $F_n = \Sigma^{-1} C_n$.
If we compose with the canonical map of coalgebras $\tau^f_{\leq n} F \to F$ (again using the discussion of adjoints and truncation at the beginning of this proof), we thus obtain a diagram of coalgebras
\[
\begin{tikzcd}
\tau_{\geq n}^f F\rar\dar & C^{(n-1)}\dar\\
\Z\rar & C,
\end{tikzcd}
\]
which is a pullback (and pushout) of objects of $\cD(\Z)$ in degree $n$. Since pushouts of coalgebras are formed underlying, the pushout vanishes in degrees $>n$, and maps equivalently to $C$ in degrees $\leq n$, i.e. agrees with $C^{(n)}$. For degree reasons, $\tau_{\leq n}^f F$ is $\Z \oplus \Sigma^{-1,n} C_n$, and so the claim about coalgebras follows.

For the algebra claim, we simply apply the coalgebra statement to $\BBar(A)$ and apply Lemma \ref{lem:squarezerofree} to the resulting cell structure.
\end{proof}

\begin{lemma}
\label{lem:barexterior}
$\BBar(\Lambda(e))$ can be described by the graded dg coalgebra $\Z\{ x_1, x_2,\ldots\}$, with zero differential, $x_i$ in bidegree $(i,i)$ and comultiplication given by
\[
\Delta(x_n) = \sum_{i+j=n} x_i\otimes x_j.
\]
\end{lemma}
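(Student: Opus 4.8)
The plan is to compute $\BBar(\Lambda(e))$ by means of an explicit chain-level model, namely the normalized two-sided bar complex of the dg-algebra representing $\Lambda(e)$. Write $A = \Z\langle e\rangle/(e^{2})$ for the graded exterior dg-algebra with $e$ in bidegree $(0,1)$ and zero differential, so that $A = \Z\oplus\widetilde{A}$ with augmentation ideal $\widetilde{A} = \Z e$ concentrated in bidegree $(0,1)$; in particular $\widetilde{A}$ is formally $1$-connective, so $\Lambda(e)$ is a formally connected algebra and Proposition \ref{prop:barcobar} applies. First I would record the underlying graded object: the normalized bar complex is $\bigoplus_{n\geq 0}\Sigma^{n,0}\,\widetilde{A}^{\otimes n}$, and since $\widetilde{A}^{\otimes n}\cong \Z e^{\otimes n}$ lives in formal degree $n$ and topological degree $0$, the $n$-th summand is a single copy of $\Z$ in bidegree $(n,n)$, which I name $x_n$ (with $x_0 = 1$).

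Next I would check that this complex carries the zero differential. Each $\widetilde{A}^{\otimes n}$ has no internal differential because $A$ has none, and the bar differential is an alternating sum of faces: the two extreme faces apply the augmentation $\epsilon(e)=0$, while every inner face factors through the multiplication $\widetilde{A}\otimes\widetilde{A}\to\widetilde{A}$, which vanishes since $e\cdot e=0$. Hence the total differential is zero. Because the $n$-th stage of the bar (skeletal) filtration is concentrated in formal degree $n$, the filtration is finite in each fixed formal degree, so the model computes $\BBar(\Lambda(e))$ honestly, and we conclude that $\BBar(\Lambda(e))$ is $\bigoplus_{n\geq 0}\Z\cdot x_n$ with $x_n$ in bidegree $(n,n)$ and zero differential, as claimed. (One may equivalently phrase this by noting $\Lambda(e)$ is the trivial square-zero extension $\Z\oplus\Z[0,1]$, whose bar construction is the tensor coalgebra on the suspension of the square-zero ideal.)

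It then remains to pin down the comultiplication. The normalized bar complex of an augmented dg-algebra is naturally a dg-coalgebra under deconcatenation, $\Delta[a_1\mid\cdots\mid a_n] = \sum_{p=0}^{n}[a_1\mid\cdots\mid a_p]\otimes[a_{p+1}\mid\cdots\mid a_n]$, and under the identification $x_n \leftrightarrow [e\mid\cdots\mid e]$ ($n$ tensor factors) this becomes precisely $\Delta(x_n)=\sum_{i+j=n}x_i\otimes x_j$. The hard part will be justifying that this strict dg-coalgebra represents the $\infty$-categorical coalgebra $\BBar(\Lambda(e))$ of Proposition \ref{prop:barcobar} — that is, that one may take the bar construction strictly together with its coalgebra structure, not merely up to equivalence of underlying objects. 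I would dispose of this by appealing to the standard identification of the monoidal bar construction with the two-sided bar complex and to the fact that every object in sight is discrete, so that the simplicial, normalized and $\infty$-categorical totalizations agree on the nose and transport the deconcatenation structure. As an independent sanity check one can compute in the other direction: $\Cobar$ of $\Z\{x_1,x_2,\ldots\}$ with the deconcatenation coproduct is the tensor algebra on the $\Sigma^{-1,0}x_n$ with differential $d(\Sigma^{-1,0}x_n)=\sum_{i+j=n,\ i,j\geq 1}\pm(\Sigma^{-1,0}x_i)(\Sigma^{-1,0}x_j)$, whose homology is the exterior algebra on $e:=\Sigma^{-1,0}x_1$ of bidegree $(0,1)$ (the relation $e^{2}=0$ witnessed by $x_2$), so by the equivalence of Proposition \ref{prop:barcobar} this coalgebra is forced to be $\BBar(\Lambda(e))$.
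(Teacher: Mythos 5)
Your argument is correct, but it takes a genuinely different route from the paper's. The paper computes the bigraded homology of $\BBar(\Lambda(e))$ abstractly as $\Tor^{\Lambda(e)}(\Z,\Z)$, using the standard periodic free resolution over an exterior algebra to land one $\Z$ in each bidegree $(n,n)$, and then pins down the coalgebra structure indirectly: it dualizes, observes that $\Ext^{\Lambda(e)}(\Z,\Z)$ is polynomial, identifies the dual algebra with the dga $\Z[x]$, and dualizes back to get the divided-power coalgebra. You instead work directly with the normalized two-sided bar complex of the dga $\Z[e]/(e^2)$: since $\epsilon(e)=0$ kills the extreme faces and $e^2=0$ kills all inner faces, the bar differential vanishes on the nose, the $n$-th bar level is a single $\Z$ in bidegree $(n,n)$, and the deconcatenation coproduct reads off the claimed formula with no dualization step. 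Your approach is more explicit and delivers the coalgebra structure transparently rather than by a polynomial-dual detour, which is a real advantage. What it buys at a cost is precisely the point you flag yourself: you need to know that the monoidal bar construction of \cite[5.2.2.19]{HA} applied to the $\bbE_1$-algebra $\Lambda(e)$ in $\gr\cD(\Z)$ is computed, as a coalgebra, by the strict two-sided bar complex of the representing dga with deconcatenation; the paper's Tor/Ext argument needs only the homology-level identification, which is classical and unambiguous. Your remark that the bar (skeletal) filtration is finite in each formal degree handles convergence correctly and mirrors the connectivity estimates the paper proves for this purpose in Lemmas \ref{lem:connectivitypowers}--\ref{lem:conservative}, and your cobar sanity check, combined with the conservativity statement of Lemma \ref{lem:conservative} and the equivalence of Proposition \ref{prop:barcobar}, is in fact a fully rigorous way to close the remaining model-comparison gap, so the proof can be made airtight along the lines you sketch.
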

\begin{proof}
The bigraded homology of $\BBar(\Lambda(e))$ is given by the bigraded Tor groups $\Tor^{\Lambda(e)}(\Z,\Z)$. The standard free resolution over an exterior algebra allows us to compute these to be given by $\Z$ in degree $(n,n)$ for each $n$. The coalgebra structure can also be computed either directly from the resolution, or by dualizing and thinking about the algebra structure given by composition on $\Ext^{\Lambda(e)}(\Z,\Z)$ (which is polynomial). Since the dual algebra has polynomial homology, it is equivalent to the algebra given by the dga $\Z[x]$ with zero differential, from which we get our description by dualizing back.
\end{proof}

\begin{proof}[Proof of Theorem \ref{thm:e1presentation}]
We simply apply Theorem \ref{thm:existence} to $\Lambda(e)$, using Lemma \ref{lem:barexterior} to see that the complex of $n$-cells in the resulting $\bE_1$-cell structure is $\Z[n-1]$.
\end{proof}

In the resulting cell structure on the coalgebra side, the $n$-skeleton $C^{(n)}$ is explicitly represented by the sub-coalgebra of $\Z\{x_1,\ldots,x_n\}\subseteq \Z\{x_1,\ldots\}$.

We can also study the $\bE_1$-algebras $\Lambda^{(n)}\simeq \Cobar(C^{(n)})$ arising as skeleta on the algebra side more closely:
\begin{lemma}
\label{lem:pnhomology}
For $n=1$, the homology $H_*(\Lambda^{(1)})$ is of the form $\Z[e]$, with $e$ in bidegree $(0,1)$. For $n>1$, the homology $H_*(\Lambda^{(n)})$ is of the form $\Lambda(e)\otimes \Z[r_{n+1}]$, with $r_{n+1}$ of bidegree $(n-1,n+1)$.
\end{lemma}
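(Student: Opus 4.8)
The plan is to compute $H_*(\Lambda^{(n)})$ directly from the cobar description $\Lambda^{(n)}\simeq\Cobar(C^{(n)})$ furnished by Theorem~\ref{thm:existence} and bar--cobar duality, bootstrapping off the explicit coalgebra of Lemma~\ref{lem:barexterior}. The case $n=1$ is immediate: by Theorem~\ref{thm:e1presentation} we have $\Lambda^{(1)}\simeq\Free_{\bE_1}(\Sigma^{0,1}\Z)$, whose underlying bigraded object is the tensor algebra $\bigoplus_{k\geq0}\Sigma^{0,k}\Z$ on a single generator $e$ of bidegree $(0,1)$, so $H_*(\Lambda^{(1)})\cong\Z[e]$; equivalently, $C^{(1)}=\Z\{x_0,x_1\}$ is a square-zero coalgebra, so by Lemma~\ref{lem:squarezerofree} its cobar dga is free graded-associative on $y_1:=s^{-1}x_1$ in bidegree $(0,1)$ with zero differential.

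For $n\geq2$, recall from Lemma~\ref{lem:barexterior} that $C^{(n)}$ is the connected graded coalgebra $\Z\{x_0,x_1,\dots,x_n\}$ with $x_i$ in bidegree $(i,i)$ and $\Delta(x_m)=\sum_{i+j=m}x_i\otimes x_j$. Thus $\Lambda^{(n)}\simeq\Cobar(C^{(n)})$ is represented by the dg algebra which is free graded-associative on generators $y_i:=s^{-1}x_i$, $1\leq i\leq n$, of bidegree $(i-1,i)$, with $dy_1=0$ and $dy_m=\sum_{0<i<m}\pm\, y_iy_{m-i}$. Since $C^{(n)}$ is degreewise a finite free $\Z$-module, its degreewise $\Z$-linear dual is the graded associative algebra $A_n:=(C^{(n)})^{\vee}\cong\Z[t]/(t^{n+1})$ with $t:=x_1^{\vee}$ of bidegree $(1,1)$, and the cobar dga above is degreewise dual to the bar complex of $A_n$. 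Consequently $H_*(\Lambda^{(n)})\cong\Tor^{A_n}(\Z,\Z)^{\vee}\cong\Ext_{A_n}(\Z,\Z)$ as bigraded rings, with $\Z=A_n/(t)$.

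It then remains to compute $\Ext_{A_n}(\Z,\Z)$ and to translate bidegrees. I would use the free resolution of $\Z$ over $A_n$ all of whose terms equal $A_n$ and whose differentials alternate between multiplication by $t$ and by $t^{n}$, placing the generator of the degree-$2j$ term in internal bidegree $(j(n+1),j(n+1))$ and that of the degree-$(2j+1)$ term in $(j(n+1)+1,j(n+1)+1)$. Since $t$ acts as zero on $\Z$, the complex $\Hom_{A_n}(-,\Z)$ of this resolution has zero differential, so $\Ext_{A_n}^{k}(\Z,\Z)\cong\Z$ for every $k\geq0$ --- no subtlety arises from working over $\Z$ rather than a field. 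The resolution is $2$-periodic up to the internal shift $(n+1,n+1)$; this exhibits the $\Ext^2$-class $b$ as acting invertibly up to that shift, whence $\Ext_{A_n}(\Z,\Z)$ is free of rank $2$ over $\Z[b]$ on $1$ and the $\Ext^1$-class $a$, while $a^2=0$ because $\Ext_{A_n}^{2}(\Z,\Z)$ is concentrated in internal degree $n+1>2$ (chain-level: $y_1^2=\pm\, dy_2$ becomes a boundary as soon as $n\geq2$, which is precisely what produces the exterior rather than polynomial factor). Hence $\Ext_{A_n}(\Z,\Z)\cong\Lambda(a)\otimes\Z[b]$. Finally, under $H_*(\Cobar C^{(n)})\cong\Ext_{A_n}(\Z,\Z)$ the cobar construction preserves formal degree and lowers topological degree by one per cosimplicial level, so an $\Ext^k$-class of internal bidegree $(d,d)$ corresponds to a homology class of bidegree $(d-k,d)$; thus $a\leftrightarrow e$ of bidegree $(0,1)$ and $b\leftrightarrow r_{n+1}$ of bidegree $(n-1,n+1)$, giving $H_*(\Lambda^{(n)})\cong\Lambda(e)\otimes\Z[r_{n+1}]$ with the asserted bidegrees.

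I expect the main obstacle to be the bookkeeping in the middle step --- identifying $H_*\Cobar(C^{(n)})$ with $\Ext$ over the dual algebra while correctly tracking both the topological and the formal grading, and verifying that $(C^{(n)})^{\vee}$ really is $\Z[t]/(t^{n+1})$ with $|t|=(1,1)$. Once the explicit periodic resolution above is in hand, the ring structure, the degree dictionary, and the integral form of the statement all follow by routine verification.
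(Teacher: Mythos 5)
Your proof is correct and follows essentially the same route as the paper's: identify $H_*(\Lambda^{(n)})$ with $\Cotor^{C^{(n)}}(\Z,\Z)$ via $\Lambda^{(n)}\simeq\Cobar(C^{(n)})$, dualize (everything being finite type and free over $\Z$) to $\Ext$ over the truncated polynomial algebra $\Z[t]/(t^{n+1})$, and read off the ring structure from the $2$-periodic minimal resolution. You merely spell out the resolution and the bidegree dictionary that the paper leaves implicit, and your degree-counting argument for $a^2=0$ (internal degree $n+1>2$) is what the paper's appeal to ``the Yoneda description of the product structure'' amounts to.
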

\begin{proof}
Since $\Lambda^{(n)} = \Cobar(C^{(n)})$, we can compute its homology groups as derived cotensor product $\Cotor^{C^{(n)}}_*(\Z,\Z)$, or equivalently (since the cotensor product with $\Z$ and $\Hom_{C^{(n)}}(\Z,-)$ agree, they both simply take a comodule to its primitives), $\Ext_*^{C^{(n)}}(\Z,\Z)$. Since everything is finite type and thus dualizable, this agrees with $\Ext(\Z,\Z)$ over the graded algebra dual to the coalgebra $C^{(n)}$. Since $C^{(n)}$ is represented by the graded coalgebra $\Z\{ x_1, x_2,\ldots, x_n\}$, its dual agrees with the truncated polynomial algebra $\Z[x]/x^{n+1}$, with $x$ now in bidegree $(-1,-1)$. This truncated polynomial algebra admits a $2$-periodic minimal resolution, and from the Yoneda description of the product structure on $\Ext_*(\Z,\Z)$ we recover the claimed multiplicative structure.
\end{proof}

Since $\Lambda^{(n)}$ arises from $\Lambda^{(n-1)}$ by a pushout
\[
\begin{tikzcd}
\Free_{\bE_1}(\Z[n,n-2])\dar\rar & \Lambda^{(n-1)}\dar\\
\Z \rar & \Lambda^{(n)},
\end{tikzcd}
\]
and the degree $n$-part of $\Lambda^{(n)}$ vanishes by Lemma \ref{lem:pnhomology}, the upper horizontal attaching map needs to be an equivalence in formal degree $n$. Thus, it is given by a choice of generator for the polynomial part of $H_*(\Lambda^{(n-1)})$ as in \ref{lem:pnhomology}.

\begin{rem}
\label{rem:explicitdga}
From the explicit description of $\BBar(\Lambda(e))$ in Lemma \ref{lem:barexterior} one can actually explicitly describe the skeleta of the $\bE_1$-cell structure on $\Lambda(e)$: The coalgebra skeleta of $\BBar(\Lambda(e))$ are represented by the sub-coalgebras $\Z\{x_1, x_2,\ldots, x_n\}$, i.e. the skeleta of the $\bE_1$-cell structure of $\Lambda(e)$ are obtained by applying the cobar construction to these coalgebras.
Given an explicit dg coalgebra which is flat over $\Z$, the cobar construction can be described explicitly as reduced total complex of the cosimplicial complex
\[
\begin{tikzcd}
\Z \ar[r,shift left=0.5]\ar[r,shift right] & C \ar[r,shift left]\ar[r]\ar[r,shift right] & C\otimes C \ar[r,shift left=1.5]\ar[r,shift left=0.5]\ar[r,shift right=0.5]\ar[r,shift right=1.5]&\ldots
\end{tikzcd}
\]
One can see explicitly describe this reduced total complex (as a dga) as the free graded associative algebra on $\widetilde{C}[-1]$, with differential given by the sum of the differential on $\widetilde{C}[-1]$ and a component coming from the (reduced) comultiplication $\widetilde{C}\to \widetilde{C}\otimes \widetilde{C}$, with signs coming from the shift.
This way, one can describe $\Lambda^{(n)}$ explicitly as
\[
\Lambda^{(n)}\simeq \Z\langle e_1,\ldots,e_n \rangle,
\]
with differential $\partial e_k = \sum_{i+j=k} (-1)^{i-1} e_i e_j$.  Here $\Z\langle-\rangle$ denotes a free associative algebra, and $e_k$ is in bidegree $(n-1,n)$.

In this description, one can take the generator $r_n$ in \ref{lem:pnhomology}, and thus also the attaching map for $\Lambda^{(n-1)}\to \Lambda^{(n)}$, to be
\[
r_n = \sum_{i+j=n} (-1)^{i-1} e_i e_j.
\]
For example, we know abstractly from \ref{lem:pnhomology} that $r_n$ can be taken as any generator of the kernel of $H_*(\Lambda^{(n-1)})\to H_*(\Lambda^{(n)})$ in formal degree $n$. But the explicit dgas
\[
\Z\langle e_1,\ldots,e_{n-1} \rangle \to \Z\langle e_1,\ldots,e_n \rangle
\]
differ in formal degree $n$ only by the single generator $e_n$, so the kernel on homology is generated by $\partial e_n$, which is the claimed expression.
\end{rem}

Using the explicit description from Remark \ref{rem:explicitdga}, we can also fix the sign in the choice of generator in Lemma \ref{lem:pnhomology}, and equivalently in the choice of attaching maps in the presentation of $\Lambda(e)$.

The description of $\Lambda(e)$ as sequence of cell-attachments along these elements $r_n$ gives us an obstruction theory for $\bE_1$-maps out of $\Lambda(e)$. Indeed, an $\bE_1$-map $\Lambda^{(1)} \to R$ corresponds precisely to a choice of element of $H_{(0,1)}(R)$. Having fixed an $\bE_1$-map $\Lambda^{(n-1)}\to R$, we obtain a well-defined element of $H_{(n-2,n)}(R)$ as the image of $r_n$, and we get a further extension to $\Lambda^{(n)}$ if and only of this element vanishes (with choices of extensions being in bijection to ``nullhomotopies'' of a representing cycle, i.e. a torsor over $H_{(n-1,n)}(R)$).

\begin{defi}
Given a map $\Lambda^{(n-1)}\to R$ sending $e\in H_{0,1}(\Lambda^{(n-1)})$ to $a\in R$, we call the image of $r_n$ in $H_{(n-2,n)}$ an \emph{$n$-fold Massey power} of $a$, and write $\langle a\rangle^n$.
\end{defi}

Looking at the explicit dga description in Remark \ref{rem:explicitdga}, we see that $\langle a\rangle^n$ indeed is a representative of the $n$-fold Massey product $\langle a, \ldots, a\rangle$. However, it is somewhat more tightly defined: An arbitrary Massey product depends of choices of nullhomotopies for every adjacent product, nullhomotopies for the resulting $3$-fold Massey products, and so on. In the Massey power above, we choose the same homotopy ($e_2$) for all the adjacent products $a\cdot a$, a single homotopy for all the resulting $3$-fold Massey products, and so on.

The obstruction theory discussed above can thus be summarized as follows: Having fixed nullhomotopies for all Massey powers $\langle a\rangle^i$ for $2\leq i<n$, we get a well-defined $n$-fold Massey power $\langle a\rangle^n$. A map $\Lambda(e)\to R$ taking $e$ to an element $a$ in bidegree $(0,1)$ corresponds precisely to a coherent choice of nullhomotopies for all Massey powers of $a$.

\bibliography{standard}
\bibliographystyle{amsalpha}
\end{document}